\newtheorem{theorem}{Theorem}[section]
\newtheorem{corollary}{Corollary}
\newtheorem{lemma}[theorem]{Lemma}
\newtheorem{proposition}[theorem]{Proposition}
\newtheorem{assumption}{Assumption}
\newtheorem{definition}{Definition}
\newtheorem{remark}{Remark}
\def\b{\beta}
\def\d{\delta}
\def\e{\epsilon}
\def\lam{\lambda}
\def\s{\sigma}
\def\th{\theta}
\def\th{\theta}
\def\D{\Delta}
\def\Sig{\Sigma}
\def\E{\mathbb E}
\def\R{\mathbb R}
\def\S{{\mathbb S}}
\def\mL{\mathcal L}
\def\mN{\mathcal N}
\def\l{\left}
\def\r{\right}
\def\la{\left\langle}
\def\ra{\right\rangle}
\def\ll{\left\lVert}
\def\rl{\right\rVert}
\def\lv{\left\lvert}
\def\rv{\right\rvert}
\def\({\left(}
\def\){\right)}
\def\pt{\partial}
\def\nb{\nabla}
\def\ts{\times}
\def\qd{\quad}
\def\h{\hat}
\def\t{\tilde}
\def\dt{{\Delta t}}
\def\hV{\hat{V}}
\def\tV{\t{V}}
\def\hv{\hat{v}}
\def\bV{\bar{V}}
\def\hlam{\hat{\lam}}
\def\hmu{\hat{\mu}}
\def\hs{\hat{\Sig}}
\def\ts{\tilde{\Sig}}
\def\ts{\t{\Sig}}
\def\Linf{{L^\infty}}
\def\st{s_t}
\def\hst{\hat{s}_t} 
\def\lammin{{\lam_{\text{min}}}}
\newcommand{\coef}[1]{a^{(#1)}}
\title{PhiBE: A PDE-based Bellman Equation for Continuous Time Policy Evaluation}
\author{Yuhua Zhu\thanks{Department of Statistics and Data Science, University of California - Los Angeles, 
       Los Angeles, CA 90095, USA; e-mail: yuhuazhu@ucla.edu}}
\date{}
\begin{document}

\maketitle

\begin{abstract}

    In this paper, we study policy evaluation in continuous-time reinforcement learning (RL), where the state follows an unknown stochastic differential equation (SDE) but only discrete-time data are available.  We first highlight that the discrete-time Bellman equation (BE) is not always a reliable approximation to the true value function because it ignores the underlying continuous-time structure. We then introduce a new bellman equation, PhiBE, which integrates the discrete-time information into a continuous-time PDE formulation. By leveraging the smooth structure of the underlying dynamics, PhiBE  provides a provably more accurate approximation to the true value function, especially in scenarios where the underlying dynamics change slowly or the reward oscillates. Moreover, we extend PhiBE to higher orders, providing increasingly accurate approximations. %We conduct the error analysis for both BE and PhiBE with explicit dependence on the discounted coefficient, the reward and the dynamics.
    
    We further develop a model-free algorithm for PhiBE under linear function approximation and establish its convergence under model misspecification, together with finite-sample guarantees. In contrast to existing continuous-time RL analyses, where the model misspecification error diverges as the sampling interval $\dt\to 0$ and the sample complexity typically scales as $O(\dt^{-4})$, our misspecification error is independent of $\dt$ and the resulting sample complexity improves to $O(\dt^{-1})$ by exploiting the smoothness of the underlying dynamics. 
    Moreover, we identify a fundamental trade-off between discretization error and sample error that is intrinsic to continuous-time policy evaluation: finer time discretization reduces bias but amplifies variance, so excessively frequent sampling does not necessarily improve performance. This is an insight that does not arise in classical discrete-time RL analyses. Numerical experiments are provided to validate the theoretical guarantees we propose.

%Specifically, finer time discretization reduces bias but increases variance, leading to an explicit nonasymptotic characterization of how $\Delta t$ and the sample size jointly determine performance. decreasing $\Delta t$ reduces discretization error but increases variance, implying that excessively frequent sampling does not necessarily improve performance.  To the best of our knowledge, such an explicit trade-off, together with a nonasymptotic characterization of its dependence on $\Delta t, n$ and $\beta$, has not been systematically identified in prior analyses.
    %that diverges as the sampling interval shrinks, the approximation error of PhiBE remains remains well-conditioned and independent of the discretization step by exploiting the smoothness of the underlying dynamics.
    
\end{abstract}

\section{Introduction}
Reinforcement learning (RL) has achieved remarkable success in  {\it digital environments}, with applications such as AlphaGo \cite{Silver2016}, strategic gameplay \cite{Mnih2015}, and fine-tuning large language models \cite{Ziegler2019}. In these applications, the system state changes only after an action is taken. 
In contrast, many systems in the {\it physical world}, the state evolves continuously in time, regardless of whether actions are taken in continuous or discrete time. Although these systems are fundamentally continuous in nature, the available data are typically collected at discrete time points. In addition, 
the time difference between data points can be irregular, sparse, and outside our control.  {Examples arise in the following applications: 
\begin{itemize}
    \item Health care: for instance, in diabetes treatment, a patient’s glucose level changes continuously, but measurements are recorded only when tests are performed \cite{guo2023general,amos2018differentiable,romero2024actor,emerson2023offline,cobelli2009diabetes, battelino2019clinical};
    \item Robotic control \cite{karimi2023dynamic,Kober2013,siciliano1999robot} and autonomous driving \cite{sciarretta2004optimal}: where the environment evolve continuously but sensor data is collected at discrete intervals;
    \item Financial markets \cite{merton1975optimum,Moody2001, wang2020continuous, whittle1981risk}: Asset prices move continuously but trades and quotes occur at discrete times; 
    \item  Plasma fusion reactors \cite{degrave2022magnetic}:  The plasma evolves in continuous time, while all experimental data are recorded as discrete-time samples via sensors. 
\end{itemize} }
This paper directs its focus toward addressing continuous-time RL problems \cite{wang2020reinforcement, doya2000reinforcement,basei2022logarithmic, jia2022policy, jia2023q} where the underlying dynamics follow unknown stochastic differential equations (SDEs) but only discrete-time observations are available. Since one can divide the RL problem into policy evaluation (PE) and policy update \cite{sutton1999policy,konda1999actor, tallec2019making, jia2022policy}, we first concentrate on the continuous-time PE problem in this paper.

%One of the key challenges in the RL problem in the physical world is addressing the mismatch between the continuous-time dynamics and the discrete-time data. 
Currently, there are two main approaches to addressing this challenge. One approach is to learn the continuous-time dynamics from discrete-time data and formulate the problem as an policy evaluation problem with known dynamics \cite{kamalapurkar2016model, della2023model,lee2021policy, vamvoudakis2010online}. 
The key advantage of this approach is that it preserves the continuous-time nature of the problem, enhancing the stability and interpretability of the resulting algorithms. 
However, identifying continuous-time dynamics from discrete-time data is often challenging, and in general, ill-posed: there are infinitely many continuous-time dynamics can yield the same discrete-time transitions. Consequently, a misspecified continuous-time model may introduce significant errors, which can propagate to the learned optimal policy and lead to suboptimal decision-making.

Another common approach to address the continuous-time PE problem involves discretizing time and treating it as a Markov reward process. This method yields an approximated value function satisfying a {(discrete-time)} Bellman equation (BE), thereby one can use RL algorithms such as temporal difference (TD) \cite{sutton2018reinforcement}, gradient TD \cite{precup2001off}, Least square TD \cite{bradtke1996linear} to solve the BE.  
This approach is appealing for several reasons.
First, it relies only on the discrete-time transition dynamics, thereby avoiding the non-identifiability issues inherent in the first approach. 
Second, many RL algorithms are model-free, eliminating the need to explicitly learn the transition dynamics, making these plug-and-play algorithms convenient to implement in practice.
Despite these advantages, it has been observed empirically that the RL algorithms can be sensitive to time discretization \cite{tallec2019making, munos2006policy, park2021time, baird1994reinforcement}. As illustrated in Figure~\ref{fig:eq0}, such algorithms often yield poor approximations (see Section \ref{sec:linear_galerkin} for the details of Figure \ref{fig:eq0}). 
Crucially, this failure is not due to stochasticity or insufficient data; rather, it stems from a more fundamental issue: the (discrete-time) Bellman equation, is not a faithful approximation of the underlying continuous-time problem. We show in the paper that the solution to the Bellman equation is sensitive to time discretization, the change rate of the rewards and the discount coefficient as shown in Figure \ref{fig:eq0}. 
{Fundamentally, the (discrete-time) Bellman equation is designed for discrete-time decision-making processes, where only the discrete-time transition dynamics are utilized. In contrast, the continuous-time dynamics considered in this paper are governed by a SDE, a structure that the (discrete-time) Bellman equation does not capture. To faithfully approximate the true continuous-time value function, it is therefore necessary to move beyond the standard MDP and (discrete-time) Bellman-equation frameworks.}

%The central question we aim to address in this paper is whether, with the same discrete-time information and the same computational cost, one can approximate the true solution more accurately than the Bellman equation.

{The central question we aim to address in this paper is: if the dynamics are known to follow a standard SDE but only discrete-time data are available, can we design algorithms that \emph{exploit this structure} to decisively outperform generic RL methods for policy evaluation?}

{First, we view the continuous-time PE problem in terms of the solution to a PDE. Then, we tackle the overall goal in two steps. The first step is to approximate the PDE operator with the discrete-time transition dynamics, and the second step is to solve the resulting equation in a model-free way.}
In the first step, we proposed a PDE-based Bellman equation, called PhiBE, which integrates discrete-time information into a continuous PDE.  %The core concept revolves around utilizing discrete-time data to approximate the dynamics rather than the value function. Furthermore, we extend this framework to higher-order PhiBE, which enhances the approximation of the true value solution with respect to the time discretization. 
PhiBE combines the advantages of both existing frameworks while mitigating their limitations.
First, PhiBE is a PDE that preserves the continuous-time differential equation structure throughout the learning process.
At the same time, PhiBE relies only on discrete-time transition distributions, which is similar to the BE, making it directly compatible with discrete-time data and enabling model-free algorithm design.
Moreover, our framework overcomes key drawbacks of existing methods. 
Unlike the continuous-time PDE approach, which requires estimating the continuous-time dynamics and may suffer from identifiability issues, PhiBE depends solely on discrete-time transitions and does not require explicit dynamic modeling. 
Compared with the BE, PhiBE yields a more accurate approximation of the exact solution as it utilizes the underlying differential equation structure. In addition to that, PhiBE is more robust to rapidly changing reward functions compared to BE, which allows greater flexibility in designing reward functions to effiectively achieve RL objectives. 
Moreover, higher-order PhiBE achieves comparable error to BE with sparser data collection, enhancing the efficiency of RL algorithms. 
As illustrated in Figure \ref{fig:eq0}, when provided with the same discrete-time information, the exact solution derived from PhiBE is closer to the true value function than BE. 

{In the second step,} we introduce a model-free algorithm for approximating the solution to PhiBE when only discrete-time data are accessible. {This algorithm can be viewed as a continuous-time counterpart to the least-squares temporal difference (LSTD) \cite{bradtke1996linear} by replacing the (discrete-time) Bellman equation with the PhiBE equation. Since PhiBE enjoys a smaller discretization error than the Bellman equation, the resulting algorithm achieves a significantly improved approximation accuracy for free with a slightly modification of the standard RL algorithm.} As depicted in Figure \ref{fig:eq0}, with exactly the same data and the same computational cost, the proposed algorithm outperforms the RL algorithms drastically.

\begin{figure}
\centering
     \begin{subfigure}[b]{0.32\textwidth}
         \centering
         \includegraphics[width=\textwidth]{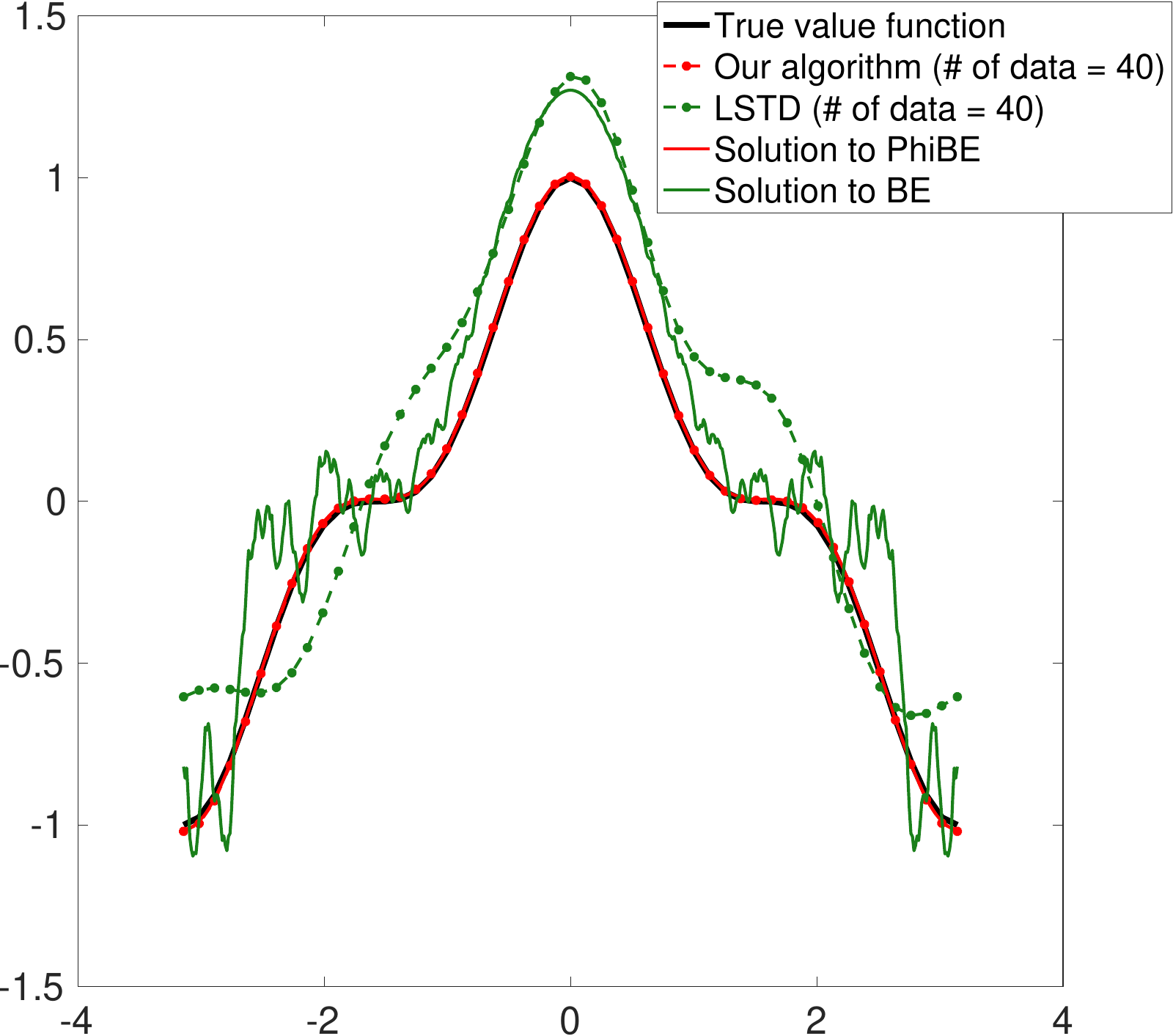}
         \caption{$\dt = 5, \b = 0.1, V(s) = \cos^3(s)$.}
     \end{subfigure}
     \hfill
     \begin{subfigure}[b]{0.30\textwidth}
         \centering
         \includegraphics[width=\textwidth]{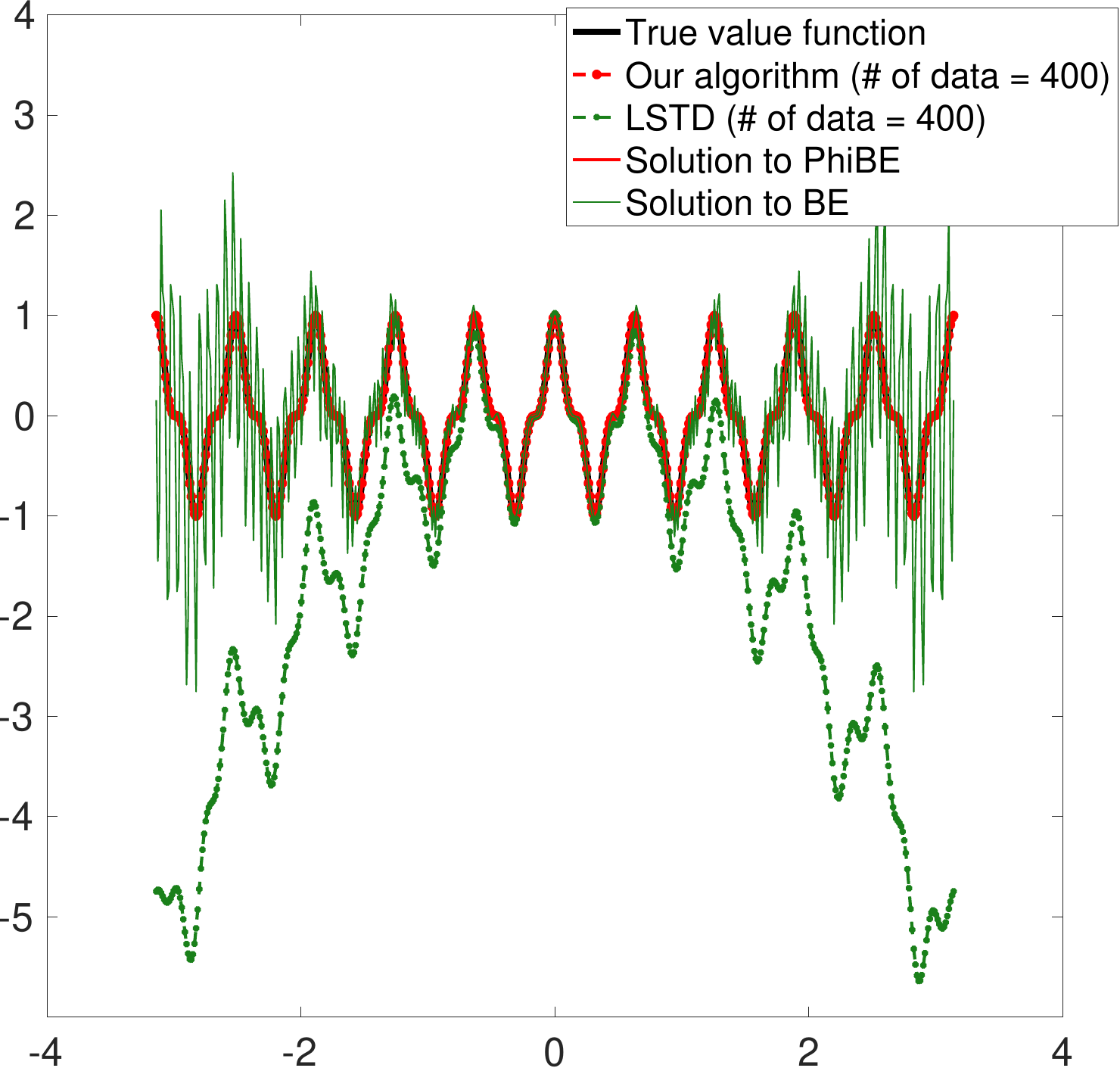}
         \caption{$\dt = 0.5, \b = 0.1, \\ V(s) = \cos^3(10s)$.}
     \end{subfigure}
     \hfill
     \begin{subfigure}[b]{0.32\textwidth}
         \centering
         \includegraphics[width=\textwidth]{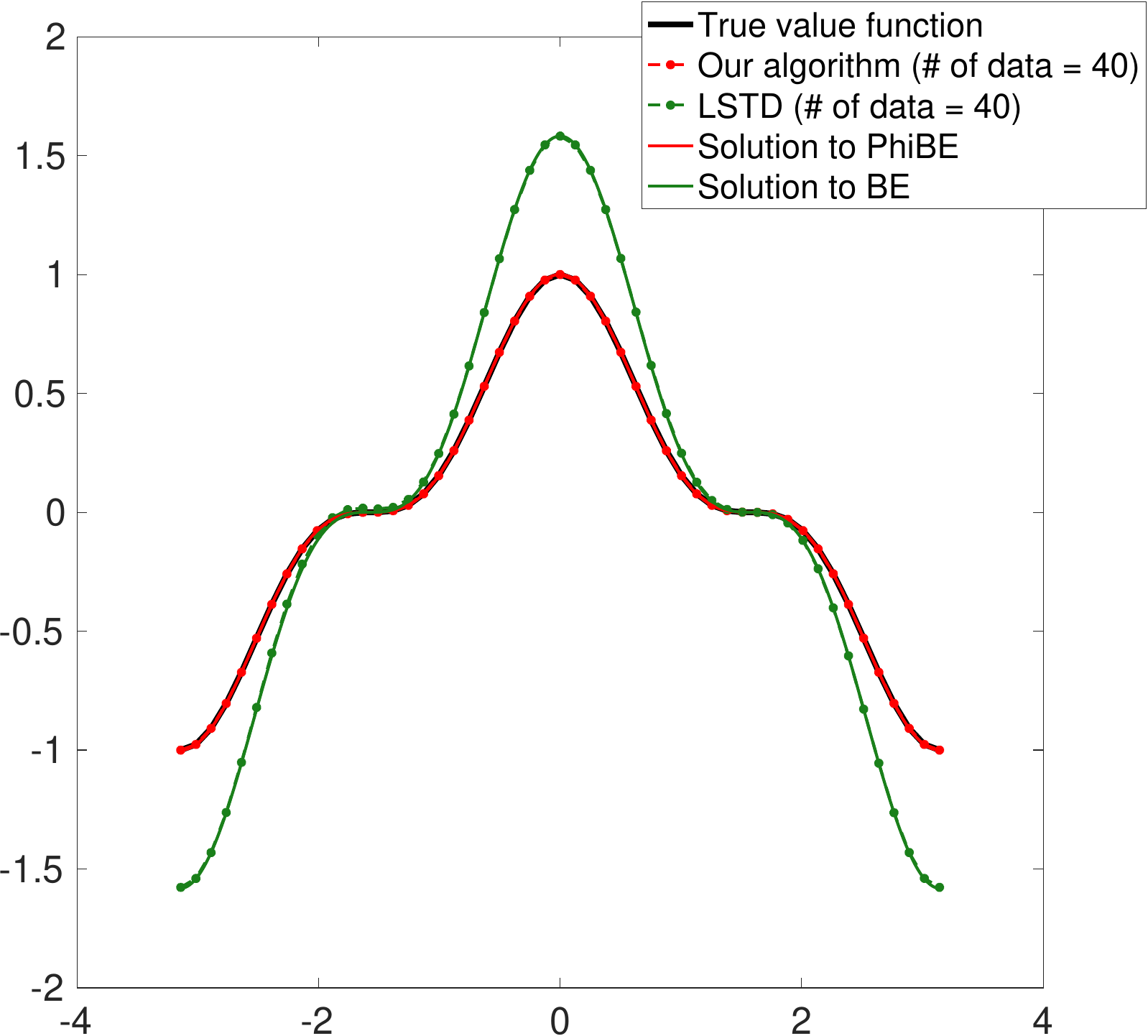}
         \caption{$\dt = 0.1, \b = 10, V(s) = \cos^3(s)$.}
     \end{subfigure}
     \caption{Here the data are collected every $\dt$ unite of time, $\b$ is the discount coefficient, and $V(s)$ is the true value function. In our setting, a larger discount coefficient indicates that future rewards are discounted more. LSTD \cite{bradtke1996linear} is a popular RL algorithm for linear function approximation. The PhiBE is proposed in Section \ref{sec:phibe} and the algorithm is proposed in Section \ref{sec:algo}. %We use the same data and linear bases in our algorithm as in LSTD.
     }
     \label{fig:eq0} 
\end{figure}

The error analysis for continuous-time reinforcement learning differs fundamentally from that of classical discrete-time RL. 
For continuous-time PE under function approximation, the total error can be decomposed into three components, as illustrated in Figure~\ref{fig:error-decomposition}. The first component is the \emph{discretization error}, which arises from the mismatch between the underlying continuous-time dynamics and the discrete-time observations used for learning. 
The second component is the \emph{algorithmic error}, which may be caused by model misspecification or by finite iterations when iterative solvers are employed. 
In this work, we use an offline solver, and therefore the algorithmic error refers to a pure model misspecification error.
The third component is the \emph{finite-sample error}, which results from having only a finite number of observations. We denote by $V^{n,\Phi}_{\Delta t}$ the value function approximation obtained from $n$ samples collected at time interval $\dt$ using linear bases $\Phi(s)$. 
As $n \to \infty$, the estimator converges to $V^{\Phi}_{\Delta t}$, which still differs from the true value function $V$ due to discretization error and the limited expressiveness of the bases.
As the approximation space is enriched, i.e., the number of bases $p\to\infty$, $V^{\Phi}_{\Delta t}$ further converges to $V_{\Delta t}$, the solution of the discretized Bellman equation in the MDP framework or, the new proposed equation, PhiBE in our paper.
Finally, as the data collection frequency $\dt \to 0$, $V_{\Delta t}$ converges to the true continuous-time value function $V$.
Most of the existing reinforcement learning literature focuses on analyzing the finite-sample and algorithmic errors, namely the gap between $V^n_{\Delta t}$ and $V_{\Delta t}$ \cite{munos2008finite,yang2019sample,jin2020provably,cai2020provably,zhang2021convergence}. 
In contrast, comparatively little work has addressed the discretization error arising from the time discretization itself.

In this paper, we analyze all three error components within the proposed PhiBE framework and its associated algorithm.
In addition, we explicitly characterize the discretization error incurred by the MDP formulation, namely the gap between the (discrete-time) Bellman equation and the true continuous-time value function. %Within the MDP framework, the discretization error corresponds to the gap between the solution of the Bellman equation and the true continuous-time value function.
In another word, the discretization error associated with the (discrete-time) Bellman equation represents the best approximation achievable by the standard policy evaluation algorithms derived from the (discrete-time) Bellman equation, including temporal difference learning \cite{sutton1988learning}, least-squares temporal difference methods \cite{bradtke1996linear}, and gradient-based temporal difference methods \cite{sutton2009fast}.
%We do not reanalyze the finite-sample and algorithmic errors for the MDP framework, as these aspects have been extensively studied in the existing literature.

Our analysis leads to two notable observations.
First, in terms of discretization error, PhiBE can provide a more accurate approximation than the MDP framework when the reward function exhibits high oscillation and the system dynamics evolve smoothly.
This regime is common in practice because reward functions are often designed to vary sharply in order to distinguish rewards from penalties, while physical systems typically evolve smoothly over time.
These properties suggest that the MDP framework may be suboptimal for certain continuous-time control problems, a limitation that has also been observed empirically \cite{de2024idiosyncrasy}.
Second, for continuous-time PE, we identify an inherent trade-off with respect to the time discretization $\Delta t$. More frequent data collection reduces discretization bias but increases statistical variance, requiring more samples to achieve the same accuracy.
To the best of our knowledge, the only prior work reporting a similar trade-off is \cite{zhang2023managing}, which provides analytical results only for linear quadratic policy evalution without function approximation.
In contrast, our results apply to general controlled diffusion processes and incorporate function approximation.

\begin{figure}
\centering
{\includegraphics[width=0.8\textwidth]{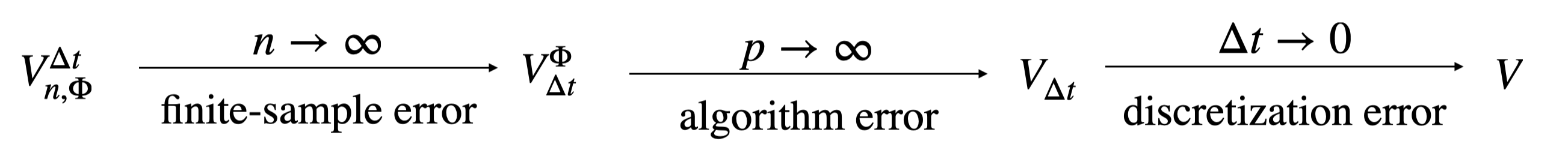}}
\caption{Error decomposition for continuous-time RL}
\label{fig:error-decomposition} 
\end{figure}

\color{black}
\paragraph{Contributions}
{
\begin{itemize}
\item We show that the classical Bellman equation, which does not exploit the underlying continuous-time structure, is a first-order approximation to the continuous-time PE problem, whose error deteriorates when the reward oscillates. To address this limitation, we develop the PhiBE framework that explicitly utilizes the structure of the underlying dynamics, yielding provably smaller approximation error than the Bellman equation when rewards oscillate or dynamics evolve slowly. Furthermore, we derive higher-order extensions to further enhance the approximation accuracy.
\item We propose a Galerkin method to approximate the solution to the PhiBE under linear function approximation when the discrete-time transition dynamics are given. We further establish its convergence under model misspecification. In contrast to the divergent results in existing RL analyses as the data frequency $\dt \to 0$, we show, by exploiting the smooth structure of the underlying dynamics, that the approximation error remains well-conditioned and independent of $\dt$.
\item We propose a model-free algorithm to approximate the Galerkin solution to the PhiBE when only data are available. We further provide a sample complexity bound guaranteeing an $\e$-accurate approximation to the Galerkin solution. In classical discrete-time RL analyses, the error scales as $O(\e^{-2}\b^{-4}\dt^{-4})$.  By contrast, our analysis exploits the smoothness of the underlying SDE and the uniform ellipticity of the infinitesimal generator, improving the dependence on the time discretization from $O(\dt^{-4})$ to $O(\dt^{-1})$.
\item Combining all the error, we identify an inherent trade-off with respect to the time discretization $\dt$ in continuous-time PE. More frequent data collection reduces discretization bias but increases statistical variance, requiring more samples to achieve the same accuracy. To the best of our knowledge, such an explicit trade-off, together with a nonasymptotic characterization of its dependence on $\Delta t, n$ and $\beta$, has not been systematically analyzed in prior analyses.
\item The PhiBE formulation integrates discrete-time information with continuous-time PDE. By leveraging the continuous-time structure, it achieves smaller discretization error than classical RL. Moreover, due to its structural similarity to the discrete-time Bellman equation, existing RL algorithms can be seamlessly adapted to the PhiBE framework, achieving improved approximation accuracy essentially for free.
\end{itemize}
}

%{Add a remark on the benefit in discretization error will be dominate in sample complexity}

\subsection{Related Work}
\subsubsection{Continuous-time RL}
There are primarily two approaches to address continuous-time RL from the continuous-time stochastic optimal control perspective. One involves employing machine learning techniques to learn the dynamics from discrete-time data and subsequently transforming the problem into model-based continuous-time RL problem \cite{kamalapurkar2016model, della2023model,lee2021policy, vamvoudakis2010online}. However, the underlying continuous dynamics are often unidentifiable when only discrete-time data are available.
{Another approach involves developing an algorithm for the continuous-time optimal control problem and then adapts it to discrete-time data \cite{basei2022logarithmic,jia2021policy, jia2022policy}.  Typically, numerical summation in discrete time is used to approximate continuous integrals (e.g., Algorithm 2 in~\cite{basei2022logarithmic} or Equation (19) in~\cite{jia2021policy}). While these methods offer valuable theoretical insights, two issues remain underexplored.    First, most analyses guarantee convergence to the true value function when continuous-time data are available, but do not explicitly account for discretization error arising when only discrete samples are available.  Second, although these works assume dynamics governed by a standard SDE, the resulting algorithms are not tailored to this structure and remain broadly applicable to other dynamics. 
By contrast, the central question of this paper is: when the dynamics follow a standard SDE but only discrete data are available, can we exploit this structure to build algorithms outperform generic RL methods? To the best of our knowledge, this is the first work to address this question systematically and establish corresponding theoretical guarantees.}

%Another approach involves an algorithm that converges to the true value function using continuous-time information and then discretizes it when only discrete-time data are available. For instance, \cite{basei2022logarithmic} presents a policy gradient algorithm tailored for linear dynamics and quadratic rewards. \cite{jia2021policy, jia2022policy} introduces a martingale loss function for continuous-time PE and RL.  These algorithms converge to the true value function when continuous-time data is available.

The proposed method fundamentally differs from the existing literature in three key aspects. First, unlike model-based approaches, which end up solving a PDE with continuous-time information, we integrate discrete-time information into the PDE formulation. Second, unlike alternative methodologies that view it as a Markov reward process, %directly approximating the value function using discrete-time data, 
which neglects the smoothness of the underlying dynamics, our method results in a PDE that incorporates gradients of the value function, which ensures that the solution closely approximates the true value function under smooth dynamics. {Third, in contrast to most continuous-time RL algorithms that require long trajectory data, our formulation depends only on the current and next states. Consequently, algorithms derived from it require access only to transition pairs rather than full trajectories, making the approach more flexible and broadly applicable to diverse data sources.}

{
To sum up, our work complements this line of research by bridging the gap between continuous-time algorithms and discrete-time data. 
We treat the discrete-time data before the algorithm design. 
The PhiBE formulation offers a systematic way to approximate the continuous-time problem from discrete-time data.
First, from a theoretical standpoint, because our learning algorithms retain the continuous-time PDE form, existing convergence analyses developed in the PDE \cite{guo2025policy,kerimkulov2020exponential,tang2023policy, TZZ} can be directly applied or adapted to analyze the proposed methods. 
Second, from a numerical perspective,  owing to the structural similarity between the PhiBE and the (discrete-time) Bellman equation, both depend only on the current and next states, most existing RL algorithms developed for solving Bellman equations can be directly applied to PhiBE. }

{
\subsubsection{Numerical schemes for HJB equation}
Although the formulation of the PhiBE closely resembles the PDE associated with the continuous-time PE problem, our method is fundamentally different from traditional numerical schemes for solving PDEs in three key aspects.
First, classical schemes require full knowledge of the system dynamics, whereas our model-free algorithm does not.
Second, numerical schemes approximate trajectories based on known dynamics, while our approach leverages exact trajectory data to approximate the underlying dynamics. Consequently, the corresponding error analysis framework also differs.
Third, the objectives of error analysis are distinct. Numerical schemes typically focus on the convergence order of the discretization, whereas one of the aims in this paper is to compare the first-order PhiBE formulation with the discrete-time MDPs formulation. Since both are first-order formulations, our analysis emphasizes the leading constant, which ultimately determines the practical performance of continuous-time PE. (See more discussion after Remark \ref{rmk:a-diff}.)
}

{
\subsubsection{Model-free algorithms for RL}
Due to the structural similarity between PhiBE and (discrete-time) Bellman equation, our model-free algorithm for solving the PhiBE is closely related to model-free algorithms for the BE, particularly the LSTD method \cite{bradtke1996linear}. Conceptually, the proposed solver serves as the continuous-time analogue of LSTD by replacing the (discrete-time) Bellman equation with the PhiBE equation. Both approaches are model-free, employ linear function approximation, and can be derived via a Galerkin projection followed by a stochastic approximation procedure. This connection highlights an important feature of the PhiBE framework: most existing RL algorithms developed for the BE can be readily adapted to the PhiBE setting with minimal modification. Moreover, because the PhiBE formulation incurs a smaller discretization error than its BE, it achieves higher approximation accuracy given sufficient data (see Remark~\ref{rmk:similarity to lstd} for detailed discussion).
From a theoretical perspective, our error analysis fundamentally departs from that of LSTD. The classical model-misspecification error for LSTD \cite{lazaric2012finite} diverges as the time discretization $\dt$ goes to zero. In contrast, our bound remains stable by exploiting the ellipticity of the infinitesimal generator, yielding a well-conditioned characterization of model misspecification in the continuous-time limit (see more discussion after Theorem \ref{thm:galerkin err}). In addition to that, we also improve the sample complexity bound from $\dt^{-4}$ to $\dt^{-1}$
}

\paragraph{Organization} The setting of the problem is specified in Section \ref{sec:setting}. Section \ref{sec:phibe} introduces the PDE-based bellman equation, PhiBE, and establishes the discretization error of PhiBE and (discrete-time) BE.  In Section \ref{sec:algo}, a model-free algorithm for solving the PhiBE is proposed, algorithmic error and finite-sample error are derived. Numerical experiments are conducted in Section \ref{sec:numerics}. 

\color{black}
\paragraph{Notation}
For function $f(s)\in \R$, $\ll f \rl_\Linf = \sup_{s} f(s)$; $\ll f \rl_\rho = \sqrt{\int f(s)^2 \rho(s)ds }$. 
For vector function $f(s)\in\R^d$, $\nb f(s)\in \R^{d\times d}$ with $(\nb f(s))_{ij} = \pt_{s_i}f_j$; $\ll f \rl_2^2 = \sum_{i=1}^d f_i^2$ represents the Euclidean Norm; $\ll f \rl_\Linf =\sqrt{\sum_{i=1}^d \ll f_i(s) \rl^2_\Linf}$, similar with $\ll f \rl_\rho$. For matrix function $f(s)\in \R^{d\times d}$, $\nb\cdot f = \sum_{l=1}^d\pt_{s_l}f_{i,l}$.

\section{Setting}\label{sec:setting}
Consider the following continuous-time policy evaluation (PE) problem, where the value function  $V(s)\in\R$, defined as follows, is the expected discounted cumulative reward starting from $s$, 
\begin{equation}\label{def of value}
    V(s) = \E\l[\int_{0}^\infty e^{-\b t}r(s_t)dt| s_0 = s\r].
\end{equation}
Here $\b>0$ is a discounted coefficient, $r(s)\in\R$ is a reward function, and the state $s_t\in\S = \R^d$ is driven by the stochastic differential equation (SDE),  
\begin{equation}\label{def of dynamics}
    d s_t = \mu(s_t)dt + \sigma(s_t)dB_t,
\end{equation} 
with unknown drift function $\mu(s)\in \R^d$ and unknown diffusion function $\sigma(s)\in \R^{d\times d}$. 
In this paper, we assume that $\mu(s), \s(s)$ are Lipschitz continuous and the reward function $\ll r \rl_\Linf$ is bounded. This ensures that \eqref{def of dynamics} has a unique strong solution \cite{oksendal2013stochastic} and the infinite horizon integral is bounded.

\begin{remark}
Continuous-time reinforcement learning \cite{wang2020reinforcement} can be formulated as follows:
\[
    \begin{aligned}
        \max_{a_t = \pi(s_t)}\quad &V^\pi(s) = \E\l[\int_{0}^\infty e^{-\b t}r(s_t, a_t)dt| s_0 = s\r]\\
        s.t.\quad &ds_t = \mu(s_t, a_t) + \sigma(s_t, a_t)dB_t, \quad a_t = \pi(s_t).
    \end{aligned}
\]
where the goal is to find the optimal policy $\pi(s)$ that maximizes the value function under unknown drift and diffusion.  
In this paper, we focus on the unregularized optimal control problem and therefore restrict attention to deterministic feedback policies $a_t = \pi(s_t)$. 
For stochastic policies (see, e.g., \cite{jia2021policy, wang2020reinforcement}), additional challenges may arise (see, e.g., \cite{jia2025accuracy, szpruch2024optimal}), which we leave for future study.  

The policy evaluation problem is a fundamental step in reinforcement learning algorithms, such as actor-critic and PPO \cite{konda1999actor, schulman2017proximal}.  
Its goal is to assess the quality of a fixed policy $\pi$ by evaluating the value function under that policy.  
The continuous-time PE problem \eqref{def of value}--\eqref{def of dynamics} considered in this paper arises by denoting  
$
r^\pi(s) = r(s, \pi(s)), \quad 
\mu^\pi(s) = \mu(s, \pi(s)), \quad 
\sigma^\pi(s) = \sigma(s, \pi(s)).
$
For simplicity, we omit the subscript $\pi$ throughout, as the policy is fixed in the entire paper.  
\end{remark}

\color{black}
We aim to approximate the continuous-time value function $V(s)$ when only discrete-time information is available. To be more specific, we consider the following two cases:
\begin{itemize}
    \item [Case 1. ] The transition distribution $\rho(s',\dt|s)$ in discrete time $\dt$, driven by the continuous dynamics \eqref{def of dynamics}, is given. Here $\rho(s',\dt|s)$ represents the probability density function of $s_\dt$ given $s_0 = s$. 
    \item [Case 2. ] Trajectory data generated by the continuous dynamics \eqref{def of dynamics} and collected at discrete time $j\dt$ is given.  Here the trajectory data  $s = \{s^l_{0},s^l_{\dt}, \cdots, s^l_{m\dt}\}_{l=1}^I$ contains $I$ independent trajectories, and the initial state $s^l_0$ of each trajectory are sampled from a distribution $\rho_0(s)$. 
\end{itemize}

%When the discrete transition distribution is given (Case 1), one can explicitly formulate the {(discrete-time)} Bellman equation. One can also estimate the discrete transition distribution from the trajectory data, which is known as model-based RL. 
The error analysis in Section \ref{sec:phibe} is conducted under Case 1. We demonstrate that the  {(discrete-time)} Bellman equation is not always the optimal equation to solve continuous-time PE problems even when the discrete-time transition dynamics are known, and consequently, all the RL algorithms derived from it are not optimal either. To address this, we introduce a Physics-informed Bellman equation (PhiBE) and establish that its exact solution serves as a superior approximation to the true value function compared to the classical Bellman equation. When only trajectory data is available (Case 2), one can also use the data to solve the PhiBE, referred to as model-free RL, which will be discussed in Section \ref{sec:algo}.

%\begin{assumption}\label{basic ass stoch}   Lipschitz continuous\end{assumption} Before we dive into the approximation of the value function, we will make the following assumptions so that one has the existence and uniqueness $s_t$, $V(s)$, and the stationary distribution of the stochastic dynamics. 

\section{A PDE-based Bellman Equation (PhiBE)}\label{sec:phibe}
In Section \ref{sec:be}, we first introduce the {(discrete-time)} Bellman equation, followed by an error analysis to demonstrate why it is not always a good approximation. Then, in Section \ref{subsec:phibe}, we propose the PhiBE, a PDE-based Bellman equation, considering both the deterministic case (Section \ref{sec:deterministic}) and the stochastic case (Section \ref{sec:stochastic}). The error analysis provides guidance on when PhiBE is a better approximation than the BE.

\subsection{{(Discrete-time)} Bellman equation}\label{sec:be}
By approximating the definition of the value function \eqref{def of value} in discrete time, one obtains the approximated value function,
\begin{equation}\label{eq:numerical-integral}
    \tV(s) =  \E\l[\sum_{j =  0 }^\infty e^{-\b \dt j}r(s_{j\dt })\dt |s_0 = s\r].
\end{equation}
In this way, it can be viewed as a policy evaluation problem in Markov Decision Process, where the state is $s\in\S$, the reward is $r(s)\dt$, and the discount factor is $e^{-\b \dt}$ and the transition dynamics is $\rho(s',\dt|s)$. Therefore, the approximated value function $\tV(s)$ satisfies the following {(discrete-time) Bellman equation \cite{sutton2018reinforcement}. In this paper, we will simply refer to it as the Bellman equation (abbreviated as BE).}
\begin{definition}[Definition of BE]
    \begin{equation}\label{bellman}
    \tV(s) = r(s)\dt +  e^{-\b \dt}\E_{s_\dt \sim \rho(s',\dt|s)}[\tV(s_{\dt})|s_0 = s].
\end{equation}
\end{definition}

When the discrete-time transition distribution is not given, one can utilize various RL algorithms to solve the Bellman equation \eqref{bellman} using the trajectory data. Note that some continuous-time RL papers approximate the value function by $V(s) \approx \sum_{i=0}^T e^{-\beta i\dt} r(s_{i\dt})$, which in fact is also approximating the solution of the Bellman equation, owing to the equivalence between~\eqref{bellman} and~\eqref{eq:numerical-integral}. 
However, if the exact solution to the Bellman equation is not a good approximation to the true value function, then all the RL algorithms derived from it will not effectively approximate the true value function. In the theorem below, we provide an upper bound for the distance between the solution $\tV$ to the above BE and the true value function $V$ defined in \eqref{def of value}.

\begin{theorem}[Discretization error for BE]\label{thm:rl}
Assume that $\ll r\rl_\Linf,\ll  \mL_{\mu,\Sig} r \rl_\Linf$ are bounded, then the solution $\tV(s)$ to the BE \eqref{bellman} approximates the true value function $V(s)$ defined in \eqref{def of value} with an error 
    \[
    \ll  V(s) - \tV(s) \rl_\Linf \leq \frac{\frac12(\ll \mL_{\mu,\Sig} r\rl_\Linf + \b \ll r \rl_\Linf)}{\b} \dt +o(\dt),
    \]
    where 
    \begin{equation}\label{def of mL}
    \mL_{\mu,\Sig} = \mu(s)\cdot\nb + \frac12\Sig : \nb^2,
    \end{equation}
    with $\Sig = \s \s^\top$, and $\Sig : \nb^2 = \sum_{i,j}\Sig_{ij}\pt_{s_i}\pt_{s_j}$.
\end{theorem}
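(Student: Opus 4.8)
The plan is to compare $V$ and $\tV$ via the fixed-point equations each satisfies, and to show that $\tV$ is the exact solution of a perturbed version of the continuous-time equation, with a perturbation of size $O(\dt)$. First I would record that the true value function $V$ satisfies the stationary PDE $\b V = r + \mL_{\mu,\Sig} V$, i.e. $V = \int_0^\infty e^{-\b t}\E[r(s_t)\mid s_0=s]\,dt$, which follows from the Feynman–Kac / Dynkin formula together with the Lipschitz assumptions on $\mu,\s$ that guarantee a unique strong solution. In parallel, $\tV$ satisfies the BE \eqref{bellman}. The strategy is then a standard contraction-perturbation argument: write the BE operator and the (discretized) true-value operator on the same footing and bound the residual obtained when the true object is plugged into the other equation.

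The key computational step is a short-time (Itô–Taylor) expansion of the one-step quantities appearing in the BE. Using Dynkin's formula, $\E_{s_\dt\sim\rho(\cdot,\dt\mid s)}[\p(s_\dt)\mid s_0=s] = \p(s) + \dt\,\mL_{\mu,\Sig}\p(s) + o(\dt)$ for sufficiently smooth $\p$, and $e^{-\b\dt} = 1 - \b\dt + o(\dt)$. Substituting these into \eqref{bellman} with $\p=\tV$, and likewise expanding $V$ around the same equation, shows that $V$ satisfies the BE up to a residual
\[
V(s) - \l(r(s)\dt + e^{-\b\dt}\E_{s_\dt}[V(s_\dt)\mid s_0=s]\r) = \l(\tfrac12 \mL_{\mu,\Sig} r(s) + \tfrac12\b r(s)\r)\dt^2 + o(\dt^2),
\]
where the leading term comes from expanding the integral $\int_0^\infty e^{-\b t}\E[r(s_t)]dt$ to second order in $\dt$ against its Riemann-sum analogue (this is where $\mL_{\mu,\Sig} r$ enters, via $\frac{d}{dt}\E[r(s_t)] = \E[\mL_{\mu,\Sig} r(s_t)]$), together with the $\b\dt$ correction from $e^{-\b\dt}$. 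The coefficient $\tfrac12(\mL_{\mu,\Sig} r + \b r)$ is precisely what appears in the statement.

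To convert this $O(\dt^2)$ residual into an $O(\dt)$ bound on $\|V-\tV\|_\Linf$, I would invoke that the BE operator $\p \mapsto r\dt + e^{-\b\dt}\E_{s_\dt}[\p(s_\dt)\mid s_0=\cdot]$ is a contraction on $L^\infty$ with modulus $e^{-\b\dt} = 1-\b\dt+o(\dt)$. Hence $\|V-\tV\|_\Linf \le \frac{1}{1-e^{-\b\dt}}\cdot(\text{residual}) = \frac{1}{\b\dt+o(\dt)}\cdot\l(\tfrac12(\|\mL_{\mu,\Sig} r\|_\Linf + \b\|r\|_\Linf)\dt^2 + o(\dt^2)\r)$, which simplifies to the claimed bound $\frac{\frac12(\|\mL_{\mu,\Sig} r\|_\Linf + \b\|r\|_\Linf)}{\b}\dt + o(\dt)$. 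The main obstacle is making the Itô–Taylor expansions rigorous with explicit control of the $o(\dt)$ (really $o(\dt^2)$) remainder uniformly in $s$: this requires bounding higher-order terms like $\E[\mL_{\mu,\Sig}^2 r(s_t)]$ over the short interval, which is where one genuinely uses the hypothesis that $\mL_{\mu,\Sig} r$ (and implicitly enough regularity of $r,\mu,\s$) is bounded; the contraction and fixed-point bookkeeping are then routine.
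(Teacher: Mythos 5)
Your proposal is correct, and its computational core coincides with the paper's: the one-step quadrature error
$\E\l[\int_0^\dt e^{-\b t}r(s_t)\,dt - r(s_0)\dt \,\middle|\, s_0=s\r]$,
bounded by $\tfrac12(\ll\mL_{\mu,\Sig}r\rl_\Linf+\b\ll r\rl_\Linf)\dt^2$, is exactly the quantity the paper isolates in \eqref{rlerror}. Where you differ is only in how this local error is propagated to a global bound: you plug $V$ into the BE operator, observe the residual is $O(\dt^2)$, and divide by the contraction gap $1-e^{-\b\dt}$, whereas the paper unrolls the recursion and writes $V-\tV$ directly as the discounted sum $\sum_i e^{-\b\dt i}$ of per-interval quadrature errors, each controlled via the Fokker--Planck equation for $\rho(s',t|s)$ and the mean value theorem. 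The two are arithmetically identical, since $\sum_i e^{-\b\dt i}=(1-e^{-\b\dt})^{-1}$ is precisely your amplification factor, so neither buys anything over the other here. Two minor points. First, the sign in your displayed residual identity should be $\tfrac12(\mL_{\mu,\Sig}r-\b r)\dt^2$ rather than $\tfrac12(\mL_{\mu,\Sig}r+\b r)\dt^2$ (the discount correction enters with a minus sign); this is harmless because only the triangle-inequality bound $\tfrac12(\ll\mL_{\mu,\Sig}r\rl_\Linf+\b\ll r\rl_\Linf)$ is used downstream. Second, your closing remark that rigorous remainder control requires bounding $\E[\mL_{\mu,\Sig}^2 r(s_t)]$ would impose a hypothesis the theorem does not assume; this is avoidable by using Dynkin's formula in integral form, $\E[r(s_t)]-r(s)=\int_0^t\E[\mL_{\mu,\Sig}r(s_\tau)]\,d\tau$, together with $1-e^{-\b t}\le \b t$, which bounds the integrand by $t(\ll\mL_{\mu,\Sig}r\rl_\Linf+\b\ll r\rl_\Linf)$ exactly and uniformly in $s$ using only the stated assumptions (this is in effect what the paper's mean-value-theorem step on the transition density accomplishes).
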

\begin{remark}[Assumptions on $\ll  \mL_{\mu,\Sig} r \rl_\Linf$]
    One sufficient condition for the assumption to hold is that $\ll \mu(s) \rl_\Linf$, $\ll \Sig(s) \rl_\Linf, \ll \nb^k r(s) \rl_\Linf $ for $k=0,1, 2$ are all bounded. However, $\ll  \mL_{\mu,\Sig} r \rl_\Linf$ is less restrictive than the above and allows, for example, linear dynamics  $\mu(s) = \lam s, \Sig = 0$, with the derivative of the reward decreasing faster than a linear function at infinity, $\ll s\cdot \nb r(s) \rl_\Linf \leq C$.
\end{remark}

The proof of the theorem is given in Section \ref{sec:proof of thm rl}. In fact, by expressing the true value function $V(s)$ as the sum of two integrals, one can more clearly tell where the error in the BE comes from. Note that $V(s)$, as defined in \eqref{def of value}, can be equivalently written as,
\begin{equation}\label{eq:twointegral}
    \begin{aligned}
    V(s)=&\E\l[\int_0^\dt e^{-\b t} r(s)dt + \int_{\dt}^\infty e^{-\b t} r(s_t)dt|s_0 = s\r]\\
    =&\E\l[\int_0^\dt e^{-\b t} r(s)dt|s_0 = s\r] + e^{-\b \dt}\E\l[V(s_{t+\dt})|s_0 = s\r]
\end{aligned}
\end{equation}
One can interpret the Bellman equation defined in \eqref{bellman} as an equation resulting from approximating $\E\l[\frac{1}{\dt}\int_0^\dt e^{-\b t} r(s_t)dt|s_0 = s\r]$ in \eqref{eq:twointegral} by $ r(s)$. The error between these two terms can be bounded by:
\begin{equation}\label{rlerror}
    \lv \E\l[\l.\l(\frac{1}{\dt}\int_0^\dt e^{-\b t} r(s_t)dt\r) - r(s_0)\r|s_0 = s\r] \rv \leq \frac{1}{2}\l(\b \ll r \rl_\Linf + \ll \mL_{\mu,\Sig} r\rl_\Linf\r)\dt + o(\dt),
\end{equation}
characterizes the error of $\ll V - \tV \rl_\Linf$ in Theorem \ref{thm:rl}. %This is the intuition why the error is first order in terms of $\dt$, and it will heavily depend on the derivatives of the reward function. 

Theorem \ref{thm:rl} indicates that the solution $\tV$ to the Bellman equation \eqref{bellman} approximates the true value function with a first-order error of $O(\dt)$. Moreover, the coefficient before $\dt$ suggests that for the same time discretization $\dt$, when $\b$ is small, the error is dominated by the term $\ll \mL_{\mu,\Sig}r(s)\rl_\Linf$, indicating that the error increases when the reward changes rapidly. Conversely, when $\b$ is large, the error is mainly affected by $\ll r \rl_\Linf$, implying that the error increases when the magnitude of the reward is large.

{Fundamentally, the Bellman equation is designed for discrete-time decision-making processes and depends only on the discrete-time transition kernel $\rho(s',\dt | s)$. This kernel may arise from any underlying mechanism. As a result, the Bellman equation does not utilize or encode any assumption about how $\rho(s',\dt \mid s)$ is generated. In particular, it does not exploit the assumption made in this paper that the discrete-time dynamics are induced by a continuous-time SDE. To incorporate this continuous-time structure into the approximation, one must go beyond the standard MDP framework.
}
The question that the rest of this section seeks to address is whether,  given the same discrete-time information, i.e., the transition distribution $\rho(s',\dt|s)$, time discretization $\dt$, and discount coefficient $\b$, one can combine the differential equation structure with the discrete-time transition dynamics and achieve a more accurate estimation of the value function?

\subsection{A PDE-based Bellman equation}\label{subsec:phibe} 
In this section, we introduce a PDE-based Bellman equation, referred to as PhiBE. We begin by discussing the case of deterministic dynamics in Section \ref{sec:deterministic} to illustrate the idea clearly. Subsequently, we extend our discussion to the stochastic case in Section \ref{sec:stochastic}.

\subsubsection{Deterministic Dynamics}\label{sec:deterministic}
When $\s(s) \equiv 0$ in \eqref{def of dynamics}, the dynamics become deterministic, which can be described by the following ODE, 
\begin{equation}\label{deter_dyn}
\frac{ds_t}{dt} = \mu(s_t).
\end{equation}
If the discrete-time transition dynamics $p(s',\dt|s) = p_\dt(s)$ is given, where $p_{\dt}(s)$ provides the state at time $t+\dt$ when the state at time $t$ is $s$, then the BE in deterministic dynamics reads as follows,
\[
\frac{1}{\dt}\tV(s) = r(s) + \frac{e^{-\b \dt}}{\dt}\tV(p_{\dt}(s)).
\]

The key idea of the new equation is that, instead of approximating the value function directly, one approximates the dynamics. First note that the value function defined in \eqref{def of value} can be equivalently written as {the solution to the following PDE,
\begin{equation}\label{deter_V}
\b V(s) = r(s) + \mu(s)\cdot \nb V(s).
\end{equation}
The above equation is known as the Hamilton–Jacobi–Bellman (HJB) equation, often referred to as the continuous-time Bellman equation. In this paper, we will consistently refer to it as the HJB equation. A detailed derivation can be found in Appendix~\ref{appendix:derivation} or in classical control textbooks such as \cite{stroock1997multidimensional}.
}
Applying a finite difference scheme, one can approximate $\mu(s_t)$ by
\[
\mu(s_t) = \frac{d}{dt}s_t \approx  \frac{1}{\dt}(s_{t+\dt} - s_t) =\frac{1}{\dt}( p_\dt(s_t) - s_t),
\]
and substituting it back into \eqref{deter_V} yields
\[
\b \hV(s_t) = r(s_t) +  \frac{1}{\dt}(s_{t+\dt} - s_t)\cdot \nb \hV(s_t).
\] 
Alternatively, this equation can be expressed in the form of a PDE as follows,
\begin{equation}\label{def of first order_deter}
    \b \hV(s) = r(s) +  \frac{1}{\dt}(p_\dt(s) - s)\cdot \nb \hV(s),
\end{equation}
Note that the error now arises from
\[
\lv \mu(s_t) - \frac{s_{t+\dt} - s_t}{\dt} \rv \leq \frac{1}{2}\ll \mu \cdot \nb \mu \rl_\Linf \dt,
\]
which only depends on the dynamics.  As long as the dynamics change slowly, i.e.,  the acceleration of dynamics $\ll \frac{d^2}{dt^2}s_t\rl_\Linf = \ll \mu \cdot \nb \mu \rl_\Linf$ is small, the error diminishes.

We refer to \eqref{def of first order_deter} as PhiBE for deterministic dynamics, an abbreviation for the physics-informed Bellman equation, because it incorporates both the current state and the state after $\dt$, similar to the Bellman equation, while also resembling the form of the PDE \eqref{deter_V} derived from the true continuous-time physical environment. However, unlike the Bellman equation, which is blind to the underlying SDE structure, PhiBE explicitly exploits this structure by involving the gradient of the value function, thereby embedding the smoothness induced by the continuous-time dynamics. At the same time, unlike the true PDE \eqref{deter_V}, which requires direct access to the continuous-time dynamics and is therefore non-identifiable from  discrete-time information, PhiBE depends only on the discrete-time transition kernel. This makes PhiBE amenable to model-free algorithm design while still leveraging continuous-time structure.

%PhiBE combines both continuous PDE form and discrete transition information $p_{\Delta t}(s)$.

One can derive a higher-order PhiBE by employing a higher-order finite difference scheme to approximate $\mu(s_t)$. For instance, the second-order finite difference scheme
\[
\mu(s_t) \approx \hmu_2(s_t):=\frac{1}{\dt}\l[ -\frac12(s_{t+2\dt} - s_t) + 2(s_{t+\dt} - s_t)\r],
\]
resulting in the second-order PhiBE,
\begin{equation*}
\b \hV_2(s) = r(s) +  \frac{1}{\dt}\l[ -\frac12(p_{\dt}(p_{\dt}(s)) - s) + 2(p_{\dt}(s) - s)\r] \cdot \nb\hV_2(s).
\end{equation*}
In this approximation, $\ll \mu(s) - \hmu_2(s) \rl_\Linf $ has a second order error $O(\dt^2)$.  We summarize $i$-th order PhiBE in deterministic dynamics in the following Definition. 

\begin{definition}[i-th order PhiBE in deterministic dynamics]\label{def: higher order deter} 
When the underlying dynamics are deterministic, then the $i$-th order PhiBE is defined as,
\begin{equation}\label{phibe_deter}
    \b \hV_i(s) = r(s) + \hmu_i(s) \cdot \nb \hV_i(s),
\end{equation}
where
\begin{equation}\label{def of higher order deter mu}
    \hmu_i(s) = \frac1\dt\sum_{j=1}^i\coef{i}_j\l(\underbrace{p_{\dt}\circ\cdots\circ p_{\dt}}_{j}(s) - s\r),
\end{equation}
and 
\begin{equation}\label{def of A b}
(\coef{i}_0,\cdots, \coef{i}_i)^\top = A_i^{-1}b_i, \quad\text{with }(A_i)_{kj} = j^k, \ (b_i)_k = \l \{
\begin{aligned}
&0, \quad k \neq 1\\
&1, \quad k = 1
\end{aligned}\r. \text{ for } 0\leq j, k \leq i.
\end{equation}
\end{definition}

\begin{remark}\label{rmk:a-diff}
Note that $\hmu_i(s)$ can be equivalently written as
\[
\hmu_i(s) = \frac1\dt \sum_{j=1}^i\coef{i}_j[s_{j\dt} - s_0|s_0 = s] .
\]
There is an equivalent definition of $\coef{i}$, given by
\begin{equation}\label{def of a}
   \sum_{j=0}^i\coef{i}_j j^k = \l \{
\begin{aligned}
&0, \quad k \neq 1,\\
&1, \quad k = 1,
\end{aligned}\r.\quad\text{ for } 0\leq j, k \leq i.
\end{equation}
\end{remark}

\paragraph{Discussion on the relation to numerical schemes for the HJB equation.}
The PhiBE formulation is fundamentally different from classical numerical schemes for the HJB equation in three key aspects.  

First of all, the goal of two approaches are different. Classical numerical schemes aim to approximate the solution of the PDE
$\mathcal{L}_{\mu,\sigma} V(s) = 0$, where the operator $\mathcal{L}_{\mu,\sigma}$ is explicitly given.  In contrast, the goal of our paper is not only must we approximate the solution to the PDE, but we must do so without fully knowing the PDE operator. We tackle the overall goal in two steps. The first step is to approximate the PDE operator $\mathcal{L}_{\mu,\sigma}$, and the second step is to solve the resulting equation. The PhiBE formulation \eqref{phibe_deter} is constructed to address the first task, namely approximating the unknown operator $\mathcal{L}_{\mu,\sigma} V(s) = 0$ by $\mathcal{L}_{\hat{\mu},\hat{\sigma}} V(s) = 0$ using only partial information in the form of the discrete-time transition kernel $\rho(s',\dt \mid s)$, with no direct access to $\mu(s), \sigma(s)$. 
Consequently, the ``discretization error'' analyzed in this section refers to the error incurred from observing $\rho(s',\dt \mid s)$ at $\dt > 0$ rather than the infinitesimal kernel $\lim_{\dt \to 0} \rho(s',\dt \mid s)$. It does not refer to the numerical error of solving the PDE.

Second, the {discretization formulations} differ. Classical schemes discretize the state space over a uniform mesh $h$, e.g., $V'(s) \approx \tfrac{1}{h}(V(s+h)-V(s))$, while such uniform meshing cannot be directly applied in RL because trajectory data are irregularly distributed. Consequently, numerical error in classical schemes arises from spatial discretization, whereas in PhiBE it originates from time discretization through the approximation of $\hat \mu$ and $\hat \Sigma$. Moreover, although the difinition of $\hat \mu(s)$ superficially resembles a finite-difference expression, its meaning is fundamentally different. In classical methods, one starts from known dynamics $\mu(s)$ and uses numerical schemes to approximate the trajectory $s_{j\Delta t}$. As a result, numerical error propagates and accumulates over time. In contrast, PhiBE takes the opposite approach: the trajectories $s_{j\Delta t}$ are observed from data and used to approximate the underlying dynamics $\mu(s)$. This inversion of roles leads to a distinct analytical treatment of convergence and convergence rates for $\hat V_i(s)$ that does not accumulate over time, in sharp contrast to classical numerical discretization errors.

Finally, the {error analysis frameworks and emphasis} are distinct. Classical analyses establish convergence by verifying that the discrete operator is monotone, stable, and consistent, then applying comparison principles and perturbation arguments to derive convergence rates. In contrast, our analysis first quantifies the discrepancy between the data-driven dynamics $(\hat \mu, \hat \Sigma)$ and the true dynamics $(\mu, \Sigma)$, a component never addressed in traditional numerical analyses, and then propagates this discrepancy to the value function via PDE stability. Furthermore, while classical numerical analyses focus on the {order of accuracy} in the mesh size $h$ and typically omit constants, our comparison with discrete-time RL requires explicit characterization of these constants, as both standard RL and PhiBE are first-order approximations. The leading constant, which depends on the reward and dynamics, ultimately determines the practical performance of continuous-time RL algorithms.

In summary, PhiBE differs fundamentally from classical numerical schemes for HJB equations. Nevertheless, because the continuous-time RL problem is connected to a PDE through the PhiBE formulation, one could, in principle, estimate $\rho_{\Delta}$ from data, compute $(\hat \mu, \hat \Sigma)$ explicitly, and then apply standard numerical schemes to solve the resulting PhiBE equation. The classical convergence results would then apply to this \emph{model-based} approach. However, this paper will focus on developing a \emph{model-free} algorithm for solving the PhiBE equation directly from data without explicitly estimating $\hat \mu$ or $\hat \Sigma$.

\color{black}

The error analysis of PhiBE in the deterministic dynamics is established in the following theorem.
\begin{theorem}[Discretization error for determinisitc PhiBE]\label{thm:v-vhat_deter}
Assume that $\ll \nb r(s) \rl_\Linf$, $\ll\mL^i_{\mu} \mu(s) \rl_\Linf$  are bounded. Additionally, assume that $\ll \nb \mu(s) \rl_\Linf<\b$, then the solution $\hV_i(s)$ to the PhiBE \eqref{phibe_deter} is an ith-order approximation to the true value function $V(s)$ defined in \eqref{def of value} with an error 
\[
\ll \hV_i(s) -  V(s) \rl_\Linf \leq 2C_i \frac{\ll \nb r\rl_\Linf \ll \mL^i_{\mu}\mu\rl_\Linf}{(\b - \ll \nb \mu \rl_\Linf)^2} \dt^i,
\]
where 
\begin{equation}\label{def of mLmu}
    \mL_\mu = \mu\cdot \nb,
\end{equation}
and $C_i$ is a constant defined in \eqref{def of C_i} that only depends on the order $i$.
\end{theorem}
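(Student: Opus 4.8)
The plan is to realize $V$ and $\hV_i$ as the value functions of two deterministic control problems that share the reward $r$ and discount $\b$ but differ only in their drifts, $\mu$ for $V$ and $\hmu_i$ for $\hV_i$, and then to control $\ll \hV_i - V\rl_\Linf$ by propagating the drift discrepancy $\ll \mu - \hmu_i\rl_\Linf$ through the two flows. The first step is a \emph{drift-error} estimate $\ll \mu - \hmu_i\rl_\Linf \le C_i \ll \mL_\mu^i\mu\rl_\Linf \dt^i$. Using the representation $\hmu_i(s) = \frac1\dt\sum_{j=1}^i \coef{i}_j\,(s_{j\dt}-s_0\mid s_0=s)$ from Remark~\ref{rmk:a-diff} and the identity $\frac{d^k}{dt^k}s_t = \mL_\mu^{k-1}\mu(s_t)$ along the flow \eqref{deter_dyn} (an easy induction on $k$ using $\mL_\mu=\mu\cdot\nb$), I would Taylor-expand in $\dt$ to get $s_{j\dt}-s_0 = \sum_{k=1}^{i}\frac{(j\dt)^k}{k!}\mL_\mu^{k-1}\mu(s) + R_{i,j}(s)$ with $\ll R_{i,j}\rl_\Linf \le \frac{(j\dt)^{i+1}}{(i+1)!}\ll\mL_\mu^i\mu\rl_\Linf$. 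The moment conditions \eqref{def of a} on $\coef{i}$ are designed precisely so that after substitution the $\dt^0$ term collapses to $\mu(s)$ and the $\dt^1,\dots,\dt^{i-1}$ terms cancel, leaving only the remainders; this yields the claimed estimate with a constant $C_i$ as in \eqref{def of C_i} depending only on $i$ through $\coef{i}$.

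The second step records the flow representations. Since \eqref{phibe_deter} is exactly the HJB equation \eqref{deter_V} for the autonomous dynamics $\frac{d}{dt}\hat s_t = \hmu_i(\hat s_t)$ (same $r$, same $\b$), and $\hmu_i$ is Lipschitz, the solution $\hV_i$ admits $\hV_i(s) = \int_0^\infty e^{-\b t}r(\hat s_t(s))\,dt$ where $\hat s_t(s)$ is the corresponding flow started at $s$; likewise $V(s)=\int_0^\infty e^{-\b t}r(s_t(s))\,dt$ for the flow $s_t$ of $\mu$. Setting $w(t):=\ll \hat s_t(s)-s_t(s)\rl_2$, both flows start at $s$, so $w(0)=0$ and $w'(t)\le \ll \hmu_i-\mu\rl_\Linf + \ll \nb\mu\rl_\Linf\, w(t)$, hence by Grönwall $w(t)\le \ll \hmu_i-\mu\rl_\Linf\, t\, e^{\ll \nb\mu\rl_\Linf t}$. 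Subtracting the two representations and using the Lipschitzness of $r$,
\[
\ll \hV_i - V\rl_\Linf \;\le\; \ll \nb r\rl_\Linf \int_0^\infty e^{-\b t} w(t)\,dt \;\le\; \ll \nb r\rl_\Linf\,\ll \hmu_i-\mu\rl_\Linf \int_0^\infty t\, e^{-(\b-\ll \nb\mu\rl_\Linf)t}\,dt \;=\; \frac{\ll \nb r\rl_\Linf\,\ll \hmu_i-\mu\rl_\Linf}{(\b-\ll \nb\mu\rl_\Linf)^2},
\]
the integral being finite precisely because $\b>\ll \nb\mu\rl_\Linf$. Combining with the first step gives the theorem, the factor $2$ in front of $C_i$ absorbing the slack in bounding the finite-difference stencil.

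The main obstacle is not any one computation but making the two halves compatible. Step~1 needs $\mu$ smooth enough for the order-$i$ expansion, which is implicit in the boundedness of $\ll \mL_\mu^i\mu\rl_\Linf$, and one must verify carefully that \eqref{def of a} annihilates \emph{all} Taylor terms of order $<i$ so the leading error is genuinely $O(\dt^i)$ with the stated $i$-dependence. More delicate is the hypothesis $\ll \nb\mu\rl_\Linf<\b$: the flow-representation route above uses only the boundedness of $r$ and never differentiates $\hV_i$, so it is robust; but if one instead argued through the error PDE $\b(\hV_i-V)=\hmu_i\cdot\nb(\hV_i-V)+(\hmu_i-\mu)\cdot\nb V$, one would need $\ll \nb\hmu_i\rl_\Linf<\b$, which does \emph{not} follow from $\ll \nb\mu\rl_\Linf<\b$ since the stencil can inflate the Lipschitz constant — it holds only for $\dt$ small, since $\nb\hmu_i\to\nb\mu$ by the same cancellation as in Step~1, and this is essentially why only a mild extra factor appears. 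I would therefore build the proof around the flow-perturbation estimate rather than the error PDE.
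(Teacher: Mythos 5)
Your proposal is correct and follows essentially the same route as the paper: the paper likewise splits the argument into a drift-stability lemma (Lemma \ref{lemma:Lhat - L}, proved via the flow representations $V=\int_0^\infty e^{-\b t}r(s_t)\,dt$, $\hV_i=\int_0^\infty e^{-\b t}r(\hat s_t)\,dt$ and a Gr\"onwall-type bound on $\ll \hat s_t - s_t\rl$) and a Taylor-expansion lemma (Lemma \ref{lemma:muhat - mu}) showing $\ll \hmu_i-\mu\rl_\Linf \le C_i\ll\mL_\mu^i\mu\rl_\Linf\dt^i$ via the moment conditions \eqref{def of a}. The only cosmetic difference is in the Gr\"onwall step: the paper applies Young's inequality to $\tfrac12\tfrac{d}{dt}\ll\hat s_t-s_t\rl^2$ and optimizes over $\e$, obtaining a $t^{1/2}$ growth and the factor $2$, whereas your linear Gr\"onwall bound $w(t)\le \ll\hmu_i-\mu\rl_\Linf\, t\,e^{\ll\nb\mu\rl_\Linf t}$ yields the same $(\b-\ll\nb\mu\rl_\Linf)^{-2}$ dependence with a slightly cleaner constant.
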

See Section \ref{sec:proof of them Phibe deter} for the proof of Theorem \ref{thm:v-vhat_deter}. Several remarks regarding the above theorem are in order. 
\paragraph{1st-order PhiBE v.s. BE}
By Theorem \ref{thm:v-vhat_deter}, the distance between the first-order PhiBE solution and the true value function can be bounded by
\[
\ll \hV_1 - V \rl_\Linf \leq \frac{2\ll \nb r \rl_\Linf \ll \mu \cdot\nb \mu \rl_\Linf }{(\b - \ll \nb \mu \rl_\Linf)^2}\dt.
\]
Comparing it with the difference between the BE solution and the true value function in deterministic dynamics,
\[
\ll \tV - V \rl_\Linf \leq \frac{\ll \mu\nb r \rl_\Linf +\b \ll r \rl_\Linf }{2\b }\dt, 
\]
one observes that when the change of the reward  is rapid, i.e., $\ll \nb r \rl_\Linf$ is large,  but the change in velocity is slow, i.e.,  $\ll \frac{d^2}{dt^2}s_t\rl_\Linf = \ll \mu\cdot\nb \mu \rl_\Linf$ is small, even though both  $\hV_1$ and $\tV$ are first-order approximations to the true value function, $\hV_1$ has a smaller upper bound. 
\paragraph{Higher-order PhiBE}
The advantage of the higher-order PhiBE is two-fold. Firstly, it provides a higher-order approximation, enhancing accuracy compared to the first-order PhiBE or BE.  Secondly, as demonstrated in Theorem \ref{thm:v-vhat_deter}, the approximation error of the $i$-th order PhiBE decreases as  $\ll \mL^i_\mu\mu\rl_\Linf$ decreases. {If} the \textquotedblleft acceleration", i.e., $\frac{d^2}{dt^2}s_t = \mL_\mu\mu$, of the dynamics is large but the change in acceleration, i.e., $\frac{d^3}{dt^3}s_t = \mL^2_\mu \mu$, is slow, then the error reduction with the second-order PhiBE will be even more pronounced in addition to the higher-order error effect.

\paragraph{Dependence on $\beta$}
When $\beta$ is large, PhiBE always has a smaller discretization error than BE. However, when $\beta$ is small, the BE error scales as $O(\beta^{-1})$, whereas the PhiBE error scales as $O(\beta^{-2})$. 
Thus, when $\beta$ is small, the gap in discretization error
between the two methods naturally narrows. However, as shown later in
Section~\ref{sec:algo}, the finite-sample error of the model-free algorithms for both BE and
PhiBE scales as $O(\beta^{-2})$. As a result, in data-driven settings, the potential advantage of BE for small $\beta$ is dominated
by finite-sample error scales as $O(\beta^{-2})$. In contrast, PhiBE retains an advantage in regimes with slowly varying dynamics and rapidly changing reward functions.

In additional to that, in standard RL settings, the discount factor is $\gamma = e^{-\beta \Delta t} \in [0.9,0.99]$, so the resulting 
range of $\beta$ depends on the sampling frequency. When the data are collected at $\Delta t = 0.01$, we expect an $\beta = O(1)$, i.e.,
$\beta \in [1,10]$. For data collected at $\Delta t = 0.1$, one obtains a smaller $\beta \in [0.1,1]$. 
In the experiments in Section~\ref{sec:numerics}, we therefore evaluate both methods across a broad range 
$\beta \in [0.1,10]$. 
\paragraph{When does PhiBE outperform BE?}
In summary, PhiBE outperforms BE in terms of discretization error whenever one of the following conditions is present:
(i) the dynamics evolve slowly;
(ii) the reward function varies sharply; or
(iii) the discount rate is not extremely small.
In summary, these features arise frequently in continuous-time RL applications. First, slow dynamics are common in physical and biomedical systems, where state variables evolve gradually and smoothly. For instance, in glucose–insulin regulation, blood glucose levels follow physiological ODE models with inherently slow drift dynamics.
Second, the reward function often needs to sharply penalize excursions outside a healthy range, so that the optimal policy strongly encourages keeping the patient within a safe region. This leads 
to a highly varying reward landscape, i.e., a large $\|\nabla_s r(s)\|$. 
Third, discount coefficients used in practice are rarely associated with extremely small continuous-time rates. In OpenAI Gym environments with $\Delta t \sim 10^{-2}$, the typical choice $\gamma \in [0.9,0.99]$ corresponds to $\beta \in [1,10]$. In clinical settings with larger sampling intervals, objectives often focus on maintaining stability over short or moderate horizons, so the effective discount rate is similarly not small.
Taken together, the three features are common in real-world applications, making the settings where PhiBE outperforms BE not exceptional but the typical regime for continuous-time reinforcement learning.

\color{black}
\paragraph{Assumptions on $\ll\mL^i_{\mu} \mu(s) \rl_\Linf$}
Note that the boundedness assumption of $\ll\mL^i_{\mu} \mu(s) \rl_\Linf$ is required in general to establish that $\hmu_i(s)$ is an $i$-th order approximation to $\mu(s)$. 
A sufficient condition for $\ll\mL^i_{\mu} \mu(s) \rl_\Linf$ being bounded is that $\ll \nb^k \mu(s) \rl_\Linf$ are bounded for all $0\leq k \leq i$. 
Note that the linear dynamics $\mu(s) = A s$ does not satisfy this condition. We lose some sharpness for the upper bound to make the theorem work for all general dynamics. However, we prove in Theorem \ref{thm: linear deter} that PhiBE works when $\mu(s) = A s$, and one can derive a sharper error estimate for this case.

%When the underlying dynamics are linear, one can conduct a sharper error analysis for PhiBE.

\begin{theorem}\label{thm: linear deter}
    When the underlying dynamics follows
    $\frac{d}{dt}s_t = A s_t$, where $s_t\in \R^d$ and $A\in \R^{d\times d}$,
    then the solution to the $i$-th order PhiBE in deterministic dynamics approximates the true value function with an error
    \[
    \ll \hV_i - V \rl_\Linf \leq \frac{C_i}{\beta^2}\ll s\cdot\nb r(s) \rl_\Linf \ll A \rl_2 D_A^{i}\dt^i , \quad D_A = e^{\ll A \rl_2 \dt} \ll A \rl_2
    \]
    where $C_i$ is a constant defined in \eqref{def of C_i} that only depends on the order $i$.
    %If the underlying dynamics follows\[ds_t = \lam s_t dt + \sigma dB_t\]then the solution $\tV(s)$ to the Bellman equation approximates the true value function with an error \[\ll \tV  - V \rl_\rho \leq ;\]
    %while the solution $\hV_1$ to the PhiBE in deterministic dynamics approximates the true value function with an error\[\ll \hV_i - V \rl_\rho \leq ;\]
\end{theorem}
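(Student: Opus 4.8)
The plan is to exploit the fact that with a linear drift both the HJB equation \eqref{deter_V} and the deterministic PhiBE \eqref{phibe_deter} are \emph{linear transport} PDEs that can be solved explicitly along characteristics. Writing $\hat A_i := \tfrac1\dt\sum_{j=1}^i\coef{i}_j(e^{Aj\dt}-I)$ and noting that the $j$-fold composition $p_\dt\circ\cdots\circ p_\dt$ equals the time-$j\dt$ flow $e^{Aj\dt}$, we have $\hmu_i(s)=\hat A_i s$, so \eqref{phibe_deter} reads $\b\hV_i(s)=r(s)+\hat A_i s\cdot\nb\hV_i(s)$. Since $r$ is bounded, the unique bounded solutions are $V(s)=\int_0^\infty e^{-\b t}r(e^{At}s)\,dt$ and $\hV_i(s)=\int_0^\infty e^{-\b t}r(e^{\hat A_i t}s)\,dt$; crucially this requires only $\b>0$ and no spectral-gap condition such as $\b>\ll A\rl_2$, which is ultimately why the final bound carries no $(\b-\ll\nb\mu\rl_\Linf)$ in the denominator, in contrast with Theorem \ref{thm:v-vhat_deter}.

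Next I would estimate the error at the level of the PDE. Subtracting the two equations, $e_i:=\hV_i-V$ solves $\b e_i(s)-\hat A_i s\cdot\nb e_i(s)=(\hat A_i-A)s\cdot\nb V(s)=:g(s)$, and since the flow map $f\mapsto f(e^{\hat A_i t}\,\cdot)$ is an $\Linf$-isometry we get $\ll e_i\rl_\Linf\le\tfrac1\b\ll g\rl_\Linf$. The heart of the argument is the bound on $g$. Differentiating the representation of $V$ gives $\nb V(s)=\int_0^\infty e^{-\b\tau}e^{A^\top\tau}\nb r(e^{A\tau}s)\,d\tau$, hence $g(s)=\int_0^\infty e^{-\b\tau}\bigl[e^{A\tau}(\hat A_i-A)s\bigr]\cdot\nb r(e^{A\tau}s)\,d\tau$. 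The key point is that each $e^{Aj\dt}$ commutes with $A$ and with $e^{A\tau}$, so $\hat A_i-A$ commutes with $e^{A\tau}$ and therefore $e^{A\tau}(\hat A_i-A)s=(\hat A_i-A)(e^{A\tau}s)$, giving $g(s)=\int_0^\infty e^{-\b\tau}\bigl[(\hat A_i-A)w\cdot\nb r(w)\bigr]\big|_{w=e^{A\tau}s}\,d\tau$. This keeps the reward always paired with a vector of the form (matrix)$\times w$ evaluated at the running point $w$, so that $\ll g\rl_\Linf\le\tfrac1\b\ll(\hat A_i-A)s\cdot\nb r(s)\rl_\Linf\le\tfrac1\b\ll\hat A_i-A\rl_2\ll s\cdot\nb r(s)\rl_\Linf$; in particular the amplifying factor $e^{\ll A\rl_2\tau}$ never escapes the bounded quantity $s\cdot\nb r$, and no spectral gap is needed. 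Combining, $\ll e_i\rl_\Linf\le\tfrac1{\b^2}\ll\hat A_i-A\rl_2\ll s\cdot\nb r(s)\rl_\Linf$, which already explains the $\b^{-2}$ and the appearance of $\ll s\cdot\nb r\rl_\Linf$ rather than $\ll\nb r\rl_\Linf$.

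It then remains to estimate $\ll\hat A_i-A\rl_2$. Expanding $e^{Aj\dt}=\sum_{k\ge0}\tfrac{(j\dt)^k}{k!}A^k$ and using the moment conditions \eqref{def of a}, namely $\sum_{j=0}^i\coef{i}_j j^k=\delta_{k,1}$ for $0\le k\le i$, all powers $A^k$ with $k\le i$ telescope, leaving $\hat A_i-A=\sum_{k\ge i+1}\tfrac{\dt^{k-1}}{k!}\bigl(\sum_{j=1}^i\coef{i}_j j^k\bigr)A^k$; bounding this exponential tail yields $\ll\hat A_i-A\rl_2\le C_i\,\ll A\rl_2^{\,i+1}\dt^i e^{\,i\ll A\rl_2\dt}=C_i\,\ll A\rl_2 D_A^i\dt^i$ with $D_A=e^{\ll A\rl_2\dt}\ll A\rl_2$ and $C_i$ depending only on $i$ (through $\sum_j|\coef{i}_j|$ and $i^{i+1}/(i+1)!$, consistent with \eqref{def of C_i}). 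Substituting into the previous display gives the claim. The step I expect to be the main obstacle is the bound on $g$: one must notice that the estimate should be carried out at the level of $g=(\hat A_i-A)s\cdot\nb V$ and that the commutativity $[\hat A_i-A,\,A]=0$ is exactly what keeps the factor $e^{A\tau}$ absorbed into the argument of $r$, which is what simultaneously removes the spectral-gap restriction and downgrades the dependence from $\ll\nb r\rl_\Linf$ to $\ll s\cdot\nb r\rl_\Linf$; in dimension $d>1$ there is also a minor point in passing from $(\hat A_i-A)w\cdot\nb r(w)$ to $\ll\hat A_i-A\rl_2\ll s\cdot\nb r(s)\rl_\Linf$.
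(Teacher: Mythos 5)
Your proposal is correct and lands on the same bound, but it organizes the perturbation argument differently from the paper. The paper works directly with the two integral representations $V(s)=\int_0^\infty e^{-\b t}r(e^{At}s)\,dt$ and $\hV_i(s)=\int_0^\infty e^{-\b t}r(e^{\hat A_i t}s)\,dt$ and interpolates between the two flows via the homotopy $\phi(\th)=e^{At}e^{\th(\hat A_i-A)t}s$; differentiating in $\th$ produces the factor $t$ inside the time integral, and $\int_0^\infty t e^{-\b t}\,dt=\b^{-2}$ delivers the $\b^{-2}$ in one stroke. You instead subtract the two transport PDEs, solve the error equation $\b e_i-\hat A_i s\cdot\nb e_i=(\hat A_i-A)s\cdot\nb V$ by characteristics to harvest one factor of $\b^{-1}$, and extract the second $\b^{-1}$ from the representation of $\nb V$. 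These are the same computation reordered by Fubini, and both hinge on the two facts you correctly isolate as the crux: (i) $\hat A_i-A$ is a function of $A$ and hence commutes with $e^{A\tau}$, which is exactly what keeps the exponentially growing flow absorbed into the argument of $s\cdot\nb r(s)$ and removes any spectral-gap condition $\b>\ll\nb\mu\rl_\Linf$; and (ii) the moment conditions \eqref{def of a} plus a Taylor remainder give $\ll\hat A_i-A\rl_2\leq C_i\ll A\rl_2 D_A^i\dt^i$, identical to the paper's estimate \eqref{diff of A Ahat}. The one imprecision you flag — passing from $(\hat A_i-A)w\cdot\nb r(w)$ to $\ll\hat A_i-A\rl_2\ll s\cdot\nb r(s)\rl_\Linf$ in dimension $d>1$, where Cauchy--Schwarz only gives $\ll\hat A_i-A\rl_2\,\ll w\rl_2\ll\nb r(w)\rl_2$ rather than the signed quantity $|w\cdot\nb r(w)|$ — is present in the paper's own proof in exactly the same form (it pulls $\hat A_i - A$ out of $\nb r(\phi)^\top(\hat A_i-A)\phi$ as if it were a scalar), so it is a shared looseness rather than a gap relative to the paper's argument; a fully rigorous statement would replace $\ll s\cdot\nb r(s)\rl_\Linf$ by $\sup_s\ll s\rl_2\ll\nb r(s)\rl_2$. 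Your Duhamel-style route is arguably slightly more modular, since the error-equation step reuses the stability estimate already implicit in Lemma \ref{lemma:Lhat - L}.
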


The proof of the above theorem is provided in Section~\ref{proof of them linear}. 
 According to Theorem~\ref{thm: linear deter}, 
even when the dynamics $s_t = e^{At} s_0$ evolve exponentially fast due to positive real parts 
in the eigenvalues of $A$, the PhiBE solution remains a good approximation to the true value 
function provided that $D_A \Delta t$ is small. In particular, when $D_A \Delta t < 1$, 
higher-order PhiBE achieves a smaller approximation error than the first-order PhiBE. %When the linear dynamics change slowly, i.e., $\lam$ is smaller, the first-order PhiBE is a superior approximation to the Bellman equation.

%{when one has the discrete-time dynamics, we one do better than the above? If one has the mapping $f_{\dt}(s)$ that maps the state at time $t$ to the state at time $t+\dt$, can one indicate $\mu(s)$?}

\color{black}
\subsubsection{Stochastic dynamics}\label{sec:stochastic}
When $\sigma(s)\not\equiv 0$ is a non-degenerate matrix, then the dynamics is stochastic and driven by the SDE in \eqref{def of dynamics}. By Feynman–Kac theorem \cite{stroock1997multidimensional}, the value function $V(s)$  satisfies the following HJB equation,
\begin{equation}\label{stoch_PDE}
    \b V(s) = r(s) + \mL_{\mu,\Sig} V(s),
\end{equation} 
where $\mL_{\mu,\Sig}$ is an operator defined in \eqref{def of mL}. However, one cannot directly solve the PDE \eqref{stoch_PDE} as $\mu(s), \s(s)$ are unknown. In the case where one only has access to the discrete-time transition distribution $\rho(s',\dt|s)$, we propose an $i$-th order PhiBE in the stochastic dynamics to approximate the true value function $V(s)$.

\begin{definition}[i-th order PhiBE in stochastic dynamics]\label{def: higher order stoch}
When the underlying dynamics are stochastic, then the $i$-th order PhiBE is defined as,
\begin{equation}\label{def of high order stoch}
    \b \hV_i(s) = r(s) + \mL_{\hmu_i,\hs_i} \hV_i(s),
\end{equation}
where 
\begin{equation}\label{def of higher order stoch mu}
\begin{aligned}
    &\hmu_i(s) =\frac1\dt \sum_{j=1}^i\E_{s_{j\dt}\sim \rho(\cdot, j\dt|s)}\l[\coef{i}_j(s_{j\dt} - s_0)|s_0 = s\r]\\
    &\hs_i(s) =\frac1\dt \sum_{j=1}^i\E_{s_{j\dt}\sim \rho(\cdot, j\dt|s)}\l[\coef{i}_j(s_{j\dt} - s_0)(s_{j\dt} - s_0)^\top|s_0 = s\r]    
\end{aligned}
\end{equation}
where $\mL_{\hmu_i,\hs_i}$ is defined in \eqref{def of mL}, and $\coef{i} = (\coef{i}_0,\cdots, \coef{i}_i)^\top$ is defined in \eqref{def of A b}.
\end{definition}
\begin{remark}
    There is another $i$-th order approximation for $\Sig(s)$,
    \[
    \ts_i(s) = \frac{1}{\dt}\sum_{j=0}^i \coef{i}_j \l(\E[s_{j\dt}^\top s_{j\dt}|s_0 = s] - \E[s_{j\dt}|s_0 = s]^\top\E[s_{j\dt}|s_0 = s]\r).
    \]
    However, the unbiased estimate for $\ts(s)$ 
    \[
    \ts_i(s_0) \approx \frac{1}{\dt}\sum_{j=1}^i \coef{i}_j \l(s_{j\dt}^\top s_{j\dt} - s'_{j\dt}s_{j\dt}\r)
    \]
    requires two independent samples $s_{j\dt}, s'_{j\dt}$ starting from $s_0$, which are usually unavailable in the RL setting. This is known as the \textquotedblleft double Sampling" problem. One could apply a similar idea in \cite{zhu2020borrowing, zhu2022borrowing} to alleviate the double sampling problem when the underlying dynamics are smooth, that is, approximating $s'_{j\dt} \approx s_{(j-1)\dt}+(s_{(j+1)\dt} - s_{j\dt})$. However, it will introduce additional bias into the approximation. We leave the study of this approximation $\ts_i(s)$ or the application of BFF on $\ts_i(s)$ for future research.   
\end{remark}

The first and second-order approximations are presented as follows. The first-order approximation reads,
\[
\hmu_1(s) = \frac1\dt\E\l[(s_{\dt} - s_0)|s_0 = s\r], \quad \hs_1(s) =\frac1\dt \E\l[(s_{\dt} - s_0)(s_{\dt} - s_0)^\top|s_0 = s\r]    ;
\]
and the second-order approximation reads,
\[
\begin{aligned}
    &\hmu_2(s) = \frac1\dt\E\l[2(s_{\dt} - s_0)-\frac12(s_{2\dt} - s_0)|s_0 = s\r],\\
    &\hs_2(s) =\frac1\dt \E\l[2(s_{\dt} - s_0)(s_{\dt} - s_0)^\top-\frac12(s_{2\dt} - s_0)(s_{2\dt} - s_0)^\top|s_0 = s\r]  .
\end{aligned}  
\]

Next, we show the solution $\hV_i(s)$ to the $i$th-order PhiBE provides an $i$th-order approximation to the true value function $V(s)$, under the following assumptions.
\begin{assumption} \label{ass_2} Assumptions on the dynamics:
    \begin{itemize}
        \item [(a)] $\lammin(\Sig(s))>\lammin > 0$ for $\forall s\in\S$, {where $\lammin(A)$ is the smallest eigenvalue of $A$}. 
        \item [(b)] $\ll \nb^k \mu(s) \rl_\Linf ,\ll \nb^k \Sig(s) \rl_\Linf $ are bounded for $0\leq k \leq 2i$.
    \end{itemize}
\end{assumption}
The first assumption ensures the coercivity of the operator $\mL_{\mu, \Sig}$, which is necessary to establish the regularity of $V(s)$. The second assumption is employed to demonstrate that $\hmu_i$ and $\hs_i$ are $i$-th approximations to $\mu,\Sig$, respectively. 

To establish the error analysis, 
we define a weighted $L^2$ inner product and norm under a probability density $\rho(s)$ by
\begin{equation}\label{def of rho}
\la f, g\ra_\rho = \int f(s) g(s)\rho(s) ds, \qd \ll f \rl^2_\rho = \int f^2(s) \rho(s)ds.
\end{equation}
In addition, the density $\rho$ satisfies
\begin{equation}\label{def of Lrho}
    L_\rho = \ll \nb \log \rho \rl_\rho \text{is bounded}.
\end{equation}
One may take $\rho(s)$ to be the stationary distribution (when it exists), namely a density satisfying
\begin{equation}\label{stationary}
    \int\mL_{\mu,\Sig}\phi(s) \rho(s)ds = 0.\qd \text{for }\forall \phi(s)\in C_c^\infty.
\end{equation}
If a stationary distribution does not exist, one may select a density $\rho$ such that
\begin{equation}\label{def of new rho}
\ll - \mu  + \frac12\nb\cdot\Sig + \frac12\Sig\nb
\log\rho\rl_{\Linf(\Omega)}^2 \leq \frac12\beta\lammin, \quad 
\end{equation}
where $\Omega=\{s: \rho(s)\neq 0\}$. If $\Omega \neq \R^d$, then all assumptions and results hold on $\Omega$, rather than on the entire space $\mathbb{R}^d$.

\begin{remark}
Several remarks are in order for the weight $\rho(s)$.  
First, classical results (e.g., \cite{huang2015steady}) imply that when the drift and diffusion satisfy a suitable dissipation condition, a stationary distribution exists.  Furthermore, once a stationary distribution exists, Theorem~1.1 of \cite{bogachev1996regularity} yields the bound
$L_\rho \;\le\; \frac{1}{\lambda_{\min}}
    \,\|\mu + \nabla\!\cdot\!\Sigma\|_{L^\infty}.$

Second, if no stationary distribution exists, one can still choose a density $\rho$ such that \eqref{def of new rho} holds.  
For example, in one of our numerical experiments \eqref{def of ou}, the system admits no stationary solution when $\lambda>0$.  
In this case, one may take
\[
\rho(s)
=
\begin{cases}
\displaystyle \frac{1}{2c}, & s\in[-c,c],\\[4pt]
0, & s\notin[-c,c],
\end{cases}
\qquad 
|c|\le \frac{\beta\sigma^2}{2\lambda},
\]
which ensures \eqref{def of new rho}.  
The trade-off is that under this choice of weighted norm, one can guarantee convergence of the approximation error only on the domain $\Omega=\{s: \rho(s)\neq 0\}$, rather than on the entire space $\mathbb{R}^d$. At the same time, Assumption~\ref{ass_2} on the drift and diffusion is also relaxed: instead of requiring it to hold globally, it only needs to be satisfied on the domain $\Omega$.

Third, if one chooses to work with the Lebesgue measure on $\Omega \subset \mathbb{R}^d$, then, in addition to Assumption~\ref{ass_2} on the drift and diffusion terms, we require
\[
\ll - \mu  + \frac12\nb\cdot\Sig \rl_{\Linf(\Omega)}^2 \leq \frac12\beta\lammin.
\]
This ensures that the requirement \eqref{def of new rho} remains valid when the density $\rho$ is taken to be the (unnormalized) Lebesgue measure.
\end{remark}

\color{black}

The error analysis for BE in the weighted $L^2$ norm is presented in the following theorem. 
\begin{theorem}[Discretization error for BE in weighted $L^2$ norm]\label{thm:rl_rho}
Assume that $\ll r\rl_\rho$, $\ll  \mL_{\mu,\Sig} r \rl_\rho$ are bounded, then the solution $\tV(s)$ to the BE \eqref{bellman} approximates the true value function $V(s)$ defined in \eqref{def of value} with an error 
    \[
    \ll  V(s) - \tV(s) \rl_\rho \leq \frac{2(\ll \mL_{\mu,\Sig} r\rl_\rho + \b \ll r \rl_\rho)}{\b} \dt +o(\dt).
    \]
\end{theorem}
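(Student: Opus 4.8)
The plan is to mirror the skeleton of the proof of Theorem~\ref{thm:rl}, carrying every estimate in the weighted norm $\ll \cd \rl_\rho$ rather than in $\Linf$; the only genuinely new ingredient is that the trivial $\Linf$--contractivity of the transition operator must be replaced by an $L^2(\rho)$ stability bound. First, set $u = V - \tV$. Subtracting the BE \eqref{bellman} from the equivalent representation \eqref{eq:twointegral} of $V$ gives the fixed-point identity
\[
u(s) = E(s) + e^{-\b\dt}\,\E[\,u(s_\dt)\mid s_0 = s\,],\qquad E(s):= \E\l[\int_0^\dt e^{-\b t} r(s_t)\,dt\,\Big|\,s_0 = s\r] - r(s)\dt .
\]
Writing $T_\tau f(s) := \E[f(s_\tau)\mid s_0 = s]$ for the transition semigroup and iterating, $u = \sum_{j\geq 0} e^{-\b\dt j} T_\dt^{\,j} E$, so that $\ll u\rl_\rho \leq \sum_{j\geq 0} e^{-\b\dt j}\,\ll T_\dt^{\,j} E\rl_\rho$.

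Next I would establish the $L^2(\rho)$ stability of $T_\tau$. If $\rho$ is the stationary density \eqref{stationary}, Jensen's inequality $(T_\tau f)^2\leq T_\tau(f^2)$ together with stationarity gives the contraction $\ll T_\tau f\rl_\rho \leq \ll f\rl_\rho$. For a general admissible weight, a standard integration-by-parts and Gr\"onwall argument applied to $\tfrac{d}{d\tau}\ll T_\tau f\rl_\rho^2 = 2\langle \mL_{\mu,\Sig} T_\tau f,\,T_\tau f\rangle_\rho$, using the uniform ellipticity of $\Sig$ to absorb the $\ll\nb T_\tau f\rl_\rho^2$ term and the dissipation condition \eqref{def of new rho}, yields $\tfrac{d}{d\tau}\ll T_\tau f\rl_\rho^2 \leq \b\,\ll T_\tau f\rl_\rho^2$ and hence $\ll T_\tau f\rl_\rho \leq e^{\b\tau/2}\ll f\rl_\rho$. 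Substituting into the series from the previous step,
\[
\ll u\rl_\rho \;\leq\; \Big(\sum_{j\geq 0} e^{-\b\dt j/2}\Big)\ll E\rl_\rho \;=\; \frac{\ll E\rl_\rho}{1 - e^{-\b\dt/2}} \;=\; \frac{2}{\b\dt}\,\ll E\rl_\rho\,(1 + o(1)).
\]

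It then remains to bound $\ll E\rl_\rho$, which is the $\rho$--analogue of \eqref{rlerror}. Using $\E[e^{-\b t}r(s_t)\mid s_0 = s] = e^{-\b t}(T_t r)(s)$ and the decomposition $e^{-\b t}T_t r - r = e^{-\b t}(T_t r - r) + (e^{-\b t}-1)r$, together with the Kolmogorov identity $T_t r - r = \int_0^t T_\tau(\mL_{\mu,\Sig} r)\,d\tau$, the stability bound from the previous step, and $|e^{-\b t}-1|\leq \b t$, one gets $\ll e^{-\b t}T_t r - r\rl_\rho \leq t\,(\ll \mL_{\mu,\Sig}r\rl_\rho + \b\ll r\rl_\rho)(1 + o(1))$ for $t\leq\dt$. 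Applying Minkowski's integral inequality to $E(s) = \int_0^\dt (e^{-\b t}T_t r - r)(s)\,dt$, with the $t$--dependent factors bounded uniformly by their value at $t = \dt$, then gives $\ll E\rl_\rho \leq \dt^2(\ll \mL_{\mu,\Sig}r\rl_\rho + \b\ll r\rl_\rho)(1+o(1))$. Combining with the bound on $\ll u\rl_\rho$ yields $\ll V - \tV\rl_\rho \leq \frac{2(\ll \mL_{\mu,\Sig}r\rl_\rho + \b\ll r\rl_\rho)}{\b}\,\dt + o(\dt)$, as claimed.

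The main obstacle is the stability step: unlike in the $\Linf$ setting, where $T_\tau$ is automatically a contraction, here one must exploit how the weight $\rho$ interacts with the generator $\mL_{\mu,\Sig}$, and the argument branches according to whether $\rho$ is stationary or only satisfies \eqref{def of new rho}. In the latter case the ellipticity of $\Sig$ is essential to control the gradient term produced by integration by parts, and one must verify that the resulting $e^{\b\tau/2}$ growth is mild enough on the scale $\tau\lesssim\dt$ that it enters only the $o(\dt)$ remainder and not the leading constant. Everything else is a routine transcription of the proof of Theorem~\ref{thm:rl} with $\Linf$ replaced by $\ll\cd\rl_\rho$.
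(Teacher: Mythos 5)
Your proposal is correct and follows essentially the same route as the paper's proof: both reduce the estimate to (i) a one-step quadrature error of size $O(\dt^2)$ controlled by $\ll \mL_{\mu,\Sig} r\rl_\rho + \b\ll r\rl_\rho$ and (ii) the weighted-$L^2$ semigroup stability $\ll T_t f\rl_\rho \le e^{\b t/2}\ll f\rl_\rho$ obtained from the coercivity estimates of Proposition \ref{lemma:l2 operator}, and then sum the resulting geometric series $\sum_j e^{-\b\dt j/2}$. The only cosmetic difference is that you organize the telescoping as a fixed-point/Neumann-series iteration using the exact Dynkin identity, whereas the paper expands the error interval by interval via the mean value theorem and Jensen's inequality; the key lemma and the final constant are the same.
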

The proof of Theorem \ref{thm:rl_rho} is given in Section \ref{sec:proof of them rl stoch}. Next, the error analysis for PhiBE in stochastic dynamics is presented in the following theorem. 

\begin{theorem}[Discretization error for stochastic PhiBE]\label{thm:v-vhat_stoch}
Under Assumption \ref{ass_2}, and $\dt^i \leq D_{\mu,\Sig,\b}$, the solution $\hV_i(s)$ to the i-th order PhiBE \eqref{def of high order stoch} is an i-th order approximation to the true value function $V(s)$ defined in \eqref{def of value} with an error 
\[
\ll V(s) - \hV_i(s) \rl_{\rho} \leq \l(\frac{ C_{r,\mu,\Sig} }{\b^2} +\frac{ \h{C}_{r,\mu,\Sig} }{\b^{3/2}} \r)\dt^i,
\]
where $D_{\mu,\Sig,\b}, C_{r,\mu,\Sig}, \h{C}_{\mu,\Sig,\b}$ are constants defined in \eqref{def of D}, \eqref{def of Crmusig} depending on $\mu, \Sig, r$.
\end{theorem}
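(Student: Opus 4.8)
The plan is to realize the difference $e := V-\hV_i$ as the solution of an HJB-type equation with a small source term, and then close the estimate by a coercivity (energy) argument in the weighted space $L^2_\rho$. By Feynman--Kac, $V$ solves \eqref{stoch_PDE}, i.e.\ $\b V = r + \mL_{\mu,\Sig}V$, while $\hV_i$ solves \eqref{def of high order stoch}, i.e.\ $\b\hV_i = r + \mL_{\hmu_i,\hs_i}\hV_i$. Subtracting and inserting $\pm\mL_{\mu,\Sig}\hV_i$ gives
\[
\b e - \mL_{\mu,\Sig}e \;=\; g, \qquad g \;:=\; \(\mL_{\mu,\Sig}-\mL_{\hmu_i,\hs_i}\)\hV_i \;=\; (\mu-\hmu_i)\cdot\nb\hV_i + \tfrac12(\Sig-\hs_i):\nb^2\hV_i .
\]
So I would reduce the theorem to three ingredients: (i) an a priori estimate $\ll e\rl_\rho \lesssim \b^{-1}\ll g\rl_\rho$; (ii) the consistency bound $\ll\mu-\hmu_i\rl_\Linf + \ll\Sig-\hs_i\rl_\Linf \lesssim \dt^i$; and (iii) $\rho$-weighted regularity of $\nb\hV_i$ and $\nb^2\hV_i$.

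For (i) I would test $\b e - \mL_{\mu,\Sig}e = g$ against $e\rho$ and integrate by parts. The second-order term yields $\tfrac12\int\nb e^\top\Sig\nb e\,\rho$ plus a first-order remainder whose effective drift is precisely the vector field controlled by \eqref{def of new rho} (and which vanishes under the stationarity condition \eqref{stationary}); combining Assumption~\ref{ass_2}(a) (uniform ellipticity $\lammin$), $L_\rho$ from \eqref{def of Lrho}, and Young's inequality to absorb that remainder gives
\[
\tfrac{\b}{2}\ll e\rl_\rho^2 + c\,\lammin\ll\nb e\rl_\rho^2 \;\le\; \ll g\rl_\rho\,\ll e\rl_\rho ,
\]
hence $\ll e\rl_\rho \le \tfrac{2}{\b}\ll g\rl_\rho$ and $\ll\nb e\rl_\rho \lesssim (\lammin\b)^{-1/2}\ll g\rl_\rho$. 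The same computation applied to \eqref{def of high order stoch} with source $r$ — valid because, for $\dt^i$ below the threshold $D_{\mu,\Sig,\b}$ of \eqref{def of D}, the perturbed operator $\mL_{\hmu_i,\hs_i}$ still satisfies uniform ellipticity and \eqref{def of new rho} — yields $\ll\hV_i\rl_\rho \lesssim \b^{-1}$ and $\ll\nb\hV_i\rl_\rho \lesssim (\lammin\b)^{-1/2}$; differentiating \eqref{def of high order stoch} once more and repeating (absorbing the $\nb^2\hV_i$ feedback term using ellipticity and $\dt$-smallness) would give $\ll\nb^2\hV_i\rl_\rho \lesssim \b^{-1/2}+\b^{-1}$, with all implicit constants depending only on $r,\mu,\Sig$.

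For (ii), using the Dynkin/It\^o--Taylor expansion $\E[f(s_{j\dt})\mid s_0=s]=\sum_{k=0}^{i}\frac{(j\dt)^k}{k!}\mL_{\mu,\Sig}^k f(s)+O(\dt^{i+1})$ with $f(s)=s$ and with the quadratic $f$ appearing in \eqref{def of higher order stoch mu}, together with the moment conditions $\sum_j\coef{i}_j j^k=\delta_{k1}$ from \eqref{def of A b}, all terms of order $k\ne 1$ cancel and the $k=1$ term reproduces $\mu$ (respectively $\Sig$, after using $\lim_{\dt\to0}\tfrac1\dt\E[(s_\dt-s_0)(s_\dt-s_0)^\top]=\Sig$), leaving $\ll\mu-\hmu_i\rl_\Linf,\ \ll\Sig-\hs_i\rl_\Linf \lesssim \dt^i$ with constants governed by the derivative bounds of Assumption~\ref{ass_2}(b). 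Substituting into $g$, $\ll g\rl_\rho \lesssim \dt^i\(\ll\nb\hV_i\rl_\rho+\ll\nb^2\hV_i\rl_\rho\) \lesssim \dt^i(\b^{-1/2}+\b^{-1})$, and therefore $\ll e\rl_\rho \lesssim \b^{-1}\dt^i(\b^{-1/2}+\b^{-1}) = (\b^{-3/2}+\b^{-2})\dt^i$, which is the claimed bound once the constants are collected into $C_{r,\mu,\Sig}$ and $\h C_{r,\mu,\Sig}$ (cf.\ \eqref{def of Crmusig}).

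The hard part will be the uniform-in-$\dt$ Hessian bound $\ll\nb^2\hV_i\rl_\rho$: because $\mL_{\hmu_i,\hs_i}$ only approximates $\mL_{\mu,\Sig}$ to $O(\dt^i)$, one must first certify that it inherits uniform ellipticity and the weighted dissipation condition — this is exactly where the smallness requirement $\dt^i\le D_{\mu,\Sig,\b}$ enters — and then bootstrap the first-order energy estimate to second derivatives, where differentiating the PhiBE feeds $\nb^2\hV_i$ back into its own source and must be absorbed via the ellipticity constant (or a weighted Calder\'on--Zygmund estimate). A secondary subtlety is the consistency estimate for $\hs_i$: the quadratic variation already enters at order $\dt$, so the leading $\dt$-term must be identified as exactly $\Sig$ before the higher-order cancellations coming from \eqref{def of A b} take effect.
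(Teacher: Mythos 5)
Your proposal is correct in architecture and lands on the right rate, but it takes a genuinely different route from the paper at the decisive step. Both arguments start from the same error equation $\b e - \mL_{\mu,\Sig}e = (\mL_{\mu,\Sig}-\mL_{\hmu_i,\hs_i})\hV_i$ and both close with a weighted coercivity estimate, but you treat the source in \emph{strong form}, bounding $\ll g \rl_\rho \lesssim \dt^i(\ll \nb \hV_i \rl_\rho + \ll \nb^2 \hV_i \rl_\rho)$, which forces you to prove a uniform-in-$\dt$ weighted Hessian bound for the PhiBE solution. The paper instead keeps the source in \emph{weak form}: in Lemma \ref{lemma:hv-v-rho} the pairing $\la \mL_{\hmu_i-\mu,\hs_i-\Sig}\hV_i, e\ra_\rho$ is integrated by parts (the last two inequalities of Proposition \ref{lemma:l2 operator}), so only $\ll \nb \hV_i\rl_\rho$ and $\ll \nb \hV_i\rl_\Linf$ ever appear and no second derivative of $\hV_i$ is needed. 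The trade is real: the paper's route requires the extra consistency estimate $\ll \nb\cdot(\hs_i-\Sig)\rl_\Linf \lesssim \dt^i$ and an $L^\infty$ gradient bound to absorb the $\nb\log\rho$ boundary term (Lemmas \ref{lemma:stoch_dynamics_order} and \ref{lemma:pt_s l2 est}), whereas your route avoids those but must instead verify that $\mL_{\hmu_i,\hs_i}$ inherits uniform ellipticity and that $\ll\nb\hmu_i\rl_\Linf, \ll\nb\hs_i\rl_\Linf$ are bounded uniformly in $\dt$ before bootstrapping to $H^2_\rho$ — you correctly flag this as the hard part, and your sketch (keep, rather than discard, the square completed in the proof of the second inequality of Proposition \ref{lemma:l2 operator}) does yield $\ll\nb^2\hV_i\rl_\rho \lesssim \lammin^{-1/2}(\b^{-1/2}+\b^{-1})$ under Assumption \ref{ass_2}, so the gap is fillable with the paper's own machinery. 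Your consistency step (ii) is essentially the paper's Lemma \ref{i-th operator}, including the correct observation that the leading term of the quadratic-variation expansion reproduces $\Sig$ exactly before the moment cancellations from \eqref{def of A b} kick in. In short: same skeleton, different flesh at the regularity step; the paper's weak-form argument is the more economical of the two because it never touches $\nb^2\hV_i$.
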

The proof of Theorem \ref{thm:v-vhat_stoch} is given in Section \ref{sec:proof of them Phibe stoch}. 
\begin{remark}[PhiBE v.s. BE]
    Here we discuss two cases. The first case is when the diffusion is known, that is, $\hs_i = \Sig$, then the distance between the PhiBE and the true value function can be bounded by 
    \[
    \begin{aligned}
        &\ll \hV_i - V \rl_\rho 
        \lesssim \frac{1}{\b^2}\ll \mL_{\mu,\Sig}^i\mu\rl_\Linf \l[ \l(\frac{\ll \nb \Sig\rl_\Linf}{\lammin} + \sqrt{\frac{\ll \nb \mu\rl_\Linf}{\lammin}}\r)\ll r \rl_{\rho} + \ll \nb r\rl_{\rho}\r]\dt^i.
    \end{aligned}
    \]
    Similar to the deterministic case, the error of the $i$-th order PhiBE proportional to the change rate of the dynamics $\E[\frac{d^{i+1}}{dt^{i+1}}s_t] = \mL^i_{\mu,\Sig}\mu$. One can refer to the discussion under Theorem \ref{thm:v-vhat_deter} for the benefit of the 1st-order PhiBE and higher-order PhiBE with respect to different dynamics. 
    
    The second case is when both drift and diffusion are unknown. Then  the distance between the first-order PhiBE and the true value function can be bounded by 
    \[
    \begin{aligned}
        &\ll \hV_1 - V \rl_\rho \\
        \lesssim &\frac{\dt}{\b^2}\l[\l(L_\mu + L_{\nb\cdot\Sig} +\frac{L_\Sig}{\lammin}\ll \mu + \nb\cdot\Sig \rl_\Linf\r)  \l( \sqrt{\frac{C_\nb}{\lammin}}\ll r \rl_{\rho,\Linf} + \ll \nb r\rl_{\rho,\Linf}\r)\r]\\
        &+ \frac{\dt}{\b^{3/2}} \frac{1}{\sqrt{\lammin}}L_\Sig \l(\sqrt{\frac{C_\nb}{\lammin}}\ll r \rl_\rho + \ll \nb r \rl_\rho\r),
    \end{aligned}
    \]
    where
    \[
    \begin{aligned}
    &L_\mu \lesssim \ll \mL\mu \rl,\qd L_\Sig \lesssim \ll \mu \mu^\top + \Sig\nb\mu + \mL\Sig \rl_\Linf + \ll \mu\rl_\Linf,\\
    &L_{\nb\cdot\Sig} \lesssim \sqrt{\frac{C_{\nb}}{\lammin}}\ll \mu \mu^\top + \Sig\nb\mu + \mL\Sig \rl_\Linf +\ll \nb\cdot\l(\mu \mu^\top + \Sig\nb\mu + \mL\Sig \r)\rl_\Linf +  \sqrt{\frac{C_{\nb}}{\lammin}}\ll \mu \rl_\Linf \\
    &+ \ll \nb\mu \rl_\Linf ,\\
    &\sqrt{\frac{C_{\nb}}{\lammin}} \lesssim \frac{ \ll \nb\mu \rl_\Linf }{2\lam} + \sqrt{\frac{\ll \nb\Sig \rl_\Linf}{\lammin}},\qd \ll r \rl_{\rho,\Linf} = \ll r \rl_\rho + \ll r \rl_\Linf.
    \end{aligned}
    \]
    Here the operator $\mL$ represents $\mL_{\mu,\Sig}$. This indicates that when $\lammin$ is large or $\nb\mu, \nb\Sig$ are small, the difference between $\hV_1$ and $V$ is smaller. Comparing it with the upper bound $\ll \mL r \rl_\rho + \b \ll r \rl_\rho$ for the BE, which is more sensitive to large $\b$ and reward function, the PhiBE approximation is less sensitive to these factors. When the change in the dynamics is slow, or the noise is large, even the first-order PhiBE solution is a better approximation to the true value function than the BE. 
\end{remark}

\section{Model-free Algorithm for {PhiBE}}\label{sec:algo}
In this section, we assume that one only has access to the discrete-time trajectory data $\{s^l_0,s^l_{\dt},\cdots s^l_{m\dt} \}_{l=1}^I$.  We first revisit the Galerkin method for solving PDEs with known dynamics in Section \ref{sec:galerkin}, and we provide the error analysis of the Galerkin method for PhiBE. Subsequently, we introduce a model-free Galerkin method in Section \ref{sec:galerkin_stoch}. Finally, we conclude this section with an end-to-end error analysis for the model-free algorithm in Section \ref{sec:total-error}.

\subsection{Galerkin Method}\label{sec:galerkin}
Given $p$ bases $\{\phi_i(s)\}_{i=1}^p$, the objective is to find an approximation $\bV(s) = \Phi(s)^\top\th$ to the solution $V(s)$ of the PDE,
\[
\b V(s) - \mL_{\mu,\Sig} V(s) = r(s),
\]
where $\th\in\R^p, \Phi(s) = (\phi_1(s), \cdots, \phi_p(s))^\top$, and $\mL_{\mu,\Sig} $ is defined in \eqref{def of mL}. The Galerkin method involves inserting the ansatz $\bV$ into the PDE and then projecting it onto the finite bases, 
\[
\la \b \bV(s) - \mL_{\mu,\Sig} \bV(s) , \Phi(s)\ra_\rho = \la r(s), \Phi(s)\ra_\rho,
\]
where the same measure $\rho$ defined in \eqref{def of rho} is used here. This results in a linear system of $\th$,
\[
A \th =b, \quad A = \la \b\Phi(s) - \mL_{\mu,\Sig} \Phi(s) , \Phi(s)\ra_\rho, \quad b =  \la r(s), \Phi(s)\ra_\rho.
\]
When the dynamics $\mu(s), \Sig(s)$ are known, one can explicitly compute the matrix $A$ and the vector $b$, and find the parameter $\th = A^{-1}b$ accordingly. 

In continuous-time PE problems, one does not have access to the underlying dynamics $\mu, \Sig$. However, the approximated dynamics $\hmu_i, \hs_i$ is given through PhiBE. Therefore, if one has access to the discrete-time transition distribution, then the parameter $\th = \hat{A}_i^{-1} b$ can be solved for by approximating $A$ with $\h{A}_i$
\begin{equation}\label{def of galerkin A}
    \h{A}_i = \la \b\Phi(s) - \mL_{\hmu_i,\hs_i}\Phi(s), \Phi(s)\ra_\rho,
\end{equation}
where $\hmu_i, \hs_i$ are defined in \eqref{def of higher order stoch mu}.
We give the error estimate of the Galerkin method for PhiBE in the following theorem. 
\begin{theorem}[Galerkin Error]\label{thm:galerkin err}
    The Galerkin solution $\hV^G_i(s) = \th^\top\Phi(s)$ satisfies
    \begin{equation}\label{galerkin}
        \la (\b - \mL_{\hmu_i,\hs_i}) \hV^G_i(s), \Phi\ra_\rho = \la r(s), \Phi(s)\ra_\rho.
    \end{equation}
    For small $\dt^i \leq \min\l\{\eta_{\mu,\Sig,\b}, D_{\mu,\Sig,\b}\r\}$, the Galerkin solution $\hV^G_i(s)$ approximates the solution to the $i$-th order PhiBE defined in \eqref{def of high order stoch} with an error 
    \[
    \ll \hV^G_i(s)  - \hat{V}_i(s) \rl_\rho \leq  \frac{C_G}\b\inf_{V^P = \th^\top\Phi}\ll \hV_i - V^P \rl_{H^1_\rho}. 
    \]
    where $\eta_{\mu,\Sig,\b}, C_G, D_{\mu,\Sig,\b}$ are constants depending on $\mu, \Sig, \b, L^\infty_\rho, L^\infty_\Phi$ defined in \eqref{def of eta}, \eqref{def of c galerkin}, \eqref{def of D} respectively, and $\ll f \rl_{H^1_\rho} = \ll f \rl_\rho + \ll \nb f \rl_\rho$.
\end{theorem}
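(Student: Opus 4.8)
The plan is to establish Theorem~\ref{thm:galerkin err} as a Céa-type quasi-optimality estimate for the conforming Galerkin discretization of the $i$-th order PhiBE~\eqref{def of high order stoch}, viewed as a variable-coefficient elliptic PDE on the weighted space $H^1_\rho$. First I would recast the PhiBE operator in \emph{weak (variational) form}. Because $\mL_{\hmu_i,\hs_i}$ contains the second-order term $\tfrac12\hs_i:\nb^2$, the pairing $\la(\b-\mL_{\hmu_i,\hs_i})u,v\ra_\rho$ is not a priori bounded by $\|u\|_{H^1_\rho}\|v\|_{H^1_\rho}$, so I integrate the diffusion term by parts against the weight $\rho$ to obtain the bilinear form
\[
a(u,v)=\b\la u,v\ra_\rho+\tfrac12\la \hs_i\nb u,\nb v\ra_\rho+\la \hat b_i\cdot\nb u,v\ra_\rho,\qquad \hat b_i:=-\hmu_i+\tfrac12\nb\cdot\hs_i+\tfrac12\hs_i\nb\log\rho,
\]
the last term of $\hat b_i$ arising from differentiating $\rho$. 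Both $\hV^G_i$ and $\hV_i$ then solve $a(\cdot,V^P)=\la r,V^P\ra_\rho$ for every $V^P\in\mathrm{span}(\Phi)$ — for $\hV^G_i$ this is exactly~\eqref{galerkin}, and for $\hV_i$ it is the weak form of~\eqref{def of high order stoch}, which is legitimate because the regularity of $\hV_i$ needed to integrate by parts is already established in the proof of Theorem~\ref{thm:v-vhat_stoch} under Assumption~\ref{ass_2}. Subtracting yields the Galerkin orthogonality $a(\hV_i-\hV^G_i,V^P)=0$ for all $V^P\in\mathrm{span}(\Phi)$.

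The heart of the argument is showing that $a$ is \emph{coercive} and \emph{bounded} on $H^1_\rho$. For coercivity I put $v=u$: the elliptic term gives $\tfrac12\la\hs_i\nb u,\nb u\ra_\rho\ge\tfrac12\lammin(\hs_i)\|\nb u\|_\rho^2$, and since $\hs_i$ is an $i$-th order approximation of $\Sig$, the condition $\dt^i\le D_{\mu,\Sig,\b}$ combined with Assumption~\ref{ass_2}(a) guarantees $\lammin(\hs_i)\ge\tfrac12\lammin>0$. Writing $\hat b_i=b+(\hat b_i-b)$ with $b:=-\mu+\tfrac12\nb\cdot\Sig+\tfrac12\Sig\nb\log\rho$, I bound $\la b\cdot\nb u,u\ra_\rho$ by Young's inequality with the parameter tuned to the ellipticity, $|\la b\cdot\nb u,u\ra_\rho|\le\tfrac{\lammin}{4}\|\nb u\|_\rho^2+\tfrac1{\lammin}\|b\|_\Linf^2\|u\|_\rho^2$, and here I invoke the defining property of the weight: \eqref{def of new rho} (or stationarity~\eqref{stationary}) gives $\|b\|_\Linf^2\le\tfrac12\b\lammin$, so $\b-\tfrac1{\lammin}\|b\|_\Linf^2\ge\tfrac12\b>0$. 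The residual $\la(\hat b_i-b)\cdot\nb u,u\ra_\rho$, together with the second-order perturbation $\tfrac12\la(\hs_i-\Sig)\nb u,\nb u\ra_\rho$ and the lower-order remainders it generates, is $O(\dt^i)$ thanks to the approximation estimates $\|\hmu_i-\mu\|_\Linf,\|\hs_i-\Sig\|_\Linf,\|\nb\cdot(\hs_i-\Sig)\|_\Linf=O(\dt^i)$ provided by Assumption~\ref{ass_2}(b) (the $\nb\log\rho$ contributions being controlled through $L_\rho$ in~\eqref{def of Lrho}); imposing $\dt^i\le\eta_{\mu,\Sig,\b}$ makes these residuals small enough to be absorbed, giving $a(u,u)\ge\alpha\|u\|_{H^1_\rho}^2$ with $\alpha\gtrsim\min(\b,\lammin)$. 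Boundedness $|a(u,v)|\le M\|u\|_{H^1_\rho}\|v\|_{H^1_\rho}$ is comparatively easy: $\|\hs_i\|_\Linf$ and $\|\hat b_i\|_\Linf$ are $O(1)$ perturbations of $\|\Sig\|_\Linf$ and $\|b\|_\Linf$ once $\dt^i\le D_{\mu,\Sig,\b}$, again by Assumption~\ref{ass_2}(b) and $L_\rho$, so $M$ is of order $\b$ plus a dynamics-dependent constant.

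With coercivity and boundedness in place, the estimate is the standard Céa computation: for any $V^P\in\mathrm{span}(\Phi)$, using $\hV^G_i-V^P\in\mathrm{span}(\Phi)$ and Galerkin orthogonality,
\[
\alpha\|\hV_i-\hV^G_i\|_{H^1_\rho}^2\le a(\hV_i-\hV^G_i,\hV_i-\hV^G_i)=a(\hV_i-\hV^G_i,\hV_i-V^P)\le M\,\|\hV_i-\hV^G_i\|_{H^1_\rho}\|\hV_i-V^P\|_{H^1_\rho},
\]
whence $\|\hV_i-\hV^G_i\|_{H^1_\rho}\le(M/\alpha)\inf_{V^P}\|\hV_i-V^P\|_{H^1_\rho}$; since $\|\cdot\|_\rho\le\|\cdot\|_{H^1_\rho}$, this is the claimed bound with $C_G/\b=M/\alpha$.

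I expect the main obstacle to be the coercivity step — concretely, showing that replacing the true $(\mu,\Sig)$ by the data-driven $(\hmu_i,\hs_i)$ does not spoil the coercivity that the weight $\rho$ was designed to provide for $\mL_{\mu,\Sig}$; this is precisely the role of the extra smallness threshold $\eta_{\mu,\Sig,\b}$, and it requires the quantitative $O(\dt^i)$ approximation bounds for $\hmu_i,\hs_i$ and their divergences that are produced while proving Theorem~\ref{thm:v-vhat_stoch}. A secondary technical point is handling the weighted integration-by-parts remainders involving $\nb\log\rho$, which must be controlled using only the $L^2_\rho$ bound $L_\rho$ rather than an $L^\infty$ bound; and one must also check that $\hV_i$ has enough regularity for the weak formulation and the Galerkin orthogonality to be valid, which is inherited from the regularity theory already developed for Theorem~\ref{thm:v-vhat_stoch}.
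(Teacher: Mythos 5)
Your proposal is correct and follows essentially the same route as the paper's proof: the paper likewise decomposes $\hV_i = \hV_i^P + e_i^P$, tests the difference of the two projected equations with $e^G_i = \hV^G_i - \hV^P_i$, invokes the weighted coercivity of $\b - \mL_{\mu,\Sig}$ (Proposition \ref{lemma:l2 operator}, which encodes exactly your integration by parts against $\rho$ together with the condition \eqref{stationary} or \eqref{def of new rho}), and absorbs the perturbation $\mL_{\hmu_i-\mu,\,\hs_i-\Sig}$ using the $O(\dt^i)$ bounds of Lemma \ref{lemma:stoch_dynamics_order} under the threshold $\dt^i\le\eta_{\mu,\Sig,\b}$ — i.e., a C\'ea-type argument written out by hand. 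The only substantive difference is bookkeeping: the paper keeps the $\tfrac{\b}{4}\ll e^G_i\rl_\rho^2$ and $\tfrac{\lammin}{8}\ll\nb e^G_i\rl_\rho^2$ terms separate instead of collapsing them into a single $H^1_\rho$ coercivity constant $\alpha\gtrsim\min(\b,\lammin)$, which yields the slightly sharper constant $C_G$ of \eqref{def of c galerkin}; also note that for the theorem as stated $\ll\nb\log\rho\rl_\Linf$ is assumed finite (it enters $C_G$), so the $L^\infty$-versus-$L^2_\rho$ issue you flag only arises in the variant treated in Theorem \ref{thm:unbounded rho galerkin error}.
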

The proof of the Theorem is given in Section \ref{proof of galerkin error}. 

Several remarks are in order.
First, the above theorem establishes an approximation-error bound under model misspecification, that is, the error arising when the true value function cannot be exactly represented within the chosen linear bases $\Phi(s)$. More specifically, the bound in Theorem~\ref{thm:galerkin err} can be interpreted as the limiting error of the data-driven algorithms (Algorithms~\ref{algo:galerkin_phibe_deter} and~\ref{algo:galerkin_phibe_stoch}) when infinite data are available.

Second, the solver \eqref{galerkin} serves as a continuous-time counterpart to LSTD \cite{bradtke1996linear} by replacing the discrete-time Bellman equation with the PhiBE equation. However, our bound is much tigher than the existing RL literature in terms of $\dt$.  In Theorem~1 of \cite{lazaric2012finite}, the approximation-error term 
$\displaystyle (1-\gamma^2)^{-1/2} \, \| v - \Pi v \|$ arises from model misspecification.  However, this bound diverges as $\dt\to0$ because $\gamma=e^{-\beta\dt}\to1$. This divergence stems from the use of the contraction property of the Bellman operator whose contraction factor is $\gamma$.  Such discrete-time arguments are sharp for general RL problems where the transition dynamics have no structural assumptions. 
However, such contraction-based analysis is not suitable for continuous-time RL problem considered in this paper for two reasons. First, as $\dt \to 0$, one would expect the approximation error to decrease; 
yet, since $\gamma \to 1$, the contraction property vanishes, causing the approximation error to instead worsen and eventually diverge. 
Second, the analysis framework fails to exploit the smoothness and structural information of the underlying SDE.

In contrast, our analysis exploits the ellipticity of the infinitesimal generator
$\mathcal{L}_{\hat \mu, \hat \Sigma}$ and yields an error bound $\frac{C_G}\beta \, \| v - \Pi v \|$, where $C_G$ is a independent of $\dt$. 
This establishes a stable and well-conditioned characterization of model-misspecification error in the continuous-time limit, in sharp contrast to existing discrete-time LSTD analyses. 
Finally, we emphasize that the improved dependence on $\dt$ does not stem from the PhiBE framework itself, but rather from the smoothness of the discrete-time transition dynamics induced by the SDE. We expect that a comparable model-misspecification bound can be derived for the Bellman equation when the discrete-time transitions arise from an underlying SDE, and we leave this direction for future work.

\begin{remark}
The Galerkin error remains bounded even when $\|\nabla \log \rho\|_{L^\infty}$ is unbounded.  However, since the sample-complexity analysis in this paper assumes boundedness of $\|\nabla \log \rho\|_{L^\infty}$, we move the Galerkin error theorem under unbounded $\|\nabla \log \rho\|_{L^\infty}$  assumption to Section~\ref{proof of galerkin error} to maintain internal consistency.  
The corresponding result is stated in Theorem~\ref{thm:unbounded rho galerkin error}.
\end{remark}

\color{black}

\iffalse
Combining the above theorem with Theorem~\ref{thm:v-vhat_stoch}, one can further bound the difference between the Galerkin solution and the true value function, as stated in the following corollary.

\begin{corollary}
    \label{coro:galerkin err}
    Under the same assumption in Theorem \ref{thm:galerkin err}, the Galerkin solution $\hV^G_i(s) = \th^\top\Phi(s)$ to \eqref{galerkin} approximates the true value function defined in \eqref{def of value} with an error 
    \[
    \ll \hV^G_i(s)  - V(s) \rl_\rho \leq \l(\frac{ C_{r,\mu,\Sig} }{\b^2} +\frac{ \h{C}_{r,\mu,\Sig} }{\b^{3/2}} \r)\dt^i+ \frac{{C}_G}\b\inf_{V^P = \th^\top\Phi}\ll \hV_i - V^P \rl_{H^1_{\rho,\infty}} . 
    \]
    where $C_{r,\mu,\Sig},  \h{C}_{r,\mu,\Sig}, \h{C}_G$ are the same constants defined in Theorems  \ref{thm:v-vhat_stoch} and \ref{thm:galerkin err}. 
\end{corollary}
\fi

\subsection{Model-free Galerkin method for PhiBE}\label{sec:galerkin_stoch}
When only discrete-time trajectory data is available, we first develop an unbiased estimate  $\bar{\mu}_i, \bar{\Sig}_i$ for $\hmu_i,\hs_i$ from the trajectory data, 
\begin{equation}\label{def of barmu}
    \begin{aligned}
    &\bar{\mu}_i(s_{j\dt}^l) = \frac{1}{\dt}\sum_{k=1}^i\coef{i}_k(s^l_{(j+k)\dt} - s^l_{j\dt}) ,\\
    &\bar{\Sig}_i(s_{j\dt}^l) = \frac{1}{\dt}\sum_{k=1}^i\coef{i}_k(s^l_{(j+k)\dt} - s^l_{j\dt})(s^l_{(j+k)\dt} - s^l_{j\dt})^\top,
\end{aligned} 
\end{equation}   
with $\coef{i}$ defined in \eqref{def of A b}.
Then, using the above unbiased estimate, one can approximate the matrix $\hat{A}$ and the vector $b$ by
\[
\begin{aligned}
    &\bar{A}_i = \sum_{l=1}^{I}\sum_{j = 0}^{m-i}\Phi(s^l_{j\dt})\l[ \b \Phi (s^l_{j\dt})- \mL_{\bar{\mu}_i(s^l_{j\dt}),\bar{\Sig}_i(s^l_{j\dt})}\Phi(s^l_{j\dt})\r]^\top,\\ 
    &\bar{b}_i = \sum_{l=1}^{I}\sum_{j = 0}^{m-i} r(s^l_{j\dt})\Phi(s^l_{j\dt}) .
\end{aligned}
\]
By solving the linear system $\bar{A}_i\th = \bar{b}_i$, one obtains the approximated value function $\bar{V}(s) = \Phi(s)^\top\th$ in terms of the finite bases. Note that our algorithm can also be applied to stochastic rewards or even unknown rewards as only observation of rewards is required at discrete time.   We summarize the model-free Galerkin method for deterministic and stochastic dynamics in Algorithm \ref{algo:galerkin_phibe_deter} and Algorithm \ref{algo:galerkin_phibe_stoch}, respectively.

\begin{algorithm}
\caption{Model-free Galerkin method for $i$-th order PhiBE in deterministic dynamics}
\label{algo:galerkin_phibe_deter}
\textbf{Given:} discrete time step $\dt$, discount coefficient $\b$, discrete-time trajectory data $\{(s^l_{j_\dt}, r^l_{j_\dt})_{j=0}^m\}_{l = 1}^I$ generated from the underlying dynamics, and a finite bases $\Phi(s) = (\phi_1(s), \cdots, \phi_p(s))^\top$.

\begin{algorithmic}[1]
        \State Calculate $\bar{A}_i$:
        \[
        \bar{A}_i = \sum_{l=1}^{I}\sum_{j = 0}^{m-i}\Phi(s^l_{j\dt})\l[ \b \Phi (s^l_{j\dt})- \bar{\mu}_i(s^l_{j\dt})\cdot \nb \Phi(s^l_{j\dt}) \r]^\top,
        \]
where $ \bar{\mu}_i(s^l_{j\dt})$ is defined in \eqref{def of barmu}.
        \State Calculate $\bar{b}_i$:
        \[
        \bar{b}_i = \sum_{l=1}^{I}\sum_{j = 0}^{m-i} r^l_{j\dt}\Phi(s^l_{j\dt}) .
        \]
        \State Calculate $\th$:
        \[
        \th = \bar{A}_i^{-1}\bar{b}_i.
        \]
    \State Output $\bar{V}(s) = \th^\top\Phi(s)$.
\end{algorithmic}
\end{algorithm}

\begin{algorithm}
\caption{Model-free Galerkin method for $i$-th order PhiBE in stochastic dynamics}
\label{algo:galerkin_phibe_stoch}
\textbf{Given:} discrete time step $\dt$, discount coefficient $\b$, discrete-time trajectory data $\{(s^l_{j_\dt}, r^l_{j_\dt})_{j=0}^m\}_{l = 1}^I$ generated from the underlying dynamics, and a finite bases $\Phi(s) = (\phi_1(s), \cdots, \phi_p(s))^\top$.

\begin{algorithmic}[1]
        \State Calculate $\bar{A}_i$:
        \[
        \bar{A}_i = \sum_{l=1}^{I}\sum_{j = 0}^{m-i}\Phi(s^l_{j\dt})\l[ \b \Phi (s^l_{j\dt})- \bar{\mu}_i(s^l_{j\dt})\cdot \nb \Phi(s^l_{j\dt}) - \frac12\bar{\Sig}_i(s^l_{j\dt}):\nb^2 \Phi(s^l_{j\dt})\r]^\top,
        \]
where $ \bar{\mu}_i(s^l_{j\dt}), \bar{\Sig}_i(s^l_{j\dt})$ are defined in \eqref{def of barmu}.
        \State Calculate $\bar{b}_i$:
        \[
        \bar{b}_i = \sum_{l=1}^{I}\sum_{j = 0}^{m-i} r^l_{j\dt}\Phi(s^l_{j\dt}) .
        \]
        \State Calculate $\th$:
        \[
        \th = \bar{A}_i^{-1}\bar{b}_i.
        \]
    \State Output $\bar{V}(s) = \th^\top\Phi(s)$.
\end{algorithmic}
\end{algorithm}

{
\begin{remark}[Comparison to the LSTD Algorithm]\label{rmk:similarity to lstd}
Algorithms \ref{algo:galerkin_phibe_deter}, \ref{algo:galerkin_phibe_stoch} serves as a continuous-time counterpart to LSTD \cite{bradtke1996linear} by replacing the discrete-time Bellman equation with the PhiBE equation. Both our algorithms and LSTD are model-free and employ linear basis functions.  In addition to that, both methods can be derived by applying the Galerkin method, followed by a stochastic approximation of the resulting system \cite{szepesvari2011least}.  

This connection, however, highlights a key advantage of our formulation. 
Owing to the structural similarity between the PhiBE \eqref{phibe_deter}, \eqref{def of high order stoch} and the discrete-time Bellman equation, both depend only on the current and next states, most existing RL algorithms developed for solving Bellman equations can be directly applied to PhiBE.  Moreover, as shown in Section~\ref{sec:phibe}, our method enjoys a smaller discretization error, allowing one to achieve a significantly improved approximation accuracy \emph{for free} with a slightly modification of the RL algorithm.
\end{remark}
}

\subsection{Sample Complexity of the Model-Free Algorithm for First-Order PhiBE}\label{sec:total-error}

We now establish the sample complexity of the model-free algorithm for solving the first-order PhiBE. A sample complexity analysis for higher-order PhiBE is deferred to future work. We assume that the data consist of $n$ independent short trajectories $\{s_i, s_i'\}_{i=1}^n$. Each trajectory provides a single transition pair: an initial state $s_i \sim \rho(s)$ and its one-step successor $s_i' \sim \rho(s',\dt \mid s_i)$. Here the initial density $\rho(s)$ is the weight we defined in \eqref{def of rho}, and the discrete-time transition $\rho(s',\dt\mid s)$ is driven by the underlying SDE \eqref{def of dynamics}. The model-free algorithm approximates the value function by $\hat V_n(s) = \Phi(s)^\top \theta_n$,
where $\theta_n \in \mathbb{R}^p$ is obtained by solving the empirical linear system
\begin{equation}\label{1st model-free galerkin}
A_n\th_n = b_n, \quad A_n = \frac1n\sum_{i=1}^n\Phi(s_i)\l[ \b \Phi (s_i)- \mL_{\bar{\mu}_1^i,\bar{\Sig}_1^i}\Phi(s_i)\r]^\top, \quad b_n = \frac1n\sum_{i=1}^nr(s_i)\Phi(s_i),
\end{equation}
where 
\[
\bar{\mu}_1^i = \frac{1}{\dt}(s_i' - s_i) ,\quad 
\bar{\Sig}_1^i = \frac{1}{\dt}(s_i' - s_i)(s_i' - s_i)^\top.
\]
This estimator targets the Galerkin solution $\hat V_1^G(s) = \Phi(s)^\top \theta$ of the first-order PhiBE defined in \eqref{galerkin} when the discrete-time transition dynamics are known. Here $\th$ solves
\begin{equation}\label{1st galerkin}
    A\th = b, \quad A = \int \Phi(s)\l[ \b \Phi (s)- \mL_{\hat{\mu}_1,\hat{\Sig}_1}\Phi(s)\r]^\top\rho(s)ds, \quad b = \int r(s)\Phi(s)\rho(s)ds.
\end{equation}
The theorem below establishes both consistency and finite-sample guarantees.

\begin{theorem}[Sample Complexity]\label{thm:sample-complexity}
Assume that $\ll \nb^k\Phi \rl \leq L_{\Phi}$ $\rho$-a.s. for $k = 0,1, 2$,  and the Gram matrix for the bases satisfies
\begin{equation}\label{ass on bases}
G_\Phi = \int \Phi(s)\Phi(s)^\top \rho(s)\,ds \succeq \lambda_\Phi I .
\end{equation}
Suppose further that $\Delta t \le \eta_{\mu,\Sigma,\beta}$ (same $\eta_{\mu,\Sigma,\beta}$ in Theorem \ref{thm:galerkin err}). Then, for any sufficiently small $\e>0$, if
\[
n \gtrsim \frac{\log(2p/\delta)R^2L_\Phi^8\sigma_{\max}^2}{ \beta^4 \e^2\dt\lam_\Phi^4}
\]
where $R = \|r\|_{L^\infty}$ and $\sigma_{\max} = \|\sigma\|_{L^\infty}$, we have, with probability at least $1-\delta$,
\[
\|\hat V_n - \hat V_1^G\|_{L^\infty} \le \varepsilon .
\]
Equivalently, if $n \gtrsim \frac{L_\Phi^4 \log(2p/\delta)\sigma_{\max}^2\lam_\Phi^{-2}}{\beta^2\dt} $, 
then with probability $1-\delta$,
\[
\ll V_n - \hat{V}_1^G \rl_\Linf \leq \l(L_\Phi^4 R\sigma_{\max}\lam_\Phi^{-2}\sqrt{\log(2p/\delta)} \r)\frac{1}{\beta^2\sqrt{{n\dt}}}  
\]
\end{theorem}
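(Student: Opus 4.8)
The plan is to bound $\|\hat V_n - \hat V_1^G\|_{L^\infty}$ by $\lambda_\Phi^{-1/2}L_\Phi \|\theta_n - \theta\|_2$ (using the Gram matrix lower bound and the pointwise bound on $\Phi$), so it suffices to control $\|\theta_n - \theta\|_2$. Writing $\theta_n - \theta = A_n^{-1}(b_n - b) + A_n^{-1}(A - A_n)\theta$, and noting that $\|\theta\|_2$ is already controlled via the Galerkin analysis of Theorem~\ref{thm:galerkin err} (coercivity of $\b - \mathcal L_{\hat\mu_1,\hat\Sig_1}$ on the span of $\Phi$ gives $\|A^{-1}\| \lesssim (\beta\lambda_\Phi)^{-1}$ and $\|\theta\|_2 \lesssim R/(\beta\sqrt{\lambda_\Phi})$), the task reduces to (i) a concentration bound for $\|A_n - A\|_2$ and $\|b_n - b\|_2$, and (ii) a perturbation argument showing $A_n$ is invertible with $\|A_n^{-1}\|$ of the same order as $\|A^{-1}\|$ once the deviations are small enough.

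For step (i), the key point is that $A_n$ and $b_n$ are empirical averages of i.i.d. matrix/vector terms whose expectation is exactly $A$ and $b$ — this is where the \emph{unbiasedness} of $\bar\mu_1^i, \bar\Sig_1^i$ (Definition of the first-order PhiBE) is used: $\E[\bar\mu_1^i \mid s_i] = \hat\mu_1(s_i)$ and $\E[\bar\Sig_1^i \mid s_i] = \hat\Sig_1(s_i)$. I would apply a matrix Bernstein (or Hoeffding) inequality. The per-sample terms involve $\Phi$, $\nabla\Phi$, $\nabla^2\Phi$ (all bounded by $L_\Phi$) contracted against $\bar\mu_1^i = (s_i'-s_i)/\dt$ and $\bar\Sig_1^i = (s_i'-s_i)(s_i'-s_i)^\top/\dt$. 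The crucial variance computation is on these increments: since $s_i' - s_i = \mu(s_i)\dt + \sigma(s_i)(B_\dt - B_0) + o(\dt)$, we have $\E\|s_i' - s_i\|^2 = O(\sigma_{\max}^2 \dt)$ and $\E\|s_i'-s_i\|^4 = O(\sigma_{\max}^4\dt^2)$, so $\bar\Sig_1^i$ has second moment $\E\|\bar\Sig_1^i\|^2 = O(\sigma_{\max}^4)$ — order one in $\dt$ — while $\bar\mu_1^i$ has second moment $O(\sigma_{\max}^2/\dt)$, which is the term that blows up as $\dt\to 0$ and produces the $\dt^{-1}$ in the sample bound. This is the \emph{source of the variance-vs-bias tradeoff} emphasized in the introduction, and getting the moment bounds with the right $\dt$-scaling (rather than the naive $\dt^{-2}$ one would get from bounding $s_i'-s_i$ crudely by its $L^\infty$ norm, which would give $\dt^{-4}$) is the main technical content; it relies on the diffusion having bounded coefficients so the Brownian increment dominates. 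Assembling, $\|A_n - A\|_2 \lesssim L_\Phi^4 \sigma_{\max} \sqrt{\log(p/\delta)/(n\dt)}$ with high probability, and similarly $\|b_n - b\|_2 \lesssim L_\Phi R\sqrt{\log(p/\delta)/n}$ (the reward term, bounded by $R$, does not carry the $\dt^{-1}$ factor — so $\|A_n-A\|$ dominates).

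For step (ii), on the event that $\|A_n - A\|_2 \le \frac12\|A^{-1}\|_2^{-1} \asymp \beta\lambda_\Phi$, a standard Neumann-series argument gives $A_n$ invertible with $\|A_n^{-1}\|_2 \le 2\|A^{-1}\|_2 \lesssim (\beta\lambda_\Phi)^{-1}$; this is exactly why the hypothesis on $n$ must be large enough to force $\|A_n - A\|_2$ below $\beta\lambda_\Phi$, and explains the form $n \gtrsim L_\Phi^8\sigma_{\max}^2\log(2p/\delta)/(\beta^2\dt\lambda_\Phi^2)$ as the minimal sample size for the argument to start (after which the $\e$-dependent bound kicks in). Then
\[
\|\theta_n - \theta\|_2 \le \|A_n^{-1}\|_2\big(\|b_n - b\|_2 + \|A_n - A\|_2\|\theta\|_2\big) \lesssim \frac{1}{\beta\lambda_\Phi}\cdot L_\Phi^4\sigma_{\max}\sqrt{\frac{\log(2p/\delta)}{n\dt}}\cdot\frac{R}{\beta\sqrt{\lambda_\Phi}},
\]
and multiplying by $\lambda_\Phi^{-1/2}L_\Phi$ from the first reduction yields $\|\hat V_n - \hat V_1^G\|_{L^\infty} \lesssim L_\Phi^5 R\sigma_{\max}\lambda_\Phi^{-2}\beta^{-2}\sqrt{\log(2p/\delta)/(n\dt)}$, which matches the stated rate (up to bookkeeping of $L_\Phi$ powers) and, rearranged, gives the sample-complexity form. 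I expect the main obstacle to be the variance bound on $\bar\mu_1^i$ and $\bar\Sig_1^i$ with the correct $\dt$-dependence: one must go beyond crude $L^\infty$ bounds on the increments and instead use the SDE structure (Itô isometry / moment estimates for SDEs with Lipschitz bounded coefficients) to show the second moments scale as $\dt^{-1}$ and $\dt^0$ respectively — this is precisely the place where "exploiting the smoothness of the underlying dynamics" enters and where the improvement from $O(\dt^{-4})$ to $O(\dt^{-1})$ is won. A secondary subtlety is ensuring the matrix-Bernstein variance proxy is correctly assembled from these moment bounds (the dominant contribution comes from the $\nabla\Phi \cdot \bar\mu_1^i$ block), and handling the $L^\infty$-norm conversion uniformly in $s$ via the Gram-matrix assumption~\eqref{ass on bases}.
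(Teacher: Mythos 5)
Your proposal follows essentially the same route as the paper: the same decomposition $\theta_n-\theta = A_n^{-1}(b_n-b)+A_n^{-1}(A-A_n)\theta$, the same coercivity bound $\|A^{-1}\|\lesssim(\beta\lambda_\Phi)^{-1}$ from the ellipticity of $\beta-\mathcal{L}_{\hat\mu_1,\hat\Sigma_1}$ for $\dt\le\eta_{\mu,\Sigma,\beta}$, the same perturbation argument to keep $\|A_n^{-1}\|$ of the same order, and, most importantly, the same key observation that $s_i'-s_i$ scales as $\sigma_{\max}\sqrt{\dt}$ so that the $\nabla\Phi\cdot\bar\mu_1$ block dominates the concentration of $A_n$ and yields the $\dt^{-1}$ rather than $\dt^{-2}$ sample complexity. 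The one place your plan is lighter than the paper is the tail control for matrix Bernstein with unbounded summands: the paper establishes sub-exponential $\psi_1$-norm bounds on the increments via the Dol\'eans--Dade exponential martingale (together with Aronson's Gaussian bounds on the transition density) rather than relying on raw second and fourth moments, which by themselves are not sufficient input for Bernstein; this is a refinement of your stated plan rather than a different argument.
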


The proof of the above Theorem is given in Section \ref{proof of sample-complexity}.
Several remarks are in order. 
The first part of Theorem~\ref{thm:sample-complexity} provides an explicit sample complexity bound guaranteeing an $\e$-accurate approximation to the Galerkin solution. The second part of the Theorem provides an finite-sample bound to the Galerkin solution, which scale as $O(\b^{-2}\dt^{-1/2} n^{-1/2})$. In classical discrete-time RL analyses (e.g., Theorem~1 of \cite{lazaric2012finite}), the error scales as $O((1-\gamma)^{-2}n^{-1/2})$. Note that their result is stated in the form $O((1-\gamma)^{-1}V_{\max}n^{-1/2})$. Since $V_{\max} = R(1-\gamma)^{-1}$,  the resulting error bound in fact scales as $(1-\gamma)^{-2}$ rather than $(1-\gamma)^{-1}$.
Substituting $\gamma = e^{-\beta \Delta t}$ leads to an order of $O(\b^{-2}\dt^{-2} n^{-1/2})$. By contrast, our analysis exploits the smoothness of the underlying SDE and the uniform ellipticity of the infinitesimal generator $\mathcal{L}_{\hat\mu_1,\hat\Sigma_1}$, improving the dependence on the time discretization from $\dt^{-2}$ to $\dt^{-1/2}$. 

\begin{corollary}[Overall Error Bound]\label{coro:overall-error}
Under the same assumption in Theorem \ref{thm:sample-complexity}, then with probability at least $1-\delta$,
\[
\ll V_n - V(s) \rl_\rho \lesssim \frac{ C_{r,\mu,\Sig} }{\b^2}\dt+ \frac{{C}_G}{\beta}\inf_{V^P = \th^\top\Phi}\ll \hV_i - V^P \rl_{H^1_{\rho,\infty}} + \frac{C_{r,\Sig,\Phi}}{\beta^2\sqrt{{n\dt}}} 
\]
where $C_{r,\Sig,\Phi} = L_\Phi^4 R\sigma_{\max}\lam_\Phi^{-2}\sqrt{\log(2p/\delta)}$, and $C_{r,\mu,\Sig}, C_G$ are the same constants as in Theorems~\ref{thm:v-vhat_stoch} and \ref{thm:galerkin err}.
\end{corollary}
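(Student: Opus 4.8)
\textbf{Proof plan for Corollary~\ref{coro:overall-error}.}
The plan is to derive the bound from a three–term triangle inequality along the chain $V_n \to \hat V_1^G \to \hat V_1 \to V$, where $\hat V_1^G$ is the Galerkin solution \eqref{1st galerkin} of the first–order PhiBE, $\hat V_1$ is the exact solution of the first–order PhiBE \eqref{def of high order stoch}, and $V$ is the true value function \eqref{def of value}. Concretely I would write
\[
\ll V_n - V \rl_\rho \;\le\; \ll V_n - \hat V_1^G \rl_\rho \;+\; \ll \hat V_1^G - \hat V_1 \rl_\rho \;+\; \ll \hat V_1 - V \rl_\rho,
\]
and control each of the three pieces by an already–established theorem. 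No new estimate is needed; the work is purely in assembling the pieces and reconciling the norms in which they are stated.

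For the rightmost term I would apply Theorem~\ref{thm:v-vhat_stoch} with $i=1$, giving $\ll \hat V_1 - V\rl_\rho \le \big(C_{r,\mu,\Sig}\b^{-2}+\h{C}_{r,\mu,\Sig}\b^{-3/2}\big)\dt$; since $\b$ is bounded above in the regime of interest, the $\b^{-3/2}$ contribution is dominated by $C_{r,\mu,\Sig}\b^{-2}\dt$ up to a constant, which is exactly why the corollary is phrased with $\lesssim$. For the middle term I would apply Theorem~\ref{thm:galerkin err} with $i=1$, yielding $\ll \hat V_1^G-\hat V_1\rl_\rho \le \tfrac{C_G}{\b}\inf_{V^P=\th^\top\Phi}\ll \hat V_1 - V^P\rl_{H^1_\rho}$, which is the second term. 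For the leftmost term I would use the finite–sample part of Theorem~\ref{thm:sample-complexity}, which on an event of probability at least $1-\delta$ bounds $\ll V_n-\hat V_1^G\rl_\Linf$ by $\big(L_\Phi^4 R\sigma_{\max}\lam_\Phi^{-2}\sqrt{\log(2p/\delta)}\big)\b^{-2}(n\dt)^{-1/2}$; since $\rho$ is a probability density, $\int\rho(s)\,ds=1$ and hence $\ll f\rl_\rho\le\ll f\rl_\Linf$ for every $f$, so the same bound holds in the weighted norm with $C_{r,\Sig,\Phi}=L_\Phi^4 R\sigma_{\max}\lam_\Phi^{-2}\sqrt{\log(2p/\delta)}$. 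Summing the three estimates on the intersection of the full–measure events and the $(1-\delta)$ sampling event gives the claimed inequality.

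There is no genuine mathematical obstacle; the corollary is a direct consequence of chaining Theorems~\ref{thm:v-vhat_stoch}, \ref{thm:galerkin err}, and \ref{thm:sample-complexity}. The only items needing care are bookkeeping: (i) checking that the hypotheses of all three theorems hold simultaneously — Assumption~\ref{ass_2}, the basis conditions \eqref{ass on bases}, and the smallness requirements $\dt\le\eta_{\mu,\Sig,\b}$ (Theorems~\ref{thm:galerkin err},~\ref{thm:sample-complexity}) and $\dt^i\le D_{\mu,\Sig,\b}$ (Theorems~\ref{thm:v-vhat_stoch},~\ref{thm:galerkin err}) — all of which are subsumed under ``the same assumption in Theorem~\ref{thm:sample-complexity}'', possibly after additionally imposing $\dt\le D_{\mu,\Sig,\b}$; (ii) the passage from the $L^\infty$ bound of Theorem~\ref{thm:sample-complexity} to the $\ll\cdot\rl_\rho$ bound, immediate because $\rho$ integrates to one; and (iii) the minor notational reconciliation between the $H^1_\rho$ norm of Theorem~\ref{thm:galerkin err} and the $H^1_{\rho,\infty}$ norm written in the corollary, which agree under the boundedness conditions in force.
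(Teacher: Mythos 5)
Your proposal is correct and is exactly the argument the paper intends: the corollary is stated without a separate proof precisely because it follows from the triangle inequality along $V_n \to \hat V_1^G \to \hat V_1 \to V$, invoking Theorem~\ref{thm:sample-complexity} (with $\ll f\rl_\rho \le \ll f \rl_\Linf$ since $\rho$ is a probability density), Theorem~\ref{thm:galerkin err}, and Theorem~\ref{thm:v-vhat_stoch} with $i=1$. Your bookkeeping remarks — absorbing the $\h C_{r,\mu,\Sig}\b^{-3/2}$ term into the $\lesssim$, checking the simultaneous smallness conditions on $\dt$, and reconciling the $H^1_\rho$ versus $H^1_{\rho,\infty}$ norms — are the right points of care and match the paper's conventions.
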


Several remarks are in order. 
The total error consists of three components: a discretization error, a model approximation error, and a finite-sample error. Among these terms, only the model error is independent of the time step $\Delta t$. The discretization and sample errors exhibits opposite dependencies on the  date collection frequency $\dt$, i.e., 
%If the functional space spanned by the linear bases is sufficiently rich so that the model error is negligible, then for $n$ samples collected at time interval $\Delta t$,
\[
\|\hat V_n - V\|
=
O\!\left(
\frac{\Delta t}{\beta^2}
+
\frac{1}{\beta^2 \sqrt{n\Delta t}}
\right).
\]
This bound highlights a fundemental trade-off inherent in continuous-time PE. Decreasing $\Delta t$ reduces the discretization bias but increases the variance of the estimator, thereby requiring a larger sample size to maintain the same level of accuracy. As a result, excessively frequent sampling does not necessarily improve overall performance.

This phenomenon is not specific to the PhiBE framework but is intrinsic to continuous-time reinforcement learning. As illustrated in Figure~\ref{fig: data_er_stoch} of our numerical experiments, the model-free RL algorithm (LSTD) exhibit a similar trade-off.
Similar behavior was observed in \cite{zhang2023managing}, who studied Monte Carlo sampling algorithm for linear--quadratic (LQ) policy evaluation. Their theoretical analysis is restricted to the LQ setting, with nonlinear dynamics examined only through numerical experiments. In contrast, our analysis applies to general controlled diffusion processes and provides explicit non-asymptotic error bounds. Moreover, their approach evaluates the value function only at discrete states and does not involve function approximation, whereas our algorithm produces a global approximation of the value function via function approximation, leading to a substantially more involved analysis. Finally, our results explicitly characterize the dependence on the discount parameter $\beta$, which is not addressed in \cite{zhang2023managing}.

\color{black}
%Then, one approximates the integral over $s$ and $\hmu(s), \hs(s)$ by the trajectory data, \begin{equation}\label{galerkin_phibe}\begin{aligned}&\l[\sum_{j = 1}^J \Phi(s^j_0)\l(\b \Phi(s^j_0)^\top - \l(\sum_{k=1}^ia_k(s^j_{k\dt} - s^j_0)\r)\cdot \nb\Phi(s^j_0)^\top\r.\r.\\&\l.\l.-\frac12\l(\sum_{k=1}^ia_k(s^j_{k\dt} - s^j_0)(s^j_{k\dt} - s^j_0)^\top\r):\nb^2\Phi(s^j_0)^\top\r) \r]\th = \sum_{j = 1}^J \Phi(s^j_0)^\top r(s^j_0) \end{aligned}\end{equation} By solving the above linear system, one obtains the i-th order approximation to the value function using the trajectory data. 

%\begin{remark}{explanation on intuition and theoretical for future study}\end{remark}

\section{Numerical Experiments}\label{sec:numerics}
All MATLAB code used to generate the numerical results is available.\footnote{%
The repository is available at \url{https://github.com/yuhuazhumath/phibe-matlab}.}
\subsection{Deterministic dynamics}\label{sec:linear_galerkin}
We first consider deterministic dynamics,  where the state space  is defined as $\S = [-\pi,\pi]$. We consider two kinds of underlying dynamics, one is linear,
\begin{equation}\label{eq:deter_linear}
    \frac{d}{dt}s_t = \lam s_t,
\end{equation}
and the other is nonlinear,
\begin{equation}\label{eq:deter_nonlinear}
    \frac{d}{dt}s_t = \lam\sin^2(s_t).
\end{equation}
The reward is set to be $r(s) = \beta\cos(ks)^3-\lam s(-3k\cos^3(ks)\sin(ks))$ for the linear case and $r(s) = \beta\cos^3(ks)-\lam\sin^2(s)(-3k\cos^2(ks)\sin(ks))$ for the nonlinear case, where the value function can be exactly obtained, $V(s) = \cos^3(ks)$ in both cases. We use periodic bases $\{\phi_k(s_1)\}_{k=1}^{2M+1} = \frac{1}{\sqrt{\pi}}\{\frac{1}{\sqrt{2}}, \cos(ms_1), \sin(ms_1)\}_{m=1}^M$ with $M$ large enough so that the solution can be accurately represented by these finite bases {($M = 4$ for the low frequency value function and $M = 30$ for the high frequency value function)}. 

For the linear dynamics, the discrete-time transition dynamics are 
\[
p_\dt(s) = e^{\lam \dt}s.
\]
Hence, one can express the BE as
\begin{equation}\label{linear_BE}
    \tV(s) = r(s)\dt + e^{-\b \dt} \tV(e^{\lam \dt} s),
\end{equation}
and $i$-th order PhiBE as
\begin{equation}\label{linear_PhiBE}
    \b\hV_i(s) = r(s) + \frac1\dt \l[\sum_{k=1}^i\coef{i}_k(e^{\lam k\dt} s - s)\r] \cdot \nb \hV_i(s),
\end{equation}
respectively for $\coef{i}$ defined in \eqref{def of A b}. 
For the nonlinear dynamics, we approximate $p_{\dt}(s)$ and generate the trajectory data numerically,
\[
s_{t+\d} = s_t + \d \lam\sin^2(s_t)
\]
with $\d = 10^{-4}$ sufficiently small. 

The trajectory data are generated from $J$ different initial values $s_0\sim$Unif$[-\pi,\pi]$, and each trajectory has $m=4$ data points, $\{s_0, \cdots, s_{(m-1)\dt}\}$. Algorithm \ref{algo:galerkin_phibe_deter} is used to solve for the PhiBE, and LSTD is used to solve for BE. LSTD is similar to Algorithm \ref{algo:galerkin_phibe_deter} except that one uses $\t{A}$ derived from the BE \eqref{bellman},
\begin{equation}\label{galerkin_be}
    \begin{aligned}
    &\t{A} = \sum_{l=1}^{I}\sum_{j = 0}^{m-2}\Phi(s^l_{j\dt})\l[\Phi (s^l_{j\dt})- e^{-\beta\dt} \Phi(s^l_{(j+1)\dt})\r]^\top 
\end{aligned}
\end{equation}
instead of $\bar{A}_i$. 

In Figure \ref{fig:eq0}, the data are generated from the linear dynamics \eqref{eq:deter_linear} with $\lam = 0.05$ and collected at different $\dt$. {For the low-frequency value function (Figure \ref{fig:eq0}/(a), (c)), we use $40$ data points with $J = 10, m = 4$, 
whereas for the high-frequency value function (Figure~\ref{fig:eq0}/(b)), we use $400$ data points with 
$J = 100, m = 4$.
}  We compare the solution to the second-order PhiBE with the solution to BE (when the discrete-time transition dynamics are known), and the performance of LSTD with the proposed Algorithm \ref{algo:galerkin_phibe_deter} (when only trajectory data are available) with different data collection interval $\dt$, discount coefficient $\b$ and oscillation of reward $k$. Note that the exact solution to BE is computed as $\tV(s) = \sum_{i=0}^Ir(e^{\lam \dt i}s)$ with {$I =500/\dt$} large enough, and the exact solution to PhiBE is calculated by applying the Galerkin method to \eqref{linear_PhiBE}. 

In Figure \ref{fig:eq1}, the data are generated from the nonlinear dynamics \eqref{eq:deter_nonlinear} and collected at different $\dt$. {For the low-frequency value function (Figure \ref{fig:eq1}/(a), (c)), we use $80$ data points with $J = 20, m = 4$, 
whereas for the high-frequency value function (Figure~\ref{fig:eq1}/(b), we use $400$ data points with 
$J = 100, m = 4$.
} We compare the solutions to the first-order and second-order PhiBE with the solution to the BE (when the discrete-time transition dynamics are known), and the performance of LSTD with the proposed Algorithm \ref{algo:galerkin_phibe_deter} (when only trajectory data are available)  with different $\dt, \b, k, \lam$. 

In Figure \ref{fig: data_er}, the distances of the solution from PhiBE, BE to the true value function are plotted as $\dt\to0$; the distances of the approximated solution by Algorithm \ref{algo:galerkin_phibe_deter} and LSTD to the true value function are plotted as the amount of data increases. {We set $J = [10, 10^2, 10^3, 10^3]$ and $m=4$ for the data size.}  Here, the distance is measured using the $L^2$ norm
\begin{equation}\label{def of l2 error}
    D(V,\hV) = \sqrt{\int_{-\pi}^\pi (V(s) - \hV(s))^2 ds}.
\end{equation}

In Figures \ref{fig:eq0} and \ref{fig:eq1}, when the discrete-time transition dynamics are known, PhiBE solution is much closer to the true value function compared to the BE solution in all the experiments. Especially, the second-order PhiBE solution is almost identical to the exact value function. Additionally, when only trajectory data is available, one can approximate the solutions to PhiBE very well with only $40$ or $400$ data points. Particularly, when $\dt = 5$ is large, the solution to PhiBE still approximates the true solution very well, which indicates that one can collect data sparsely based on PhiBE. Moreover, the solution to PhiBE is not sensitive to the oscillation of the reward function, which implies that one has more flexibility in designing the reward function in the RL problem.  Besides, unlike BE, the error increases when $\b$ is too small or too large, while the error for PhiBE decays as $\b$ increases. Furthermore, it's noteworthy that in Figure \ref{fig:eq1}/(b) and (c), for relatively large changes in the dynamics indicated by $\ll \nb \mu \rl \leq \lam = 5 $ and $2$, respectively, PhiBE still provides a good approximation.

In Figure \ref{fig: data_er}/(a) and (b), one can observe that the solution for BE approximates the true solution in the first order, while the solution for $i$-th order PhiBE approximates the true solution in $i$-th order. 
In Figure \ref{fig: data_er}/(c) and (d), one can see that as the amount of data increases, the error from the LSTD algorithm stops decreasing when it reaches $10^{-1}$. This is because the error between BE and the true value function $\ll \tV - V \rl = O(\dt)$ dominates the data error. On the other hand, for higher-order PhiBE, as the amount of data increases, the performance of the algorithm improves, and the error can achieve $O(\dt^i)$.

\begin{figure}
\centering
     \begin{subfigure}[b]{0.32\textwidth}
         \centering
         \includegraphics[width=\textwidth]{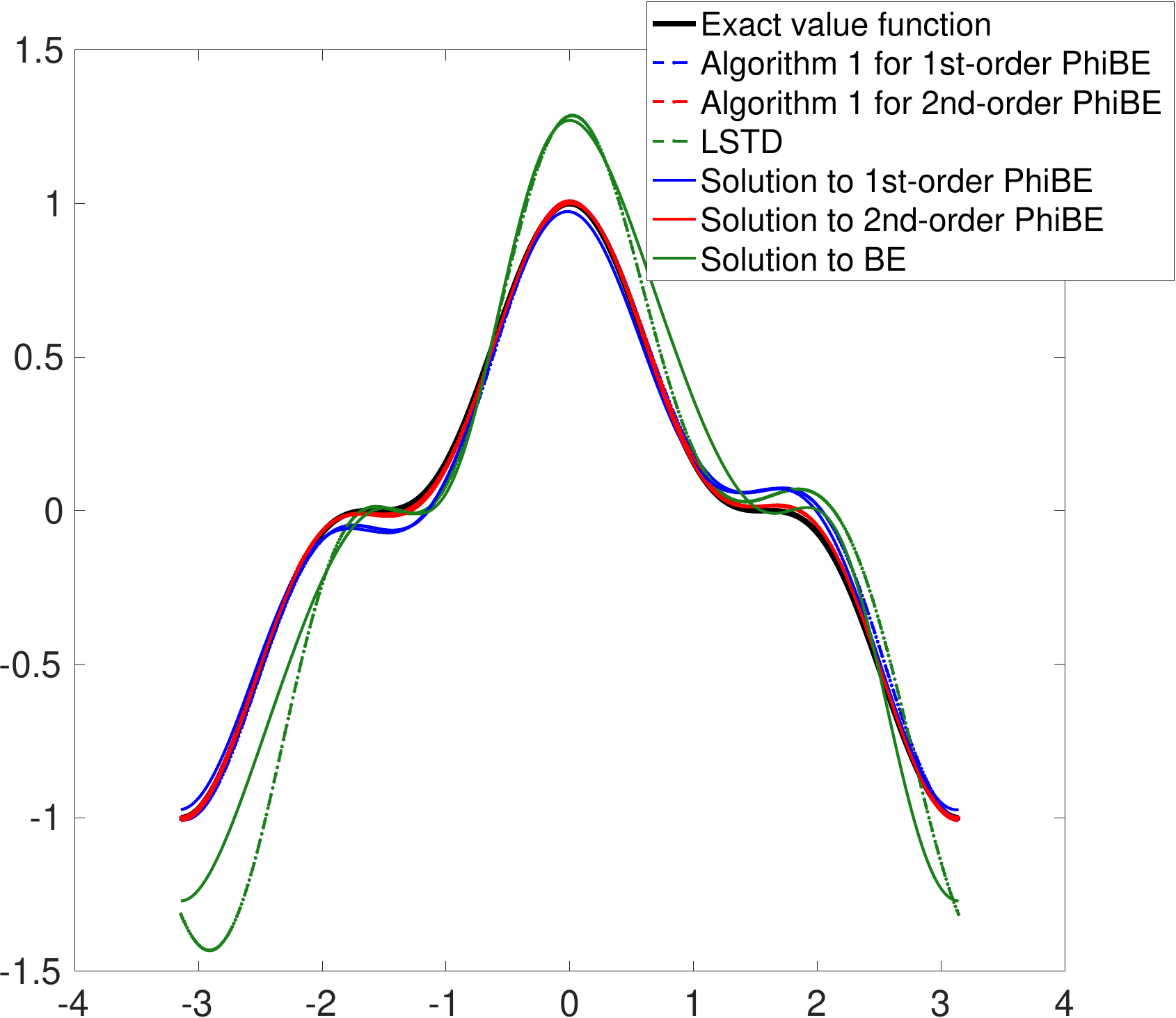}
         \caption{Nonlinear dynamics \eqref{eq:deter_nonlinear} with $\dt = 5, \b = 0.1, k = 1,\lam = 0.1$}
     \end{subfigure}
     \hfill
     \begin{subfigure}[b]{0.32\textwidth}
         \centering
         \includegraphics[width=\textwidth]{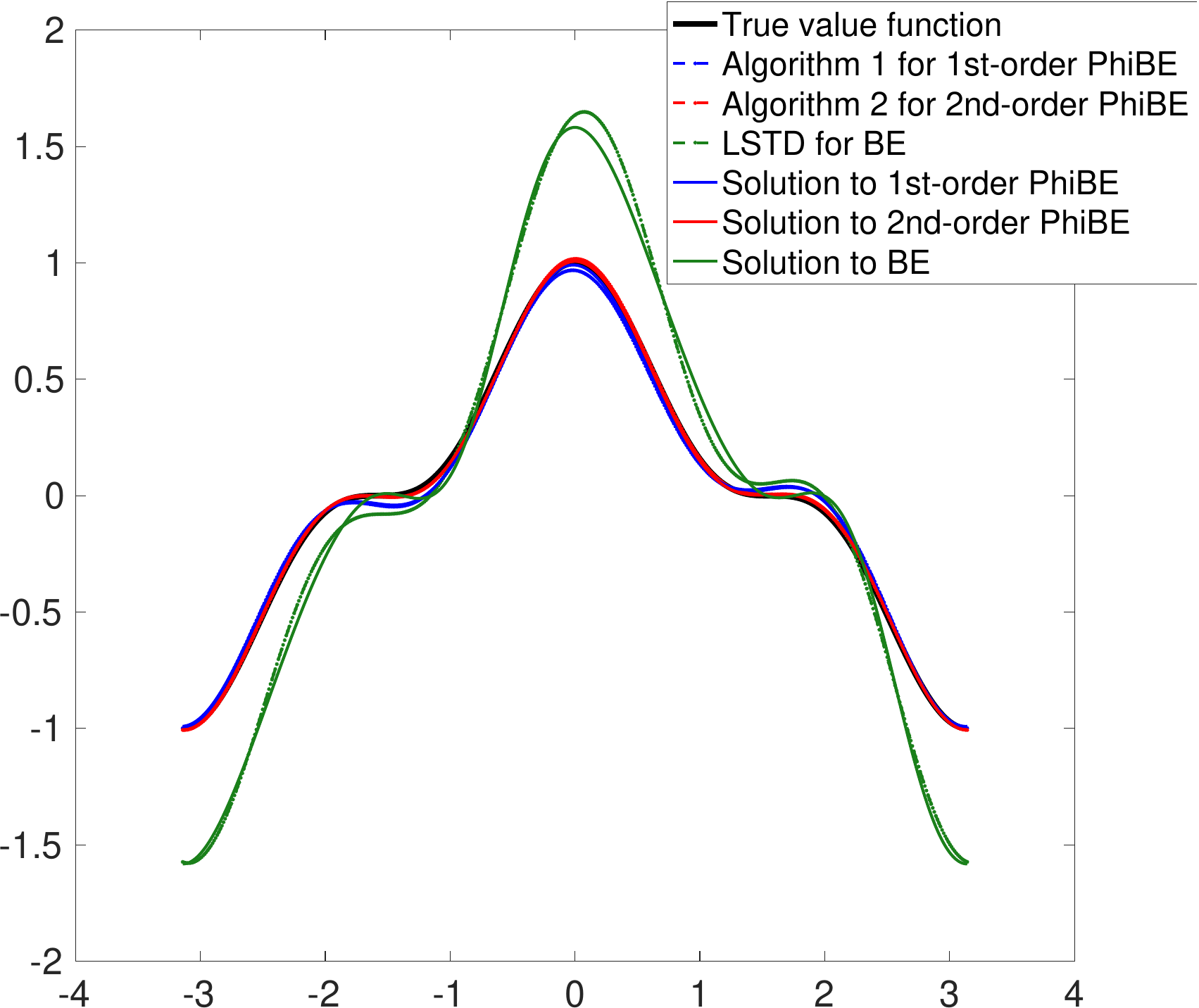}
         \caption{Nonlinear dynamics \eqref{eq:deter_nonlinear} with $\dt = 0.1, \b = 10, {k = 1},\lam = 5$}
     \end{subfigure}
     \hfill
     \begin{subfigure}[b]{0.32\textwidth}
         \centering
         \includegraphics[width=\textwidth]{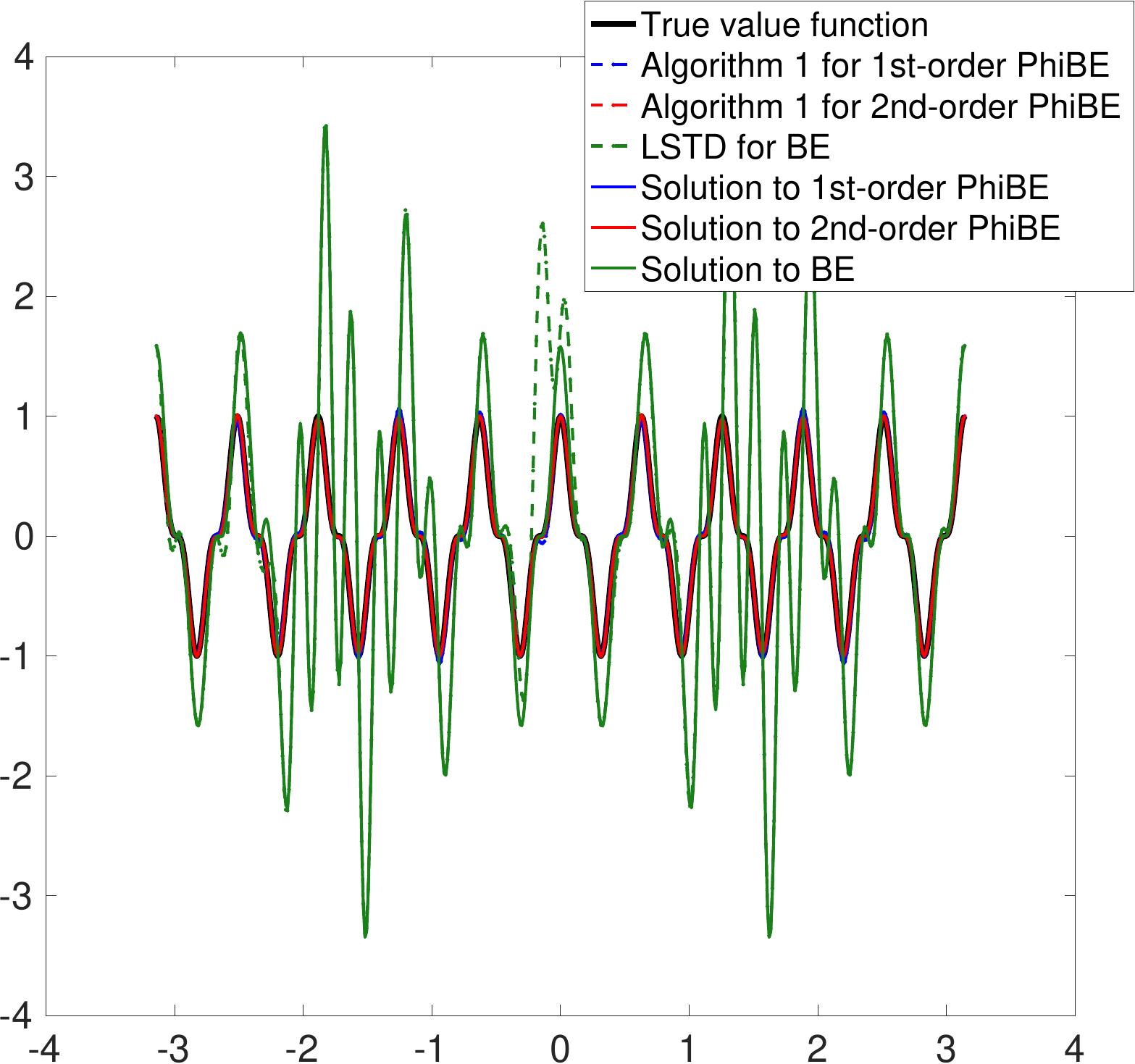}
         \caption{Nonlinear dynamics \eqref{eq:deter_nonlinear} with $\dt = 0.1, \b = 10, k = 10,\lam = 2$}
     \end{subfigure}
     \caption{The PhiBE solution and the BE solution, when the discrete-time transition dynamics are given, are plotted in solid lines. The approximated PhiBE solution based on Algorithm \ref{algo:galerkin_phibe_deter} and the approximated BE solution based on LSTD, when discrete-time data are given, are plotted in dash lines. Both algorithms utilize the same data points.}
     \label{fig:eq1} 
\end{figure}

\begin{figure}
\centering
     \begin{subfigure}[b]{0.24\textwidth}
         \centering
         \includegraphics[width=\textwidth]{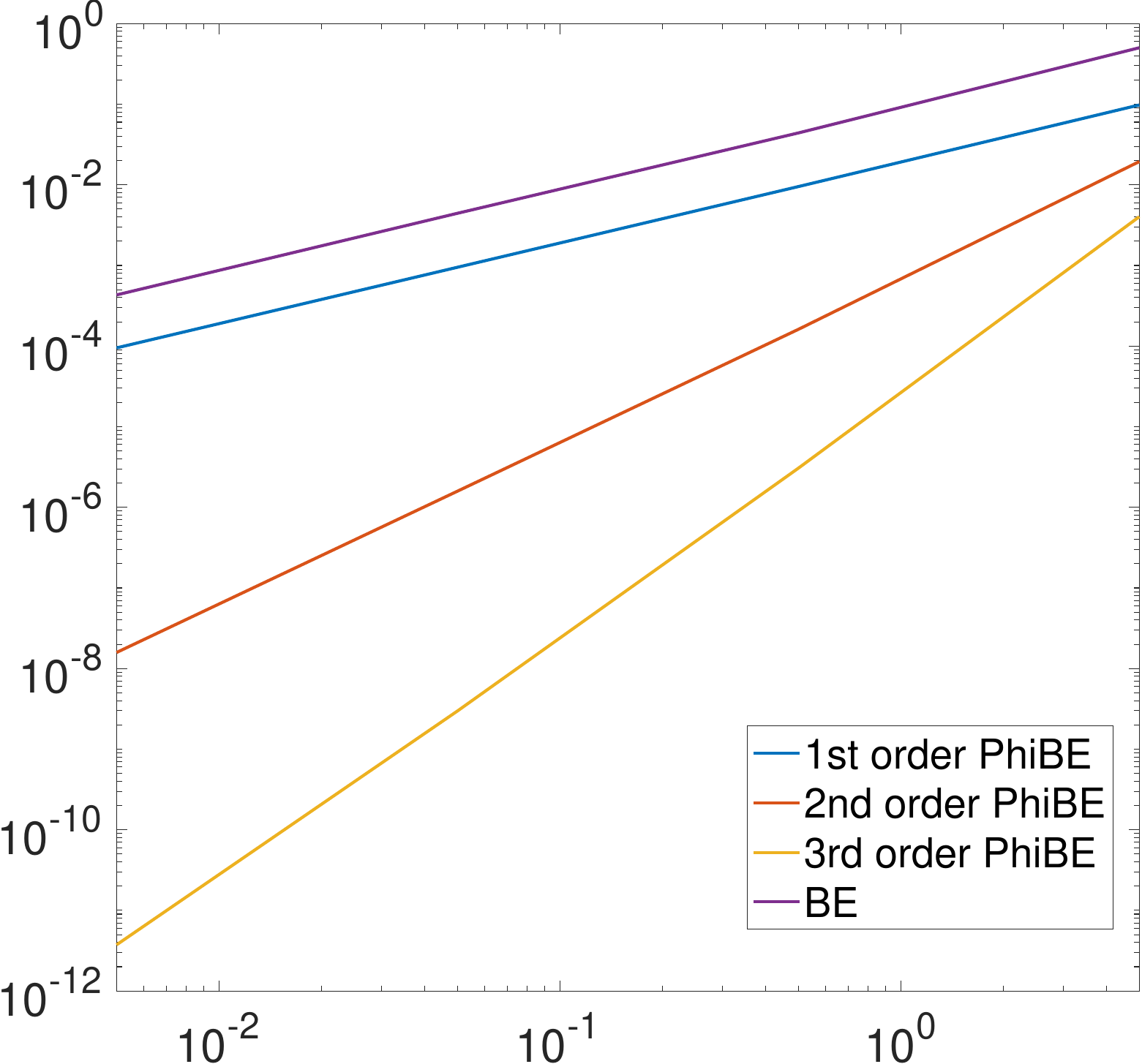}
         \caption{Linear dynamics \eqref{eq:deter_nonlinear}.}
     \end{subfigure}
     \hfill
     \begin{subfigure}[b]{0.24\textwidth}
         \centering
         \includegraphics[width=\textwidth]{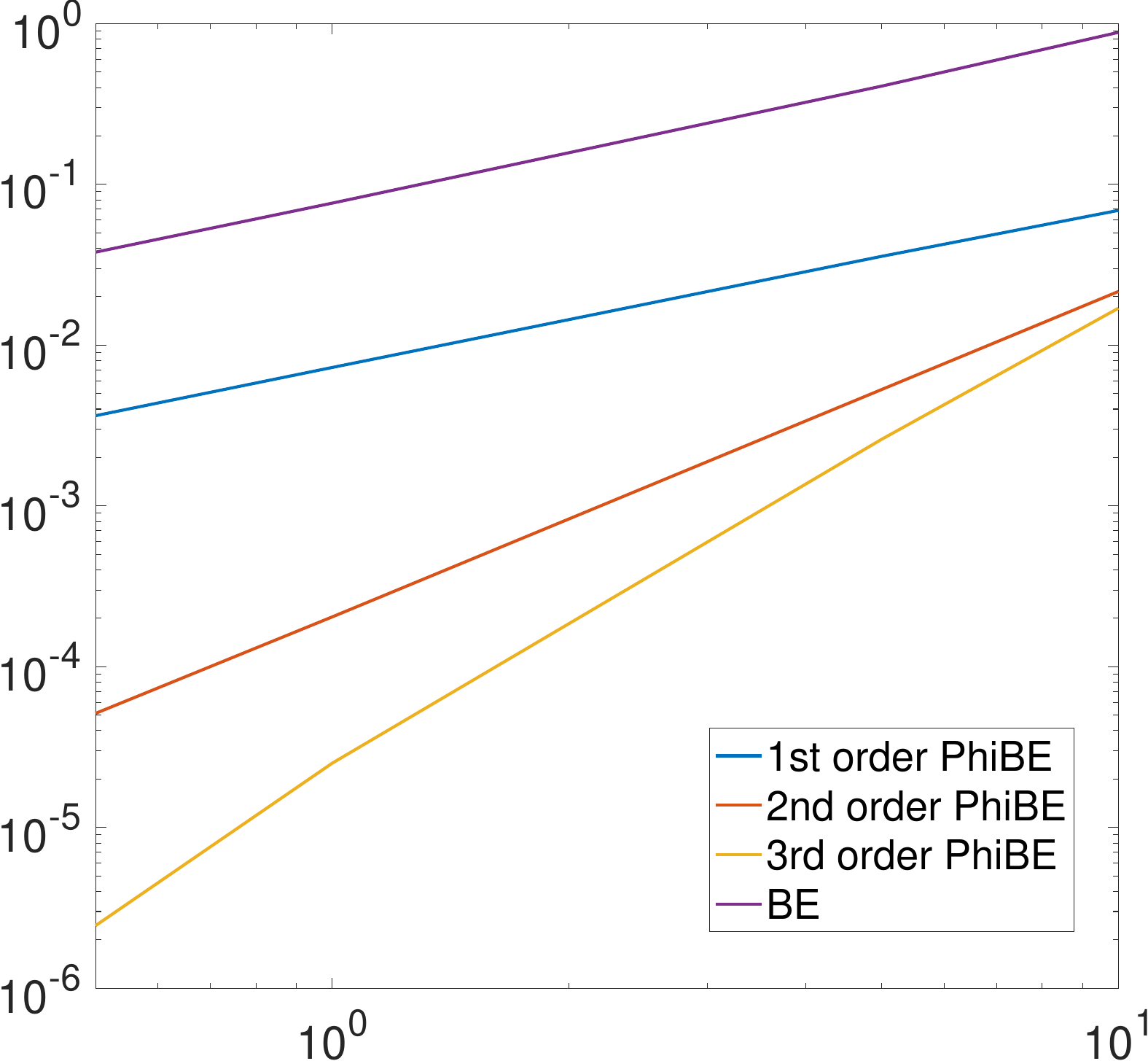}
         \caption{Nonlinear dynamics \eqref{eq:deter_nonlinear}}
     \end{subfigure}
     \hfill
     \begin{subfigure}[b]{0.24\textwidth}
         \centering
         \includegraphics[width=\textwidth]{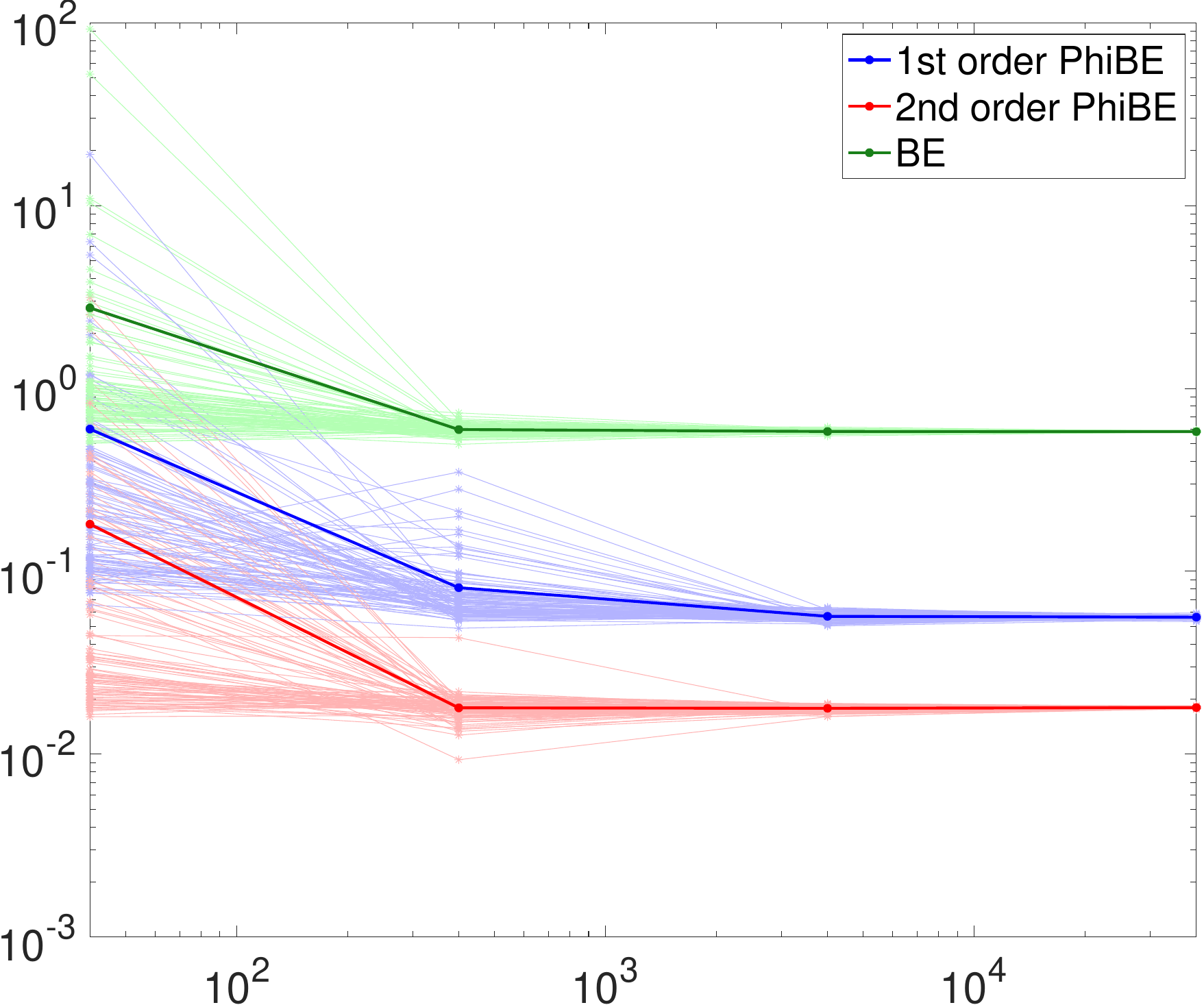}
         \caption{Linear dynamics \eqref{eq:deter_nonlinear}.}
     \end{subfigure}
     \hfill
     \begin{subfigure}[b]{0.24\textwidth}
         \centering
         \includegraphics[width=\textwidth]{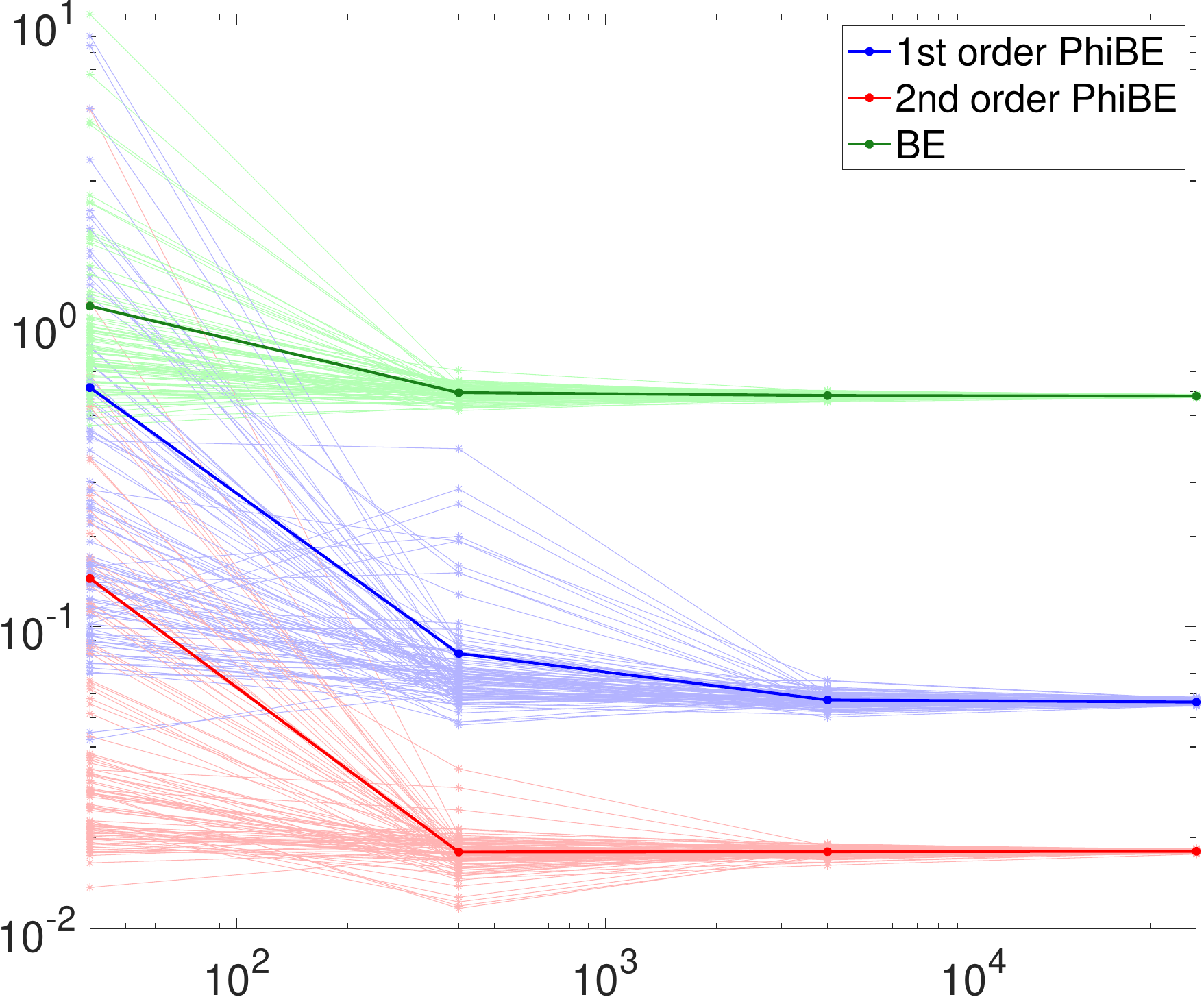}
         \caption{Nonlinear dynamics \eqref{eq:deter_nonlinear}.}
     \end{subfigure}
     \caption{The $L^2$ error \eqref{def of l2 error} of the PhiBE solutions and the BE solutions with decreasing $\dt$ are plotted in the left two figures. The $L^2$ error \eqref{def of l2 error} of the approximated PhiBE solutions and the approximated BE solutions with increasing amount of data collected every $\dt = 5$ unit of time are plotted in the right two figures. {The solid lines are the average over $100$ simulations for all different number of data.} We set $\lam = 0.05, \b = 0.1, k = 1$ in both linear and nonlinear cases. }
     \label{fig: data_er} 
\end{figure}

\newpage
\subsection{Stochastic dynamics}
We consider the Ornstein–Uhlenbeck process,
\begin{equation}\label{def of ou}
    ds(t) = \lam sdt + \s dB_t,
\end{equation}
with $\lam = 0.05, \s = 1$. Here the reward is set to be $r(s) = \b \cos^3(ks)-\lam s (-3k\cos^2(ks)\sin(ks)) $ $- \frac12\s^2(6k^2\cos(s)\sin^2(ks) -3k^2\cos^3(ks))$, where the value function can be exactly obtained, 
$V(s) = \cos^3(ks)$. For OU process, since the conditional density function for $s_t$ given $s_0 = s$ follows the normal distribution with expectation $se^{\lam t}$, variance $\frac{\s^2}{2\lam}(e^{2\lam t}-1)$. Both PhiBE and BE have explicit forms. One can express PhiBE as,
\begin{equation}\label{phibe_stoch}
    \begin{aligned}
        \b \hV_i(s) =& r(s) + \frac1\dt\sum_{k=1}^i\coef{i}_k(e^{\lam k\dt} - 1)s \nb \hV(s) \\
        &+ \frac1{2\dt}\sum_{k=1}^i\coef{i}_k\l[\frac{\s^2}{2\lam}(e^{2\lam k\dt}-1) + (e^{\lam k\dt} - 1)^2s^2\r]\D \hV(s);
    \end{aligned}
\end{equation}
and  BE as,
\begin{equation}\label{be_stoch}
\begin{aligned}
    \tV(s) = & r(s)\dt + e^{-\beta\dt} \E\l[\tV(s_{t+1})|s_t = s\r]\\
    & = r(s)\dt + e^{-\beta\dt} \int_\S \tV(s') \rho_{\dt}(s',s) ds',
\end{aligned}
\end{equation}
where $$\rho_{\dt}(s',s) = \frac{1}{\sqrt{2\pi} \h{\s}}\exp\l(-\frac1{2\h{\s}^2}(s' - se^{\lam\dt})^2\r), \qd \text{with  } \h{\s} = \frac{\s^2}{2\lam}(e^{2\lam \dt} -1).$$

In Figure \ref{fig:eq2}, we compare the exact solution and approximated solution to PhiBE and BE, respectively, for different $\dt, \b, k$. {For the approximated solution, we set the data size $m = 4$, and $J = 10^4$ for Figure~\ref{fig:eq2}/(a),  $J = 10^5$ for Figure~\ref{fig:eq2} /(b), $J = 100$ for Figure~\ref{fig:eq2} /(c).} In Figure \ref{fig: data_er_stoch}/(a), the decay of the error as $\dt \to 0$ for the exact solutions to PhiBE and BE are plotted. In Figure  \ref{fig: data_er_stoch} /(b), the decay of the approximated solution to PhiBE and BE based on Algorithm \ref{algo:galerkin_phibe_stoch} and LSTD are plotted with an increasing amount of data. {We set $J = [10^3, 10^4, 10^5, 10^6, 10^7]$ and $m=4$ for the data size.} In Figure  \ref{fig: data_er_stoch} /(c), the error of the approximated solution to PhiBE and BE based on Algorithm \ref{algo:galerkin_phibe_stoch} and LSTD are plotted with a decreasing $\dt$. {We set $J = 10^6$ and $m=4$ for the data size.}

We observe similar performance in the stochastic dynamics as in the deterministic dynamics, as shown in Figures \ref{fig:eq2} and \ref{fig: data_er_stoch}. In Figure \ref{fig: data_er_stoch}, the variance of the higher order PhiBE is larger than that of the first-order PhiBE because it involves more future steps. However, note that the error is plotted on a logarithmic scale. Therefore, when the error is smaller, although the variance appears to have the same width on the plot, it is actually much smaller. Particularly, when the amount of the data exceeds $10^6$, the variance is smaller than $10^{-1}$. 
{Figure \ref{fig: data_er_stoch}/(c) illustrates a non-monotone dependence of the error on the time discretization $\Delta t$ when the total data budget is fixed. For the data-driven RL algorithm and the first-order PhiBE method, the error initially decreases as $\Delta t$ is reduced from $1$ to $0.1$, reflecting a regime in which discretization error dominates. As $\Delta t$ is further reduced from $0.1$ to $0.01$, the error increases, indicating that finite-sample error becomes dominant. The second-order PhiBE method, which exhibits higher variance, enters the variance-dominated regime earlier: its error already increases as $\Delta t$ decreases from $1$ to $0.1$.}

\begin{figure}
\centering
     \begin{subfigure}[b]{0.32\textwidth}
         \centering
         \includegraphics[width=\textwidth]{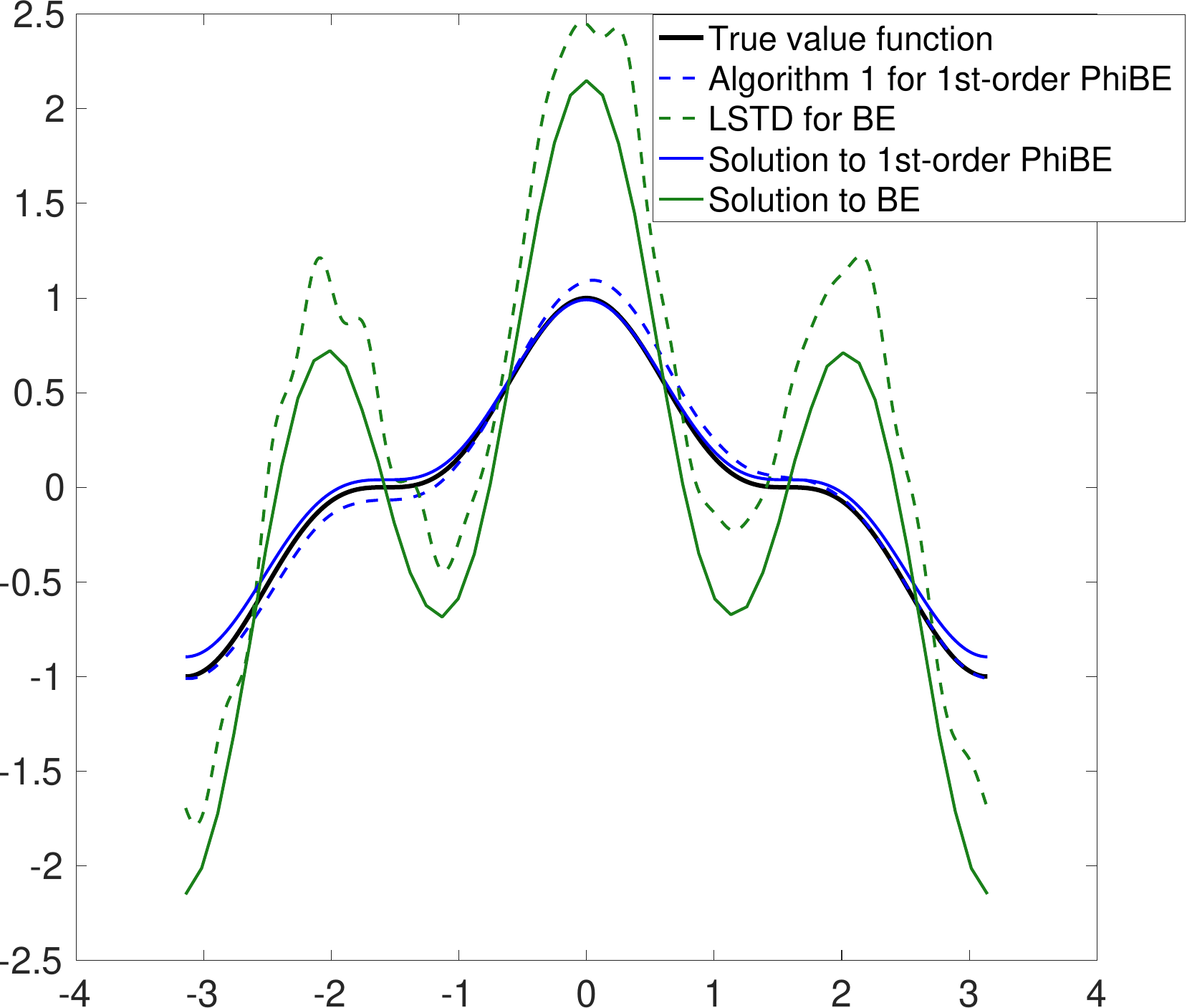}
         \caption{ Stochastic dynamics \eqref{def of ou} with $\dt = 1,  \beta = 0.1, k = 1$.\\\quad\quad}
     \end{subfigure}
     \hfill
     \begin{subfigure}[b]{0.32\textwidth}
         \centering
         \includegraphics[width=\textwidth]{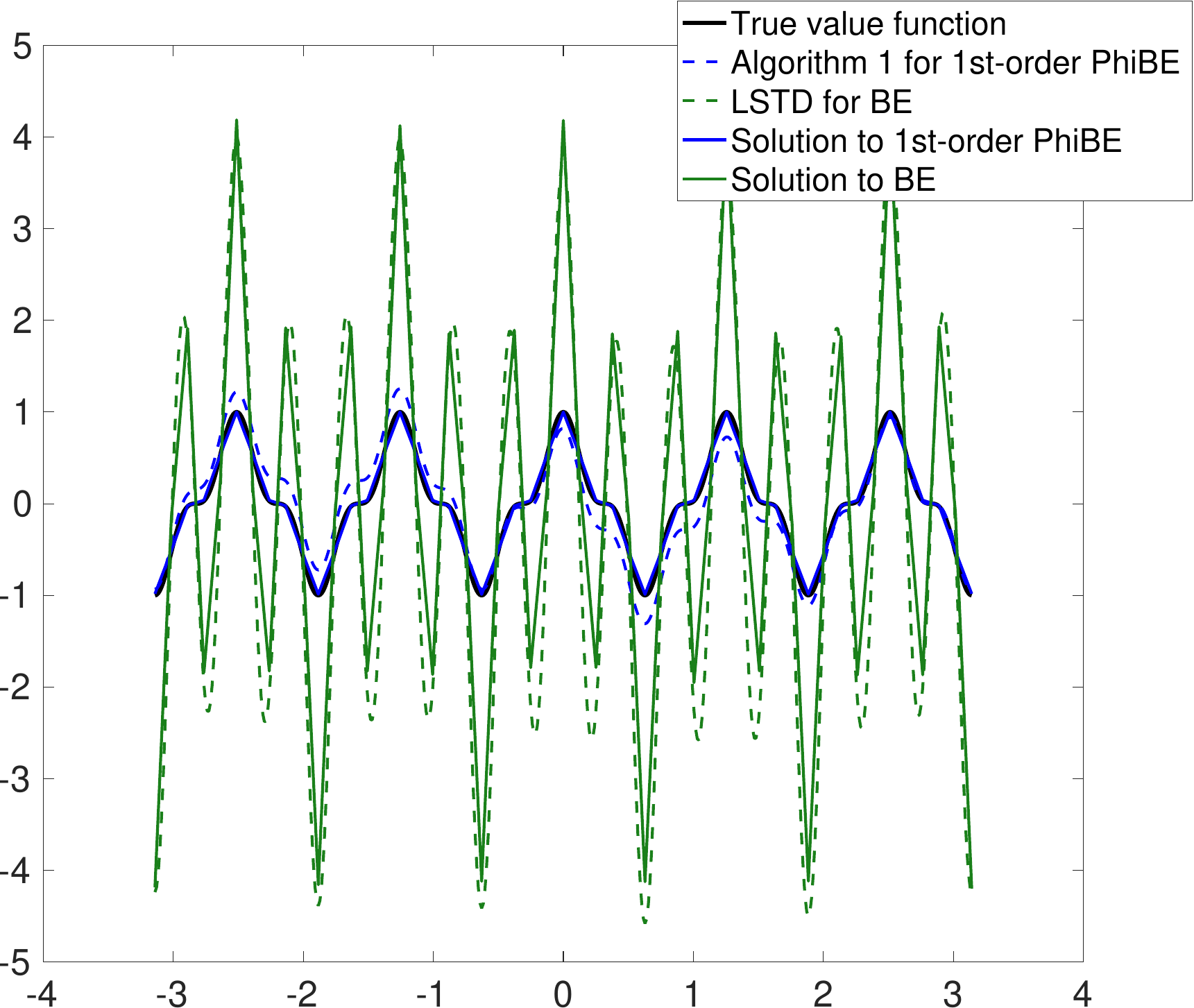}
         \caption{Stochastic dynamics \eqref{def of ou} with $\dt = 0.1,  \beta = 0.1, k = 5$.}
     \end{subfigure}
     \begin{subfigure}[b]{0.32\textwidth}
         \centering
         \includegraphics[width=\textwidth]{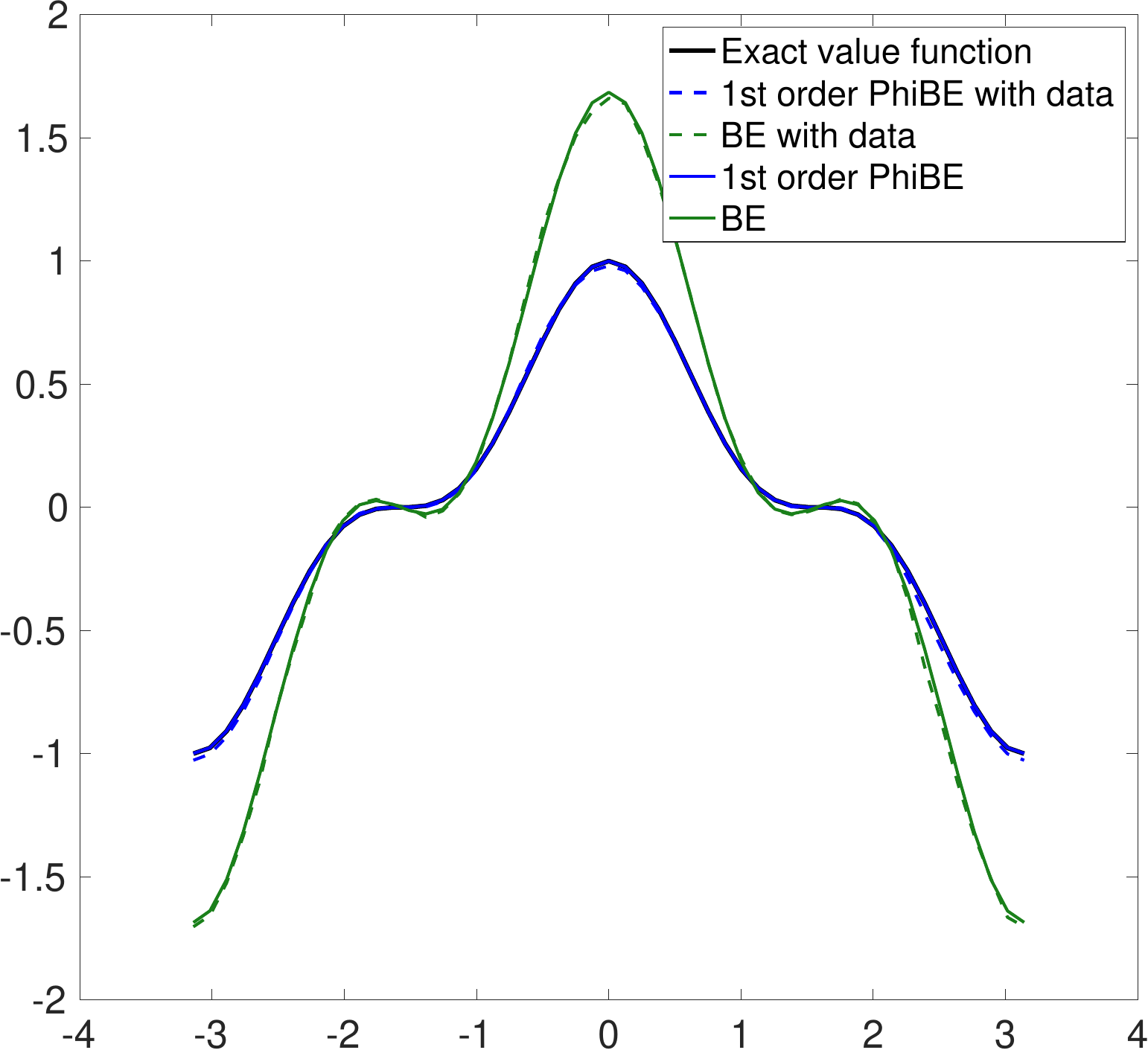}
         \caption{Stochastic dynamics \eqref{def of ou} with $\dt = 0.1,  \beta = 10, k = 1$.}
     \end{subfigure}
     \caption{
     The PhiBE solution and the BE solution, when the discrete-time transition dynamics are given, are plotted in solid lines. The approximated PhiBE solution based on Algorithm \ref{algo:galerkin_phibe_stoch} and the approximated BE solution based on LSTD, when discrete-time data are given, are plotted in dash lines. Both algorithms utilize the same data points.
     }
     \label{fig:eq2} 
\end{figure}
\begin{figure}
\centering
\hfill
     \begin{subfigure}[b]{0.3\textwidth}
         \centering
         \includegraphics[width=\textwidth]{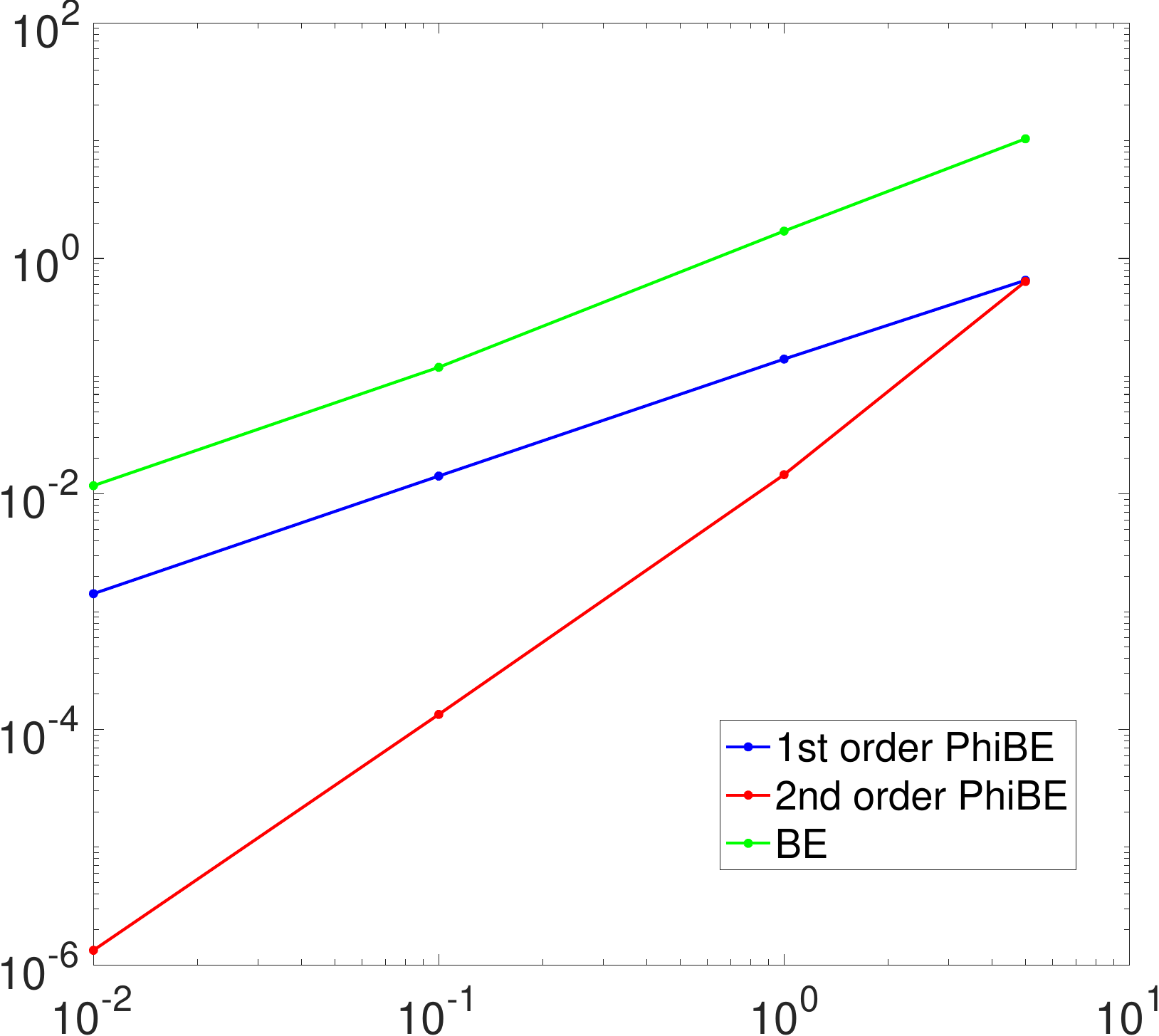}
         \caption{Discretization error w.r.t. data collection frequency $\dt$.}
     \end{subfigure}
     \hfill
     \begin{subfigure}[b]{0.3\textwidth}
         \centering
         \includegraphics[width=\textwidth]{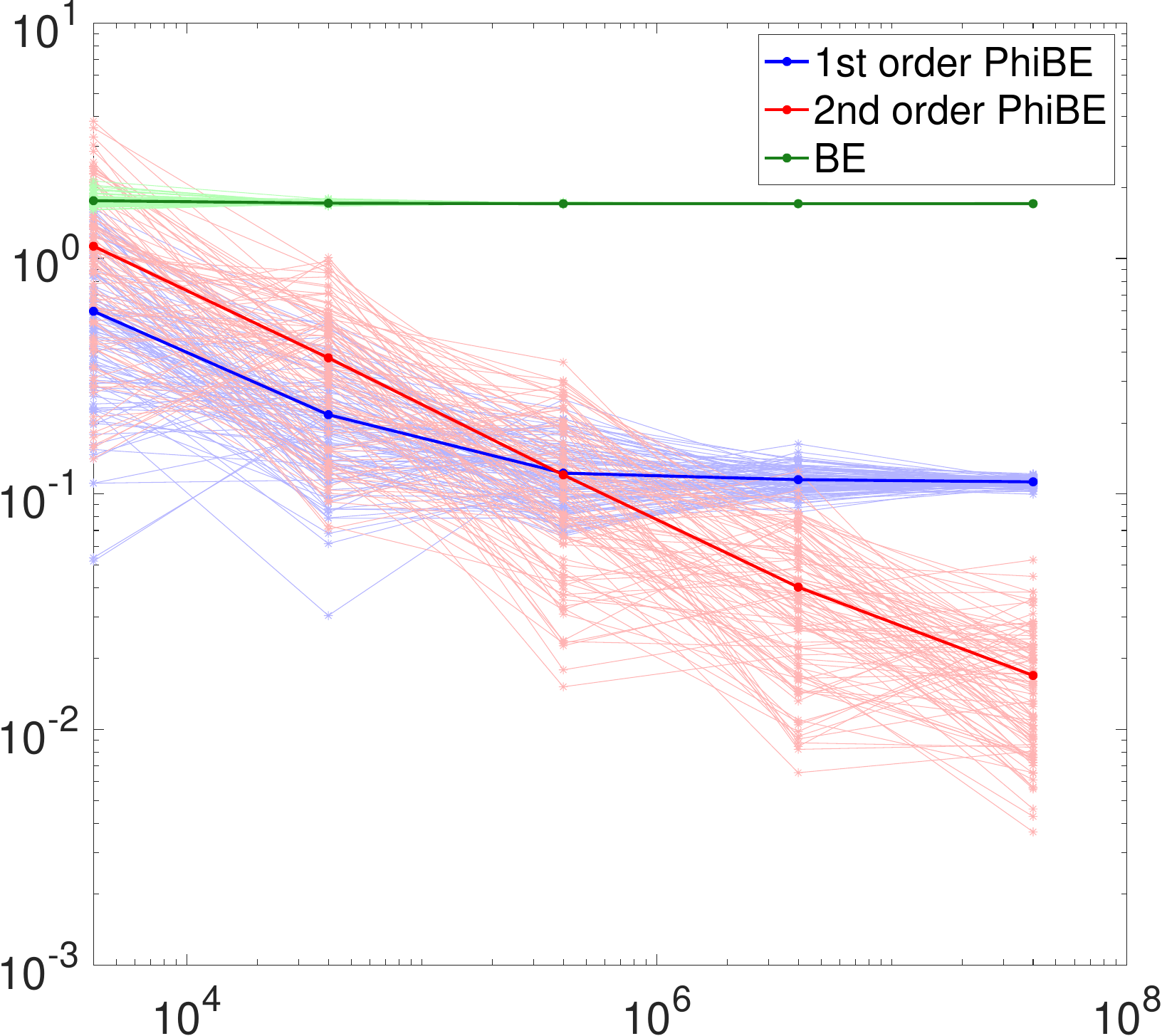}
         \caption{Approximation error w.r.t the amount of data.}
     \end{subfigure}
     \hfill
     \begin{subfigure}[b]{0.36\textwidth}
         \centering
         \includegraphics[width=\textwidth]{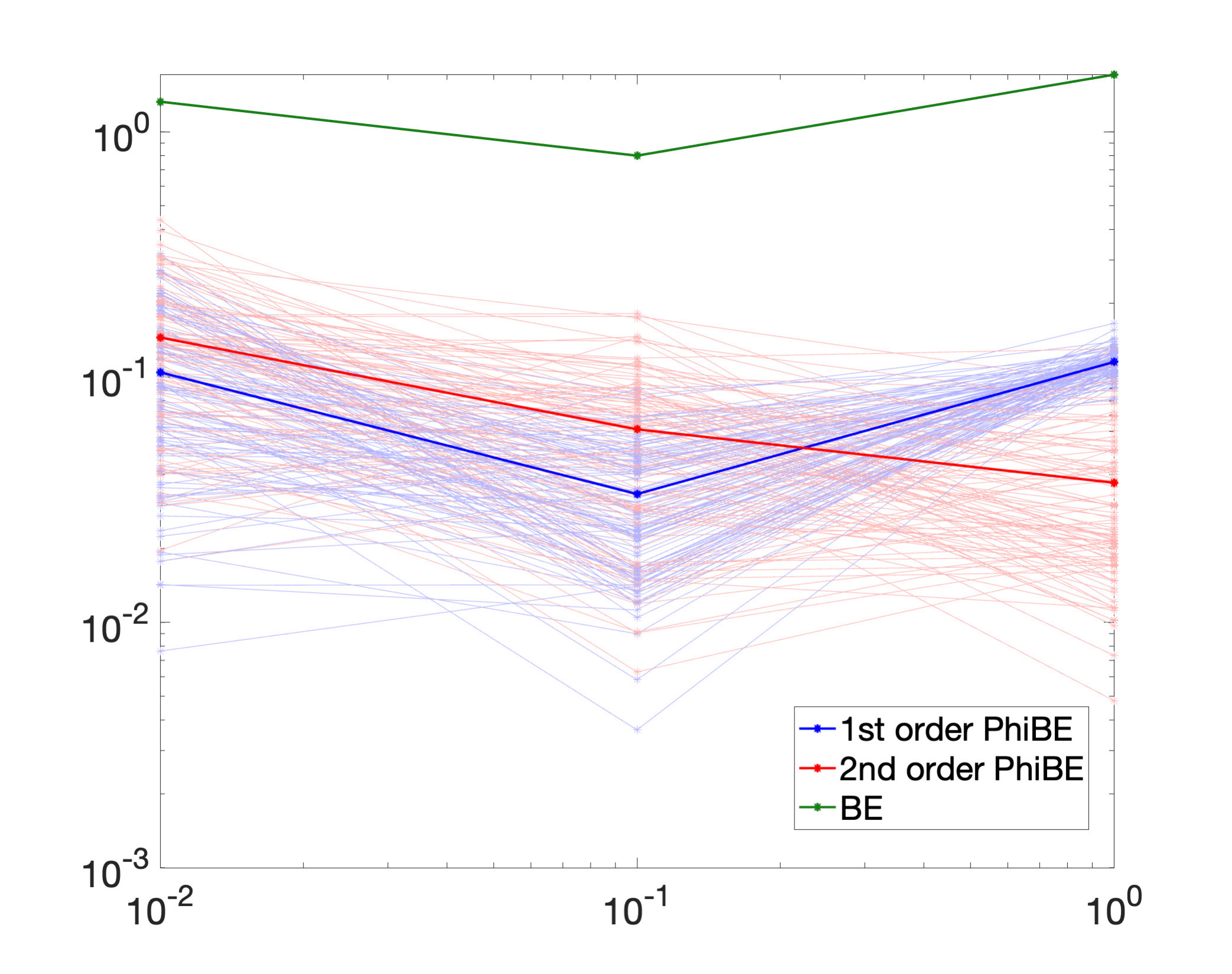}
         \caption{Approximation error w.r.t data collection frequency$\dt$.}
     \end{subfigure}
     \hfill
     \caption{The $L^2$ error \eqref{def of l2 error} of the PhiBE and BE solutions as a function of the time discretization $\dt$ is shown in (a).
Plot (b) reports the $L^2$ error of the data-driven approximations as the amount of data increases, with samples collected every $\dt = 1$ unit of time.
Plot (c) shows the $L^2$ error of the data-driven approximations with a fixed budget of $4\times10^6$ data points as $\dt$ decreases.
{Solid lines denote averages over $100$ independent simulations.}
In all plots, we set $\beta = 0.1$ and $k = 1$.}
     \label{fig: data_er_stoch}
\end{figure}

\subsection{Stabilization problem}
In the previous section, we compared the PhiBE framework with the standard RL formulation and validated our theoretical results using a synthetic example. We now examine their performance on a real control problem.

\paragraph{Problem setup.}
We consider the continuous-time PE problem
\begin{equation}  \label{eq:sde-closed-loop}
\begin{aligned}
    &V(s) = 
    \E\l[\int_0^\infty e^{-\beta t} (q s_t^2+ru(s_t)^2) | s_0 = s\r]\\
    &dS_t=\l(-\kappa s_t^3+(\alpha s_t -b u(s_t)\r)\,dt + \sigma\,dW_t,
\end{aligned}
\end{equation}
with parameters $\kappa>0$, $\alpha\in\mathbb{R}$, $b>0$, and $\sigma>0$.
We focus on the linear feedback policy
\[
u(s)=K s.
\]
Under this policy, the true value function $V$ solves the elliptic equation
\begin{equation} \label{eq:poisson}
\beta V(s) - \mathcal L V(s) = r(s), \qquad
\mathcal L f(s)
= \Big(-\kappa s^{3} + (\alpha - bK)s\Big) f'(s)
+ \frac{\sigma^{2}}{2} f''(s),
\end{equation}
where $r(s)=q s^{2} + r K^{2} s^{2}$.

This model represents a standard stabilization problem, where the quadratic reward balances state deviation and control effort, and the linear policy $u(s)=-Ks$ acts as a proportional controller. When $\kappa=0$, the dynamics reduce to a linear system, corresponding to the classical continuous-time LQR formulation widely used in robotics, aerospace, and process control. In this regime, the value function admits a closed-form solution, providing a baseline against which the approximation accuracy of RL and PhiBE can be directly evaluated. When $\kappa\neq 0$, the cubic drift introduces strong nonlinear restoring forces, modeling phenomena such as soft-spring mechanical systems or the stabilization of an inverted pendulum near its upright position.

\subsubsection{Linear dynamics}
In the linear-dynamics setting, the true value function admits a closed-form solution,
\begin{equation}\label{lqr-true}
    V(s) = a_1s^2 + a_2, \quad a_1 = \frac{R}{\beta - 2\lam}, \quad a_2 = \frac{\sigma^2}{\beta}a_1,
\end{equation}
where $R = q + rK^2$ and $\lambda = \alpha - bK$.
Moreover, for this linear system, the Bellman equation (BE) and PhiBE both admit exact analytical solutions. For the discrete-time BE, we have
\[
\begin{aligned}
    &\text{BE}: \t{V}(s) = Rs^2 \dt + \gamma \int V(s') \rho(s'|s) ds', \quad R = q+rK^2, \quad \gamma = e^{-\beta \dt},
\end{aligned}
\]
where 
\[
\rho(s'|s) \sim \mN (e^{\lam\dt}s, \hat{\sigma}^2\dt), \quad \quad \hat{\sigma}^2 = \frac{\sigma^2}{2\lam\dt}(e^{2\lam\dt} - 1).
\]
This yields the quadratic solution for BE,
\begin{equation} \label{be-anlytic}
\t{V}(s) = a_1^{R}s^2 + a_0^R, \quad a_1^{R} = \frac{R\dt} {1-\gamma e^{2\lam\dt}}, \quad a_0^{R} = \frac{\gamma \dt}{1-\gamma}\h{\sigma}^2 a_1^{R}.
\end{equation}
Similarly, the PhiBE equation can be evaluated exactly:
\[
\begin{aligned}
\text{PhiBE}: \h{V}(s) =& Rs^2 +  \h{V}'(s)\int \frac{s'-s}{\dt} \rho(s'|s) ds' + \frac12\h{V}''(s)\int \frac{(s'-s)^2}{\dt} \rho(s'|s) ds' \\
=&Rs^2 +  \hat{\lambda}s\h{V}'(s) + \frac{\h{\sigma}^2 + \hat{\lam}^2s^2\dt}2\h{V}''(s), \quad \h{\lam} = \frac{e^{\lam\dt} - 1}{\dt}.
\end{aligned}
\]
This yields another quadratic solution,
\begin{equation}\label{phibe-anlytic}
    \h{V}(s) = a_1^{P}s^2 + a_0^P, \quad a_1^{P} =\frac{R}{\beta - 2\h{\lam} - \eta}, \quad a_0^{P} = \frac{\h{\sigma}^2}{\beta}a_1^P.
\end{equation}
where $\eta = \frac{1}{\dt}(e^{2\lam\dt} - 2e^{\lambda\dt} + 1)$.

Figure~\ref{fig:lqr-anlytic} compares the analytical BE and PhiBE solutions, $\widetilde{V}$ and $\widehat{V}$, with the true value function $V(s)$ across four test cases. Throughout all experiments we set $q = 1$, $r = 0.1$, and $K = 2$. The four configurations are:
\begin{equation}\label{lqr-diff case}
\begin{aligned}
    &\text{case 1\ \   Baseline:} \ \alpha = b = \frac14, \sigma = 0.5, \beta = 1, \dt = 0.1.\\
    &\text{case 2\ \  More frequent observations (smaller $\dt$):}\  \alpha = b = \frac14, \sigma = 0.5, \beta = 1, \dt = 0.01.\\
    &\text{case 3\ \   Quicker dynamics:}\  \alpha = b = 1, \sigma = 1, \beta = 1, \dt = 0.1.\\
    &\text{case 4\ \  Less discounted (smaller $\beta$):}\  \alpha = b = \frac14, \sigma = 0.5, \beta = 0.1, \dt = 0.1.
\end{aligned}
\end{equation}

From Figure~\ref{fig:lqr-anlytic}, we observe that reducing $\Delta t$ by one order decreases the approximation error of both formulations by approximately one order, consistent with the theoretical convergence rates. Moreover, as the system dynamics become faster or the discount factor $\beta$ becomes smaller, the advantage of PhiBE over BE diminishes, although PhiBE still performs better overall. This observation aligns with our theoretical analysis: (1) PhiBE achieves greater improvement when the dynamics evolve more slowly, and (2) the approximation error of PhiBE scales as $O(1/\beta^2)$, whereas that of BE scales as $O(1/\beta)$ when $\beta$ is small.

We next evaluate the data-driven algorithms. To solve the PhiBE formulation, we use the first-order method in Algorithm~\ref{algo:galerkin_phibe_stoch}; for the BE formulation, we apply LSTD as in~\eqref{galerkin_be}. We generate $\{s_i\}_{i=1}^n$ from $[-1,1]$ on the uniform mesh, and $s_i'\sim \mN(e^{\lam\dt}s_i, \hat{\sigma}^2\dt)$ for all $1\leq i\leq n$.
consistent with the linear system dynamics. The mean and variance of the approximation error, computed over $100$ Monte Carlo repetitions for all four cases, are shown in Figure~\ref{fig:lqr-data-driven}.

Figure~\ref{fig:lqr-data-driven} demonstrates that the data-driven estimators concentrate around the corresponding analytic solutions in Figure~\ref{fig:lqr-anlytic} when the sample size is sufficiently large. However, the variability differs across scenarios. To examine this more closely, Table~\ref{table:lqr} reports the mean and variance of the approximation error. Two observations stand out from Table~\ref{table:lqr}. First, across all test cases, PhiBE consistently attains smaller mean error and smaller variance compared with BE given the same amount of data. Second, for a fixed sample size, the variance increases for both methods as $\Delta t$ decreases, $\beta$ decreases, or the dynamics become faster. %A precise characterization of how the variance and the associated sample complexity depend on these quantities is an interesting direction for future work.

\begin{table}[h]
\centering
\begin{tabular}{|c|c|c|c|c|}
\hline
& mean (PhiBE) & mean (BE) & variance (PhiBE) & variance (BE) \\ \hline
Baseline ($n = 10^6$)& $4.76\times 10^{-3}$& $4.54 \times 10^{-2}$ &$2.7\times 10^{-6}$ & $1.02\times 10^{-5}$\\ \hline 
Smaller $\dt$ ($n = 10^6$) & $9.64\times 10^{-3}$& $1.04 \times 10^{-2}$ &$3.84\times 10^{-5}$ & $4.3\times 10^{-5}$\\ \hline 
Smaller $\dt$ ($n = 10^7$) & $2.91\times 10^{-3}$& $4.7 \times 10^{-3}$ &$3.6\times 10^{-6}$ & $8.1\times 10^{-6}$ \\ \hline 
Quick dynamics ($n = 10^6$)  & $3.45\times 10^{-2}$& $5.5 \times 10^{-2}$ &$3.87\times 10^{-4}$ & $8.07\times 10^{-4}$ \\ \hline 
Smaller $\beta$ ($n = 10^6$) & $7.48\times 10^{-2}$& $7.81 \times 10^{-2}$ &$2.11\times 10^{-3}$ & $2.93\times 10^{-3}$ \\ \hline
Smaller $\beta$ ($n = 10^7$) & $3.41\times 10^{-2}$& $4.97 \times 10^{-2}$ &$1.97\times 10^{-4}$ & $3.89\times 10^{-4}$\\ \hline 
\end{tabular}
\caption{Comparison of the approximation error for the model-free PhiBE solver and the LSTD method under linear dynamics.}\label{table:lqr}
\end{table}

\begin{figure}
\centering
     \begin{subfigure}[b]{0.24\textwidth}
         \centering
         \includegraphics[width=\textwidth]{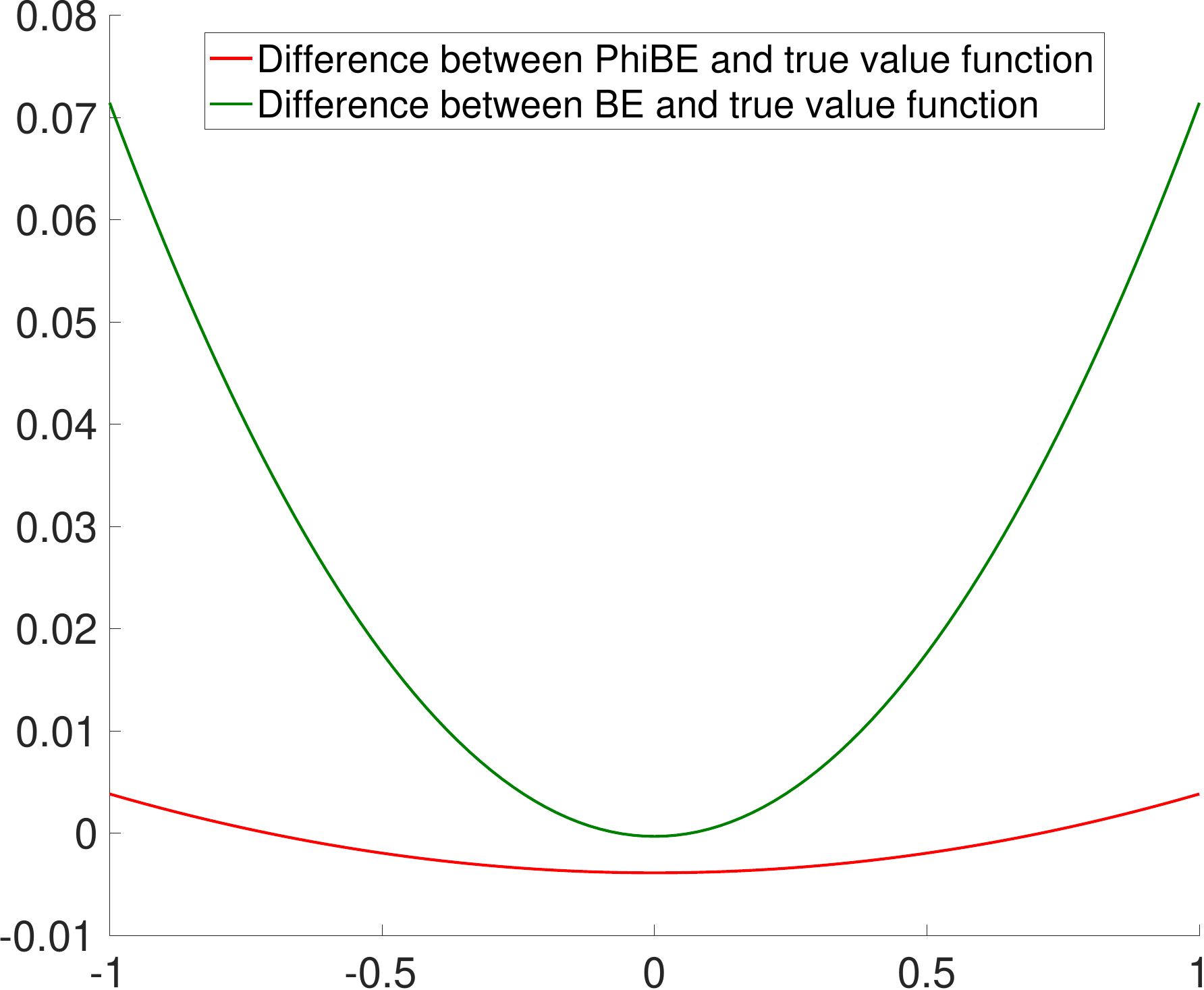}
         \caption{Baseline}
     \end{subfigure}
     \begin{subfigure}[b]{0.24\textwidth}
         \centering
         \includegraphics[width=\textwidth]{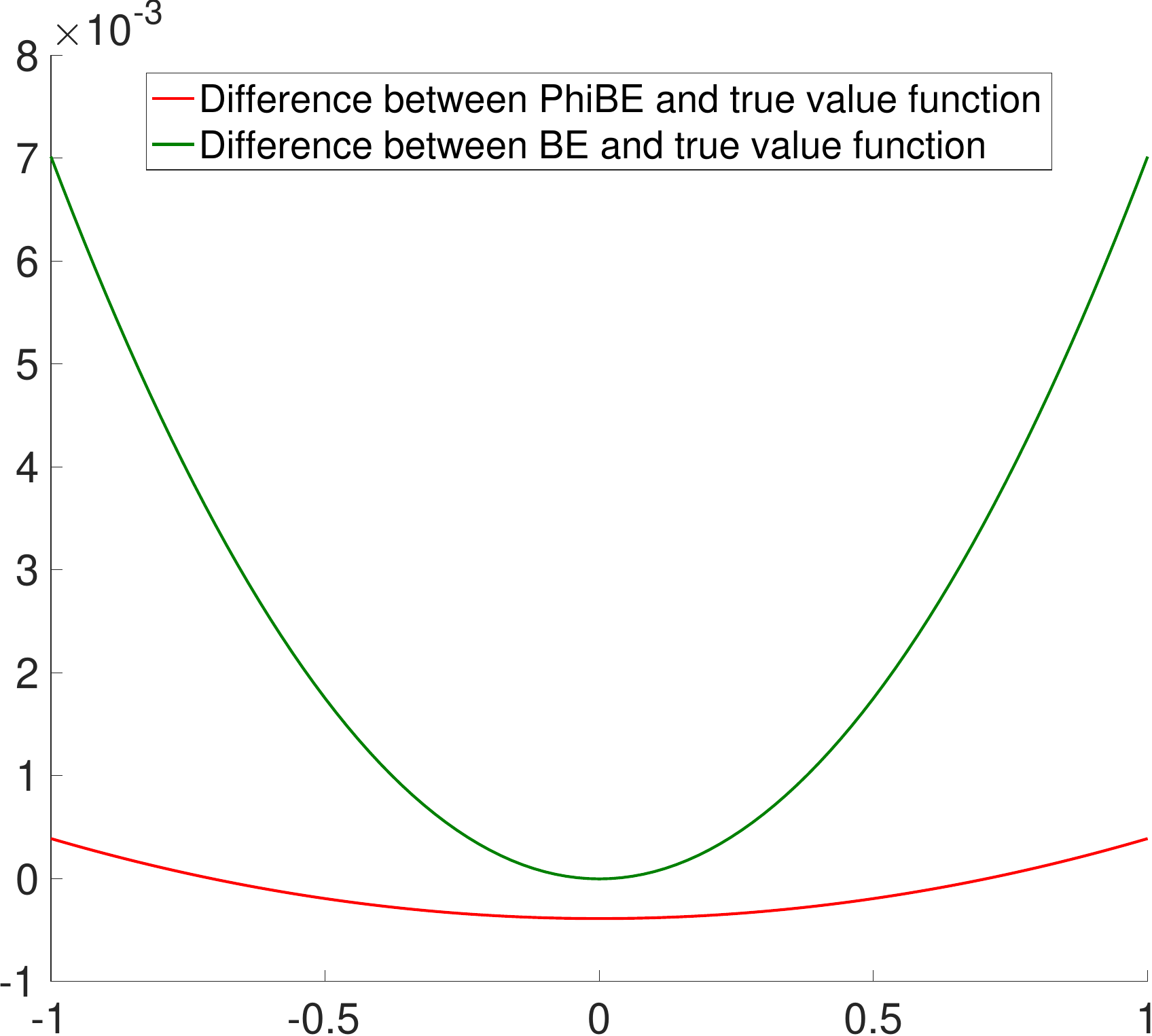}
         \caption{Smaller $\dt$}
     \end{subfigure}
     \begin{subfigure}[b]{0.24\textwidth}
         \centering
         \includegraphics[width=\textwidth]{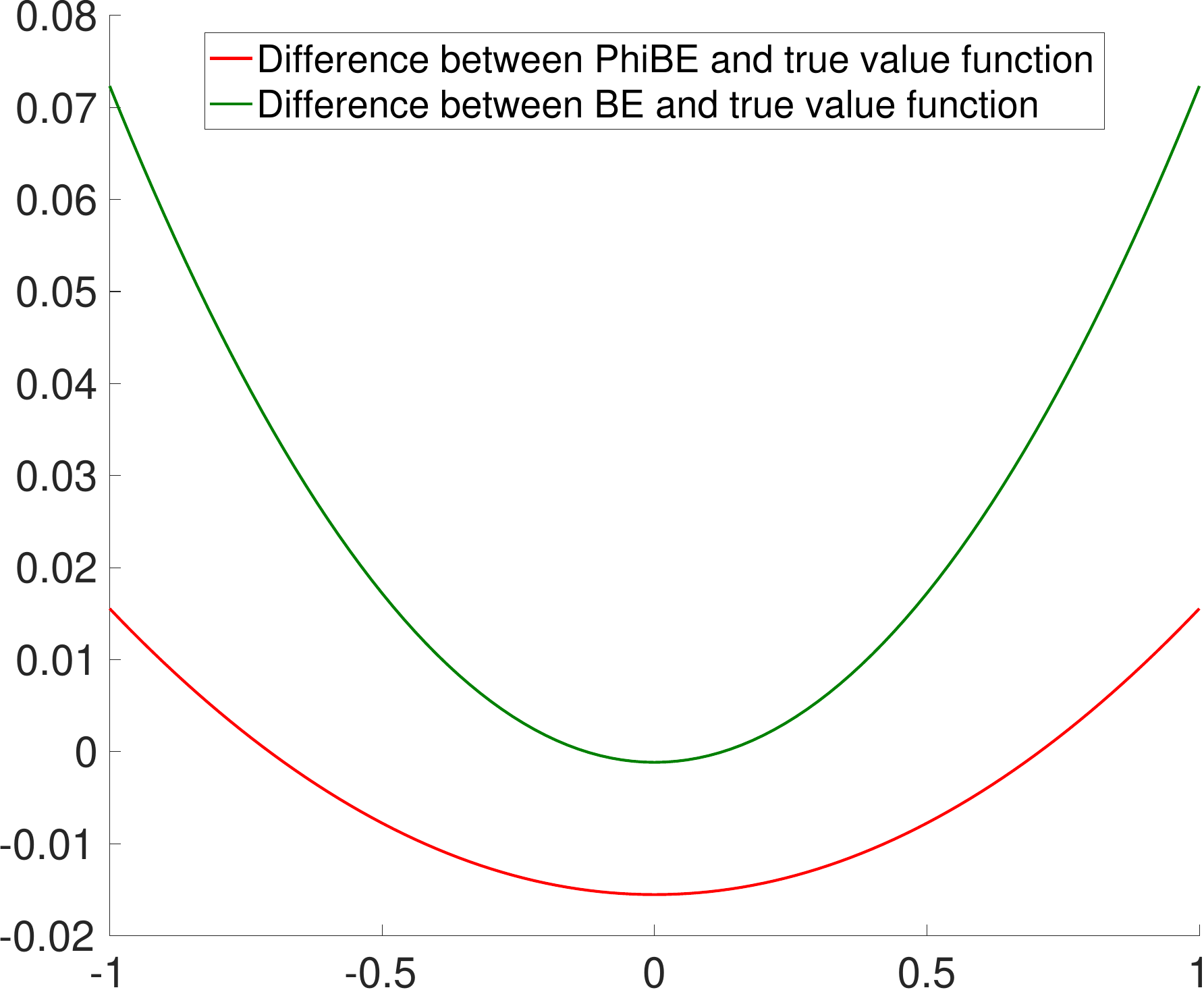}
         \caption{Quicker dynamics}
     \end{subfigure}
     \begin{subfigure}[b]{0.24\textwidth}
         \centering
         \includegraphics[width=\textwidth]{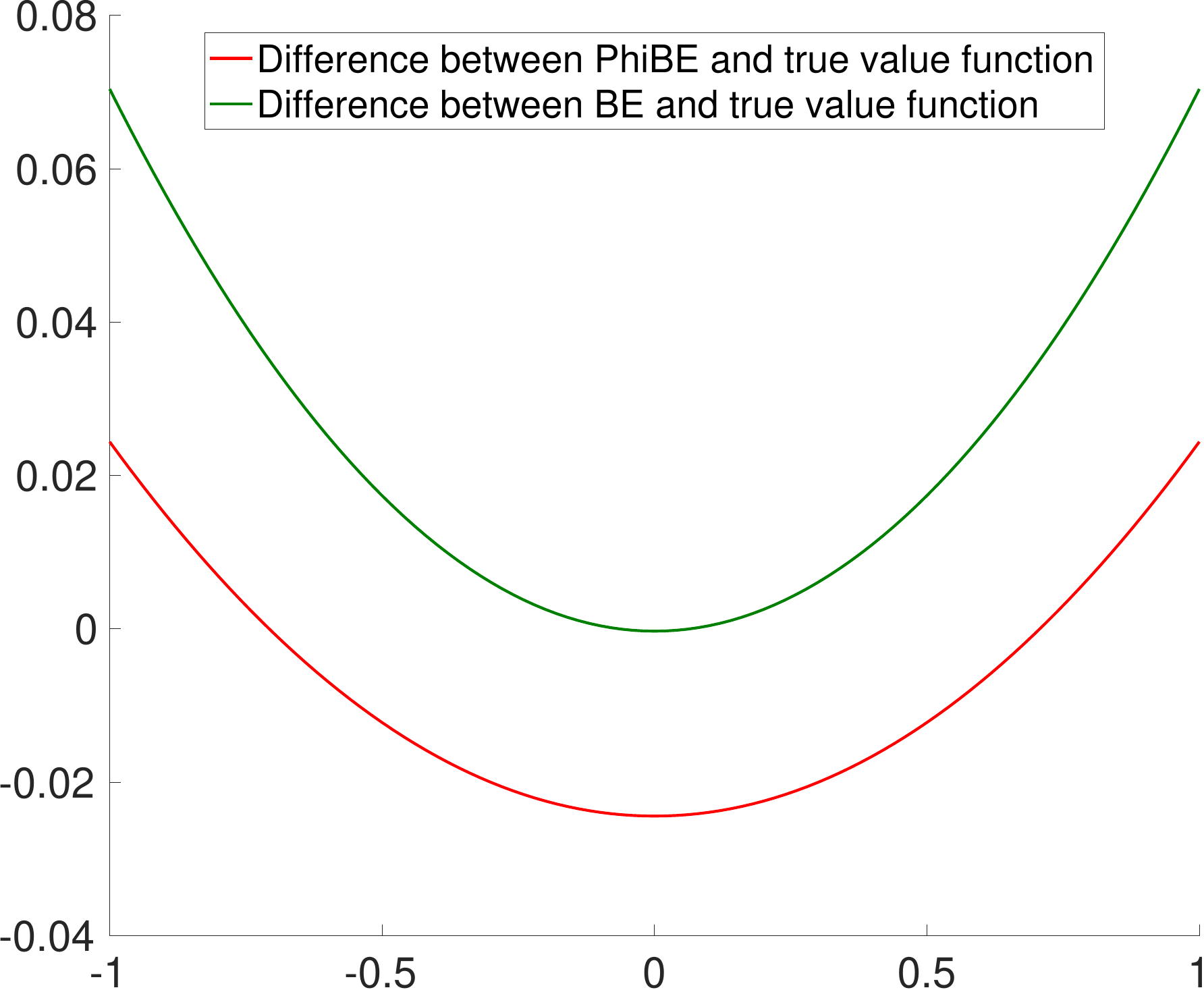}
         \caption{Smaller $\beta$}
     \end{subfigure}
     \caption{
     Stabilization control under linear dynamics. 
The error $\hat{V}(s) - V(s)$ from the PhiBE solution (red) and 
$\t{V}(s) - V(s)$ from the BE solution (green) are plotted, where 
$V(s)$, $\widehat{V}(s)$, and $\widetilde{V}(s)$ are given in 
\eqref{lqr-true}, \eqref{phibe-anlytic}, and \eqref{be-anlytic}. 
Each subplot corresponds to one scenario in \eqref{lqr-diff case}.
}
     \label{fig:lqr-anlytic}
\end{figure}

\begin{figure}
\centering
     \begin{subfigure}[b]{0.24\textwidth}
         \centering
         \includegraphics[width=\textwidth]{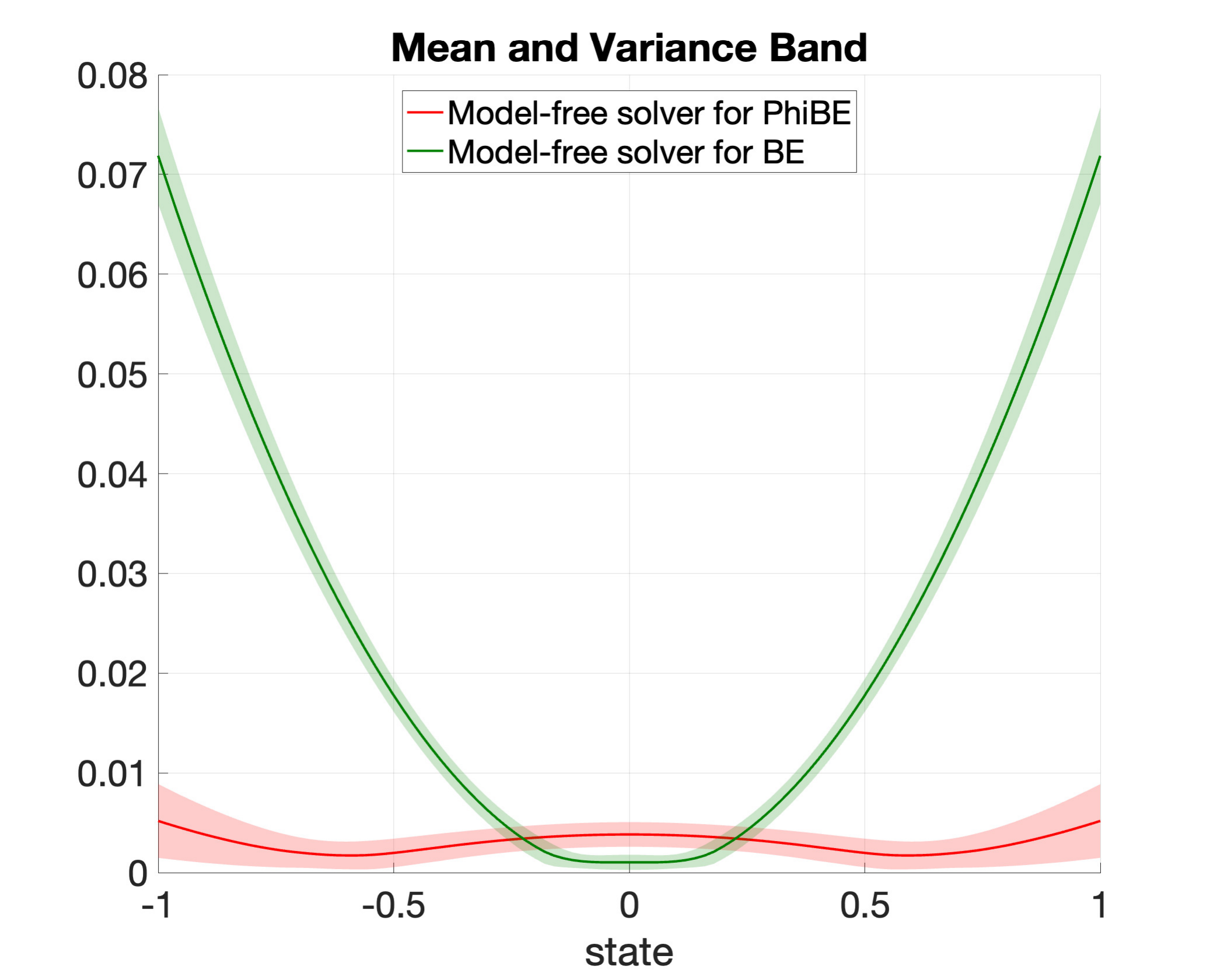}
         \caption{Baseline: $n = 10^6$}
     \end{subfigure}
     \begin{subfigure}[b]{0.24\textwidth}
         \centering
         \includegraphics[width=\textwidth]{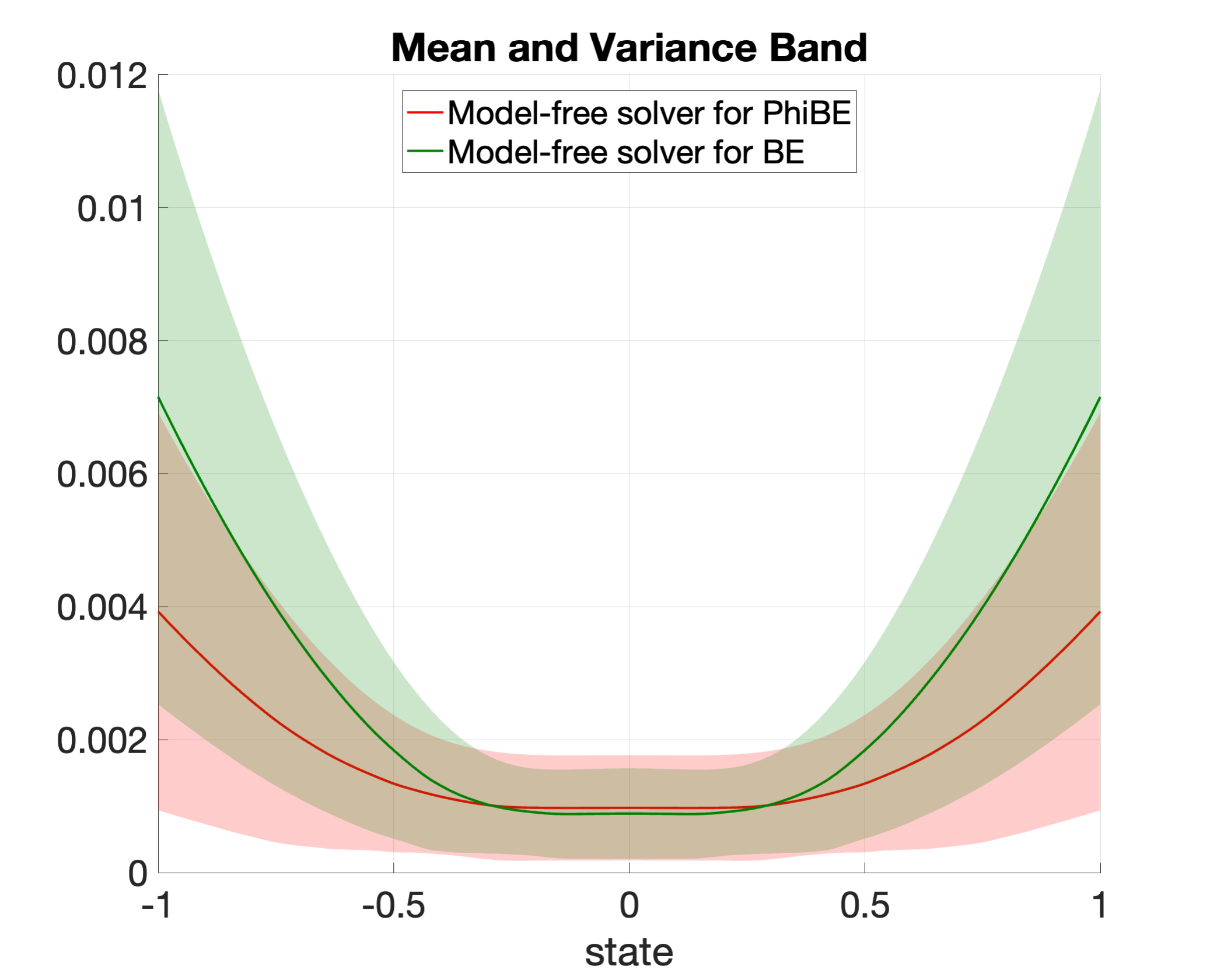}
         \caption{Smaller $\dt$: $n = 10^7$}
     \end{subfigure}
     \begin{subfigure}[b]{0.24\textwidth}
         \centering
         \includegraphics[width=\textwidth]{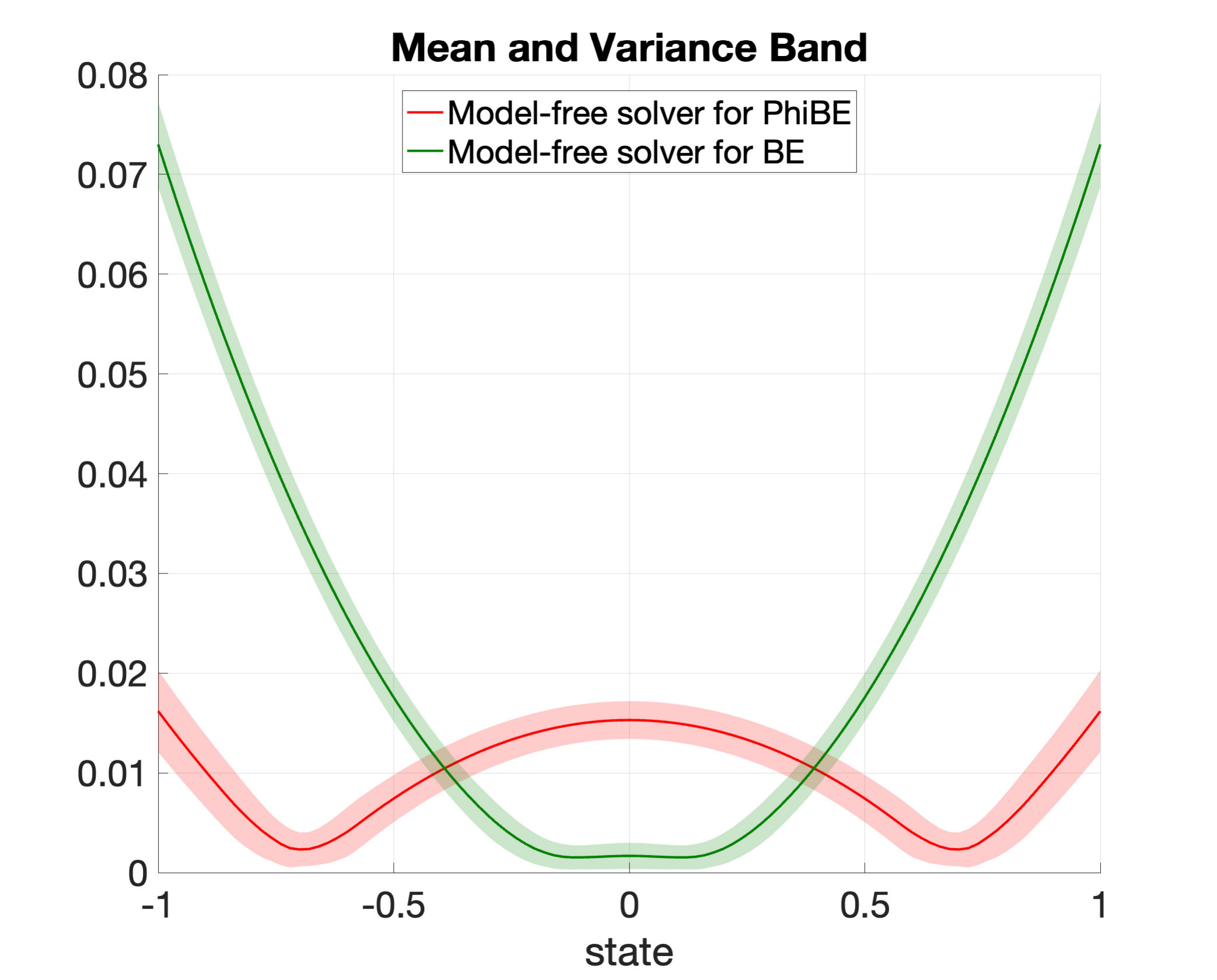}
         \caption{{\tiny Quicker dynamics: $n=10^6$ }}
     \end{subfigure}
     \begin{subfigure}[b]{0.24\textwidth}
         \centering
         \includegraphics[width=\textwidth]{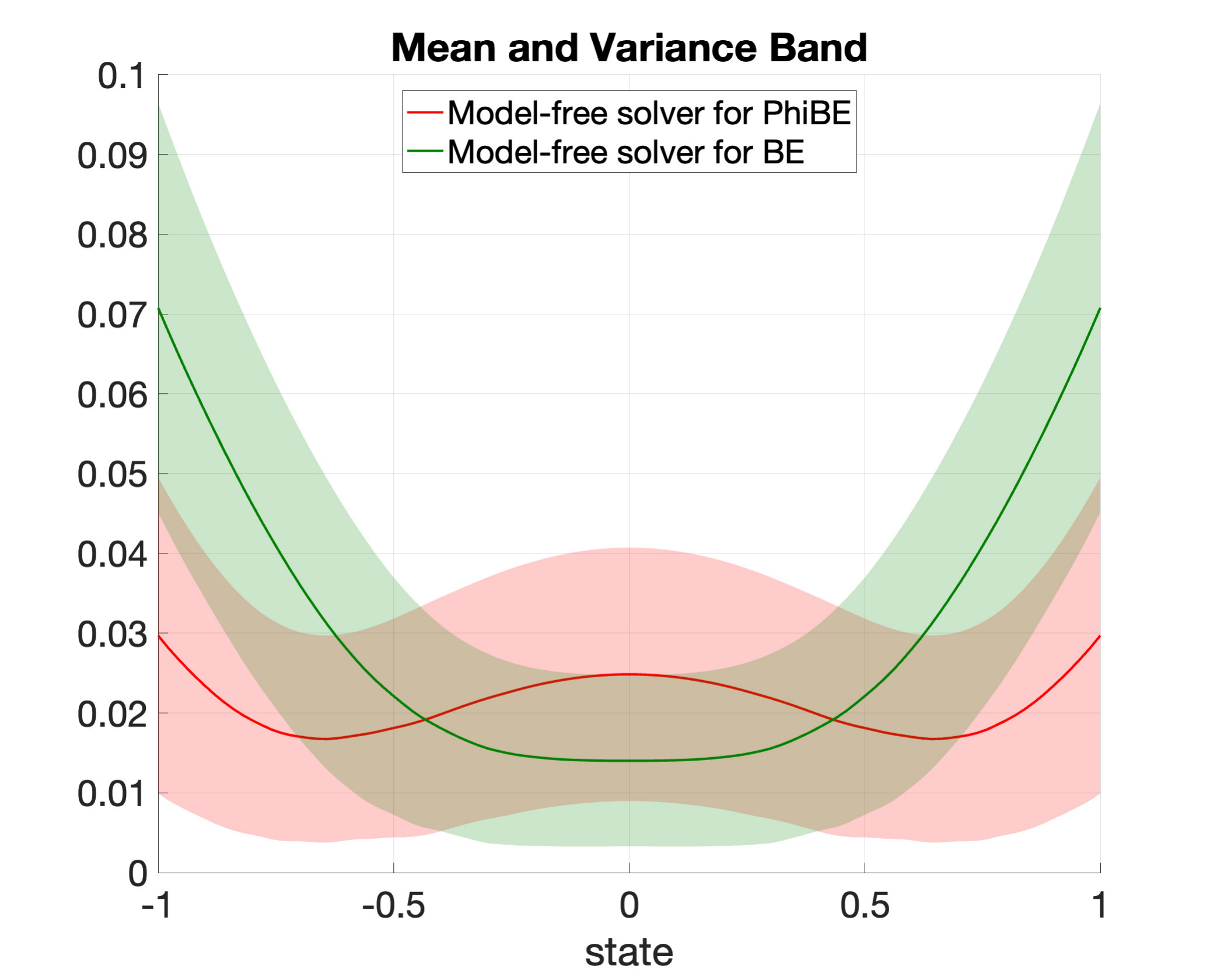}
         \caption{Smaller $\beta$: $n = 10^7$}
     \end{subfigure}
     \caption{
     Stabilization control under linear dynamics. 
The mean and variance of the approximation error 
$|\hat{V}(s) - V(s)|$ from the PhiBE algorithm 
(Algorithm~\ref{algo:galerkin_phibe_stoch}) are shown in red, and those of 
$\t{V}(s) - V(s)$ from the LSTD method 
\eqref{galerkin_be} are shown in green. 
Each subplot corresponds to one scenario in \eqref{lqr-diff case}, 
illustrating how the error behaves under different values of 
$\dt$, $\beta$, and the system dynamics.}
     \label{fig:lqr-data-driven}
\end{figure}

\subsubsection{Nonlinear Dynamics}
In this section, we demonstrate that our approach also applies to nonlinear dynamics. In this setting, the true value function is no longer available in closed form, so we approximate the ground-truth solution using a high-accuracy Galerkin method with sufficiently many polynomial basis functions. The data collection procedure follows the linear case, except that we generate each $s_i'$ by simulating the SDE using the Euler-Maruyama scheme with step size $\delta_t = 10^{-3}$.

We consider the following four cases, setting $q = 1$, $r = 0.1$, $K = 2$, and $n = 10^6$ for all experiments:
\begin{equation}\label{nlqr-diff case}
\begin{aligned}
    &\text{case 1\ \   Baseline:} \ \alpha = b = \kappa = 0.1, \sigma = 0.05, \beta = 1, \dt = 0.1;\\
    &\text{case 2\ \  More frequent observations (smaller $\dt$):}\  \alpha = b = \kappa = 0.1, \sigma = 0.05, \beta = 1, \dt = 0.01;\\
    &\text{case 3\ \   Quicker dynamics:}\  \alpha = b = \kappa = 1/2, \sigma = 1/4, \beta = 1, \dt = 0.1;\\
    &\text{case 4\ \  Less discounted (smaller $\beta$):}\   \ \alpha = b = \kappa = 0.1, \sigma = 0.05, \beta = 0.1, \dt = 0.1.
\end{aligned}
\end{equation}

The resulting approximation errors are shown in Figure~\ref{fig:nonlinear-qr-data-driven}. The qualitative behavior is similar to the linear-dynamics experiments: PhiBE exhibits smaller mean error and more stable performance across all cases.

One caveat appears in Case~3, where both methods exhibit a few significant outliers. These outliers inflate the empirical variance, especially for LSTD, which in one run produced an error nearly two orders of magnitude larger than the average. This behavior is sensitive to the random sample, and a systematic analysis of such instability phenomena is left for future work.

\begin{figure}
\centering
     \begin{subfigure}[b]{0.24\textwidth}
         \centering
         \includegraphics[width=\textwidth]{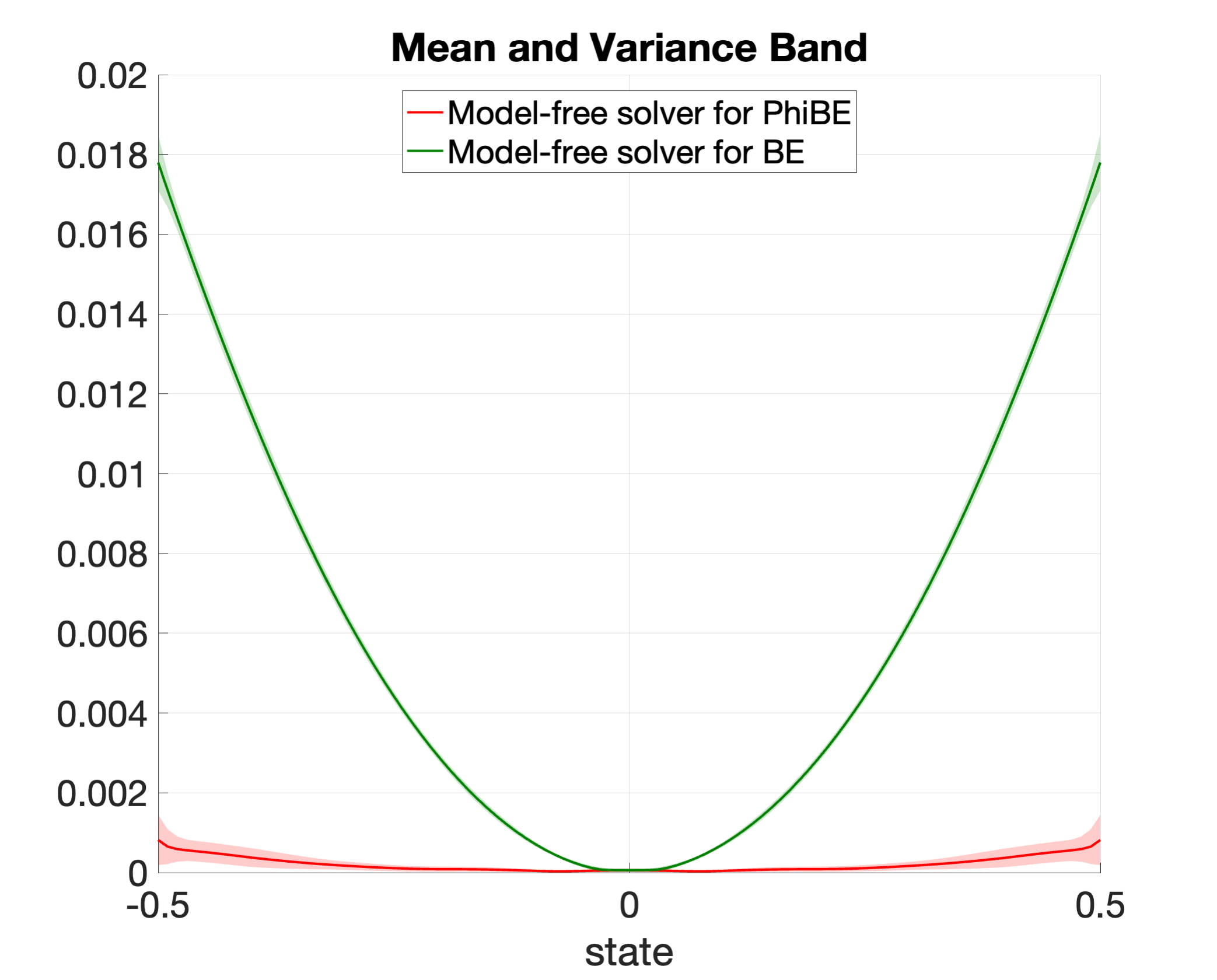}
         \caption{Baseline}
     \end{subfigure}
     \begin{subfigure}[b]{0.24\textwidth}
         \centering
         \includegraphics[width=\textwidth]{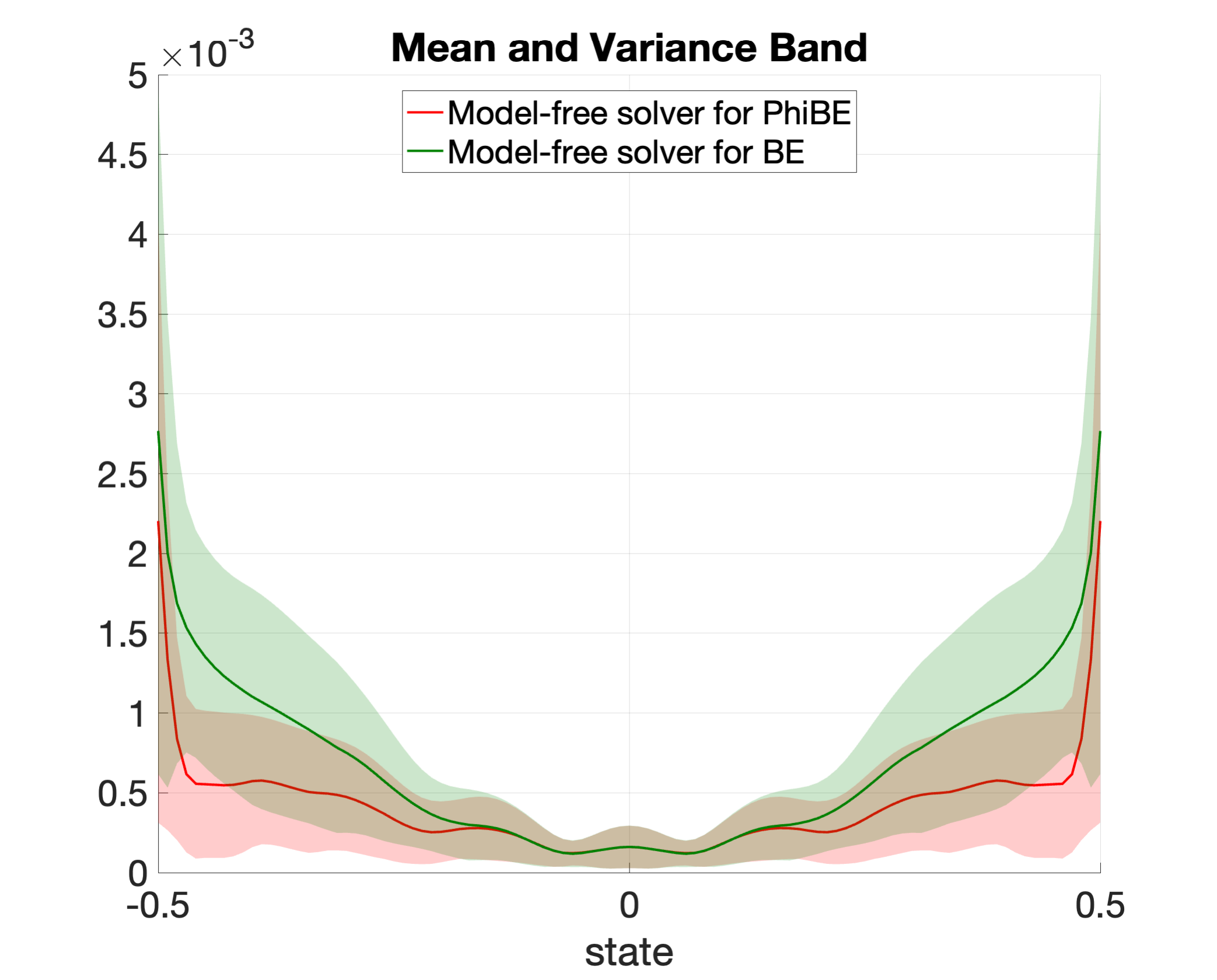}
         \caption{Smaller $\dt$}
     \end{subfigure}
     \begin{subfigure}[b]{0.24\textwidth}
         \centering
         \includegraphics[width=\textwidth]{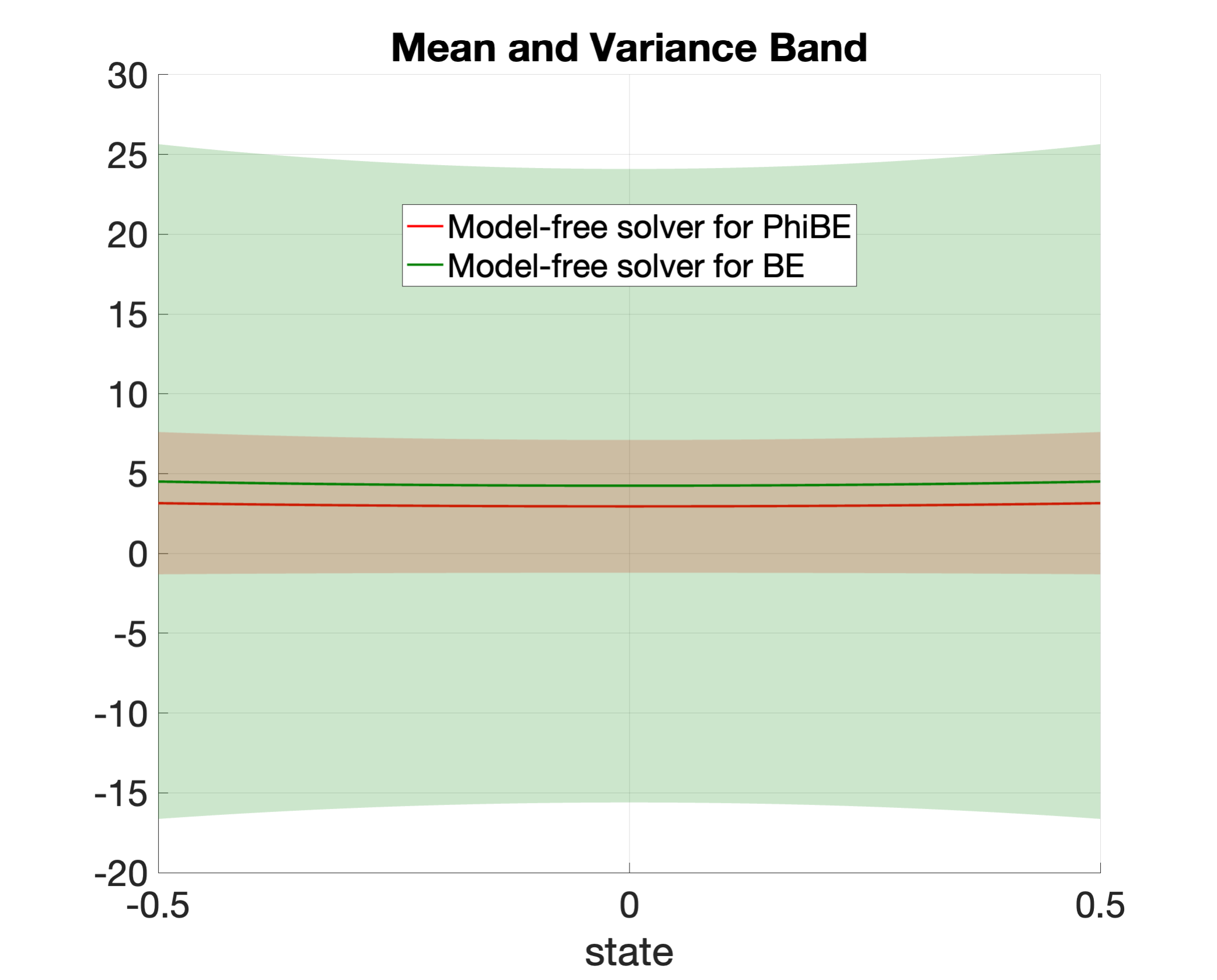}
         \caption{{Quicker dynamics }}
     \end{subfigure}
     \begin{subfigure}[b]{0.24\textwidth}
         \centering
         \includegraphics[width=\textwidth]{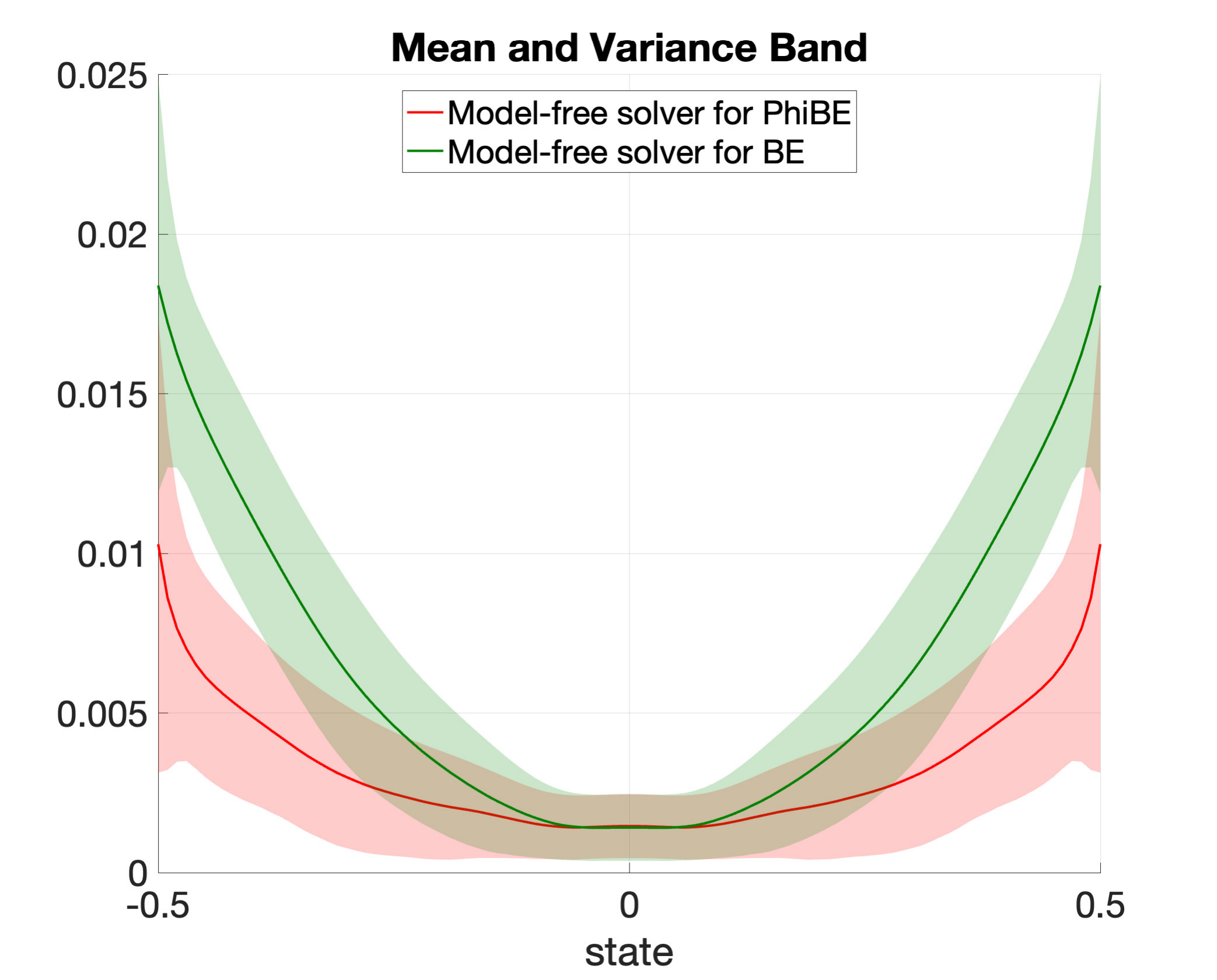}
         \caption{Smaller $\beta$}
     \end{subfigure}
     \caption{Stabilization control under nonlinear dynamics. 
The mean and variance of the approximation error 
$|\hat{V}(s) - V(s)|$ from the PhiBE algorithm 
(Algorithm~\ref{algo:galerkin_phibe_stoch}) are shown in red, and those of 
$\t{V}(s) - V(s)$ from the LSTD method 
\eqref{galerkin_be} are shown in green. 
Each subplot corresponds to one scenario in \eqref{nlqr-diff case}, 
illustrating how the error behaves under different values of 
$\dt$, $\beta$, and the system dynamics.}
     \label{fig:nonlinear-qr-data-driven}
\end{figure}

\subsection{High-Dimensional Case}
We next evaluate the algorithms on a $10$-dimensional linear-quadratic PE problem:
\[
\begin{aligned}
    &V(s) = 
    \E\l[\int_0^\infty e^{-\beta t} (s_t^\top Q s_t)dt | s_0 = s\r]\\
    &ds_t=As_t \,dt + \sigma\,dW_t,
\end{aligned}
\]
where $Q\in\R^{10\times 10}$ is constructed to have eigenvalues $(1,\ldots,10)$ by generating a random orthonormal matrix $O$ and setting $Q = O^\top \text{diag}(1,\ldots,10)\, O$.
The drift matrix $A\in\R^{10\times 10}$ is generated analogously with eigenvalues $-(1,\ldots,10)\times 0.1$.  
The diffusion matrix $\Sigma\in\R^{10\times 10}$ is diagonal matrix with all diagonal entries equal to $0.3$, and we set $\beta = 1$.

We use polynomial basis functions up to second order. The dataset consists of samples $(s_i, s_i')_{i=1}^n$, where  $s_i \sim \mathrm{Unif}([-1,1]^{10})$, and $s_i'$ is obtained by simulating the SDE for one step of size $\dt$ starting from $s_i$. Since achieving a smaller discretization error at smaller $\Delta t$ requires more samples, we choose
$n = 10^5,\; 10^6,\; 10^8 \qquad \text{for} \qquad \Delta t = 1,\; 0.1,\; 0.01,$ respectively.

Figure~\ref{fig:10-dim} reports the results for LSTD and the model-free PhiBE algorithm. The findings closely parallel the one-dimensional case: both error curves decay approximately at first order in $\Delta t$, confirming that the approach scales effectively to higher dimensions and that the theoretical predictions remain valid in the high-dimensional setting.

\begin{figure}
\centering
         \includegraphics[width=0.4\textwidth]{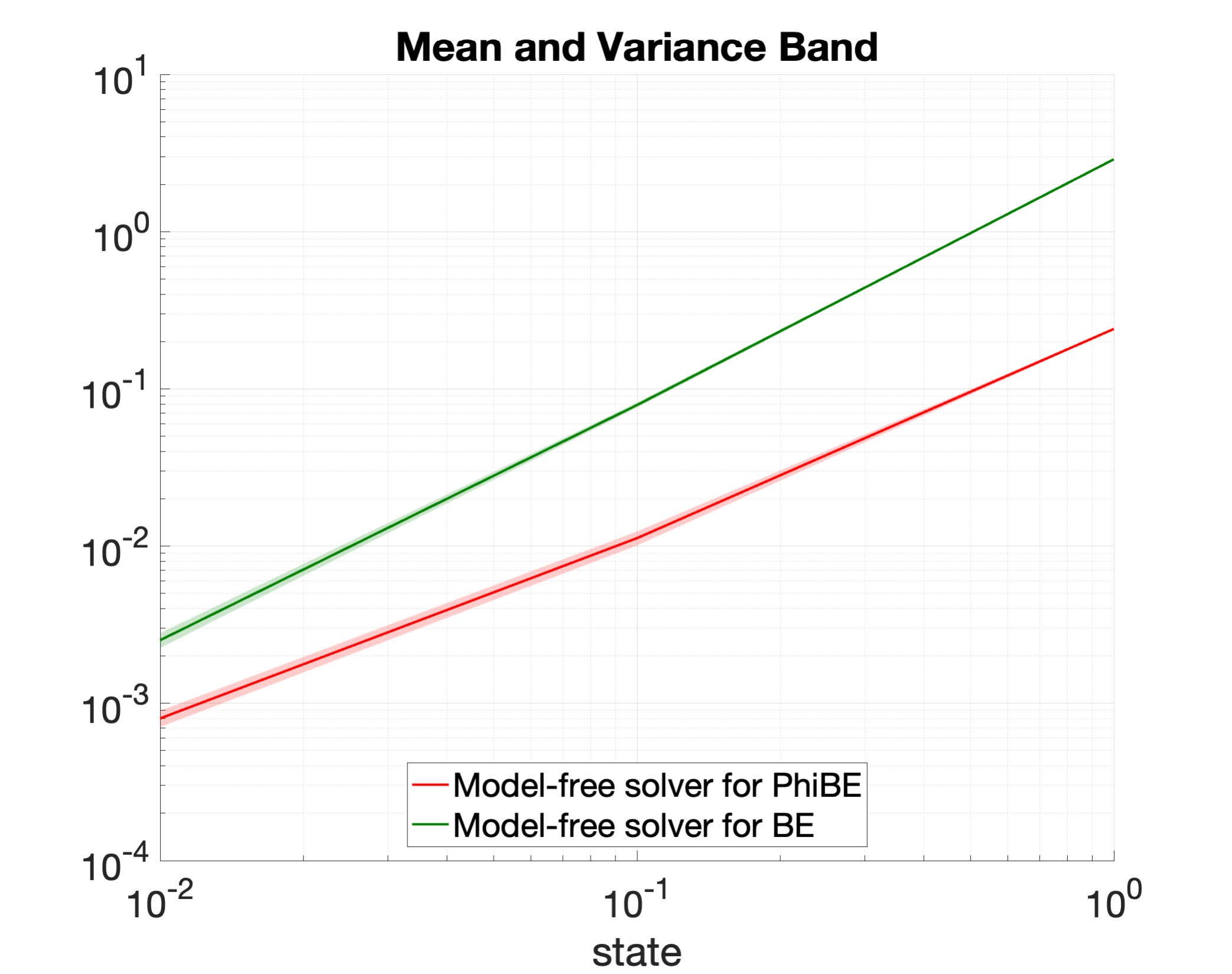}
     \caption{Stabilization control under linear dynamics with high-dimensional state space $s\in \R^{10}$. 
The mean and variance of the approximation error 
$\ll \hat{V}(s) - V(s)\rl_\rho$ from the PhiBE algorithm 
(Algorithm~\ref{algo:galerkin_phibe_stoch}) are shown in red, and those of 
$\ll \t{V}(s) - V(s)\rl_\rho$ from the LSTD method 
\eqref{galerkin_be} are shown in green. }
     \label{fig:10-dim}
\end{figure}

\color{black}

\section{Proofs}
\subsection{Proof of Theorem \ref{thm:rl}.}\label{sec:proof of thm rl}
Let $\rho(s',t|s)$ be the probability density  function of $s_t$ that starts from $s_0 = s$, then  it satisfies the following PDE
\begin{equation}\label{def of pt_t rho}
    \pt_t\rho(s',t|s) = \nb\cdot[\mu(s')\rho(s',t|s)] + \frac12\sum_{i,j}\pt_{s_i}\pt_{s_j}[\Sig_{ij}(s') \rho(s',t|s)]. 
\end{equation}
with initial data $\rho(s',0|s) = \delta_{s}(s')$.
Let $f(t,s) = e^{-\b t}r(s)$, then
\begin{equation}\label{ineq_10}
    \begin{aligned}
    &V(s) - \tV(s) = \E\l[\sum_{i=0}^\infty \int_{\dt i }^{\dt (i+1)}f(t,s_t) - f(\dt i,s_{\dt i}) dt|s_0 = s\r]\\
    =& \sum_{i=0}^\infty \int_{\dt i }^{\dt (i+1)}\l(\int_\S f(t,s') \rho(s',t|s)-  f(\dt i,s')\rho(\dt i, s')ds'\r)dt.
\end{aligned}
\end{equation}
Since
\begin{equation}\label{ineq_6}
\begin{aligned}
    &\int_\S f(t,s') \rho(s',t|s)-  f(\dt i,s')\rho(\dt i, s')ds'\\
    =&\int_\S f(t,s') (\rho(s',t|s)-\rho(s',\dt i|s))+(f(t,s') -  f(\dt i,s'))\rho(s',\dt i|s)ds'\\
    = & \int_\S f(t,s')\pt_t\rho(s',\xi_1|s)(t - \dt i) + \pt_tf(\xi_2,s')(t - \dt i)\rho(s',\dt i |s)ds'\qd \\
    &\text{where }\xi_1, \xi_2 \in (\dt i, \dt(i+1))\\
    = & \int_\S \mL_{\mu,\Sig} f(t,s'),
    \rho(s',\xi_1|s) (t - \dt i) ds' - \int_\S\b e^{-\b \xi_2}r(s')\rho(s',\dt i |s)(t - \dt i) ds'\\
    = & \l(e^{-\b t}\int_\S \mL_{\mu,\Sig} r(s') 
    \rho(s',\xi_1|s)  ds' - \b e^{-\b \xi_2}\int_\S r(s')\rho(s',\dt i |s) ds' \r)(t - \dt i),
    \end{aligned}   
\end{equation}
where the second equality is due to the mean value theorem, and the third equality is obtained by inserting the equation \eqref{def of pt_t rho} for $\rho(s',t|s)$ and integrating by parts. Therefore, for $t\in[\dt i, \dt(i+1)],$
\[
\begin{aligned}
    &\lv \int_\S f(t,s') \rho(s',t|s)-  f(\dt i,s')\rho(\dt i, s')ds'\rv \\
    \leq&  \ll \mL_{\mu,\Sig} r  \rl_\Linf e^{-\b \dt i}(t - \dt i)  + \b e^{-\b \dt i}\ll r \rl_\Linf (t - \dt i) .
    \end{aligned}
\]
Therefore, one has
\[
\begin{aligned}
    &\ll V(s) - \tV(s)\rl_\Linf\\
    \leq&\sum_{i=0}^\infty \int_{\dt i }^{\dt (i+1)}\ll \mL_{\mu,\Sig} r  \rl_\Linf e^{-\b \dt i}(t - \dt i)  + \b e^{-\b \dt i}\ll r \rl_\Linf (t - \dt i)  dt\\
    \leq &\l(\ll \mL_{\mu,\Sig} r  \rl_\Linf + \b\ll r \rl_\Linf \r)\sum_{i=0}^\infty e^{-\b \dt i} \int_{\dt i }^{\dt (i+1)} (t - \dt i) dt\\
    \leq &\frac{1}{2}\l(\ll \mL_{\mu,\Sig} r  \rl_\Linf + \b\ll r \rl_\Linf \r) \sum_{i=0}^\infty e^{-\b \dt i} \dt^2  = \frac{C}{1-e^{-\b \dt} }\dt^2  =\frac{C}{\b}\dt + C\l(\frac{1}{1-e^{-\b\dt}}\dt^2 -\frac\dt\b \r),
\end{aligned}
\]
where $C =\frac{1}{2}\l(\ll \mL_{\mu,\Sig} r  \rl_\Linf + \b\ll r \rl_\Linf \r) $.
Since 
\[
\lim_{\dt\to0}C\l(\frac{1}{1-e^{-\b\dt}}\dt^2 - \frac\dt\b\r)\frac{1}{\dt} = 0,
\]
one has, 
\[
\begin{aligned}
    &\ll V(s) - \tV(s) \rl_\Linf = \frac{L\dt}{\b} + o(\dt).
\end{aligned}
\]

\subsection{Proof of Theorem \ref{thm:v-vhat_deter}}\label{sec:proof of them Phibe deter}
Note that the true value function $V$ and the $i$-th order PhiBE solution $\hV$ satisfies 
\[
\b V(s) =  r(s)+ \mu(s)\cdot \nb V(s), \quad  \b \hV_i(s) =  r(s)+ \hmu_i(s)\cdot \nb \hV_i(s). 
\]
First, by the following lemma, one can bound $\ll V - \hV_i \rl_\Linf $ with $\ll \mu - \hmu_i \rl_\Linf $.
\begin{lemma}\label{lemma:Lhat - L}
For function $V$ and $\hV$ satisfies, 
\[
\b V(s) =  r(s)+ \mu(s)\cdot \nb V(s), \quad  \b \hV(s) =  r(s)+ \hmu(s)\cdot \nb \hV(s), 
\]
the distance between $V$ and $\hV$ can be bounded by 
\[
\ll V - \hV \rl_\Linf \leq \frac{2\ll \mu - \hmu \rl_\Linf \ll \nb r \rl_\Linf}{(\b - \ll \nb \mu \rl_\Linf)^2}.
\]
\end{lemma}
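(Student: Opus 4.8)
The plan is to avoid working with the PDE directly and instead use the characteristic (flow) representations of both value functions, then compare the two deterministic flows. Under the standing assumptions $\mu$ is Lipschitz, and $\hat\mu$ is as well (when $\hat\mu=\hat\mu_i$ it is assembled from the flow of $\mu$, hence Lipschitz), so each generates a global flow. Let $s_t$ solve $\dot s_t=\mu(s_t)$ and $\hat s_t$ solve $\dot{\hat s}_t=\hat\mu(\hat s_t)$, both started at $s$. Since $V,\hat V$ are the bounded solutions of $\beta V=r+\mu\cdot\nabla V$ and $\beta\hat V=r+\hat\mu\cdot\nabla\hat V$, the standard characteristic identity (deterministic Feynman--Kac) gives $V(s)=\int_0^\infty e^{-\beta t}r(s_t)\,dt$ and $\hat V(s)=\int_0^\infty e^{-\beta t}r(\hat s_t)\,dt$, so that
\[
|V(s)-\hat V(s)|\;\le\;\|\nabla r\|_{L^\infty}\int_0^\infty e^{-\beta t}\,|s_t-\hat s_t|\,dt .
\]

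The second step is a Gr\"{o}nwall bound on the flow discrepancy. Splitting $\tfrac{d}{dt}(s_t-\hat s_t)=\bigl(\mu(s_t)-\mu(\hat s_t)\bigr)+\bigl(\mu(\hat s_t)-\hat\mu(\hat s_t)\bigr)$ and using Lipschitzness of $\mu$ together with the definition of $\|\mu-\hat\mu\|_{L^\infty}$ gives $\tfrac{d}{dt}|s_t-\hat s_t|\le\|\nabla\mu\|_{L^\infty}|s_t-\hat s_t|+\|\mu-\hat\mu\|_{L^\infty}$, and since the flows agree at $t=0$,
\[
|s_t-\hat s_t|\;\le\;\|\mu-\hat\mu\|_{L^\infty}\,\frac{e^{\|\nabla\mu\|_{L^\infty}t}-1}{\|\nabla\mu\|_{L^\infty}}\;\le\;\|\mu-\hat\mu\|_{L^\infty}\,t\,e^{\|\nabla\mu\|_{L^\infty}t}.
\]
Substituting, and using the hypothesis $\|\nabla\mu\|_{L^\infty}<\beta$ to evaluate $\int_0^\infty t\,e^{-(\beta-\|\nabla\mu\|_{L^\infty})t}\,dt=(\beta-\|\nabla\mu\|_{L^\infty})^{-2}$, produces $\|V-\hat V\|_{L^\infty}\le\|\nabla r\|_{L^\infty}\|\mu-\hat\mu\|_{L^\infty}/(\beta-\|\nabla\mu\|_{L^\infty})^2$, which is the claim (in fact with a slightly better constant than the stated $2$, which leaves slack).

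A purely PDE-based alternative: subtracting the two equations gives the transport equation $\beta w-\hat\mu\cdot\nabla w=(\mu-\hat\mu)\cdot\nabla V$ for $w=V-\hat V$, and evaluating at an approximate maximiser of $\pm w$ (where $\nabla w=0$) gives $\|w\|_{L^\infty}\le\beta^{-1}\|\mu-\hat\mu\|_{L^\infty}\|\nabla V\|_{L^\infty}$; the gradient bound $\|\nabla V\|_{L^\infty}\le\|\nabla r\|_{L^\infty}/(\beta-\|\nabla\mu\|_{L^\infty})$ follows by differentiating $\beta V=r+\mu\cdot\nabla V$ and applying the same maximum principle to $\tfrac12|\nabla V|^2$ (or from $\nabla V(s)=\int_0^\infty e^{-\beta t}\nabla r(s_t)\,\partial_s s_t\,dt$ with $\|\partial_s s_t\|\le e^{\|\nabla\mu\|_{L^\infty}t}$), and $\beta^{-1}\le(\beta-\|\nabla\mu\|_{L^\infty})^{-1}$ closes the argument. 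I expect the only genuinely delicate point to be the justification of the representation formulas / maximum principle on the unbounded state space $\mathbb R^d$: one must know a priori that $V$ and $\hat V$ are $C^1$ and bounded so that $e^{-\beta t}V(s_t)\to0$ (and likewise for $\hat V$) and that $\nabla V$ exists with the stated growth — this is exactly where smoothness of $r$ and $\mu$, and the coercivity $\|\nabla\mu\|_{L^\infty}<\beta$, enter. Everything after that (Gr\"{o}nwall, the elementary time integrals, the final constant) is routine.
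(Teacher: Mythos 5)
Your proof is correct and follows essentially the same route as the paper: represent $V$ and $\hat V$ along their characteristic flows, bound $|V(s)-\hat V(s)|$ by $\|\nabla r\|_{L^\infty}\int_0^\infty e^{-\beta t}\,|s_t-\hat s_t|\,dt$, and control the flow discrepancy via Gr\"onwall under $\|\nabla\mu\|_{L^\infty}<\beta$. The only difference is that you run Gr\"onwall directly on $|s_t-\hat s_t|$, whereas the paper works with $\tfrac12\|s_t-\hat s_t\|^2$ and a Young-inequality parameter $\epsilon$ (optimized at $\epsilon=\tfrac12(\beta-\|\nabla\mu\|_{L^\infty})$), which is why you obtain the constant $1$ where the paper settles for $2$.
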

(See Section \ref{proof of lemma Lhat - L} for the proof of the above lemma.)
Therefore, one has
\begin{equation}\label{ineq_13}
    \ll V - \hV_i \rl_\Linf \leq  \frac{\ll \mu - \hmu_i \rl_\Linf \ll \nb r \rl_\Linf}{(\b - \ll \nb \mu \rl_\Linf)^2}.
\end{equation}
Then by the following lemma, one can further bound $\ll \mu - \hmu_i \rl_\Linf$. 
\begin{lemma}\label{lemma:muhat - mu}
The distance between $\hmu_i(s)$ defined in \eqref{def of higher order deter mu} and the true dynamics can be bounded by
\[
\ll \hmu_i(s) - \mu(s) \rl_\Linf \leq C_i\ll \mL^i_{\mu}\mu(s)\rl_\Linf\dt^i,
\] 
where $\mL_\mu$ is defined in \eqref{def of mLmu}, and
\begin{equation}\label{def of C_i}
C_i = \frac{\sum_{j=0}^i|\coef{i}_j|  j^{i+1} }{(i+1)!}.
\end{equation} 
\end{lemma}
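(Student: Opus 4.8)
The plan is to reduce the lemma to a Taylor expansion of the exact ODE flow in time, combined with the moment conditions \eqref{def of a} satisfied by the weights $\coef{i}$. First I would record the key identity that, since $t\mapsto s_t$ solves $\frac{d}{dt}s_t = \mu(s_t)$ with $s_0 = s$, repeated differentiation via the chain rule together with $\mL_\mu = \mu\cd\nb$ gives $\frac{d^{k+1}}{dt^{k+1}}s_t = (\mL_\mu^k\mu)(s_t)$ for every $k\ge 0$ (with the convention $\mL_\mu^0\mu := \mu$). The finiteness of $\ll\mL_\mu^i\mu\rl_\Linf$, together with the smoothness of $\mu$ implicit in its definition, then makes $t\mapsto s_t$ of class $C^{i+1}$ near $0$, so I can apply Taylor's theorem with Lagrange remainder to each coordinate: for every $j$ and coordinate $\ell$ there is $\xi_{j,\ell}\in(0,j\dt)$ with
\[
(s_{j\dt})_\ell = (s)_\ell + \sum_{k=1}^{i}\frac{(j\dt)^k}{k!}(\mL_\mu^{k-1}\mu(s))_\ell + \frac{(j\dt)^{i+1}}{(i+1)!}(\mL_\mu^i\mu(s_{\xi_{j,\ell}}))_\ell .
\]

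Next I would substitute this into $\hmu_i(s) = \frac1\dt\sum_{j=1}^i\coef{i}_j(s_{j\dt}-s)$ from \eqref{def of higher order deter mu} and exchange the $j$- and $k$-summations. Because $0^k = 0$ for $k\ge1$, one has $\sum_{j=1}^i\coef{i}_j j^k = \sum_{j=0}^i\coef{i}_j j^k$, which by \eqref{def of a} equals $1$ for $k=1$ and $0$ otherwise; hence every polynomial-in-$\dt$ term collapses, the $k=1$ contribution reproducing $\mu(s)$ exactly and the rest vanishing. What survives is, coordinatewise,
\[
\hmu_i(s) - \mu(s) = \frac{\dt^i}{(i+1)!}\sum_{j=1}^i \coef{i}_j\, j^{i+1}\,\mL_\mu^i\mu(s_{\xi_{j,\cdot}}).
\]
Finally I would bound each coordinate by $\ll\mL_\mu^i\mu\rl_\Linf$ uniformly over the (unknown) intermediate points, take the supremum over $s$, recombine coordinates through the definition of the vector $L^\infty$ norm, and use $0^{i+1}=0$ to replace $\sum_{j=1}^i$ by $\sum_{j=0}^i$; this yields $\ll\hmu_i(s)-\mu(s)\rl_\Linf \le C_i\ll\mL_\mu^i\mu\rl_\Linf\dt^i$ with $C_i$ as in \eqref{def of C_i}.

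I expect the argument to be essentially an algebraic identity — the vanishing of the first $i$ discrete moments of $\coef{i}$ — followed by a single uniform remainder estimate, with no serious analytic obstacle. The one point requiring care is the bookkeeping for the vector-valued Taylor remainder: the Lagrange points $\xi_{j,\ell}$ depend on both the coordinate $\ell$ and the base point $s$, so I must keep the estimate coordinatewise until the last step rather than writing it directly in the vector norm, and I should state explicitly which regularity of $\mu$ is being invoked so that $t\mapsto s_t$ is genuinely $C^{i+1}$ and Taylor's theorem legitimately applies.
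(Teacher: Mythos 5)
Your proposal is correct and follows essentially the same route as the paper's proof: Taylor-expand the flow $t\mapsto s_t$ to order $i+1$ around $t=0$, use the moment conditions \eqref{def of a} to collapse every polynomial term except the $k=1$ contribution (which reproduces $\mu(s)$), and bound the remainder via the identity $\frac{d^{i+1}}{dt^{i+1}}s_t = \mL_\mu^i\mu(s_t)$. Your coordinatewise handling of the Lagrange points is a minor tightening of the paper's vector-valued statement of the remainder, not a different argument.
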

(See Section \ref{proof of lemma muhat - mu} for the proof of the above lemma.) Hence, one completes the proof by applying the above lemma to \eqref{ineq_13}
\[
\ll V - \hV_i \rl_\Linf \leq \frac{C_i\ll \mL_\mu^i\mu \rl_\Linf\ll \nb r\rl_\Linf }{(\b - \ll \nb \mu \rl_\Linf)^2} \dt^i.
\]

\subsection{Proof of Theorem \ref{thm: linear deter}}\label{proof of them linear}
\def\da{\Delta_i}
\def\hci{{C}_i}

For the linear dynamics $ds_t = As_t dt$, one has $s_t = e^{A t}s_0$, therefore, 
\[
\hat{\mu}(s) = \frac1\dt\sum_{j=1}^i \coef{i}_j(s_{j
dt} - s_0) = \frac1\dt\sum_{j=1}^i \coef{i}_j(e^{Aj\dt} - I)s.
\]
Let $\hat{A}_i = \coef{i}_j(e^{Aj\dt} - I)$, then the i-th order PhiBE reads,
\[
\begin{aligned}
    \b\hV_i(s) = r(s) + \hat{A}_is\cdot \nb \hV_i(s),
\end{aligned}
\]
which can equivalently written as 
\[
\begin{aligned}
    \b\hV_i(s) = \int_0^\infty e^{-\beta t} r(e^{\hat{A}_i t}s) dt.
\end{aligned}
\]
Now define $\phi(\th) = e^{At}e^{\th \da t}s$, where $\da = \hat{A} - A$, then $e^{At}s = \phi(0), \quad e^{\hat{A}t}s=\phi(1)$,
which implies
\[
\begin{aligned}
    &r(e^{At}s) - r(e^{\hat{A}t}s) = r(\phi(0)) - r(\phi(1)) = \int_0^1 \frac{d}{d\th} r(\phi(\th)) d\th = \int_0^1 \nb (r(\phi(\th)))^\top \phi'(\th) d\th\\
    =&  \l[\int_0^1 (\nb r(\phi(\th)))^\top \phi(\th)   d\th\r]  \da t .
\end{aligned}
\]
Therefore, one has
\[
\begin{aligned}
    &\lv V(s) - \hV_1(s) \rv = \lv \int_0^\infty e^{-\beta t} \l(r(e^{At}s) - r(e^{\hat{A}t}s)\r) dt \rv \\
    = &  \lv \int_0^1 \l(\int_0^\infty (\nb r(\phi(\th)))^\top \phi(\th) e^{-\beta t} t dt\r) d\th  \da  \rv \leq  \ll u\cdot r(u) \rl_\Linf  \int_0^1 \l(\int_0^\infty e^{-\beta t} t dt\r) d\th  \lv \Delta_A \rv  \\
    =& \frac{\ll u\cdot r(u) \rl_\Linf }{\beta^2} \ll \da \rl.
\end{aligned}
\]
Next we estimate $\ll \da\rl$. 
First note that 
    \[
    \h{A}_i =  \frac1\dt\sum_{j=1}^i \coef{i}_j(e^{Aj\dt} - I) =  \frac1\dt\sum_{j=1}^i \coef{i}_j\l(\sum_{k=1}^i \frac{1}{k!}(Aj\dt)^k + R_{ij}\r)
    \]
    where 
    \[
    R_{ij} = e^{Aj\dt} - \sum_{k=0}^i \frac{1}{k!}(Aj\dt)^k = \frac{A^{i+1}(j\dt)^{i+1}}{(i+1)!}e^{A\xi}, \quad \text{for }\xi\in[0,j\dt),\quad 
    \]
    and therefore
    \[
    \ll R_{ij}  \rl \leq \frac{\ll A \rl^{i+1} (j\dt)^{i+1} }{(i+1)!}e^{\ll A\rl j\dt}\leq   \frac{\ll A \rl^{i+1} (j\dt)^{i+1} }{(i+1)!} e^{\ll A \rl i\dt}.
    \]
    By the definition of $\coef{i}_j$, one has
    \[
    \h{A}_i  =  \sum_{k=1}^i\frac{1}{k!}A^k\dt^{k-1} \l(\sum_{j=1}^i \coef{i}_j j^k\r)+  \frac1\dt\sum_{j=1}^i \coef{i}_jR_{ij} = A + \frac1\dt\sum_{j=1}^i \coef{i}_jR_{ij},
    \]
    which leads to
    \begin{equation}\label{diff of A Ahat}
        \ll \da \rl = \ll \h{A}_i - A \rl \leq \frac1\dt\sum_{j=1}^i \lv \coef{i}_j \rv \ll R_{ij} \rl \leq  \hci\ll A \rl D_A^{i} \dt^i , \quad \text{with }D_A = e^{\ll A \rl \dt}\ll A \rl
    \end{equation}
    where $\hci$ is defined in \eqref{def of C_i}.

\color{black}

\subsection{Proof of Theorem \ref{thm:rl_rho}}\label{sec:proof of them rl stoch}
We first present the property of the operator $\mL_{\mu,\Sig}$ that will be frequently used later in the following Proposition. 
\begin{proposition}\label{lemma:l2 operator}
    For the operator $\mL_{\mu,\Sig}$ defined in \eqref{def of mL}, under Assumption \ref{ass_2}/(a), one has
    \[
    {\la \mL_{\mu,\Sig} V(s), V(s) \ra_\rho \leq -\frac{\lammin}{2} \ll \nb V \rl_{\rho}^2;}
    \]
    \[
    \sum_i\la \pt_{s_i}\mL_{\mu,\Sig} V(s), \pt_{s_i}V(s) \ra_\rho \leq C_{\nb\mu,\nb\Sig} \ll \nb V \rl^2_\rho;
    \]
    \[
    \begin{aligned}
        \la \mL_{\mu,\Sig} f(s), g(s)\ra_\rho
        \leq &\l[\l(\ll \mu\rl_\Linf+ \frac12\ll \nb\cdot\Sig \rl_\Linf \r)\ll g\rl_\rho +  \frac12\ll \Sig\rl_\Linf \ll \nb g\rl_\rho \r]\ll \nb f\rl_\rho\\
        &+ \frac12\ll \Sig\rl_\Linf \ll \nb\log\rho \rl_\rho\ll g\rl_\rho\ll \nb f \rl_\Linf;\\
        \la \mL_{\mu,\Sig} f(s), g(s)\ra_\rho \leq &\l[\l(\ll \mu\rl_\Linf+ \frac12\ll \nb\cdot\Sig \rl_\Linf + \frac12\ll \Sig\rl_\Linf \ll \nb\log\rho \rl_\Linf\r)\ll g\rl_\rho \r.\\
        &\l.+ \frac12\ll \Sig\rl_\Linf \ll \nb g\rl_\rho \r]\ll \nb f\rl_\rho,
    \end{aligned}
    \]
    where $C_{\nb\mu,\nb\Sig}$ is defined in \eqref{def of c123} depending on the first derivatives of $\mu, \Sig$. \\
\end{proposition}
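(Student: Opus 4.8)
The plan is to derive all four inequalities from integration by parts against the weight $\rho$, using three ingredients: the uniform ellipticity $(\nabla f)^\top \Sigma(s) \nabla f \geq \lambda_{\min}|\nabla f|^2$ from Assumption \ref{ass_2}(a), the defining property of $\rho$ (either stationarity \eqref{stationary} or the drift bound \eqref{def of new rho}), and the boundedness of $\mu,\Sigma$ together with their derivatives and of $\nabla \log\rho$. All boundary terms in the integrations by parts vanish, either because $\rho$ decays at infinity or, when $\Omega=\{\rho\neq 0\}\neq\mathbb{R}^d$, because one works on $\Omega$ with the corresponding boundary behavior of $\rho$; a standard density argument then extends the identities from smooth compactly supported functions to the relevant $H^1_\rho$ (or $H^2_\rho$) spaces.

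\emph{First inequality (coercivity).} I would start from the carré-du-champ identity $\mathcal{L}_{\mu,\Sigma}(V^2) = 2V\,\mathcal{L}_{\mu,\Sigma}V + (\nabla V)^\top\Sigma\nabla V$, a direct consequence of $\nabla^2(V^2)=2\nabla V\otimes\nabla V + 2V\nabla^2 V$. When $\rho$ is the stationary density, $\int \mathcal{L}_{\mu,\Sigma}(V^2)\rho\,ds = 0$ by \eqref{stationary}, hence $2\langle \mathcal{L}_{\mu,\Sigma}V,V\rangle_\rho = -\int (\nabla V)^\top\Sigma\nabla V\,\rho\,ds$, and ellipticity yields the stated bound. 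When $\rho$ only satisfies \eqref{def of new rho}, I would instead integrate $\langle \mathcal{L}_{\mu,\Sigma}V,V\rangle_\rho$ by parts directly, producing the same negative term $-\tfrac12\int(\nabla V)^\top\Sigma\nabla V\,\rho$ plus a drift term $\int\big(-\mu+\tfrac12\nabla\cdot\Sigma+\tfrac12\Sigma\nabla\log\rho\big)\cdot\nabla V\,V\,\rho\,ds$, which is absorbed by Cauchy--Schwarz, Young's inequality, and the $L^\infty$ bound of \eqref{def of new rho} (this is the step where $\beta$ enters and is what makes $\beta-\mathcal{L}_{\mu,\Sigma}$ coercive downstream).

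\emph{Second inequality ($H^1$ estimate).} I would commute a derivative past the operator: $\partial_{s_k}\mathcal{L}_{\mu,\Sigma}V = \mathcal{L}_{\mu,\Sigma}(\partial_{s_k}V) + (\partial_{s_k}\mu)\cdot\nabla V + \tfrac12(\partial_{s_k}\Sigma):\nabla^2 V$. Summing $\langle\cdot,\partial_{s_k}V\rangle_\rho$ over $k$, the leading term gives $\sum_k\langle\mathcal{L}_{\mu,\Sigma}(\partial_{s_k}V),\partial_{s_k}V\rangle_\rho \leq -\tfrac{\lambda_{\min}}{2}\sum_k\|\nabla\partial_{s_k}V\|_\rho^2$ by the first inequality applied to each $\partial_{s_k}V$; the term $\sum_k\langle(\partial_{s_k}\mu)\cdot\nabla V,\partial_{s_k}V\rangle_\rho$ is bounded by $\|\nabla\mu\|_{L^\infty}\|\nabla V\|_\rho^2$. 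The delicate piece is $\sum_k\langle\tfrac12(\partial_{s_k}\Sigma):\nabla^2 V,\partial_{s_k}V\rangle_\rho$, which still contains second derivatives; I would integrate it by parts once, shifting one derivative off $\nabla^2 V$ onto $\partial_{s_k}V$, $\partial_{s_k}\Sigma$, and $\rho$. This produces terms controlled by $\|\nabla^2\Sigma\|_{L^\infty}\|\nabla V\|_\rho^2$ and $\|\nabla\Sigma\|_{L^\infty}\|\nabla\log\rho\|_{L^\infty}\|\nabla V\|_\rho^2$, plus a cross term $\|\nabla\Sigma\|_{L^\infty}\|\nabla^2 V\|_\rho\|\nabla V\|_\rho$; splitting the cross term by Young's inequality into $\tfrac{\lambda_{\min}}{4}\|\nabla^2 V\|_\rho^2 + C\|\nabla V\|_\rho^2$ and absorbing the second-derivative part into the coercivity gain leaves the bound with a constant $C_{\nabla\mu,\nabla\Sigma}$ depending on $\|\nabla\mu\|_{L^\infty}$, $\|\nabla\Sigma\|_{L^\infty}$, $\|\nabla^2\Sigma\|_{L^\infty}$, $\|\nabla\log\rho\|_{L^\infty}$, $\lambda_{\min}$, as in \eqref{def of c123}. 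I expect this absorption argument to be the main technical bookkeeping of the proof.

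\emph{Third and fourth inequalities.} These need only a single integration by parts and no absorption. I would write $\langle \mathcal{L}_{\mu,\Sigma}f,g\rangle_\rho = \int(\mu\cdot\nabla f)g\,\rho + \tfrac12\int(\Sigma:\nabla^2 f)g\,\rho$ and integrate the second term by parts to obtain $-\tfrac12\int\big[(\nabla\cdot\Sigma)\cdot\nabla f\,g + (\nabla f)^\top\Sigma\nabla g + (\nabla f)^\top\Sigma\nabla\log\rho\,g\big]\rho\,ds$. Bounding the four resulting terms by Cauchy--Schwarz — estimating the last one either as $\tfrac12\|\Sigma\|_{L^\infty}\|\nabla\log\rho\|_\rho\|g\|_\rho\|\nabla f\|_{L^\infty}$ (third inequality) or as $\tfrac12\|\Sigma\|_{L^\infty}\|\nabla\log\rho\|_{L^\infty}\|g\|_\rho\|\nabla f\|_\rho$ (fourth inequality) — and grouping the coefficients multiplying $\|g\|_\rho$ and $\|\nabla g\|_\rho$ gives exactly the two stated bounds.
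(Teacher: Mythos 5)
Your overall strategy --- integration by parts against $\rho$, ellipticity from Assumption \ref{ass_2}(a), the stationarity/drift condition on $\rho$ to kill or absorb the first-order remainder, and the commutator identity $\pt_{s_k}\mL_{\mu,\Sig}V = \mL_{\mu,\Sig}\pt_{s_k}V + \pt_{s_k}\mu\cdot\nb V + \tfrac12\pt_{s_k}\Sig:\nb^2 V$ --- is exactly the paper's, and your treatment of the first, third and fourth inequalities matches the paper's proof essentially line for line. The one place you genuinely diverge is the term $\tfrac12\sum_k\la \pt_{s_k}\Sig:\nb^2 V, \pt_{s_k}V\ra_\rho$ in the second inequality. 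The paper does \emph{not} integrate it by parts: it bounds it directly by Cauchy--Schwarz as $\tfrac12\ll\nb\Sig\rl_\Linf\ll\nb^2V\rl_\rho\ll\nb V\rl_\rho$ and absorbs the $\ll\nb^2V\rl_\rho$ factor into the coercivity gain $-\tfrac{\lammin}{4}\ll\nb^2V\rl_\rho^2$ by completing the square, which is why $C_{\nb\mu,\nb\Sig}$ in \eqref{def of c123} depends only on $\ll\nb\mu\rl_\Linf$, $\ll\nb\Sig\rl_\Linf^2/\lammin$ and $\beta$. Your extra integration by parts still requires the same absorption afterwards (as you note, one of the resulting terms retains a second derivative of $V$), but it additionally drags $\ll\nb^2\Sig\rl_\Linf$ and $\ll\nb\log\rho\rl_\Linf$ into the constant; so what you would prove is the inequality with a different, more demanding constant than the one actually defined in \eqref{def of c123} --- the paper's shortcut is both simpler and sharper in its hypotheses. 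One further small point: the first inequality, as actually established and as applied to each $\pt_{s_k}V$ inside the second one, carries an extra $\tfrac{\beta}{2}\ll V\rl_\rho^2$ and a degraded coefficient $\lammin/4$ when $\rho$ is not stationary but only satisfies \eqref{def of new rho}; this is precisely where the $\beta$ in \eqref{def of c123} comes from, and your sketch of the second inequality, which invokes the clean $-\tfrac{\lammin}{2}$ coercivity, silently assumes the stationary case.
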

\begin{proof}
Inserting the operator $\mL_{\mu,\Sig}$, and applying integral by parts gives,
\begin{equation}\label{important_ineq}
    \begin{aligned}
 &\la \mL_{\mu,\Sig} V(s), V(s) \ra_\rho = \la\mu \cdot \nb V, V\ra_\rho - \frac12 \sum_{i,j} \la \pt_{s_j}(\Sig_{ij} V\rho), \pt_{s_i}V \ra \\
 =& \sum_i\la\mu_i \rho, \pt_{s_i}\l(\frac12V^2\r)\ra - \frac12 \sum_{i,j} \la \pt_{s_j}(\Sig_{ij} \rho), \pt_{s_i}\l(\frac12V^2\r) \ra - \frac12 \sum_{i,j} \la (\pt_{s_j}V)\Sig_{ij}, \pt_{s_i}V \ra_\rho\\
 =& -\sum_i\la\pt_{s_i}(\mu_i \rho), \frac12V^2\ra + \frac12 \sum_{i,j} \la \pt_{s_i}\pt_{s_j}(\Sig_{ij} \rho), \frac12V^2\ra - \frac12 \int (\nb V)^\top \Sig (\nb V) \rho \,ds\\
  =& \la \nb\cdot\l(- \mu \rho + \frac12\nb\cdot(\Sig\rho)\r), \frac12V^2\ra  - \frac12 \int (\nb V)^\top \Sig (\nb V) \rho \,ds\\
  \leq& \la \nb\cdot\l(- \mu \rho + \frac12\nb\cdot(\Sig\rho)\r), \frac12V^2\ra -\frac{\lammin}{2}\ll \nb V \rl^2_\rho,
\end{aligned}
\end{equation}
where the last inequality is because of the positivity of the matrix $\Sig(s)$ in Assumption \ref{ass_2}. 

If $\rho$ is the stationary solution, then
\[
\la \nb\cdot\l(- \mu \rho + \frac12\nb\cdot(\Sig\rho)\r), \frac12V^2\ra = 0.
\]
If $\rho$ s.t. $\frac{1}{\lammin}\ll - \mu  + \frac12\nb\cdot\Sig + \frac12\Sig\nb
\log\rho\rl_\Linf^2 \leq \frac\beta2$, then
\[
\begin{aligned}
    &\la \nb\cdot\l(- \mu \rho + \frac12\nb\cdot(\Sig\rho)\r), \frac12V^2\ra = \la \l( \mu  - \frac12\nb\cdot\Sig - \frac12\Sig\nb
\log\rho\r)\rho, \nb\l(\frac12V^2\r)\ra\\
\leq & \ll \mu  - \frac12\nb\cdot\Sig - \frac12\Sig\nb
\log\rho\rl_\Linf \ll V \rl_\rho \ll \nb V \rl_\rho \\
\leq& \frac{1}{\lammin}\ll \mu  - \frac12\nb\cdot\Sig - \frac12\Sig\nb
\log\rho\rl_\Linf^2 \ll V \rl_\rho^2 +  \frac\lammin4\ll \nb V \rl_\rho^2
\leq \frac\beta2\ll V \rl_\rho^2 +  \frac\lammin4\ll \nb V \rl_\rho^2.
\end{aligned}
\]
Inserting it back to \eqref{important_ineq}, this completes the proof of the first inequality.
\color{black}

For the second part of the Lemma, first note that 
\[
\pt_{s_i}\mL_{\mu,\Sig} V = \pt_{s_i}\mu\cdot\nb V + \frac{1}{2}\pt_{s_i}\Sig:\nb^2V + \mL_{\mu,\Sig}\pt_{s_i} V.
\]
Therefore, applying the first part of the Lemma gives
\begin{equation*}
\begin{aligned}
    &\sum_i\la \pt_{s_i}\mL_{\mu,\Sig} V(s), V(s) \ra_\rho \\
    \leq  &\sum_i\l(\la \pt_{s_i}\mu\cdot\nb V, \pt_{s_i}V\ra_\rho + \frac{1}{2}\la \pt_{s_i}\Sig:\nb^2V, \pt_{s_i}V\ra_\rho\r) - \frac{\lammin}{4}\sum_{i}\ll \nb\pt_{s_i}V \rl^2_\rho + \frac\beta2\ll \nb V \rl_\rho^2\\
    \leq & \ll \nb \mu \rl_\Linf \ll \nb V \rl_\rho^2 + \frac{1}{2}\ll \nb\Sig \rl_\Linf \ll \nb^2V \rl_\rho \ll \nb V \rl_\rho - \frac{\lammin}{4}\ll \nb^2V \rl^2_\rho+ \frac\beta2\ll \nb V \rl_\rho^2\\
    = &- \frac{\lammin}{4}\l(\ll \nb^2V \rl_\rho - \frac{\ll \nb \Sig \rl_\Linf}{\lammin}\ll \nb V \rl_\rho\r)^2 + \l(\frac{\ll \nb \Sig\rl_\Linf^2}{4\lammin} + \ll \nb \mu \rl_\Linf + \frac\beta2 \r)\ll \nb V \rl_\rho^2\\
    \leq & \frac{C_{\nb\mu,\nb\Sig}}{2}\ll \nb V \rl^2_\rho ,
\end{aligned}
\end{equation*}
where
\begin{equation}\label{def of c123}
\begin{aligned}
{C_{\nb\mu,\nb\Sig} = \frac{\ll \nb \Sig\rl_\Linf^2}{2\lammin} + 2\ll \nb \mu \rl_\Linf + \beta.}
\end{aligned}
\end{equation}

For the last two inequalities, one notes
\[
\begin{aligned}
    &\la \mL_{\mu,\Sig} f, g\ra_\rho \\
    =& \la \mu\cdot\nb f , g\ra_\rho -\frac12\l[\la \nb f \cdot\nb\cdot\Sig, g\ra_\rho + \la \nb f \Sig, \nb g\ra_\rho + \la  \nb f  \Sig, \frac{\nb\rho}{\rho} g\ra_\rho\r]\\
    \leq& \l[\l(\ll \mu\rl_\Linf+ \frac12\ll \nb\cdot\Sig \rl_\Linf \r)\ll g\rl_\rho +  \frac{\ll \Sig\rl_\Linf}2 \ll \nb g\rl_\rho \r]\ll \nb f\rl_\rho + \la  \nb f  \frac{\Sig}{2}, g \nb\log\rho \ra_\rho .\\
\end{aligned}
\]
By bounding the last term differently,
\[
\frac12\ll \nb\log \rho \rl_\Linf \ll \Sig\rl_\Linf\ll g\rl_\rho\ll \nb f\rl_\rho\quad \text{or, }\quad\frac12\ll \nb\log \rho \rl_\rho \ll \Sig\rl_\Linf\ll g\rl_\rho\ll \nb f\rl_\Linf,
\]
one ends up with the last two inequalities of the Lemma. 

\end{proof}

\paragraph{Proof of Theorem \ref{thm:rl_rho}}

Now we are ready to prove Theorem \ref{thm:rl_rho}. 
\begin{proof}
By \eqref{ineq_10}, one has
\begin{equation}\label{ineq_11}
   \begin{aligned}
    &\ll V(s) - \tV(s)\rl_\rho \\
    \leq&  \sum_{i=0}^\infty \sqrt{\dt \int_{\dt i }^{\dt (i+1)}\ll\int_\S f(t,s') \rho(s',t|s)-  f(\dt i,s')\rho(\dt i, s')ds'\rl_\rho^2dt} ,
\end{aligned} 
\end{equation}
where the Jensen's inequality is used.
By \eqref{ineq_6}, one has for $t\in[\dt i, \dt(i+1)]$
\[
\begin{aligned}
    &\ll \int_\S f(t,s') \rho(s',t|s)-  f(\dt i,s')\rho(\dt i, s')ds'\rl_\rho \\
    \leq&  e^{-\b\dt i}(t-\dt i) \l(\ll p_1(\xi_1,s)\rl_\rho +  \b\ll p_2(\dt i, s) \rl_\rho\r),
    \end{aligned}
\]
where
\[
p_1(s,t) = \int_\S \mL_{\mu,\Sig} r(s') 
    \rho(s',t|s)  ds', \quad p_2(s,t) = \int_\S  r(s') 
    \rho(s',t|s)  ds'.
\]
Note that both $p_1(s,t)$ and $p_2(s,t)$ satisfies
\[
\pt_t p_i(s,t) = \mL_{\mu,\Sig} p_i(s,t), \quad \text{with initial data }p_1(0,s) = \mL_{\mu,\Sig} r(s), \quad p_2(0,s) = r(s). 
\]
By Proposition \ref{lemma:l2 operator}, one has
\[
\pt_t\l(\frac1{2}\ll p_i(t) \rl^2_\rho\r) \leq -\frac{\lammin}{4}\ll \nb p_i(t) \rl^2_\rho  + \frac{\beta}{2}\ll p_i(t) \rl^2_\rho,
\]
which implies, 
\[
\ll p_i(t) \rl_\rho\leq e^{\frac{\beta}2 t}\ll p_i(0)\rl_\rho.
\]
Therefore, one has
\[
\begin{aligned}
    &\ll \int_\S f(t,s') \rho(s',t|s)-  f(\dt i,s')\rho(\dt i, s')ds'\rl_\rho \\
    \leq&  e^{-\b\dt i}(t-\dt i) e^{\frac\beta2\dt i }\l(e^{\frac\beta2(\xi_1 - \dt i)}\ll\mL_{\mu,\Sig} r(s)\rl_\rho +  \b \ll r(s)\rl_\rho\r), \quad \xi_1\in[\dt i, \dt(i+1)].
    \end{aligned}
\]
Inserting it back to \eqref{ineq_11} yields, 
\[
\begin{aligned}
    &\ll V(s) - \tV(s)\rl_\rho \\
    \leq & \l(\ll\mL_{\mu,\Sig} r(s)\rl_\rho +  \b\ll r(s)\rl_\rho\r)  e^{\frac\beta2\dt}\sum_{i=0}^\infty \sqrt{\dt e^{-\b\dt i} \int_{\dt i }^{\dt (i+1)} (t-\dt i)^2  dt}\\
    = & \l(\ll\mL_{\mu,\Sig} r(s)\rl_\rho +  \b\ll r(s)\rl_\rho\r)e^{\frac\beta2\dt} \frac{1}{\sqrt{3}}\dt^2 \sum_{i=0}^\infty  e^{-\frac\b2\dt i} \\
    = & \frac{2}{\b}\l(\ll\mL_{\mu,\Sig} r(s)\rl_\rho +  \b\ll r(s)\rl_\rho\r)\dt + o(\dt),
\end{aligned}
\]
where the last equality comes from 
\[
e^{\frac\beta2\dt} \dt^2 \sum_{i=0}^\infty  e^{-\frac\b2\dt i}  = \frac{2\dt}{\beta} + o(\dt),
\]
which completes the proof. 
\end{proof}

\subsection{Proof of Theorem \ref{thm:v-vhat_stoch}}\label{sec:proof of them Phibe stoch}

First note that $V,\hV_i$ satisfies,
\[
\mL_{\mu,\Sig} V = \b V - r, \quad \mL_{\hmu_i,\hs_i} \hV_i = \b \hV_i - r.
\]
By the following Lemma, one can bound $\ll V - \hV_i \rl_\rho$ by the distance between $\mu, \Sig$ and $\hmu_i, \hs_i$. 
\begin{lemma}\label{lemma:hv-v-rho}
For $V,\hV$ satisfying
\[
\mL_{\mu,\Sig} V = \b V - r, \quad \mL_{\hmu,\hs} \hV = \b \hV - r,
\]
under Assumption \ref{ass_2}/(a), if $\ll \hmu - \mu \rl_\Linf\leq C_\mu, \ll \hs - \Sig \rl_\Linf \leq C_\Sig, \ll \nb\cdot (\hs - \Sig)\rl_\Linf \leq C_{\nb\cdot\Sig}$, $\ll\nb\log\rho\rl_\rho\leq L_\rho$ , and $C_\mu + \frac12C_\Sig\leq \sqrt{\frac{\b\lammin}{8}}, C_\Sig\leq \frac\lammin4$, one has
\[
\ll V - \hV \rl_\rho \leq \l[\frac{4C_\mu + 2C_{\nb\cdot\Sig}}{\b}\l(1+\frac{2C_\Sig}{\lammin}\r) + \frac{2C_\Sig}{\sqrt{\b\lammin}}\r]\ll \nb V \rl_\rho + \frac{2C_\Sig L_\rho}{\b} \ll \nb\hV\rl_\Linf.
\]
\end{lemma}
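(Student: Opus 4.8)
The plan is to estimate $V - \hV$ by a standard energy method: write down the PDE satisfied by the difference, test it against $V - \hV$ in the weighted inner product $\la\cdot,\cdot\ra_\rho$, and use the coercivity estimate for $\mL_{\mu,\Sig}$ from Proposition \ref{lemma:l2 operator} to absorb the gradient term on the left, while bounding the perturbation terms (those involving $\hmu - \mu$ and $\hs - \Sig$) on the right.

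First I would subtract the two equations. Since $\mL_{\mu,\Sig} V = \b V - r$ and $\mL_{\hmu,\hs}\hV = \b\hV - r$, setting $w = V - \hV$ gives
\[
\b w - \mL_{\mu,\Sig} w = \mL_{\mu,\Sig}\hV - \mL_{\hmu,\hs}\hV = (\mu - \hmu)\cdot\nb\hV + \tfrac12(\Sig - \hs):\nb^2\hV.
\]
Testing against $w$ in $\la\cdot,\cdot\ra_\rho$ and applying the first inequality of Proposition \ref{lemma:l2 operator} (which gives $\la \mL_{\mu,\Sig} w, w\ra_\rho \leq \tfrac\b2\ll w\rl_\rho^2 - \tfrac{\lammin}2\ll\nb w\rl_\rho^2$, using the choice of $\rho$ via \eqref{def of new rho}), the left side is bounded below by $\tfrac\b2\ll w\rl_\rho^2 + \tfrac{\lammin}2\ll\nb w\rl_\rho^2$. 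For the right side, the drift term $\la(\mu - \hmu)\cdot\nb\hV, w\ra_\rho$ is directly bounded by $C_\mu\ll\nb\hV\rl_\rho\ll w\rl_\rho$; but to avoid needing control of $\nb^2\hV$ I would rewrite $\la\tfrac12(\Sig - \hs):\nb^2\hV, w\ra_\rho$ via integration by parts, moving one derivative off $\hV$, producing terms of the form $\ll\nb\cdot(\Sig-\hs)\rl_\Linf\ll\nb\hV\rl_\rho\ll w\rl_\rho$, $\ll\Sig - \hs\rl_\Linf\ll\nb\hV\rl_\rho\ll\nb w\rl_\rho$, and $\ll\Sig-\hs\rl_\Linf\ll\nb\hV\rl_\Linf\ll\nb\log\rho\rl_\rho\ll w\rl_\rho$ — which explains the appearance of $L_\rho$ and $\ll\nb\hV\rl_\Linf$ in the statement. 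The term carrying $\ll\nb w\rl_\rho$ is absorbed into the $\tfrac{\lammin}2\ll\nb w\rl_\rho^2$ on the left using Young's inequality (this is where $C_\Sig \leq \lammin/4$ is used), and the remaining terms are handled by Young against the $\tfrac\b2\ll w\rl_\rho^2$ term (this is where $C_\mu + \tfrac12 C_\Sig \leq \sqrt{\b\lammin/8}$ enters, to keep the coefficient on $\ll w\rl_\rho^2$ strictly below $\b/2$).

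After the absorptions, I am left with $\ll w\rl_\rho^2 \lesssim \b^{-1}(\text{bounded terms})\ll\nb\hV\rl_\rho\ll w\rl_\rho$ plus $\b^{-1}$ times the $L_\rho$-term, so dividing by $\ll w\rl_\rho$ gives the bound. The last step is to replace $\ll\nb\hV\rl_\rho$ by $\ll\nb V\rl_\rho$ up to the factor $(1 + 2C_\Sig/\lammin)$ appearing in the statement: this requires a separate a priori estimate showing $\ll\nb\hV\rl_\rho \leq (1+2C_\Sig/\lammin)\ll\nb V\rl_\rho + (\text{small})$, which itself comes from testing the $\hV$-equation against $\hV$ and comparing with the analogous estimate for $V$; alternatively one keeps $\ll\nb\hV\rl_\rho$ throughout and does the substitution at the very end. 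The main obstacle I anticipate is the bookkeeping in the integration-by-parts step for the diffusion perturbation term — keeping careful track of which norm ($\rho$, $\Linf$, or mixed) each factor carries so that the final constants match, and verifying that the two smallness hypotheses on $C_\mu, C_\Sig$ are exactly what is needed to close the Young's-inequality absorptions without losing the $1/\b$ and $1/\sqrt{\b\lammin}$ scalings.
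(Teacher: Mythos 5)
Your proposal follows essentially the same route as the paper: subtract the two equations, test the error against itself in $\la\cdot,\cdot\ra_\rho$, invoke the coercivity of $\mL_{\mu,\Sig}$ from Proposition \ref{lemma:l2 operator}, integrate by parts on the diffusion perturbation (this is exactly the third inequality of that proposition, producing the $C_{\nb\cdot\Sig}$, $C_\Sig\ll\nb e\rl_\rho$, and $C_\Sig L_\rho\ll\nb\hV\rl_\Linf$ terms), and close with Young's inequality using the two smallness hypotheses precisely where you place them. The only cosmetic difference is the last step: the paper replaces $\ll\nb\hV\rl_\rho$ by $\ll\nb V\rl_\rho+\ll\nb e\rl_\rho$ and reabsorbs (which is where the factor $1+2C_\Sig/\lammin$ actually arises, via the cross term in the completion of the square) rather than proving a separate a priori bound on $\ll\nb\hV\rl_\rho$, matching the second alternative you mention.
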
 
(See Section \ref{proof of lemma hv-v-rho} for the proof of the above lemma)
Then we further apply the following lemma regarding the distance between $\mu, \Sig$ and $\hmu_i, \hs_i$.
\begin{lemma}\label{lemma:stoch_dynamics_order}
Under Assumption \ref{ass_2}, for $\hmu(s), \hs(s)$ defined in \eqref{def of higher order stoch mu}, one has 
\[
\ll \hmu_i (s)-  \mu (s)\rl_\Linf \leq L_\mu \dt^i, \quad \ll \hs_i (s)_{kl}-  \Sig(s)_{kl}\rl_\Linf \leq  L_\Sig \dt^i + o(\dt^i),
\] 
and
\begin{equation}
    \ll \nb\cdot (\hs - \Sig) \rl^2_\Linf \leq L_{\nb\cdot\Sig} \dt^i +o(\dt^i),
\end{equation}
where $ L_\mu, L_\Sig, L_{\nb\cdot\Sig}$ are constants depending on $\mu, \Sig, i$ defined in \eqref{def of Lmu}, \eqref{def of Lsig}, \eqref{def of LSigrho}, respectively.
\end{lemma}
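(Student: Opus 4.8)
The plan is to obtain all three bounds from a single \emph{semigroup Taylor expansion in time}. For a (possibly base‑point‑dependent) test function $\psi$, set $u_\psi(s,t):=\E[\psi(s_t)\mid s_0=s]$. By Kolmogorov's backward equation $\pt_t u_\psi=\mL_{\mu,\Sig}u_\psi$ with $u_\psi(\cdot,0)=\psi$, so $\pt_t^k u_\psi(\cdot,0)=\mL_{\mu,\Sig}^k\psi$, and Taylor's theorem with integral remainder gives
\[
u_\psi(s,t)\;=\;\sum_{k=0}^{i}\frac{t^k}{k!}(\mL_{\mu,\Sig}^{k}\psi)(s)\;+\;\frac{1}{i!}\int_0^{t}(t-\tau)^i\,\E[(\mL_{\mu,\Sig}^{i+1}\psi)(s_\tau)\mid s_0=s]\,d\tau .
\]
Assumption~\ref{ass_2}(b) (bounded $\nb^k\mu,\nb^k\Sig$ for $k\le 2i$) guarantees $u_\psi$ is $C^{i+1}$ in $t$ and controls $\mL_{\mu,\Sig}^{i+1}\psi$, so this is legitimate. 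The recurring mechanism is the moment identity \eqref{def of a}: $\sum_{j=1}^{i}\coef{i}_j j^k=1$ if $k=1$ and $0$ if $2\le k\le i$ (the $j=0$ term drops since $0^k=0$), which annihilates every polynomial‑in‑$\dt$ term of order $\le i$ except the one of order $1$.

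\textbf{Bound on $\hmu_i$.} Apply the expansion componentwise with $\psi=\mathrm{id}$, so $\mL_{\mu,\Sig}^{k}\mathrm{id}=\mL^{k-1}_{\mu,\Sig}\mu$ for $k\ge 1$ and $\mL^{i+1}_{\mu,\Sig}\mathrm{id}=\mL^{i}_{\mu,\Sig}\mu$ is bounded. Substituting $\E[s_{j\dt}\mid s_0=s]-s$ into $\hmu_i(s)=\tfrac1\dt\sum_{j=1}^i\coef{i}_j(\E[s_{j\dt}\mid s_0=s]-s)$ and invoking \eqref{def of a}, all intermediate terms cancel except $k=1$, which reproduces $\mu(s)$ exactly; what remains is $\tfrac1\dt\sum_j\coef{i}_j$ times the order‑$(i{+}1)$ remainder, whence
\[
\ll\hmu_i-\mu\rl_\Linf\;\le\;\frac{\sum_{j=1}^i|\coef{i}_j|\,j^{i+1}}{(i+1)!}\,\ll\mL^{i}_{\mu,\Sig}\mu\rl_\Linf\,\dt^i\;=:\;L_\mu\,\dt^i ,
\]
identifying $L_\mu$ in \eqref{def of Lmu}. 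This step is routine polynomial cancellation plus a bounded remainder.

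\textbf{Bound on $\hs_i$.} Apply the expansion to the base‑point‑dependent quadratic $\psi^{s}_{kl}(x)=(x_k-s_k)(x_l-s_l)$, for which $\psi^{s}_{kl}(s)=0$, $(\mL_{\mu,\Sig}\psi^{s}_{kl})(s)=\Sig_{kl}(s)$, and $(\mL^{m}_{\mu,\Sig}\psi^{s}_{kl})(s)$ for $2\le m\le i$ are bounded functions of $\mu,\Sig$ and their derivatives (for $m=2$ one gets a combination of $\mu\mu^\top$, $\Sig\nb\mu$, and $\mL_{\mu,\Sig}\Sig$). Identity \eqref{def of a} again kills every polynomial term except the one producing $\Sig_{kl}(s)$, leaving the order‑$(i{+}1)$ remainder. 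A short computation shows each application of $\mL_{\mu,\Sig}$ to $\psi^{s}_{kl}$ preserves ``at most linear growth in $x-s$'', so $\mL^{i+1}_{\mu,\Sig}\psi^{s}_{kl}(x)$ splits into a bounded part, contributing $O(\dt^i)$, and a part linear in $x-s$; for the latter the short‑time moment bound $\E[|s_\tau-s|\mid s_0=s]\lesssim\sqrt{\tau}$ (valid under the Lipschitz/linear‑growth assumptions) renders its contribution $o(\dt^i)$. Tracking the $m=2$ coefficient gives $L_\Sig\lesssim\ll\mu\mu^\top+\Sig\nb\mu+\mL_{\mu,\Sig}\Sig\rl_\Linf+\ll\mu\rl_\Linf$, matching the remark and \eqref{def of Lsig}.

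\textbf{Bound on $\nb\cdot(\hs_i-\Sig)$, and the main obstacle.} For the divergence I would differentiate in $s$ the representation of $\hs_i-\Sig$ just obtained. The delicate point — and the real work of the lemma — is that $s$ enters $\E[\psi^{s}_{kl}(s_t)\mid s_0=s]$ both as the initial condition and explicitly inside $\psi^{s}_{kl}$; differentiating in the initial condition invokes the first‑variation (derivative) process of the SDE flow, which satisfies a linear SDE driven by $\nb\mu,\nb\Sig$ and has bounded moments under Assumption~\ref{ass_2}(b) — this is precisely where the full budget of $2i$ derivatives is consumed. The outcome is the analogous $i$‑th order estimate $\ll\nb\cdot(\hs_i-\Sig)\rl_\Linf^2\lesssim L_{\nb\cdot\Sig}\dt^i+o(\dt^i)$, with $L_{\nb\cdot\Sig}$ obtained by differentiating the $m=2$ coefficient, giving \eqref{def of LSigrho}. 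In summary, the $\hmu_i$ bound is immediate; the genuine difficulties are (i) the ``at most linear growth'' invariance that tames the $\hs_i$ remainder via short‑time SDE moment estimates, and (ii) the flow‑differentiation required for the divergence term.
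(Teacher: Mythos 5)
Your proposal is correct and its skeleton coincides with the paper's: a Taylor expansion in time of the semigroup (the paper does the dual version, expanding the transition density via the Fokker--Planck equation in its Lemma~\ref{i-th operator}), cancellation of all intermediate powers of $\dt$ through the moment identities \eqref{def of a}, a direct bound on the remainder for $\hmu_i$, and for $\hs_i$ a split of the order-$(i{+}1)$ remainder into a bounded piece (contributing $O(\dt^i)$, the function $h$ in \eqref{def of h f g}) and a piece linear in $s_t-s_0$ that is killed by a short-time $\sqrt{t}$ moment bound. The one place where you genuinely diverge is the mechanism for differentiating conditional expectations in the initial condition, which you correctly identify as the crux of the divergence bound. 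You propose the probabilistic route via the first-variation process of the SDE flow; the paper instead proves PDE gradient estimates (Lemma~\ref{lemma:plinf}) by differentiating the backward Kolmogorov equation, running an energy/comparison-principle argument, and crucially using the uniform ellipticity of Assumption~\ref{ass_2}(a) to absorb the second-derivative terms --- this is why the paper's constants $C_1, C_2, L_{\nb\cdot\Sig}$ carry factors of $\sqrt{C_{\nb\mu,\nb\Sig}/\lammin}$. Both routes work here; yours yields constants depending only on Lipschitz bounds of the coefficients (and would survive degenerate diffusion), while the paper's lets it bound $\nb\,\E[f(s_t)\mid s_0=s]$ partly in terms of $\ll f\rl_\Linf$ rather than $\ll\nb f\rl_\Linf$, which is reused elsewhere (e.g.\ Lemma~\ref{lemma:pt_s l2 est}). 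One small caveat for your route: the first-variation SDE requires derivatives of $\sigma$ itself, whereas Assumption~\ref{ass_2} only bounds derivatives of $\Sig=\s\s^\top$; you would need to note that uniform ellipticity plus smoothness of $\Sig$ yields a square root with the required bounded derivatives.
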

(See Section \ref{proof of lemma stoch_dynamics_order} for the proof of the above lemma) Combine the above two lemmas, one can bound
\[
\ll V - \hV_i \rl\leq \l[ \l(\frac{3L_\mu + 2L_{\nb\cdot\Sig}}{\b}\l(1+\frac{L_\Sig\dt^i}{\lammin}\r) + \frac{L_\Sig}{\sqrt{\b\lammin}}\r)\ll \nb V \rl_\rho + \frac{2L_\Sig L_\rho}{\b}\ll \nb \hV \rl_\Linf \r]\dt^i + o(\dt^i)
\]
for 
\begin{equation}\label{def of D}
    \dt^i\leq D_{\mu, \Sig, \b}, \quad D_{\mu, \Sig, \b} = \min\l\{\frac{\lammin}{2L_\Sig}, \frac{\sqrt{2\b\lammin}}{2L_\mu + L_{\nb\cdot\Sig}} \r\},
\end{equation} 
with $L_\mu, L_\Sig, L_{\nb\cdot\Sig}$ defined in \eqref{def of Lmu}, \eqref{def of Lsig}, \eqref{def of LnbSigrho}. 
Furthermore,  by the following lemma on the upper bound for $\ll \nb V \rl_\rho, \ll \nb \hV \rl_\Linf$, one completes the proof for Theorem \ref{thm:v-vhat_stoch}.
\begin{lemma}\label{lemma:pt_s l2 est}
    Under Assumption \ref{ass_2}/(a), for $V(s)$ satisfying \eqref{stoch_PDE}, one has
    \[
    \ll \nb V(s) \rl_\rho \leq \frac2{\b}\l(\sqrt{\frac{C_{\nb\mu,\nb\Sig}}{\lammin}}\ll r \rl_\rho + \ll \nb r\rl_\rho\r)
    \]
    \[\ll \nb V(s) \rl_\Linf 
     \leq \frac1\b\l(\sqrt{\frac{2C_{\nb\mu,\nb\Sig}}{\lammin}}  \ll r \rl_\Linf + \ll \nb r \rl_\Linf \r) + o(\dt^{i/2})
     \]
    where $C_{\nb\mu,\nb\Sig}$ is a constant defined in \eqref{def of c123} that depends on $\nb\mu(s), \nb\Sig(s)$.
\end{lemma}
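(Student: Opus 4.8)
My plan is to treat Lemma~\ref{lemma:pt_s l2 est} as two a~priori estimates for the elliptic equation $\mL_{\mu,\Sig}V=\b V-r$ — an energy estimate in the weighted $L^2_\rho$ norm and a Bernstein-type maximum-principle estimate in $L^\infty$ — and to feed the coercivity bounds of Proposition~\ref{lemma:l2 operator} into the differentiated equation. Note at the outset that the same argument yields the bound for $\hV_i$ solving the PhiBE $\mL_{\hmu_i,\hs_i}\hV_i=\b\hV_i-r$: by Lemma~\ref{lemma:stoch_dynamics_order} the coefficients $\hmu_i,\hs_i$ satisfy Assumption~\ref{ass_2} with constants agreeing with the true ones up to $o(1)$ as $\dt\to0$, so $C_{\nb\mu,\nb\Sig},\lammin$ are replaced by $C_{\nb\hmu_i,\nb\hs_i},\lammin(\hs_i)$, which is the source of the $o(\dt^{i/2})$ correction.

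For the $L^2_\rho$ bound I would proceed in two energy steps. First, pairing $\mL_{\mu,\Sig}V=\b V-r$ with $V$ in $\la\cdot,\cdot\ra_\rho$ and using the first inequality of Proposition~\ref{lemma:l2 operator} gives
\[
\b\ll V\rl_\rho^2+\tfrac{\lammin}{2}\ll\nb V\rl_\rho^2\le\la r,V\ra_\rho\le\ll r\rl_\rho\ll V\rl_\rho,
\]
whence $\ll V\rl_\rho\le\ll r\rl_\rho/\b$ and the a~priori bound $\ll\nb V\rl_\rho^2\le\tfrac{2}{\lammin}\ll r\rl_\rho\ll V\rl_\rho\le\tfrac{2}{\b\lammin}\ll r\rl_\rho^2$. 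Second, differentiating the equation, pairing $\pt_{s_i}\mL_{\mu,\Sig}V=\b\pt_{s_i}V-\pt_{s_i}r$ with $\pt_{s_i}V$, summing over $i$, and invoking the second inequality of Proposition~\ref{lemma:l2 operator} yields $\b\ll\nb V\rl_\rho^2\le\la\nb r,\nb V\ra_\rho+C_{\nb\mu,\nb\Sig}\ll\nb V\rl_\rho^2$. Since $C_{\nb\mu,\nb\Sig}$ contains a $+\b$ term and need not be smaller than $\b$, the last term cannot be absorbed directly; instead I would bound it by the a~priori estimate, $C_{\nb\mu,\nb\Sig}\ll\nb V\rl_\rho^2\le\tfrac{2C_{\nb\mu,\nb\Sig}}{\b\lammin}\ll r\rl_\rho^2$, and solve the quadratic inequality $\b x^2\le\ll\nb r\rl_\rho\,x+\tfrac{2C_{\nb\mu,\nb\Sig}}{\b\lammin}\ll r\rl_\rho^2$ in $x=\ll\nb V\rl_\rho$, using $\sqrt{a+b}\le\sqrt a+\sqrt b$. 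This produces exactly the stated bound.

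For the $L^\infty$ bound I would run the Bernstein method. The maximum principle applied to $\b V-\mL_{\mu,\Sig}V=r$ gives $\ll V\rl_\Linf\le\ll r\rl_\Linf/\b$. Then, with $\lambda:=C_{\nb\mu,\nb\Sig}/\lammin$, I would study $w:=|\nb V|^2+\lambda V^2$: at a maximizer $s^*$ one has $\nb w(s^*)=0$ and $\mL_{\mu,\Sig}w(s^*)\le0$. Expanding $\mL_{\mu,\Sig}w$ via the carré-du-champ identity and the commutator $\pt_{s_i}\mL_{\mu,\Sig}V=\mL_{\mu,\Sig}\pt_{s_i}V+(\pt_{s_i}\mu)\cd\nb V+\tfrac12(\pt_{s_i}\Sig):\nb^2V$, keeping the good terms $\lammin\ll\nb^2V(s^*)\rl^2$ and $\lambda\lammin|\nb V(s^*)|^2$ furnished by the ellipticity of $\Sig$, applying Young on the $\nb\Sig$ cross term to cancel the Hessian, and choosing $\lambda$ so that $\lambda\lammin\ge C_{\nb\mu,\nb\Sig}-\b$ absorbs the bad $\big(2\ll\nb\mu\rl_\Linf+\ll\nb\Sig\rl_\Linf^2/(4\lammin)\big)|\nb V(s^*)|^2$ term, one is left with
\[
3\b\,|\nb V(s^*)|^2\le 2\,|\nb V(s^*)|\,\ll\nb r\rl_\Linf+\tfrac{2C_{\nb\mu,\nb\Sig}}{\b\lammin}\ll r\rl_\Linf^2 .
\]
Solving this quadratic for $|\nb V(s^*)|$ and then using $\ll\nb V\rl_\Linf^2\le w(s^*)=|\nb V(s^*)|^2+\lambda V(s^*)^2\le|\nb V(s^*)|^2+\tfrac{C_{\nb\mu,\nb\Sig}}{\b^2\lammin}\ll r\rl_\Linf^2$ gives the second assertion.

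I expect the $L^\infty$ estimate to be the hard part. Conceptually the obstruction is the same in both norms: the natural energy identity for $\nb V$ carries a coefficient $\ll\nb\mu\rl_\Linf+\ll\nb\Sig\rl_\Linf^2/(4\lammin)$ on the wrong side that $\b$ does not control, so plain absorption fails — which is why one must first obtain a bound on $\ll\nb V\rl$ that does not see this coefficient (from testing against $V$ in $L^2_\rho$, from the extra $\lambda V^2$ term in the Bernstein function in $L^\infty$) and then reinsert it. The genuinely delicate point is that $w$ need not attain its supremum on $\R^d$, so the maximum-principle step must be made rigorous by a localization/penalization argument (testing against a slowly decaying cutoff and passing to the limit), together with enough interior regularity of $V$, which is available under Assumption~\ref{ass_2} via standard Schauder/$L^p$ theory.
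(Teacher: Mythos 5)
Your proposal is correct in substance, and the two halves should be judged separately. For the $L^2_\rho$ bound you are doing essentially what the paper does, just packaged differently: the paper forms the linear combination $\tfrac{2C_{\nb\mu,\nb\Sig}}{\lammin}\times(\text{zeroth-order energy identity})+(\text{first-order energy identity})$ so that the uncontrollable $\tfrac{C_{\nb\mu,\nb\Sig}}{2}\ll\nb V\rl_\rho^2$ term is cancelled by the dissipation $-\tfrac{\lammin}{4}\ll\nb V\rl_\rho^2$ coming from testing against $V$, whereas you first extract $\ll\nb V\rl_\rho^2\le\tfrac{2}{\b\lammin}\ll r\rl_\rho^2$ from the zeroth-order identity and reinsert it into the first-order one before solving a quadratic; both routes land on the stated bound (yours with a slightly better constant). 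The $L^\infty$ part is where you genuinely diverge. The paper never touches the elliptic equation there: it writes $\hV(s)=\int_0^\infty e^{-\b t}p(s,t)\,dt$ with $p(s,t)=\E[r(s_t)\mid s_0=s]$ solving the Kolmogorov backward equation $\pt_t p=\mL_{\hmu_i,\hs_i}p$, proves in Lemma~\ref{lemma:plinf} the uniform-in-time gradient bound $\ll\nb p(\cdot,t)\rl_\Linf\le\sqrt{C_{\nb\hmu_i,\nb\hs_i}/\lammin}\,\ll r\rl_\Linf+\ll\nb r\rl_\Linf$ by showing that $g=\tfrac{c}{2}p^2+\tfrac12\ll\nb p\rl_2^2$ is a parabolic subsolution and invoking a comparison theorem, and then integrates $e^{-\b t}$ to produce the $1/\b$; the factor $\sqrt2$ and the $o(\dt^{i/2})$ arise only at the end, from replacing $C_{\nb\hmu_i,\nb\hs_i}$ by $2C_{\nb\mu,\nb\Sig}+O(\dt^i)$ via the coefficient perturbation bounds \eqref{def of LnbSigrho}--\eqref{def of Lnbmu}. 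Your elliptic Bernstein function $w=|\nb V|^2+\lambda V^2$ is exactly the stationary analogue of the paper's $g$, and the computation you sketch (commutator, ellipticity of $\Sig$ plus Young to cancel the Hessian, choice of $\lambda$ to absorb the $\nb\mu,\nb\Sig$ terms, and $\ll V\rl_\Linf\le\ll r\rl_\Linf/\b$ to control the $\lambda Vr$ term) does close; what the paper's semigroup route buys is that the time-dependent comparison argument is reused verbatim for the other estimates in Lemma~\ref{lemma:plinf}, while your route is more self-contained for this one lemma. Two caveats: your constant in front of $\ll r\rl_\Linf$ comes out as roughly $2\sqrt{C_{\nb\mu,\nb\Sig}/\lammin}$ rather than $\sqrt{2C_{\nb\mu,\nb\Sig}/\lammin}$, i.e.\ weaker by $\sqrt2$ than the stated bound (harmless downstream, since the lemma's constants are absorbed into $C_{r,\mu,\Sig}$, but strictly speaking you would need to tighten the Bernstein bookkeeping to reproduce the exact statement); and the supremum-attainment issue you flag is real but not a deficit relative to the paper, whose ``comparison theorem'' on all of $\R^d$ for the parabolic inequality carries the same unstated growth hypotheses. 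Finally, you correctly read the statement: the $o(\dt^{i/2})$ term signals that the $L^\infty$ estimate is really about $\hV_i$ with the perturbed coefficients $\hmu_i,\hs_i$, which is exactly how the paper proves it and how it is consumed in Theorem~\ref{thm:v-vhat_stoch}.
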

(See Section \ref{proof of lemma pt_s l2 est} for the proof of the above lemma)
one has,
\[
\begin{aligned}
    \ll V - \hV_i \rl_\rho\leq &\l[ \l(\frac{3L_\mu + 2L_{\nb\cdot\Sig}}{\b^2} + \frac{L_\Sig}{\b\sqrt{\b\lammin}}\r)\l(\sqrt{\frac{C_{\nb\mu,\nb\Sig}}{\lammin}}\ll r \rl_\rho + \ll \nb r\rl_\rho\r) \r.\\
    &\l.+ \frac{2L_\Sig L_\rho}{\b^2}\l(\sqrt{\frac{2C_{\nb\mu,\nb\Sig}}{\lammin}}  \ll r \rl_\Linf + \ll \nb r \rl_\Linf \r) \r]\dt^i + o(\dt^i)\\
    \leq & \l[\frac{C_{r,\mu,\Sig}}{\b^2} +  \frac{\h{C}_{r,\mu,\Sig}}{\b^{3/2}}\r]\dt^i + o(\dt^i) ,
\end{aligned}
\]
where 
\begin{equation}\label{def of Crmusig}
    \begin{aligned}
    C_{r,\mu,\Sig} = &\l(3L_\mu + 2L_{\nb\cdot\Sig}\r)\l(\sqrt{\frac{C_{\nb\mu,\nb\Sig}}{\lammin}}\ll r \rl_\rho + \ll \nb r\rl_\rho\r) \\
    &+ 2L_\Sig L_\rho\l(\sqrt{\frac{2C_{\nb\mu,\nb\Sig}}{\lammin}}  \ll r \rl_\Linf + \ll \nb r \rl_\Linf \r),\\
    \h{C}_{r,\mu,\Sig} =& \frac{L_\Sig}{\sqrt{\lammin}}\l(\sqrt{\frac{C_{\nb\mu,\nb\Sig}}{\lammin}}\ll r \rl_\rho + \ll \nb r\rl_\rho\r),
\end{aligned}
\end{equation}
with $L_\mu, L_\Sig, L_{\nb\cdot\Sig}, L_\rho, C_{\nb\mu, \nb\Sig}$ defined in \eqref{def of Lmu}, \eqref{def of Lsig}, \eqref{def of LnbSigrho}, \eqref{def of Lrho}, \eqref{def of c123}.

\subsection{Proof of Theorem \ref{thm:galerkin err} and Galerkin error with unbounded $\ll \nb\log\rho \rl_\Linf$}\label{proof of galerkin error}
    The $i$-th approximation $\hV_i(s)$ can be divided into two parts,
    \[
    \hV_i(s) = \hV_i^P(s) + e_i^P(s) \quad \text{with}\quad \hV_i^P(s) = \sum_{k=1}^p\hv_k\phi_k(s), e_i^P(s) = \hV_i(s) - \hV_i^P(s),
    \]
    where $\hV_i^P(s)$ could be any functions in the linear space spanned by $\Phi(s)$. 
    Note that $\hV_i(s)$ satisfies
    \[
    \la (\b-  \mL_{\hmu_i,\hs_i}) \hV_i(s), \Phi\ra_\rho = \la r(s), \Phi(s)\ra_\rho,
    \]
    which can be divided into two parts,
    \[
    \la (\b-  \mL_{\hmu_i,\hs_i}) \hV^P_i(s), \Phi\ra_\rho = \la r(s), \Phi(s)\ra_\rho -  \la (\b-  \mL_{\hmu_i,\hs_i}) e_i^P(s), \Phi\ra_\rho,
    \]
    subtract the above equation from the Galerkin equation \eqref{galerkin} gives
    \[
    \la (\b-  \mL_{\hmu_i,\hs_i})(\hV^G_i(s)  - \hV^P_i(s)), \Phi\ra_\rho =  \la (\b-  \mL_{\hmu_i,\hs_i}) e_i^P(s), \Phi\ra_\rho.
    \]
    Let $e^G_i (s)=  \hV^G_i(s)  - \hV^P_i(s) = \sum_{k=1}^pe_k\phi_k(s)$, then multiplying $(e_1, \cdots, e_p)$ to the above equation yields,
    \begin{equation}\label{ineq_16}
        \begin{aligned}
        \la (\b-  \mL_{\hmu_i,\hs_i})e^G_i, e^G_i\ra_\rho = & \la (\b-  \mL_{\hmu_i,\hs_i}) e_i^P, e^G_i\ra_\rho\\
        \la (\b-  \mL_{\mu,\Sig})e^G_i, e^G_i\ra_\rho =& \la \mL_{\hmu_i - \mu,\hs_i - \Sig} e^G_i, e^G_i\ra_\rho +\la (\b-  \mL_{\hmu_i,\hs_i}) e_i^P, e^G_i\ra_\rho.
        \end{aligned}
    \end{equation}
    
    When $L^\infty_\rho = \ll \nb \log \rho \rl_\Linf$ is bounded, by applying the last inequality of Proposition \ref{lemma:l2 operator} and Lemma \ref{lemma:stoch_dynamics_order}, one has
    \[
    \begin{aligned}
        \frac\b2\ll e^G_i \rl^2_\rho +\frac\lammin4 \ll \nb e^G_i \rl_\rho^2 \leq& c_1 \ll \nb e^G_i\rl^2_\rho + c_2 \ll e^G_i\rl_\rho\ll \nb e^G_i\rl_\rho + \b\ll e_i^P\rl_\rho\ll  e^G_i\rl_\rho \\
        &+ c_3\ll  \nb e_i^P\rl_\rho \ll\nb  e^G_i\rl_\rho + c_4\ll  \nb e_i^P \rl_\rho \ll e^G_i\rl_\rho,
    \end{aligned}
    \]
    where
    \begin{equation}\label{def of c34}
    \begin{aligned}
        &c_1 = \frac{L_\Sig}{2}\dt^i,\quad  c_2 = \l(L_\mu + \frac{L_{\nb\cdot\Sig}}{2} + \frac{L_\Sig L^\infty_\rho}{2} \r)\dt^i,\\
        &c_3 = \frac{1}{2}\l(\ll \Sig \rl_\Linf + L_\Sig\dt^i\r), \\
        &c_4 = \l(\ll \mu\rl_\Linf + \frac{\ll \nb\cdot \Sig \rl_\Linf }{2} + \frac{\ll \Sig \rl_\Linf L^\infty_\rho}{2} \r) + \l(L_\mu + \frac{L_{\nb\cdot\Sig}}{2} + \frac{L_\Sig L^\infty_\rho}{2} \r)\dt^i
    \end{aligned}
    \end{equation}
    with $L_\mu,L_\Sig, L_{\nb\cdot\Sig}$ defined in \eqref{def of Lmu}, \eqref{def of Lsig}, \eqref{def of LnbSigrho}.
    Under the assumption that $c_1\leq \frac{\lammin}{16}, \frac{c_2}{2}\leq\min\l\{\frac\lammin{16},\frac{\b}{4}\r\} $, i.e., 
    \begin{equation}\label{def of eta}
        \dt^i \leq \eta_{\mu,\Sig, \b}, \quad \eta_{\mu,\Sig, \b} = \frac{\min\l\{\frac{\lammin}{4} , \b\r\}}{2L_\mu + L_{\nb\cdot\Sig} + L_\Sig \max\{L^\infty_\rho, 4\}}
    \end{equation}
    with $L_\mu,L_\Sig, L_{\nb\cdot\Sig}$ defined in \eqref{def of Lmu}, \eqref{def of Lsig}, \eqref{def of LnbSigrho},    one has
    \[
    \begin{aligned}
        \frac\b4\ll e^G_i \rl^2_\rho +\frac\lammin8 \ll \nb e^G_i \rl_\rho^2 \leq&  \b\ll e_i^P \rl_\rho\ll  e^G_i\rl_\rho + c_3\ll  \nb e_i^P \rl_\rho \ll\nb  e^G_i\rl_\rho,\\
         &+ c_4\ll  \nb e_i^P \rl_\rho \ll e^G_i\rl_\rho\\
        \ll e^G_i \rl_\rho\leq & 4\ll e_i^P\rl_\rho + \sqrt{\frac{8}{\b}\l(\frac{2c_3^2}{\lammin}  +\frac{4c_4^2}{\b}\r)}\ll  \nb e_i^P \rl_\rho ,
        \end{aligned}
    \]
    which implies that
    \[
    \begin{aligned}
        \ll \hV_i - \hV_i^G \rl_\rho\leq\ll e_i^P \rl_\rho +\ll e_i^G \rl_\rho \leq & 5\ll e_i^P\rl_\rho + \sqrt{\frac{8}{\b}\l(\frac{2c_3^2}{\lammin}  +\frac{4c_4^2}{\b}\r)}\ll  \nb e_i^P \rl_\rho.
    \end{aligned}
    \]
    Since the above inequality holds for all $V_i^P$ in the linear space spanned by $\{\Phi\}$, therefore,
    \[
    \ll \hV_i - \hV_i^G \rl_\rho\leq  \frac{1}{\beta}C_G\inf_{V = \th^\top\Phi}\ll \hV_i - V \rl_{H^1_\rho}
    \]
    where
    \begin{equation}\label{def of c galerkin}
        C_G = \max\l\{5\beta, \frac{4c_3\sqrt{\beta}}{\sqrt{\lammin}} +6c_4\r\}, \quad \ll f \rl_{H^1_\rho} = \ll f \rl_\rho + \ll \nb f \rl_\rho.
    \end{equation}
with $c_3, c_4$ defined in \eqref{def of c34}.

\begin{theorem}[Galerkin Error with unbounded $\ll\nb\log\rho\rl_\Linf$]\label{thm:unbounded rho galerkin error}
The Galerkin solution \\$\hV^G_i(s) = \th^\top\Phi(s)$ satisfies
    \begin{equation}\label{galerkin-2}
        \la (\b - \mL_{\hmu_i,\hs_i}) \hV^G_i(s), \Phi\ra_\rho = \la r(s), \Phi(s)\ra_\rho.
    \end{equation}
    When $\ll \nb \log \rho \rl_\Linf$ is unbounded, assume that the bases $L^\infty_\Phi = \ll \Phi \rl_\Linf$ is bounded,  then as long as $\dt^i \leq \min\l\{\h{\eta}_{\mu,\Sig,\b}, D_{\mu,\Sig,\b}\r\}$, the Galerkin solution $\hV^G_i(s)$ approximates the solution to the $i$-th order PhiBE defined in \eqref{def of high order stoch}  with an error 
    \[
    \ll \hV^G_i(s)  - \hat{V}_i(s) \rl_\rho \leq \frac{\h{C}_G}\b\inf_{V^P = \th^\top\Phi}\ll \hV_i - V^P \rl_{H^1_{\rho,\infty}}. 
    \]
     where $ \h{\eta}_{\mu,\Sig,\b}, \h{C}_G, D_{\mu,\Sig,\b}$ are constants depending on $\mu, \Sig, \b, L^\infty_\rho, L^\infty_\Phi$ defined in \eqref{def of hateta}, \eqref{def of hatc galerkin}, \eqref{def of D} respectively, and  $\ll f \rl_{H^1_{\rho,\infty}} = \ll f \rl_{H^1_\rho} + \ll \nb f \rl_\Linf$. 
\end{theorem}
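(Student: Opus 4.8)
The plan is to reuse the argument in Section~\ref{proof of galerkin error} almost verbatim, changing only the operator estimates. Keep the decomposition $\hV_i = \hV_i^P + e_i^P$ with $\hV_i^P\in\mathrm{span}(\Phi)$ and $e_i^G=\hV_i^G-\hV_i^P$, subtract the Galerkin equation \eqref{galerkin-2} from the $\Phi$-projection of the exact PhiBE \eqref{def of high order stoch}, and pair with the coefficient vector of $e_i^G$ to land again on \eqref{ineq_16},
\[
\la(\b-\mL_{\mu,\Sig})e_i^G,e_i^G\ra_\rho = \la\mL_{\hmu_i-\mu,\hs_i-\Sig}e_i^G,e_i^G\ra_\rho + \la(\b-\mL_{\hmu_i,\hs_i})e_i^P,e_i^G\ra_\rho .
\]
Bound the left side below by $\tfrac\b2\ll e_i^G\rl_\rho^2+\tfrac{\lammin}4\ll\nb e_i^G\rl_\rho^2$ via the coercivity estimate in Proposition~\ref{lemma:l2 operator}, exactly as before.

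The single change is that, in bounding the two right-hand inner products, I replace the last inequality of Proposition~\ref{lemma:l2 operator} (which would cost the now-unavailable $\ll\nb\log\rho\rl_\Linf$) by its third inequality, which trades $\ll\nb\log\rho\rl_\Linf$ for $\ll\nb\log\rho\rl_\rho=L_\rho$ at the price of an $L^\infty$ norm on the gradient of the argument of $\mL$. Using it with $f=g=e_i^G$, together with the coefficient bounds $\ll\hmu_i-\mu\rl_\Linf,\ \ll\hs_i-\Sig\rl_\Linf,\ \ll\nb\cdot(\hs_i-\Sig)\rl_\Linf$ from Lemma~\ref{lemma:stoch_dynamics_order} (all $O(\dt^{i/2})$ or smaller), the first term becomes a combination of $\ll e_i^G\rl_\rho^2$, $\ll e_i^G\rl_\rho\ll\nb e_i^G\rl_\rho$, $\ll\nb e_i^G\rl_\rho^2$ with $o(1)$ coefficients, plus a genuinely new term $\tfrac12\ll\hs_i-\Sig\rl_\Linf L_\rho\ll e_i^G\rl_\rho\ll\nb e_i^G\rl_\Linf$. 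Using it with $f=e_i^P$, $g=e_i^G$ (and the bounded $\ll\hmu_i\rl_\Linf,\ll\hs_i\rl_\Linf,\ll\nb\cdot\hs_i\rl_\Linf$), together with $\b\la e_i^P,e_i^G\ra_\rho\le\b\ll e_i^P\rl_\rho\ll e_i^G\rl_\rho$, the second term is bounded up to constants by $(\ll e_i^G\rl_\rho+\ll\nb e_i^G\rl_\rho)\ll\nb e_i^P\rl_\rho + L_\rho\ll e_i^G\rl_\rho\ll\nb e_i^P\rl_\Linf$; crucially, here $e_i^P$ enters only through the three norms that make up $\ll\cdot\rl_{H^1_{\rho,\infty}}$, and $e_i^G$ enters only through $\ll e_i^G\rl_\rho$ and $\ll\nb e_i^G\rl_\rho$, which the left side absorbs.

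It remains to dispose of $\ll\nb e_i^G\rl_\Linf$. Since $e_i^G\in\mathrm{span}(\Phi)$ and the bases and their first derivatives are $L^\infty$-bounded (with $\ll\Phi\rl_\Linf\le L^\infty_\Phi$), norm equivalence on this finite-dimensional space yields an inverse inequality $\ll\nb e_i^G\rl_\Linf\le C_\Phi\ll e_i^G\rl_\rho$, with $C_\Phi$ depending only on $\Phi$ (through $L^\infty_\Phi$ and the Gram matrix of $\Phi$ under $\rho$); hence the new term is at most $\tfrac12\ll\hs_i-\Sig\rl_\Linf L_\rho C_\Phi\ll e_i^G\rl_\rho^2$, and since $\ll\hs_i-\Sig\rl_\Linf=O(\dt^i)$, imposing $\dt^i\le\h{\eta}_{\mu,\Sig,\b}$ — a threshold which, unlike $\eta_{\mu,\Sig,\b}$ in the bounded case, must also involve $L_\rho$ and $L^\infty_\Phi$ through $C_\Phi$ — makes its prefactor $\le\b/4$ and it is absorbed into $\tfrac\b2\ll e_i^G\rl_\rho^2$ just as the $o(1)$ terms were in Section~\ref{proof of galerkin error}; the extra constraint $\dt^i\le D_{\mu,\Sig,\b}$ from \eqref{def of D} keeps Lemma~\ref{lemma:stoch_dynamics_order} and the uniform ellipticity of $\hs_i$ in force. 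Applying Young's inequality to the surviving cross terms and absorbing all $\ll e_i^G\rl_\rho^2,\ll\nb e_i^G\rl_\rho^2$ pieces leaves
\[
\ll e_i^G\rl_\rho \le \frac{\h{C}_G}{\b}\big(\ll e_i^P\rl_\rho+\ll\nb e_i^P\rl_\rho+\ll\nb e_i^P\rl_\Linf\big)=\frac{\h{C}_G}{\b}\ll e_i^P\rl_{H^1_{\rho,\infty}},
\]
and $\ll\hV_i^G-\hV_i\rl_\rho\le\ll e_i^G\rl_\rho+\ll e_i^P\rl_\rho$ together with the infimum over $\hV_i^P=\th^\top\Phi$ gives the claim, with $\h{C}_G$ collecting the constants generated along the way.

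The main obstacle is exactly this control of $\ll\nb e_i^G\rl_\Linf$. In the bounded-$\ll\nb\log\rho\rl_\Linf$ proof no such $L^\infty$ gradient norm ever appears, because the awkward $\Sig\nb\log\rho$ contribution is routed through an $L^\infty$ bound on $\nb\log\rho$ while keeping an $L^2_\rho$ norm on $\nb e_i^G$, which is directly absorbable; here one is forced onto $\ll\nb e_i^G\rl_\Linf$, and the only leverage is the $O(\dt^i)$ smallness of $\ll\hs_i-\Sig\rl_\Linf$ together with finite-dimensionality of $\mathrm{span}(\Phi)$. Making the threshold $\h{\eta}_{\mu,\Sig,\b}$, the constant $\h{C}_G$, and the dependence on $L^\infty_\Phi$ fit together — and ensuring the inverse inequality is spent only against that small prefactor and never against the merely $O(1)$ quantities $\ll\hmu_i\rl_\Linf,\ll\hs_i\rl_\Linf$ — is the delicate part.
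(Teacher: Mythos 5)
Your proposal follows the paper's proof almost exactly: same decomposition $\hV_i=\hV_i^P+e_i^P$, same identity \eqref{ineq_16}, same coercivity lower bound, and the same key move of swapping the last inequality of Proposition \ref{lemma:l2 operator} for the one that charges $\ll\nb\log\rho\rl_\rho=L_\rho$ instead of $\ll\nb\log\rho\rl_\Linf$, followed by a finite-dimensional norm-equivalence step through the Gram matrix of $\Phi$. The structure of the absorption argument and the form of $\h{\eta}_{\mu,\Sig,\b}$ and $\h{C}_G$ also match.

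There is, however, one step that does not go through under the stated hypotheses. When you bound $\la\mL_{\hmu_i-\mu,\hs_i-\Sig}e_i^G,e_i^G\ra_\rho$, you place the $L^\infty$ norm on $\nb e_i^G$, producing the term $\tfrac12\ll\hs_i-\Sig\rl_\Linf L_\rho\ll e_i^G\rl_\rho\ll\nb e_i^G\rl_\Linf$, and you then dispose of $\ll\nb e_i^G\rl_\Linf$ via an inverse inequality $\ll\nb e_i^G\rl_\Linf\le C_\Phi\ll e_i^G\rl_\rho$. That inequality requires $\ll\nb\Phi\rl_\Linf$ to be bounded, which the theorem does not assume --- the only hypothesis on the bases is $L^\infty_\Phi=\ll\Phi\rl_\Linf<\infty$. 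You are also not ``forced'' onto $\ll\nb e_i^G\rl_\Linf$ here: the offending term is $\la\nb e_i^G\,\tfrac{\e_\Sig}{2},\,e_i^G\,\nb\log\rho\ra_\rho$, and Cauchy--Schwarz lets you put the sup norm on either factor. The paper takes the other option, bounding it by $\tfrac12\ll\hs_i-\Sig\rl_\Linf L_\rho\ll e_i^G\rl_\Linf\ll\nb e_i^G\rl_\rho$ and then using $\ll e_i^G\rl_\Linf\le\hat\lambda^{-1/2}L^\infty_\Phi\ll e_i^G\rl_\rho$, which needs only the stated assumption on $\ll\Phi\rl_\Linf$. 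With that single substitution (and noting that the Lemma \ref{lemma:stoch_dynamics_order} bounds are $O(\dt^i)$, not $O(\dt^{i/2})$), your argument coincides with the paper's; as written, it proves the theorem only under the additional, unstated assumption that $\ll\nb\Phi\rl_\Linf$ is finite.
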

\begin{proof}
When $L^\infty_\rho = \ll \nb \log \rho \rl_\Linf$ is not bounded, by applying the last second inequality of Proposition \ref{lemma:l2 operator} and Lemma \ref{lemma:stoch_dynamics_order} to \eqref{ineq_16}, one has
\begin{equation}\label{ineq_17}
    \begin{aligned}
        &\frac\b2\ll e^G_i \rl^2_\rho +\frac\lammin4 \ll \nb e^G_i \rl_\rho^2 \leq c_1 \ll \nb e^G_i\rl^2_\rho + c_5 \ll e^G_i\rl_\rho\ll \nb e^G_i\rl_\rho + c_6 \ll e^G_i\rl_\Linf\ll \nb e^G_i\rl_\rho\\
        &+ \b\ll e_i^P\rl_\rho\ll  e^G_i\rl_\rho 
        + c_3\ll  \nb e_i^P\rl_\rho \ll\nb  e^G_i\rl_\rho + c_7\ll  \nb e_i^P \rl_\rho \ll e^G_i\rl_\rho+ c_8\ll  \nb e_i^P \rl_\Linf \ll e^G_i\rl_\rho,
    \end{aligned}
\end{equation}    
where
\begin{equation}\label{def of c78}
 \begin{aligned}
    &c_5 = \l(L_\mu + \frac{L_{\nb\cdot\Sig}}{2} \r)\dt^i, \quad c_8 = \frac{\ll \Sig \rl_\Linf L_\rho}{2}+ \frac{L_\Sig L_\rho}{2} \dt^i,\\
    &c_6 =  \frac{L_\Sig L_\rho}{2}\dt^i, \quad c_7 = \l(\ll \mu\rl_\Linf + \frac{\ll \nb\cdot \Sig \rl_\Linf }{2} \r) + \l(L_\mu + \frac{L_{\nb\cdot\Sig}}{2} \r)\dt^i,
\end{aligned}    
\end{equation}
with $L_\mu,L_\Sig, L_{\nb\cdot\Sig}, L_\rho$ defined in \eqref{def of Lmu}, \eqref{def of Lsig}, \eqref{def of LnbSigrho}, \eqref{def of Lrho}. 
Since 
\[
\ll e^G_i \rl_\Linf =  \ll e^\top \Phi  \rl_\Linf \leq \ll e \rl_2\ll \Phi \rl_\Linf \leq \frac{1}{\sqrt{\hat{\lam}}} \ll e^G_i \rl_\rho \ll \Phi \rl_\Linf,
\]
where $\hat{\lam}$ is the smallest eigenvalue of the matrix $B$, where $B_{ij} = \int \phi_i(s)\phi_i(s) \rho(s) ds$. Note that the matrix $B$ is always positive definite when $\{\phi_i(s)\}$ are linear independent bases w.r.t. the weighted $L^2$ norm. let $L^\infty_\Phi = \ll \Phi \rl_\Linf$, then \eqref{ineq_17} can be rewritten as
\[
\begin{aligned}
    \frac\b2\ll e^G_i \rl^2_\rho &+\frac\lammin4 \ll \nb e^G_i \rl_\rho^2 \leq c_1 \ll \nb e^G_i\rl^2_\rho + c_9 \ll e^G_i\rl_\rho\ll \nb e^G_i\rl_\rho+ \b\ll e_i^P\rl_\rho\ll  e^G_i\rl_\rho \\
    & + c_3\ll  \nb e_i^P\rl_\rho \ll\nb  e^G_i\rl_\rho + c_7\ll  \nb e_i^P \rl_\rho \ll e^G_i\rl_\rho+ c_8\ll  \nb e_i^P \rl_\Linf \ll e^G_i\rl_\rho,
\end{aligned}
\]
where
\[
    \begin{aligned}
    &c_9 = \l(L_\mu+ \frac{L_{\nb\cdot\Sig}}2 + \frac{L_\Sig L_\rho L^\infty_\Phi}{2\sqrt{\hat{\lam}}}\r) \dt^i.
\end{aligned}
\]
When $c_1 \leq \frac{\lammin}{16}, \frac{c_9}{2}\leq \min\{\frac{\lammin}{16}, \frac\b4\}$, i.e., 
    \begin{equation}\label{def of hateta}
        \dt^i \leq \h{\eta}_{\mu,\Sig, \b}, \quad \hat{\eta}_{\mu,\Sig, \b} = \frac{\min\l\{\frac{\lammin}{4} , \b\r\}}{2L_\mu + L_{\nb\cdot\Sig} + L_\Sig \max\l\{\frac{L_\rho L^\infty_\Phi}{\sqrt{\hat{\lam}}}, 4\r\}}, 
    \end{equation} 
    with $L_\mu,L_\Sig, L_{\nb\cdot\Sig}$ defined in \eqref{def of Lmu}, \eqref{def of Lsig}, \eqref{def of LnbSigrho}, 
then one has
\[
\begin{aligned}
    \frac{\b}{4}\ll e^G_i \rl^2_\rho +\frac\lammin8 \ll \nb e^G_i \rl_\rho^2 \leq  &\b\ll e_i^P\rl_\rho\ll  e^G_i\rl_\rho + c_3\ll  \nb e_i^P\rl_\rho \ll\nb  e^G_i\rl_\rho \\
    &  + c_7\ll  \nb e_i^P \rl_\rho \ll e^G_i\rl_\rho+ c_8\ll  \nb e_i^P \rl_\Linf \ll e^G_i\rl_\rho,\\
    %\frac{\b}{4}\ll e^G_i \rl^2_\rho \leq  3\b\ll e_i^P\rl^2_\rho +& \frac{c_3^2}{\lammin}\ll  \nb e_i^P\rl_\rho^2 + \frac{3c_7^2}\b\ll  \nb e_i^P \rl_\rho^2+ \frac{3c_8^2}{\b}\ll  \nb e_i^P \rl^2_\Linf,\\
    \ll e^G_i \rl_\rho \leq  7\ll e_i^P\rl_\rho +&  \sqrt{\frac{16c_3^2}{\b\lammin}+ \frac{48c_7^2}{\b^2}}\ll  \nb e_i^P \rl_\rho + \frac{7c_8}{\b}\ll  \nb e_i^P \rl_\Linf,
\end{aligned}
\]
which implies that
\[
\begin{aligned}
    \ll \hV_i - \hV_i^G \rl_\rho \leq & 8\ll e_i^P\rl_\rho +  \sqrt{\frac{16c_3^2}{\b\lammin}+ \frac{48c_7^2}{\b^2}}\ll  \nb e_i^P \rl_\rho + \frac{7c_8}{\b}\ll  \nb e_i^P \rl_\Linf.
\end{aligned}
\]
Since the above inequality holds for all $V_i^P$ in the linear space spanned by $\{\Phi\}$, therefore,
\[
\ll \hV_i - \hV_i^G \rl_\rho\leq  \frac{1}{\beta}\h{C}_G\inf_{V = \th^\top\Phi}\ll \hV_i - V \rl_{H^1_{\rho,\infty}},
\]
where
\begin{equation}\label{def of hatc galerkin}
    \h{C}_G = \max\l\{8\beta, \frac{4c_3\sqrt{\beta}}{\sqrt{\lammin}}+ 7c_7, 7c_8\r\}, \quad \ll f \rl_{H^1_{\rho,\infty}} = \ll f \rl_{H^1_\rho} + \ll \nb f \rl_\Linf.
\end{equation}
with $c_3, c_7, c_8$ defined in \eqref{def of c34}, \eqref{def of c78}.
\end{proof}

\subsection{Proof of Theorem \ref{thm:sample-complexity}}\label{proof of sample-complexity}

Note that in the proof, we refer the vector norm $\ll b \rl$ as the Euclidean norm, and the matrix norm $\ll A \rl$ as the operator norm. 

We first prove that the matrix $A$ defined in \eqref{1st model-free galerkin} is positive definite. By applying the first and last inequalities of Proposition \ref{lemma:l2 operator}, one has for $\forall \th\in\R^p$, 
\[
\begin{aligned}
    &\th^\top A \th = \la \beta V(s) - \mL_{\hat{\mu}_1, \hat{\Sig}_1} V(s), V(s) \ra_\rho, \quad \text{with }V = \Phi(s)^\top \th\\
    = &\beta \ll  V(s) \rl_\rho^2 - \la \mL_{\mu, \Sig} V(s), V(s) \ra_\rho - \la \mL_{\hat{\mu}_1 - \mu, \hat{\Sig}_1 - \Sig} V(s), V(s) \ra_\rho\\
    \geq& \beta \ll  V \rl_\rho^2 + \frac{\lammin}{2} \ll \nb V\rl_\rho^2 \\
    &- \l[\l(\ll \e_\mu \rl_\infty + \frac12\ll \nb\cdot \e_\Sig \rl_\infty + \frac12 \ll \e_\Sig \rl_\infty \ll \nb\log\rho \rl_\infty \r) \ll  V(s) \rl_\rho\ll \nb V \rl_\rho + \frac12\ll\e_\Sig \rl_\infty \ll \nb V \rl_\rho^2 \r]
\end{aligned}
\]
where $\e_\mu = \hat{\mu}_1 - \mu, \e_\Sig = \hat{\Sig}_1 - \Sig$. 
By Lemma \ref{lemma:stoch_dynamics_order}, one has
\[
\ll \e_\mu \rl_\infty + \frac12\ll \nb\cdot \e_\Sig \rl_\infty + \frac12 \ll \e_\Sig \rl_\infty \ll \nb\log\rho \rl_\infty \lesssim (L_\mu + L_\Sig L_\rho^\infty + L_{\nb\cdot\Sig} )\dt, \quad \ll\e_\Sig \rl_\infty \leq L_\Sig \dt.
\]
Under the assumption that $\dt \leq \eta_{\mu, \Sig, \beta}$ defined in \eqref{def of eta} is small enough, one can further bound
\[
\begin{aligned}
    &\th^\top A \th 
    \geq& \frac\beta2 \ll  V \rl_\rho^2 = \frac\beta2 \th^\top G_\Phi \th \geq \frac{\beta \lam_\Phi}2 \ll \th \rl^2, 
\end{aligned}
\]
where the last inequality comes from Assumption \eqref{ass on bases} on $\Phi$.

In addition, one also has boundedness on $\ll b \rl$,
\[
\ll b \rl = \ll \int r(s)\Phi(s) \rho(s) ds \rl \leq R L_\Phi, \quad R = \ll r \rl_\Linf.
\]

Next, one can divided the error $\ll \th_n - \th \rl$ by
\[
\begin{aligned}
    &\ll \th_n - \th \rl = \ll A_n^{-1}b_n - A^{-1} b \rl = \ll  A_n^{-1}(b_n - b) - (A_n^{-1} - A^{-1}) b  \rl \\
    \leq& \ll A_n^{-1} \rl \ll b_n - b \rl + \ll A_n^{-1} \rl \ll A_n - A \rl \ll A^{-1} \rl \ll b \rl.
\end{aligned}
\]
If one has, 
\[
\ll A_n - A \rl  \lesssim \min\l\{\frac{\beta \lam_\Phi}4, \frac{(\beta \lam_\Phi)^2\e}{16 RL_\Phi^2}\r\}, \quad \ll b_n - b \rl \leq \frac{\e\beta \lam_\Phi}{8L_\Phi}
\]
then by the positive definite of $A$ and boundedness of $b$, one has
\[
\ll V_n - V \rl \leq \ll \th_n - \th \rl L_\Phi \leq \l(\frac4{\beta \lam_\Phi}\ll b_n - b \rl  + \frac{8L_\Phi R}{(\beta \lam_\Phi)^2}\ll A_n - A \rl  \r) L_\Phi \leq \e
\]
Since we assume $\e$ small enough, so the goal becomes the sample complexity to bound 
\begin{equation}\label{target error}
   \ll b_n - b \rl \leq \frac{\e\beta \lam_\Phi}{8L_\Phi},\quad \ll A_n - A \rl  \lesssim \frac{(\beta \lam_\Phi)^2\e}{16RL_\Phi^2}
\end{equation}

Next, we bound the first term. Let $b_i = r(s_i)\Phi(s_i)$ with $s_i \sim \rho$. 
By the boundedness of $\|r\|_\infty$ and the almost-sure boundedness of $\Phi(s)$, 
the random vectors $b_i$ are uniformly bounded. 
Applying Bernstein’s inequality for sums of bounded random vectors 
\cite{vershynin2018high}, we obtain that for $n \geq \log(1/\delta)$, with probability at least $1-\delta$,
\[
\ll b_n - b \rl = \ll \frac{1}{n} \sum_{i=1}^n b_i  - E[b_i] \rl \lesssim L_\Phi R\sqrt{\frac{\log(1/\delta)}{n}}
\]
Therefore, the sample complexity to achieve the first error in \eqref{target error} is
\[
n \gtrsim \frac{\log(1/\delta)L_\Phi^2 R^2}{ \e^2\beta^2 \lam_\Phi^2}
\]

\iffalse
Now we bound the second term. Let $w_i = s_i' - s_i$. By the classical Fokker-Planck equation theorem \cite{}, one has the conditional distribution of $w_i$ given $s_i$ is
\[
\frac{c_1}{\dt^{d/2}}\exp\l(-\frac{|w|^2}{c_1\dt}\r) \leq \rho(\dt,w|s_i) \leq \frac{c_2}{\dt^{d/2}}\exp\l(-\frac{|w|^2}{c_2\dt}\r).
\]
In addition, when $\dt$ is small enough, one can further bound
\[
c_2\lesssim \sigma_{\max}^2 : = \sigma_{\max} = \ll \sigma \rl_\Linf.
\]
\fi

Now we bound the second term. Let $w_i = s_i' - s_i$.
Under Assumption~\ref{ass_2},
the transition density $\rho(\dt,w\mid s_i)$ admits two-sided Gaussian bounds of Aronson type \cite{aronson1967bounds}. Specifically, there exist positive constants
$c_1,c_2,c_3,c_4$, depending only on the dimension $d$, the ellipticity constants
$\lambda,\Lambda$ of $\sigma\sigma^\top$, and $\|b\|_\infty$, such that for all sufficiently
small $\dt$,
\[
c_1\,\dt^{-d/2}\exp\!\Big(-\frac{|w|^2}{c_2\,\dt}\Big)
\;\le\;
\rho(\dt,w\mid s_i)
\;\le\;
c_3\,\dt^{-d/2}\exp\!\Big(-\frac{|w|^2}{c_4\,\dt}\Big).
\]

Define the norm $\ll \cdot \rl_{\psi_1}$ for scalar random variable $X$ as
\[
\ll X \rl_{\psi_1} = \inf\l\{C>0: \E\l[e^{\frac{|X|}{C}}\r]\leq 2\r\},
\]
and for vector or matrix $X$, it refers to
\[
\ll X \rl_{\psi_1} = \sup_{\ll u \rl_2 = 1} \ll u^\top X \rl_{\psi_1}, \quad \ll X \rl_{\psi_1} = \sup_{\ll u \rl_2 = \ll v \rl_2 = 1} \ll u^\top X v \rl_{\psi_1} .
\]
then one has, 
\[
 \ll w_i \rl_{\psi_1|s_i} \lesssim \sigma_{\max}\sqrt{\dt}.
\]
It is well known that sub-Gaussian random variables are sub-exponential and that
\[
\|X\|_{\psi_1}\lesssim \|X\|_{\psi_2},
\]
where the implicit constant is universal \cite{vershynin2018high}.

Let $M_i(t) := \int_0^tu^\top \sigma(s_\tau) dB_\tau$ with some $\ll u\rl= 1$, %then $u^\top w_i = y_i(\dt)+ M_i(\dt)$.  First,\[y_i(\dt) := \ll \int_0^t u^\top \mu(s_\tau) d\tau \rl\]
Since the quadratic variant of $M_i(t)$ can be bounded by \cite{karatzas2014brownian} 
\[
\la M_i \ra_t = \int_0^t u^\top \sigma(s_\tau)\sigma(s_\tau)^\top u \, d\tau
\le \sigma_{\max}^2 t,
\]
$\lambda M_i(t)$ is a continuous local martingale with $M_i(0) = 0$ and $\E[\exp(\frac{\lam^2}2\la M \ra_t)] < \infty$ for any finite $t$. By Doléans--Dade exponential martingale theorem \cite[Chapter~3]{karatzas1998brownian}
\[
Z_t := \exp\!\l( \lambda M_t - \frac{\lambda^2}{2} \langle M \rangle_t \r)
\]
is a martingale, and therefore
\[
\mathbb{E}[Z_t \mid s_i(0) = s_i] = Z_0 = 1.
\]
It follows that
\[
\mathbb{E}\!\left[ \exp(\lambda M_\dt) \mid s_i(0) = s_i\right]
\le \exp\!\left( \frac{\lambda^2}{2} \langle M \rangle_\dt \right)
\le \exp\!\left( \frac{\lambda^2}{2} \sigma_{\max}^2 \dt \right).
\]
This implies \cite{vershynin2018high}
\begin{equation}\label{bound on M}
    \| M_i(\dt) \|_{\psi_2 \mid s_i}
\le C \sigma_{\max}\sqrt{\dt},
\end{equation}
for universal constants $C$. In addition, because the upper bound is independent of $s_i$, and the relation between $\ll \cdot \rl_{\psi_1}$ and $\ll \cdot \rl_{\psi_2}$  \cite{vershynin2018high}, one has,
\begin{equation}\label{bound on M1}
\| M_i(\dt) \|_{\psi_1}
\lesssim \sigma_{\max}\sqrt{\dt}.
\end{equation}
Let $Y_i = \int_0^\dt \mu(s_i(t))dt$, then $|Y_i|\leq \ll \mu \rl_\Linf\dt$. Because of \eqref{bound on M1} and $u^\top w_i  = M_i(\dt) + u^\top Y_i$, 
one has
\[
\| w_i - \E[ w_i ] \|_{\psi_1 } \leq 2\| w_i\|_{\psi_1 } \lesssim \sigma_{\max}\sqrt{\dt}  + \ll \mu \rl_\Linf \dt.
\]
Therefore, 
\[
 \ll \frac{w_i}{\dt}\cdot \nb \Phi(s_i)  - \E\l[ \frac{w_i}{\dt}\cdot \nb \Phi(s_i)  \r] \rl_{\psi_1} \lesssim L_\Phi\l(\frac{\sigma_{\max} }{\sqrt{\dt}} + \ll \mu \rl_\Linf\r).
\]
%Since the uppper bound is independent of $s_i$, %and for $\dt$ small enough, we only focus on the leading order term, one has \[  \ll \frac{w_i}{\dt}\cdot \nb \Phi(s_i)  \rl_{\psi_1} \lesssim \frac{\sigma_{\max} L_\Phi}{\sqrt{\dt}}. \]
By \eqref{bound on M} and the boundedness of $\ll \nb^2 \Phi(s)\rl \leq L_\Phi$, one has $M_i^\top \nb^2\Phi(s_i) M_i$ is sub-exponential, therefore, one has
\[
\begin{aligned}
&\ll w_i^\top \nb^2\Phi(s_i) w_i\rl_{\psi_1|s_i} \leq \ll M_i^\top\nb^2\Phi(s_i) M_i \rl_{\psi_1|s_i} +  2\ll M_i^\top\nb^2\Phi(s_i) Y_i \rl_{\psi_1|s_i} + \ll Y_i^\top\nb^2\Phi(s_i) Y_i \rl_{\psi_1|s_i}\\
\lesssim & L_\Phi\l(\sigma_{\max}^2\dt  + \ll \mu \rl_\Linf \sigma_{\max}\dt\sqrt{\dt} +  \ll \mu \rl_\Linf^2\dt^2\r)
\end{aligned}
\]
Hence, one has
\[
\begin{aligned}
&\ll \frac1\dt w_i^\top \nb^2\Phi(s_i) w_i - \E\l[\frac1\dt w_i^\top \nb^2\Phi(s_i) w_i\r] \rl_{\psi_1} \leq 2\ll \frac1\dt w_i^\top \nb^2\Phi(s_i) w_i \rl_{\psi_1} \\
\lesssim & L_\Phi (\sigma_{\max}^2 + \sigma_{\max} \ll \mu \rl_\Linf \sqrt{\dt} + \ll \mu \rl_\Linf^2\dt ).
\end{aligned}
\]
Therefore, for $A_i = \Phi(s_i)\l(\beta \Phi(s_i) - \frac{w_i}\dt\cdot\nb\Phi(s_i) - \frac1{2\dt} w_i^\top \nb^2\Phi(s_i) w_i \r)^\top$
\[
\ll A_i - \E[A_i]\rl_{\psi_1} \lesssim L_\Phi^2 \l(\beta +  \frac{\sigma_{\max} }{\sqrt{\dt}} + \ll \mu \rl_\Linf + \sigma_{\max}^2 + \sigma_{\max} \ll \mu \rl_\Linf \sqrt{\dt} + \ll \mu \rl_\Linf^2\dt \r).
\]
Finally, by applying Bernstein inequality \cite{tropp2012user}, one obtains that with probability $1- \delta$, one has
\[
\ll A_n - A \rl  \lesssim  \ll A_i - \E[A_i]\rl_{\psi_1}\l(\sqrt{\frac{\log(2p/\delta)}{n}} + \frac{\log(2p/\delta)}{n}\r) %\lesssim  L_\Phi^2\sigma_{\max} \sqrt{\frac{\log(2p/\delta)}{n\dt}} 
\]
In addition, for $n \geq\log(2p/\delta)$ large enough, and $\dt$ small enough, one can further simplify the above inequality
\[
\ll A_n - A \rl  \lesssim  \frac{L_\Phi^2\sigma_{\max}}{\sqrt{\dt}}\sqrt{\frac{\log(2p/\delta)}{n}} 
\]
%where the last inequality is obtained by the upper bound for $\dt$. 
Therefore, the sample complexity to achieve the error in \eqref{target error} is
\[
n \gtrsim \frac{\log(2p/\delta)R^2L_\Phi^8\sigma_{\max}^2}{ \beta^4 \e^2\dt\lam_\Phi^4}.
\]
To summarize, with probability $1-\delta$, if 
\[
n \gtrsim \max\l\{\frac{\log(2p/\delta)R^2L_\Phi^8\sigma_{\max}^2}{ \beta^4 \e^2\dt\lam_\Phi^4}, \frac{\log(1/\delta)L_\Phi^2 R^2}{ \e^2\beta^2 \lam_\Phi^2}\r\}
\]
one has
\[
\ll V_n - \hat{V}_1^G \rl_\rho \leq \e
\]
This gives the first inequality.

On the other hand, if 
\[
n \gtrsim n_1 = \frac{16L_\Phi^4 \log(2p/\delta)\l( \frac{\sigma_{\max} }{\sqrt{\dt}}  \r)^2 }{\beta^2\lam_\Phi^2}  \approx \frac{L_\Phi^4 \log(2p/\delta)\sigma_{\max}^2\lam_\Phi^{-2}}{\beta^2\dt}  
\]
then w.p. $1-\delta$, one has
\[
\ll A_n - A\rl \leq \frac{\beta\lam_\Phi}{4}.
\]
Therefore, for $\forall n \geq n_1$, w.p. $1-\delta$, one has,
\[
\begin{aligned}
&\ll V_n - V \rl \leq \l(\frac4{\beta \lam_\Phi}\ll b_n - b \rl  + \frac{8L_\Phi R}{(\beta \lam_\Phi)^2}\ll A_n - A \rl  \r) L_\Phi \\
\lesssim& \l(\frac4{\beta \lam_\Phi}L_\Phi R\sqrt{\frac{\log(p/\delta)}{n}}  + \frac{8L_\Phi R}{(\beta \lam_\Phi)^2}L_\Phi^2\sigma_{\max}  \sqrt{\frac{\log(2p/\delta)}{n\dt}}  \r) L_\Phi\\
\lesssim &\l(L_\Phi^4 R\sigma_{\max}\lam_\Phi^{-2}\sqrt{\log(2p/\delta)} \r)\frac{1}{\beta^2\sqrt{{n\dt}}}   
\end{aligned}
\]

\section{Conclusion}
In this paper, we introduce PhiBE, a PDE-based Bellman equation that integrates discrete-time information into a continuous-time PDE. PhiBE outperforms the classical Bellman equation in approximating the continuous-time PE problem, particularly in scenarios where underlying dynamics evolve slowly. Importantly, the approximation error of PhiBE depends on the dynamics, making it more robust against changes in reward structures. This property allows greater flexibility in designing reward functions to effectively achieve RL objectives.
We further proposed higher-order PhiBE, which yields improved approximations of the true value function and achieves the same accuracy with sparser data, thereby improving sample efficiency. In addition, we developed a model-free algorithm for solving PhiBE under linear function approximation and established theoretical guarantees that sharpen classical RL convergence results, particularly in their dependence on the discretization step size~$\dt$.

This work serves as the first step toward a systematic PDE-based approach to continuous-time reinforcement learning. We expect the PhiBE framework to extend naturally to more general RL settings, including richer function classes and control problems beyond policy evaluation.

\appendix

\section*{Appendix}
\section{Derivation of the HJB equation under deterministic dynamics}\label{appendix:derivation}
By the definition of $V(s)$ in \eqref{def of value}, one has
\[
V(s_t) = \int_t^\infty e^{-\b (\t{t} - t)} r(s_{\t{t}}) d\t{t},
\]
which implies that,
\[
\begin{aligned}
    \frac{d}{dt}V(s_t) =& \b\int_t^\infty e^{-\b (\t{t} - t)} r(s_{\t{t}}) d\t{t} - r(s_t). 
\end{aligned}
\]
Using the chain rule on the LHS of the above equation yields $\frac{d}{dt}V(s_t) = \frac{d}{dt}s_t\cdot \nb V(s_t)$, and the RHS can be written as $\b V(s_t) - r(s_t)$, resulting in a PDE for the true value function 
\begin{equation}\label{deter_Vst}
\b V(s_t) = r(s_t) + \frac{d}{dt}s_t\cdot \nb V(s_t).
\end{equation}
or equivalently, 
\begin{equation}\label{deter_V2}
\b V(s) = r(s) + \mu(s)\cdot \nb V(s).
\end{equation}

\color{black}
\section{Proof of Lemmas}
\subsection{Proof of Lemma \ref{lemma:Lhat - L}}\label{proof of lemma Lhat - L}
By Feynman–Kac theorem, it is equivalently to write $\hV$ as, 
\[
\hV = \int_0^\infty e^{-\b t} r(\hst) dt \quad \text{with}\quad \frac{d}{dt}\hst = \hmu(s_t).
\] 
Hence, 
\begin{equation}\label{diff_deter}
\begin{aligned}
\lv V(s) - \hV(s)\rv  =& \lv \int_0^\infty e^{-\b t} (r(s_t) - r(\hst) ) dt \rv = \lv \int_0^\infty e^{-\b t} \l(\int_{s_t}^{\h{s}_t}\nb r(s) ds\r)dt \rv\\
\leq &\ll \nb r \rl_\Linf  \int_0^\infty e^{-\b t}  \lv \hst - \st \rv dt,
\end{aligned}
\end{equation}
where 
\begin{equation}\label{deter_dynam}
\frac{d}{dt}s_t = \mu(s_t), \quad \frac{d}{dt}\hst = \hmu(\hst), \quad s_0 = \h{s}_0 = s.
\end{equation}
Subtracting the two equations in \eqref{deter_dynam} and multiplying it with  $(\hst - \st)^\top$ gives
\[
\begin{aligned}
\frac12\frac{d}{dt} \ll\h{s}_t-s_t\rl^2  =& (\hmu (\h{s}_t)) - \mu (s_t) )(\h{s}_t-s_t) \\
=& \l((\hmu (\hst)-  \mu (\hst) + (\mu (\hst) - \mu (s_t)) \r)(\hst - \st)\\
\leq& \ll \mu - \hmu\rl_\Linf \ll \hst - \st\rl + \ll \nb\mu(s) \rl_\Linf \ll \hst - \st\rl^2\\
\leq & \frac{1}{2\e}\ll \mu - \hmu\rl^2_\Linf + \l(\frac{\e}{2}+\ll \nb\mu(s) \rl_\Linf \r)\ll \hst - \st\rl^2, \quad \text{for }\forall \e>0,
\end{aligned}
\]
where the mean value theorem is used in the first inequality. This implies
\[
\begin{aligned}
    &\ll \hst - \st \rl_2 \leq \frac{1}{\sqrt{\e}}\ll \mu - \hmu\rl_\Linf t^{1/2} e^{(\e/2+\ll\nb\mu\rl_\Linf)t}.
\end{aligned}
\]
Inserting the above inequality back to \eqref{diff_deter} gives
\[
\begin{aligned}
\ll V(s) - \hV(s) \rl_\Linf \leq&\frac{1}{\sqrt{\e}} \ll \nb r \rl_\Linf \ll \mu - \hmu\rl_\Linf  \int_0^\infty e^{-(\b - \e/2 -\ll \nb \mu \rl_\Linf) t}  t^{1/2} dt\\
 = &\frac{\sqrt{\pi}\ll \mu - \hmu\rl_\Linf \ll \nb r \rl_\Linf }{2\sqrt{\e}(\b - \e/2 - \ll \nb \mu(s) \rl_\Linf)^{3/2} }.
\end{aligned}
\]
Assigning $\e = \frac{1}{2}(\b - \ll \nb \mu(s) \rl_\Linf)$ to the above inequality completes the proof.

\subsection{Proof of Lemma \ref{lemma:muhat - mu}}\label{proof of lemma muhat - mu}
By Taylor expansion, one has
\[
s_{j\dt} = \sum_{k=0}^i\frac{(j\dt )^k}{k!}\l(\l.\frac{d^k}{dt^k}s_t\r|_{t=0}\r) + \frac{(j\dt )^{i+1}}{(i+1)!}\l(\l.\frac{d^{i+1}}{dt^{i+1}}s_t\r|_{t=\xi_j}\r)
\]
with $\xi_j \in (0, j\dt)$. Inserting it into $\hmu_i(s)$ gives, 
\[
\begin{aligned}
\hmu_i(s) =& \frac1\dt\sum_{j=0}^i\coef{i}_j[s_{j\dt}|s_0 = s] \\
=& \frac1\dt\sum_{j=0}^i\coef{i}_j \l[ \sum_{k=0}^i \l(\l.\frac{d^k}{dt^k}s_t\r|_{t=0}\r)\frac{(\dt j)^k}{k!} +  \l(\l.\frac{d^{i+1}}{dt^{i+1}}s_t\r|_{t=\xi_j}\r)\frac{(\dt j)^{i+1}}{(i+1)!}   \r]\\
= & \frac1\dt\sum_{k=0}^i \l(\l.\frac{d^k}{dt^k}s_t\r|_{t=0}\r) \frac{(\dt)^k}{k!} \sum_{j=0}^i\coef{i}_jj^k +  \frac1\dt\sum_{j=0}^i\coef{i}_j \l(\l.\frac{d^{i+1}}{dt^{i+1}}s_t\r|_{t=\xi_j}\r)\frac{(\dt j)^{i+1}}{(i+1)!}  \\
=&\l(\l.\frac{d}{dt}s_t\r|_{t=0}\r)  + \frac{ \dt^{i}}{(i+1)!} \sum_{j=0}^i\coef{i}_j j^{i+1}\l(\l.\frac{d^{i+1}}{dt^{i+1}}s_t\r|_{t=\xi_j}\r) ,
\end{aligned}
\]
where the last equality is due to the definition of $\coef{i}$ in \eqref{def of a}. Since \[\l. \frac{d}{dt}s_t\r|_{t=0} = \mu(s_0) = \mu(s),\] one has 
\[
\lv \hmu_i(s) - \mu(s) \rv = \frac{\dt^i}{(i+1)!} \lv \sum_{j=0}^i\coef{i}_j j^{i+1} \l(\l.\frac{d^{i+1}}{dt^{i+1}}s_t\r|_{t=\xi_j}\r)\rv  . 
\]
Since
\[
\begin{aligned}
 \frac{d^{i+1}}{dt^{i+1}}s_t =&  \frac{d^i}{dt^{i}}(\mu(s_t)) = \mL_{\mu}^i \mu(s_t),
\end{aligned}
\]
then as long as  $\ll \mL_{\mu}^i \mu(s_t) \rl_\Linf$ is bounded, one has
\[
\ll \hmu_i(s) - \mu(s) \rl_\Linf \leq\frac{\ll \mL_{\mu}^i \mu(s) \rl_\Linf }{(i+1)!} \sum_{j=0}^i|\coef{i}_j|  j^{i+1} \dt^i = C_i\ll \mL_{\mu}^i \mu(s) \rl_\Linf\dt^i .
\]

\subsection{Proof of Lemma \ref{lemma:hv-v-rho}}\label{proof of lemma hv-v-rho}
Substracting the second equation from the first one and let $e(s) = V(s) -  \hV(s)$ gives,
\[
\begin{aligned}
    &\b e = \mL_{\mu,\Sig}e +  (\mL_{\mu,\Sig} - \mL_{\hmu,\hs})\hV.
\end{aligned}
\]
Multiply the above equation with $e(s)\rho(s)$ and integrate it over $s\in\S$, one has, 
\begin{equation}\label{ineq_7}
\begin{aligned}
    &\frac\b2\ll e \rl_\rho^2 = \la \mL_{\mu,\Sig} e, e\ra_\rho + \la  \mL_{\hmu - \mu,\hs - \Sig} \hV, e\ra_\rho\\
\leq& -\frac{\lammin}{4} \ll \nb e \rl_\rho^2 + \l(C_\mu+ \frac12C_{\nb\cdot\Sig} \r)\ll e\rl_\rho\ll \nb \hV \rl_\rho + \frac12C_\Sig\ll \nb e\rl_\rho\ll \nb \hV \rl_\rho\\
&+ \frac12C_\Sig L_\rho\ll e\rl_\rho\ll \nb \hV \rl_\Linf\\
\leq& -\l(\frac{\lammin}{4} -c_2\r)\ll \nb e \rl_\rho^2 + \l( c_1\ll e \rl_\rho + c_2\ll \nb V \rl_\rho \r)\ll \nb e\rl_\rho + \l(c_1\ll \nb V\rl_\rho + c_3\ll \nb \hV \rl_\Linf \r)\ll e\rl_\rho ,
\end{aligned}
\end{equation}
where the first and third equations in Proposition \ref{lemma:l2 operator} are used for the first inequality, $\ll \nb \hV \rl_\rho \leq \ll \nb V \rl_\rho + \ll \nb e \rl_\rho$ are used for the second inequality, and $c_1 = C_\mu+ \frac12C_{\nb\cdot\Sig} $, $c_2 = \frac12C_\Sig$, $c_3 = \frac{1}{2}C_\Sig L_\rho$. 
Under the assumption that $c_2\leq \frac\lammin4$, one has
\begin{equation*}
\begin{aligned}
\frac\b2\ll e \rl_\rho^2 \leq  & -\frac\lammin8\l(\ll \nb e\rl_\rho - \frac{4}{\lammin}(c_1\ll e \rl_\rho + c_2\ll \nb V \rl_\rho) \r)^2 \\
&+ \frac{2}{\lammin}(c_1\ll e \rl_\rho + c_2\ll \nb V \rl_\rho)^2 + \l(c_1\ll \nb V\rl_\rho + c_3\ll \nb \hV \rl_\Linf \r)\ll e\rl_\rho \\
\leq & \frac{2c_1^2}{\lammin}\ll e\rl_\rho^2 + \l[\l(\frac{4c_1c_2}{\lammin} + c_1\r)\ll \nb V \rl_\rho+ c_3\ll \nb \hV \rl_\Linf\r]\ll e\rl_\rho + \frac{2c_2^2}{\lammin}\ll \nb V\rl_\rho^2.
\end{aligned}
\end{equation*}
Under the assumption that  $2c_1^2\leq \frac18\b\lammin$, one has
\[
\begin{aligned}
   \frac{\b}{4}\ll e \rl^2_\rho \leq \l[\frac4\b \l(\frac{4c_1c_2}{\lammin} + c_1\r)^2+ \frac{2c_2^2}{\lammin} \r]\ll \nb V \rl_\rho^2 + \frac{4c_3^2}\b \ll \nb \hV \rl_\Linf^2  + \frac{\b}{8}\ll e \rl^2_\rho,
\end{aligned}
\]
which yields, 
\[
\begin{aligned}
   \ll e \rl_\rho \leq \l[\frac{4c_1}{\b} \l(\frac{4c_2}{\lammin} + 1\r)+ \frac{3c_2}{\sqrt{\b\lammin}} \r]\ll \nb V \rl_\rho + \frac{4c_3}{\b}\ll \nb\hV \rl_\Linf.
\end{aligned}
\]

\subsection{Proof of Lemma \ref{lemma:stoch_dynamics_order}}\label{proof of lemma stoch_dynamics_order}
The proof of Lemma \ref{lemma:stoch_dynamics_order} replies the following two lemmas, which we will prove later. 
\begin{lemma}\label{i-th operator}
    Define operator $\Pi_{i,\dt} f(s) = \frac1\dt \E[\sum_{j=1}^i\coef{i}_jf(s_{j\dt} - s_0)|s_0 = s]$ with $\coef{i}_j$ defined in \eqref{def of a} and $f(0) = 0$, then 
    \[
    \Pi_{i,\dt}f(s) = [\mL_{\mu,\Sig}f](0) + \frac{1}{\dt i!}\sum_{j=1}^i\coef{i}_j\int_0^{j\dt}\E[\mL^{i+1}_{\mu,\Sig}f(s_t - s_0)|s_0 = s] t^i dt.
    \]
\end{lemma}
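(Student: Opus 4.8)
The plan is to reduce the identity to a one–dimensional Taylor expansion in the time variable, with the weights $\coef{i}_j$ doing the work of annihilating all the low–order terms, exactly as in the deterministic analogue (Lemma~\ref{lemma:muhat - mu}) but now working with the expected value rather than a pointwise Lagrange remainder. First I would set $u(t):=\E[\,f(s_t-s_0)\mid s_0=s\,]$, viewed as a function of $t$ with $s$ frozen, so that $\Pi_{i,\dt}f(s)=\frac1\dt\sum_{j=1}^i\coef{i}_j\,u(j\dt)$, and note that $u(0)=f(0)=0$ since $s_0-s_0=0$. The structural input is that $u$ is $(i+1)$–times differentiable with
\[
u^{(m)}(t)=\E\bigl[\,(\mL^m_{\mu,\Sig}f)(s_t-s_0)\mid s_0=s\,\bigr],\qquad 0\le m\le i+1,
\]
where $\mL^m_{\mu,\Sig}f(s_t-s_0)$ is read as the $m$–fold generator applied in the displacement variable (coefficients evaluated along the trajectory), so that in particular $u^{(m)}(0)=(\mL^m_{\mu,\Sig}f)(0)$. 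This is just Dynkin's formula applied iteratively: $\frac{d}{dt}\E[g(s_t-s_0)]=\E[(\mathcal{G}g)(s_t-s_0)]$ with $\mathcal{G}$ the generator, the martingale part having zero mean; differentiating under the expectation and iterating $i+1$ times gives the display. Justifying this interchange up to order $i+1$ is the only point requiring care, and it is where Assumption~\ref{ass_2}(b) (boundedness of $\nb^k\mu,\nb^k\Sig$ for $0\le k\le 2i$, hence of the coefficients of $\mL^{i+1}_{\mu,\Sig}$) together with standard moment bounds on $s_t$ are used; this is routine, but the accompanying bookkeeping is precisely what produces the explicit constants $L_\mu,L_\Sig$ in Lemma~\ref{lemma:stoch_dynamics_order}.

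Next I would apply Taylor's theorem with integral remainder to $u$ about $t=0$ at order $i$:
\[
u(j\dt)=\sum_{m=0}^i\frac{(j\dt)^m}{m!}\,u^{(m)}(0)+\frac{1}{i!}\int_0^{j\dt}(j\dt-t)^i\,u^{(i+1)}(t)\,dt .
\]
Multiplying by $\coef{i}_j/\dt$ and summing over $j=1,\dots,i$, the polynomial part is $\frac1\dt\sum_{m=0}^i\frac{\dt^m}{m!}u^{(m)}(0)\bigl(\sum_{j=1}^i\coef{i}_j j^m\bigr)$. By the moment identities \eqref{def of a}, for $1\le m\le i$ one has $\sum_{j=1}^i\coef{i}_j j^m=\sum_{j=0}^i\coef{i}_j j^m=\mathbf{1}[m=1]$ (the $j=0$ term vanishing), while for $m=0$ the contribution is $\frac1\dt\,u(0)\sum_{j=1}^i\coef{i}_j$, which vanishes precisely because $f(0)=0$. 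This is exactly where the hypothesis $f(0)=0$ enters: it is needed here only because the sum defining $\Pi_{i,\dt}$ runs from $j=1$ rather than $j=0$, so $\sum_{j=1}^i\coef{i}_j=-\coef{i}_0$ need not vanish. Hence the whole polynomial part collapses to the single surviving term $u^{(1)}(0)=(\mL_{\mu,\Sig}f)(0)$.

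Collecting the remainders then gives
\[
\Pi_{i,\dt}f(s)=(\mL_{\mu,\Sig}f)(0)+\frac{1}{\dt\,i!}\sum_{j=1}^i\coef{i}_j\int_0^{j\dt}(j\dt-t)^i\,\E\bigl[\,(\mL^{i+1}_{\mu,\Sig}f)(s_t-s_0)\mid s_0=s\,\bigr]\,dt ,
\]
which is the claimed expression; since $\int_0^{j\dt}(j\dt-t)^i\,dt=\int_0^{j\dt}t^i\,dt=(j\dt)^{i+1}/(i+1)$, the kernel $(j\dt-t)^i$ is interchangeable with the kernel $t^i$ displayed in the statement for all downstream purposes, and the $t^i$ form can also be produced directly by writing the remainder as an $(i+1)$–fold iterated time integral before collapsing it. The only remaining work for Lemma~\ref{lemma:stoch_dynamics_order} is then to take norms: each inner integral is $O(\dt^{i+1})$ times $\sup_{0\le t\le i\dt}\|(\mL^{i+1}_{\mu,\Sig}f)(s_t-\cdot)\|$, so after dividing by $\dt$ the deviation $\Pi_{i,\dt}f(s)-(\mL_{\mu,\Sig}f)(0)$ is $O(\dt^i)$ with the constant governed by $\mL^{i+1}_{\mu,\Sig}f$, i.e.\ by $\mL^i_{\mu,\Sig}\mu$ when $f(y)=y$ and by the corresponding quantity when $f(y)=yy^\top$. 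I do not expect a genuine obstacle here beyond the regularity/interchange justification noted above and the (purely computational) task of writing $\mL^{i+1}_{\mu,\Sig}f$ out explicitly to extract those constants.
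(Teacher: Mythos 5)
Your proof is correct and is essentially the paper's argument: both rest on an order-$i$ Taylor expansion in the time variable in which the moment identities $\sum_{j}\coef{i}_j j^k=\mathbf{1}[k=1]$ annihilate every polynomial term except $[\mL_{\mu,\Sig}f](0)$ and the hypothesis $f(0)=0$ kills the $k=0$ term; you expand $u(t)=\E[f(s_t-s_0)\mid s_0=s]$ directly via the backward equation/Dynkin formula, while the paper Taylor-expands the transition density $\rho(s',t\mid s)$ using the forward Kolmogorov equation and then integrates by parts, which is the dual presentation of the identical computation. Your remainder kernel $(j\dt-t)^i$ is the standard one (the paper writes $t^i$, matching the lemma's display), and your observation that $\int_0^{j\dt}(j\dt-t)^i\,dt=\int_0^{j\dt}t^i\,dt$ is precisely why the discrepancy is harmless for every downstream bound.
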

\begin{lemma}\label{lemma:plinf}
    For $p(s,t) = \E[f(s_t)|s_0 = s]$ with $s_t$ driven by the SDE \eqref{def of dynamics}, then under Assumption \ref{ass_2}/(a), one has 
    \[
    \ll \nb p(s, t)\rl_\Linf \leq \sqrt{\frac{C_{\nb\mu, \nb\Sig}}{\lammin}}\ll f \rl_\Linf + \ll \nb f \rl_\Linf, \quad \text{with }C_{\nb\mu, \nb\Sig} \text{ defined in \eqref{def of c123}}.
    \]
    For $p(s,t) = \E[f(s_t)(s_t - s_0)|s_0 = s]$ with $s_t$ driven by the SDE \eqref{def of dynamics}, then under Assumption \ref{ass_2}/(a), one has 
    \[
    \ll p(s,t) \rl_\Linf \leq  C_1(\ll \mu \rl_{C^1},\ll \Sigma \rl_{C^1}, \ll f \rl_{C^1})\sqrt{e^t-1}.
    \]
    \[
    \begin{aligned}
            \ll \nb p(s, t)\rl_\Linf \leq& C_2(\ll \mu \rl_{C^2},\ll \Sigma \rl_{C^2}, \ll f \rl_{C^2})\sqrt{e^t-1} .
    \end{aligned}
    \]
    where $C_1(\ll \mu \rl_{C^1},\ll \Sigma \rl_{C^1}, \ll f \rl_{C^1})$ and $C_2(\ll \mu \rl_{C^2},\ll \Sigma \rl_{C^2}, \ll f \rl_{C^2})$ are defined in \eqref{def of C1} and \eqref{def of C2}.
\end{lemma}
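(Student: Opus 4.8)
The three estimates rest on the fact that, for each integrand, $p(\cdot,t)$ solves (componentwise) the Kolmogorov backward equation $\pt_t p=\mL_{\mu,\Sig}p$ with initial datum obtained by evaluating the integrand at $t=0$ (Feynman–Kac, \cite{stroock1997multidimensional}), together with the uniform ellipticity $\Sig(s)\succeq\lammin I$ of Assumption~\ref{ass_2}/(a). For the first bound I would set $q(s,t)=\sum_k(\pt_{s_k}p(s,t))^2$ and $w=q+\tfrac{C_{\nb\mu,\nb\Sig}-\b}{\lammin}\,p^2$, with $C_{\nb\mu,\nb\Sig}$ as in \eqref{def of c123}. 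Using the identity $\mL_{\mu,\Sig}(u^2)=2u\,\mL_{\mu,\Sig}u+(\nb u)^\top\Sig(\nb u)$, commuting $\pt_{s_k}$ past $\mL_{\mu,\Sig}$ (which produces extra terms in $\nb\mu$ and $\nb\Sig$), and using Young's inequality to absorb the $\pt_{s_k}\Sig:\nb^2 p$ contribution into the second-derivative dissipation, one obtains
\[
\pt_t q\;\le\;\mL_{\mu,\Sig}q-\tfrac{\lammin}{2}\sum_{k,l}(\pt_{s_k}\pt_{s_l}p)^2+(C_{\nb\mu,\nb\Sig}-\b)\,q,\qquad \pt_t(p^2)\;\le\;\mL_{\mu,\Sig}(p^2)-\lammin\,q .
\]
The coefficient of $p^2$ in $w$ is calibrated so that $(C_{\nb\mu,\nb\Sig}-\b)q$ is exactly cancelled, giving $\pt_t w\le\mL_{\mu,\Sig}w$; the parabolic maximum principle on $\R^d$ then yields $\|w(\cdot,t)\|_\Linf\le\|w(\cdot,0)\|_\Linf=\|\nb f\|_\Linf^2+\tfrac{C_{\nb\mu,\nb\Sig}-\b}{\lammin}\|f\|_\Linf^2$. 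Since $\sum_k(\pt_{s_k}p)^2\le w$, bounding $C_{\nb\mu,\nb\Sig}-\b\le C_{\nb\mu,\nb\Sig}$, taking square roots, and using $\sqrt{a^2+b^2}\le a+b$ gives the first inequality.

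For the second bound, Cauchy–Schwarz gives $|p(s,t)|\le\|f\|_\Linf\,\E[\,|s_t-s|^2\mid s_0=s\,]^{1/2}$, and Itô's formula applied to $|s_t-s|^2$, using $|\mu(x)|\le|\mu(s)|+\|\nb\mu\|_\Linf|x-s|$ and boundedness of $\Sig$, yields $\tfrac{d}{dt}\E[|s_t-s|^2]\le c\,\E[|s_t-s|^2]+C$; Grönwall then produces a bound of the claimed form $C_1\sqrt{e^{ct}-1}$, with $c,C_1$ depending on $\|\mu\|_{C^1},\|\Sig\|_{C^1},\|f\|_{C^1}$ (the exponential rate $c$ being absorbed into $C_1$ over the relevant range of $t$). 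For the third bound I would differentiate $p$ through the stochastic flow: with the first-variation process $J_t=\nb_s s_t$ solving the linear SDE $dJ_t=\nb\mu(s_t)J_t\,dt+\sum_k\nb\s_k(s_t)J_t\,dB^k_t$, $J_0=I$, one has $\nb_s p(s,t)=\E\!\big[(s_t-s)\otimes(J_t^\top\nb f(s_t))+f(s_t)(J_t-I)\big]$. Bounding the two pieces by $\|\nb f\|_\Linf\,\E[\|J_t\|^2]^{1/2}\E[|s_t-s|^2]^{1/2}$ and $\|f\|_\Linf\,\E[\|J_t-I\|^2]^{1/2}$, and combining the second-bound estimate for $\E[|s_t-s|^2]$ with the Itô/Grönwall estimates $\E[\|J_t\|^2]\le e^{c't}$ and $\E[\|J_t-I\|^2]\le C'(e^{c't}-1)$ (which use only boundedness of $\nb\mu,\nb\s$), gives $\|\nb p(\cdot,t)\|_\Linf\le C_2\sqrt{e^{c''t}-1}$ of the asserted form.

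The crux is the first estimate, the only one required to be \emph{uniform in $t$}: a naive Grönwall bound would leave an $e^{(C_{\nb\mu,\nb\Sig}-\b)t/2}$ factor, which is fatal once this quantity is integrated against $e^{-\b t}$ over $[0,\infty)$ in the proof of Lemma~\ref{lemma:pt_s l2 est}. The decisive point is the exact algebraic calibration in $w=q+\tfrac{C_{\nb\mu,\nb\Sig}-\b}{\lammin}p^2$: the ellipticity-generated dissipation $-\lammin q$ coming from $\pt_t(p^2)$ is tuned precisely to cancel the amplification $(C_{\nb\mu,\nb\Sig}-\b)q$ coming from the commutator terms in $\pt_t q$. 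The remaining difficulty is the (standard but not entirely routine) justification of the parabolic comparison principle on all of $\R^d$ — interior regularity of $p$ under Assumption~\ref{ass_2}, so that $q$ and $w$ are admissible in the comparison and finite on each $[0,T]$, together with a quadratic barrier controlling growth at spatial infinity given that $\mu$ grows at most linearly and $\Sig$ is bounded. The second and third bounds are otherwise routine SDE moment estimates, the only care being to track the dependence of $C_1,C_2$ on $\|\mu\|_{C^k},\|\Sig\|_{C^k},\|f\|_{C^k}$ as in the statement.
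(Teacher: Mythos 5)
Your proof of the first inequality is essentially the paper's own argument: the paper forms the Lyapunov function $g=\tfrac{c}{2}p^2+\tfrac12\ll\nb p\rl_2^2$ with $c=C_{\nb\mu,\nb\Sig}/\lammin$, shows $\pt_t g\le \mL_{\mu,\Sig}g$ by letting the ellipticity-generated dissipation $-\lammin\ll\nb p\rl_2^2$ from the $p^2$-equation absorb the commutator amplification in the $\nb p$-equation, and concludes by comparison with the exact solution started from $cf^2/2+\ll\nb f\rl_2^2/2$. Your calibration $(C_{\nb\mu,\nb\Sig}-\b)/\lammin$ is in fact the tight one (the paper's constant carries a superfluous $+\b$ inherited from the weighted-$L^2$ setting of Proposition~\ref{lemma:l2 operator}), and your identification of this cancellation as the step that makes the bound uniform in $t$ is exactly right. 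For the second and third inequalities you take a genuinely different, probabilistic route. The paper stays on the PDE side: it writes $p=p_1-p_2s$, observes that $p$ solves $\pt_t p=\mL_{\mu,\Sig}p+\mu p_2+\Sig\nb p_2$ with zero initial data, and runs the same comparison machinery with an exponential weight $e^{-at}$ (the free Young parameter $a=1$ producing the literal rate in $\sqrt{e^{t}-1}$); for the gradient bound it must then differentiate this sourced equation, which forces a further bootstrap through $\nb^2p_2$ and is why the paper's $C_2$ involves $C^2$-norms of $\mu,\Sig,f$. Your route via Cauchy--Schwarz plus It\^o moment bounds for $\E[|s_t-s|^2]$, and via the first-variation process $J_t$ for the gradient, is cleaner and needs only $C^1$ data for the third bound, at the price of invoking differentiability of the stochastic flow rather than PDE regularity; it yields constants of a different form than \eqref{def of C1}--\eqref{def of C2}, which is harmless for the downstream lemmas.

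One caveat on your second and third bounds: Gr\"onwall applied after the decomposition $|\mu(x)|\le|\mu(s)|+\ll\nb\mu\rl_\Linf|x-s|$ produces a dynamics-dependent exponential rate $c$, and $\sqrt{e^{ct}-1}$ is \emph{not} dominated by a constant times $\sqrt{e^{t}-1}$ uniformly in $t$ when $c>1$, so "absorbing the rate into $C_1$" is only valid on a bounded time range. This is harmless where the lemma is used (times of order $i\dt$), but to recover the stated form for all $t$ you should use the boundedness of $\ll\mu\rl_\Linf$ and $\ll\Sig\rl_\Linf$ directly, giving $\E[|s_t-s|^2]\le 2\ll\mu\rl_\Linf^2t^2+\ll\mathrm{tr}\,\Sig\rl_\Linf t\lesssim e^{t}-1$ without any Gr\"onwall rate, and similarly keep the free parameter in the $J_t$ estimates where possible.
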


Now we are ready to prove Lemma \ref{lemma:stoch_dynamics_order}.
By Lemma \ref{i-th operator}, one has
\begin{equation}\label{ineq_55}
    \hmu_i(s) = \mu(s) + \frac1{\dt i!}\sum_{j=1}^i\coef{i}_j\l( \int_0^{j\dt}\E[\mL^{i}_{\mu,\Sig}\mu(s_t)|s_0 = s]t^idt  \r),
\end{equation}
which implies that 
\[
\begin{aligned}
    &\ll \hmu_i(s) - \mu(s) \rl_\Linf 
    \leq& \frac{ \ll\mL_{\mu,\Sig}^{i}\mu(s)\rl_\Linf}{\dt i!}\sum_{j=1}^i|\coef{i}_j|\int_0^{j\dt} t^idt
    \leq L_\mu \dt^i,
\end{aligned}
\]
where 
\begin{equation}\label{def of Lmu}
    L_\mu = \h{C}_i\ll\mL_{\mu,\Sig}^{i}\mu(s)\rl_\Linf \quad \text{with } \h{C}_i = \sum_{j=1}^i\frac{|\coef{i}_j|j^{i+1}}{(i+1)!}\text{ defined in \eqref{def of C_i}}.
\end{equation}

To prove the second inequality in the lemma, first apply Lemma \ref{i-th operator}, one has
\begin{equation}\label{ineq_4}
    \begin{aligned}
    \hs_i(s) 
    = & \Sig(s) \\
    &+ \frac{1}{\dt i!}\sum_{j=1}^i\coef{i}_j\int_0^{j\dt}\E[\mL_{\mu,\Sig}^i\l(\mu(s_t) (s_t-s_0)^\top + (s_t - s_0)\mu^\top(s_t) + \Sig(s_t)\r)|s_0 = s]t^idt.
\end{aligned}
\end{equation}
Note that 
\begin{equation}\label{def of h f g}
    \begin{aligned}
 h(s_t): =& \mL_{\mu,\Sig}^i\l(\mu(s_t) (s_t-s_0)^\top + (s_t - s_0)\mu^\top(s_t) + \Sig(s_t)\r) \\
 &-\l[ \mL_{\mu,\Sig}^{i}\mu(s_t) (s_t-s_0)^\top + (s_t - s_0)(\mL_{\mu,\Sig}^{i}\mu(s_t))^\top\r]
\end{aligned}
\end{equation}
is a function that only depends on the derivative $\nb^j\Sig, \nb^j\mu$ up to $2i$-th order, which can be bounded under Assumption \ref{ass_2}/(b). 
Thus applying the second inequality of Lemma \ref{lemma:plinf} yields
\[
\begin{aligned}
&\ll \E[\mL_{\mu,\Sig}^i\l(\mu(s_t) (s_t-s_0)^\top + (s_t - s_0)\mu^\top(s_t) + \Sig(s_t)\r)|s_0 = s] \rl_\Linf\\
\leq& \ll h(s)\rl_\Linf + 2C_1 e^{t/2} ,
\end{aligned}
\]
where $C_1 = C_1(\ll \mu \rl_{C^1},\ll \Sigma \rl_{C^1}, \ll \mL_{\mu,\Sig}^{i}\mu \rl_{C^1})$ defined in \eqref{def of C1}.
Hence, one has,
\[
\begin{aligned}
&\ll \int_0^{j\dt}\E[\mL_{\mu,\Sig}^i\l(\mu(s_t) (s_t-s_0)^\top + (s_t - s_0)\mu^\top(s_t) + \Sig(s_t)\r)|s_0 = s]t^idt \rl_\Linf \\
\leq& \frac{1}{i+1}\ll h \rl_\Linf(j\dt)^{i+1} + 2C_1 \int_{0}^{j\dt} (2 + t)t^i dt, \quad \text{for }j\dt \leq 3\\
=&\frac{1}{i+1}\l(\ll h \rl_\Linf + 4C_1 \r)(j\dt)^{i+1} + 2C_1 \frac{1}{i+2} (j\dt)^{i+2},
\end{aligned}
\]
where $e^{t/2} \leq 2+t$ for $t\leq 3$ are used in the first inequality.
Plugging the above inequality back to \eqref{ineq_4} implies
\[
\begin{aligned}
    &\ll\hs_i(s) - \Sig(s)\rl_\Linf
    \leq L_\Sig \dt^{i} + o(\dt^i),
\end{aligned}
\]
where 
\begin{equation}\label{def of Lsig}
    L_\Sig =\h{C}_i(\ll h(s)\rl_\Linf+4C_1(\ll \mu \rl_{C^1},\ll \Sigma \rl_{C^1}, \ll\mL_{\mu,\Sig}^{i}\mu \rl_{C^1})) 
\end{equation}
with $\h{C}_i, h(s), C_1(p,q,g)$ defined in \eqref{def of Lmu}, \eqref{def of h f g}, \eqref{def of C1}.

To prove the third inequality, one first takes $\nb\cdot$ to \eqref{ineq_4}, 
\[
\begin{aligned}
    &\nb\cdot \hs_i(s)  = \nb\cdot \Sig(s) + \frac{1}{\dt i!}\sum_{j=1}^i\coef{i}_j \int_0^{j\dt}\nb\cdot p(s,t) t^idt,
\end{aligned}
\]
where \[p(s,t) = \E[h(s_t)|s_0 = s] + \E[\mL_{\mu,\Sig}^{i}\mu(s_t) (s_t-s_0)^\top + (s_t - s_0)(\mL_{\mu,\Sig}^{i}\mu(s_t))^\top |s_0 = s],\]
with $h$ defined in \eqref{def of h f g}.
Therefore, by the first and third inequalities in Lemma \ref{lemma:plinf}, and denoting $c_1 = \sqrt{C_{\nb\mu, \nb\Sig}/\lammin} \ll h \rl_\Linf + \ll \nb\cdot h \rl_\Linf, c_2 = C_2(\ll \mu \rl_{C^2},\ll \Sigma \rl_{C^2}, \ll \mL_{\mu,\Sig}^{i}\mu \rl_{C^2})$, one has 
\begin{equation}\label{eq:pts sig}
    \begin{aligned}
&\ll  \nb\cdot \hs_i(s)  - \nb\cdot \Sig(s)  \rl_\Linf
\leq \frac{1}{\dt i!}\sum_{j=1}^i|\coef{i}_j| \l(\int_0^{j\dt} c_1t^i + 2c_2e^{t/2} t^i dt\r)\\
\leq &  \frac{1}{\dt i!}\sum_{j=1}^i|\coef{i}_j| \l(\int_0^{j\dt} (c_1 + 4c_2) t^i + 2c_2t^{i+1} dt\r) = L_{\nb\cdot\Sig} \dt^i  +o(\dt^i),
\end{aligned}
\end{equation}
where 
\begin{equation}\label{def of LSigrho}
\begin{aligned}
    L_{\nb\cdot\Sig} =&\h{C}_i \l[\sqrt{C_{\nb\mu, \nb\Sig}/\lammin} \ll h \rl_\Linf + \ll \nb\cdot h \rl_\Linf\r. \\
    &\l.+ 4C_2(\ll \mu \rl_{C^2},\ll \Sigma \rl_{C^2}, \ll \mL_{\mu,\Sig}^{i}\mu \rl_{C^2}) \r],  
\end{aligned}
\end{equation}
with $\h{C}_i, h(s), C_{\nb\mu, \nb\Sig}, C_2(p,q,g)$ defined in \eqref{def of Lmu}, \eqref{def of h f g}, \eqref{def of c123}, \eqref{def of C2}.

\paragraph{Proof of Lemma \ref{i-th operator}}
\begin{proof}
First note that
\begin{equation}\label{ineq_5}
    \Pi_{i,\dt} f(s) = \frac1\dt\sum_{j=1}^i\coef{i}_j\int_\S f(s' - s) \rho(s',j\dt|s) ds',
\end{equation}
where $\rho(s',t|s)$ is defined in \eqref{def of pt_t rho}.
By Taylor's expansion, one has
\[
\rho(s',j\dt|s) = \sum_{k=0}^i\pt^k_t\rho(s',0|s)\frac{(j\dt)^k}{k!} + \frac{1}{i!}\int_0^{j\dt}\pt_t^{i+1}\rho(s',t|s)t^idt.
\]
Inserting the above equation into \eqref{ineq_5} yields,
\[
\begin{aligned}
    \Pi_{i,\dt} f(s) =& \underbrace{\frac1\dt\sum_{k=0}^i\l(\sum_{j=1}^i\coef{i}_jj^k\r) \frac{(\dt)^k}{k!}\int_\S f(s' - s)  \pt^k_t\rho(s',0|s)ds'}_{I}\\
    &+ \underbrace{\frac1{\dt i!}\sum_{j=1}^i\coef{i}_j\l(\int_\S \int_0^{j\dt}f(s' - s)\pt_t^{i+1}\rho(s',t|s)t^idt ds' \r)}_{II}.
    \end{aligned}
\]
By the definition of $\coef{i}_j$, the first part can simplified to
\[
\begin{aligned}
    I
    =&\frac1\dt\l(\sum_{j=1}^i\coef{i}_j\r) \int_\S f(s' - s) \rho(s',0|s)ds' + \int_\S f(s' - s)  \pt_t\rho(s',0|s)ds'  \\
    =&\frac{\sum_{j=1}^i\coef{i}_j}{\dt}f(0) + \int_\S\mL_{\mu, \Sig}f(s' - s)\rho(s',0|s) ds'   = \mL_{\mu, \Sig}f(0).
\end{aligned}
\]
Apply integration by parts, the second part can be written as
\[
II = \frac{1}{\dt i!}\sum_{j=1}^i\coef{i}_j\int_0^{j\dt}\E[\mL^{i+1}_{\mu,\Sig}f(s_t - s_0)|s_0 = s] t^i dt,
\]
which completes the proof. 
\end{proof}

\paragraph{Proof of Lemma \ref{lemma:plinf}}
\begin{proof}
Note that $p(s,t)$ satisfies the following forward Kolmogorov equation \cite{pavliotis2016stochastic},
\[
    \pt_tp(s,t) = \mL_{\mu,\Sig} p(s,t), \quad \text{with }p(s,0) = f(s).
\]
Multiplying $p$ to the above equation and let $q_l = \pt_{s_l}p$, with Assumption \ref{ass_2}/(a), one has
\begin{equation}\label{ineq_12}
\begin{aligned}
&\pt_t (\frac12p^2) = \mL_{\mu,\Sig}(\frac12 p^2) - \frac12q^\top \Sig q, \quad \pt_t (\frac12p^2) \leq \mL_{\mu,\Sig}(\frac12 p^2) - \frac\lammin2\ll q\rl^2_2.
\end{aligned}
\end{equation}
On the other hand, $q_l$ satisfies
\begin{equation}\label{ineq18}
    \pt_t q_l = \mL_{\mu,\Sig}q_l + \mL_{\pt_{s_l}\mu,\pt_{s_l}\Sig} p, \quad \text{with }q(s,0) = \pt_{s_l}f(s).
\end{equation}
Multiplying $q_l$ to the above equation and then summing it over $l$ gives,
\[
\begin{aligned}
\pt_t (\frac12 \ll q \rl_2^2) =& \mL_{\mu,\Sig} (\frac12  \ll q \rl_2^2) - \frac12\sum_l (\nb q_l)^\top\Sig (\nb q_l) + q^\top \nb \mu \cdot q + \sum_l\frac12(\pt_{s_l}\Sig:\nb q) q_l,\\
\pt_t (\frac12 \ll q \rl_2^2) \leq& \mL_{\mu,\Sig} (\frac12  \ll q \rl_2^2) - \frac\lammin2\ll\nb q\rl_2^2  + \ll \nb\mu \rl_2\ll q \rl_2^2  + \frac12\ll \nb\Sig \rl_2\ll\nb q\rl_2\ll q \rl_2\\
\leq& \mL_{\mu,\Sig} (\frac12  \ll q \rl_2^2)   + \underbrace{\l(\ll \nb\mu \rl_\Linf  + \frac{\ll \nb\Sig \rl^2_\Linf}{8\lammin}\r)}_{\frac\lammin2c}\ll q \rl_2^2.
\end{aligned}
\]
Adding the above inequality to $c\times$\eqref{ineq_12} with $c = \frac{C_{\nb\mu, \nb\Sig}}{\lammin}$ and $C_{\nb\mu, \nb\Sig}$ defined in \eqref{def of c123}, one has
\[
\pt_t \l(\frac{c}2 p^2 + \frac12 \ll q \rl_2^2\r) \leq \mL_{\mu,\Sig} (\frac{c}2 p^2 + \frac12  \ll q \rl_2^2) . \\
\]
Let $g(s,t) = \frac{c}2 p^2 + \frac12 \ll q \rl_2^2$, then $\pt_t g \leq \mL_{\mu,\Sig} g$. Let $g_1(s,t) = \E[cf(s_t)^2/2+ \ll\nb f(s_t)\rl_2^2/2|s_0 = s]$, then $g_1(s,t)$ satisfying $\pt_t g_1 = \mL_{\mu,\Sig} g_1$, with $g_1(0,s) = g(0,s)$. Since $\ll g_1(t,s)  \rl_\Linf \leq \frac{c}2\ll f \rl_\Linf^2 + \frac12\ll \nb f \rl_\Linf$, by comparison theorem, one has for $\forall s\in \S$
\[
g(t,s) \leq g_1(t,s)\leq  \frac12(c\ll f \rl^2_\Linf + \ll \nb f \rl^2_\Linf),
\]
which completes the proof for the first inequality. 

For the second $p(t,s) = \E[f(s_t)(s_t - s_0)|s_0 = s]$, Let $p_1(t,s) = \E[f(s_t)s_t |s_0 = s], p_2(t,s) = \E[f(s_t)|s_0 = s]$, then note that it satisfies the following PDE, 
\[
\pt_tp = \pt_t(p_1 - p_2 s) =  \mL_{\mu,\Sig} p_1 - (\mL_{\mu,\Sig} p_2)s = \mL_{\mu,\Sig} p + \mu p_2 + \Sigma\nb p_2, \quad \text{with }p(0,s) = 0.
\]
Multiplying it with $p^\top$ and letting $q = \nb p$ gives, 
\[
\pt_t(\frac12\ll p\rl_2^2) \leq \mL_{\mu,\Sig}(\frac12\ll p\rl_2^2) -\frac{\lammin}{2}\ll q\rl_2^2 + \frac{1}{2a} \ll \mu p_2 + \Sigma\nb p_2\rl^2_\Linf + \frac{a}{2}\ll p\rl_2^2, \quad \text{for }\forall a>0.
\]
Let $ c_2 = \ll \mu p_2 + \Sigma\nb p_2\rl^2_\Linf$, one has  
\begin{equation}\label{ineq_15}
    \pt_t(\frac12\ll p\rl_2^2) \leq \mL_{\mu,\Sig}(\frac12\ll p\rl_2^2) -\frac{\lammin}{2}\ll q\rl_2^2 + \frac{c_2}{2a} + \frac{a}{2}\ll p\rl_2^2, \quad \text{for }\forall a>0.
\end{equation}
Let $g(t,s) = \frac{1}{2}\ll p\rl_2^2e^{-at} + \frac{c_2}{2a^2} e^{-at}$, then,
\begin{equation}\label{ineq_14}
    \pt_t g \leq \mL_{\mu,\Sig} g, \quad \text{with }g(0,s) = \frac{c_2}{2a^2}.
\end{equation}
Similarly, by comparison theorem, $\ll g(t,s) \rl_\Linf \leq \ll g(0,s) \rl_\Linf = \frac{c_2}{2a^2}$, which implies, 
\[
\ll p(t,\cdot) \rl_\Linf \leq \frac{\sqrt{c_2}}{a}\sqrt{e^{at} -1} = \sqrt{c_2}\sqrt{e^t-1} \leq C_1(\ll \mu \rl_{C^1},\ll \Sigma \rl_{C^1}, \ll f \rl_{C^1})\sqrt{e^t-1}
\]
with 
\begin{equation}\label{def of C1}
    C_1(\ll \mu \rl_{C^1},\ll \Sigma \rl_{C^1}, \ll f \rl_{C^1}) = \ll \mu \rl_\Linf \ll f \rl_\Linf  + \ll \Sigma \rl_\Linf \l(\sqrt{\frac{C_{\nb\mu, \nb\Sig}}{\lammin}}\ll f \rl_\Linf + \ll \nb f \rl_\Linf\r) 
\end{equation}
where the first equality is obtained by selecting $a =1$. One obtains the last inequality by applying the first inequality in this Lemma to $\ll \nb p_2\rl_\Linf$ and $\ll p_2 \rl_\Linf \leq \ll f \rl_\Linf$.

On the other hand, letting $h(t,s) = \mu p_2 + \Sigma\nb p_2$, taking $\nb$ to \eqref{ineq_14} and multiplying $q^\top$ to it gives,
\begin{equation}\label{ineq_20}
\begin{aligned}
\pt_t (\frac12 \ll q \rl_2^2) =& \mL_{\mu,\Sig} (\frac12  \ll q \rl_2^2) - \frac12\sum_l (\nb q_l)^\top\Sig (\nb q_l) + q^\top \nb \mu \cdot q + \sum_l\frac12(\pt_{s_l}\Sig:\nb q) q_l + q\cdot\nb h ,
\end{aligned}
\end{equation}
\[
\begin{aligned}
\pt_t (\frac12 \ll q \rl_2^2) \leq& \mL_{\mu,\Sig} (\frac12  \ll q \rl_2^2) - \frac\lammin2\ll\nb q\rl_2^2  + \ll \nb\mu \rl_2\ll q \rl_2^2  + \frac12\ll \nb\Sig \rl_2\ll\nb q\rl_2\ll q \rl_2 + \frac{1}{2a}\ll \nb h \rl_\Linf^2 \\
&+ \frac{a}{2}\ll q\rl_2^2\\
\leq& \mL_{\mu,\Sig} (\frac12  \ll q \rl_2^2)   + \l(\ll \nb\mu \rl_\Linf  + \frac{\ll \nb\Sig \rl^2_\Linf}{8\lammin}\r)\ll q \rl_2^2+\frac{1}{2a}\ll \nb h \rl_\Linf^2+\frac{a}{2}\ll q\rl_2^2.
\end{aligned}
\]
Adding the above inequality to $c\times$\eqref{ineq_15} with $c = \frac{C_{\nb\mu, \nb\Sig}}{\lammin}$ and $C_{\nb\mu, \nb\Sig}$ defined in \eqref{def of c123}, one has
\[
\pt_t \l(\frac12 p^2 + \frac{c}2 \ll q \rl_2^2\r) \leq \mL_{\mu,\Sig} \l(\frac12 p^2 + \frac{c}2  \ll q \rl_2^2\r) +\frac{1}{2a}\l(\ll \nb h \rl_\Linf^2 +c\ll \mu \rl^2_\Linf\r) + a\l(\frac12\ll p \rl^2_2 + \frac{c}2\ll q \rl^2_2\r) . \\
\]
Let $g(s,t) = \l(\frac{c}2 p^2 + \frac{1}2 \ll q \rl_2^2\r) e^{ -a t } + \frac{1}{2a^2}(c\ll \mu \rl_\Linf^2 + \ll \nb h \rl_\Linf^2)e^{-at}$, then 
\[
\pt_t g \leq \mL_{\mu,\Sig} g, \quad \text{with } g(s,0) = \frac{1}{2a^2}(c\ll \mu \rl_\Linf^2 + \ll \nb h \rl_\Linf^2).
\]
By the comparison theorem, one has $g(s,t) \leq 0$, which implies
\[
\frac{c}2 \ll p\rl_\Linf^2 + \frac{1}2 \ll q \rl_\Linf^2 \leq \frac{1}{2a^2}(c\ll \mu \rl_\Linf^2 + \ll \nb h \rl_\Linf^2)(e^{at} - 1)
\]
\[
 \ll q \rl_\Linf \leq (\sqrt{c}\ll \mu \rl_\Linf + \ll \nb h \rl_\Linf)\sqrt{e^{t} - 1}
\]
Since one needs to bound $\nb^2 p_2$, we will estimate it first. Taking $\pt_{s_k}$ to \eqref{ineq18}, and letting $q_{lk} = \pt_{s_k}\pt_{s_l} p$, one has, 
\[
\begin{aligned}
    \pt_t q_{lk} = \mL_{\mu,\Sig} q_{lk}  + \mL_{\pt_{s_l}\mu,\pt_{s_l}\Sigma}q_k + \mL_{\pt_{s_k}\mu,\pt_{s_k}\Sig} q_l  + \mL_{\pt_{s_l}\pt_{s_k}\mu,\pt_{s_l}\pt_{s_k}\Sigma}p
\end{aligned}
\]
Multiplying $q_{lk}$ to both sides of the above equation, and summing it over $l, m$ gives,
\begin{equation}\label{ineq_19}
    \begin{aligned}
    &\pt_t\l(\frac12\ll \nb^2 p\rl^2_2\r) \\
    \leq& \mL_{\mu,\Sig} \l(\frac12\ll \nb^2 p\rl^2_2\r)  + \underbrace{\l(\ll \nb \mu\rl_\Linf^2 + \frac{\ll \nb \Sigma \rl^2_\Linf}{2\lammin} + \frac12 \ll \nb^2 \Sigma\rl_\Linf^2\r)}_{c_3}\ll \nb^2 p \rl^2_2 + \frac12\ll \nb^2 \mu\rl^2_2 \ll q \rl^2_2\\
\end{aligned}
\end{equation}
The equation \eqref{ineq_20} gives
\begin{equation}\label{ineq_21}
    \pt_t\l(\frac12\ll \nb p\rl^2_2\r) \leq \mL_{\mu,\Sig} \l(\frac12\ll \nb p\rl^2_2\r) -\frac{\lammin}{4}\ll \nb^2 p \rl_2^2 + \lammin c\ll q \rl_2^2.
\end{equation}
Now combining \eqref{ineq_12}, \eqref{ineq_21} and \eqref{ineq_19} leads to,
\begin{equation}\label{ineq_22}
\begin{aligned}
    &\eqref{ineq_12}\times \frac{2}{\lammin}\l(\frac12\ll \nb^2\mu\rl^2_2 + 4cc_3\r) + \eqref{ineq_21}\times \frac{4c_3}{\lammin} + \eqref{ineq_19}\\ 
    =&\frac12\pt_t \l[\frac{2}{\lammin}\l(\frac12\ll \nb^2\mu\rl^2_2  + 4cc_3\r)\ll p \rl_2^2 +  \frac{4c_3}{\lammin} \ll\nb p \rl_2^2+ \ll\nb^2 p \rl_2^2\r] \\
    \leq& \frac12\mL_{\mu,\Sigma} \l[\frac{2}{\lammin}\l(\frac12\ll \nb^2\mu\rl^2_2  + 4cc_3\r)\ll p \rl_2^2 +  \frac{4c_3}{\lammin} \ll\nb p \rl_2^2+ \ll\nb^2 p \rl_2^2\r] 
\end{aligned}
\end{equation}
By comparison theorem, one has
\[
\ll \nb^2 p \rl_2 \leq \frac{1}{\sqrt{\lammin}}\l(\ll \nb^2\mu\rl_2  + 2\sqrt{cc_3}\r)\ll f \rl_2^2 +  \frac{2\sqrt{c_3}}{\sqrt{\lammin}} \ll\nb f \rl_2+ \ll\nb^2 f \rl_2
\]
By the first inequality of this Lemma and the above inequality, one has, 
\[
\sqrt{c}\ll \mu \rl_\Linf + \ll \nb h \rl_\Linf \leq C_2(\ll \mu \rl_{C^2}, \ll  \Sigma \rl_{C^2}, \ll f \rl_{C^2})
\]
where 
\begin{equation}\label{def of C2}
\begin{aligned}
    & C_2(\ll \mu \rl_{C^2}, \ll  \Sigma \rl_{C^2}, \ll f \rl_{C^2}) \\
    = &\sqrt{c}\ll \mu \rl_\Linf  + \ll \nb \mu \rl_\Linf\ll f \rl_\Linf \\
&+ \l(\ll \mu \rl_\Linf +  \ll\nb \Sigma \rl_\Linf\r)(\sqrt{c}\ll f \rl_\Linf + \ll \nb f \rl_\Linf) \\
&+ \ll \Sigma \rl_\Linf \l[\frac{1}{\sqrt{\lammin}}\l(\ll \nb^2\mu\rl_2  
+ 2\sqrt{cc_3}\r)\ll f \rl_2^2 +  \frac{2\sqrt{c_3}}{\sqrt{\lammin}} \ll\nb f \rl_2+ \ll\nb^2 f \rl_2\r]
\end{aligned}
\end{equation}
which completes the proof.
\end{proof}

\subsection{Proof of Lemma \ref{lemma:pt_s l2 est}}\label{proof of lemma pt_s l2 est}

Based on Proposition \ref{lemma:l2 operator}, one has
\[
\begin{aligned}
    &\frac\b2 \ll V \rl_\rho^2 -\la r(s), V(s)\ra_\rho \leq  - \frac\lammin4 \ll\nb V(s) \rl_\rho^2;\\
    &\b \ll \nb V \rl_\rho^2 -\la \nb r(s), \nb V(s)\ra_\rho \leq \frac{C_{\nb\mu,\nb\Sig}}{2}\ll \nb V \rl_\rho^2.
\end{aligned}
\]
Multiplying $\frac{2C_{\nb\mu,\nb\Sig}}{\lammin}$ to the first inequality and adding it to the second one gives 
\begin{equation}\label{ineq_2}
    \begin{aligned}
    &\b\l(\frac{C_{\nb\mu,\nb\Sig}}{\lammin} \ll V \rl^2 + \ll\nb  V(s) \rl_\rho^2 \r)\\
    \leq& \frac1{2\b}\l(\frac{C_{\nb\mu,\nb\Sig}}{\lammin}\ll r \rl_\rho^2 + \ll \nb r\rl_\rho^2\r)+  \frac\b{2}\l(\frac{2C_{\nb\mu,\nb\Sig}}{\lammin}\ll V\rl^2_\rho+ \ll \nb V \rl^2_\rho\r),
\end{aligned}
\end{equation}
which implies
\[
\begin{aligned}
    &\ll\nb  V(s) \rl_\rho
    \leq \frac2{\b}\l(\sqrt{\frac{C_{\nb\mu,\nb\Sig}}{\lammin}}\ll r \rl_\rho + \ll \nb r\rl_\rho\r).
\end{aligned}
\]

For the second inequality, 
Let $p(s,t) = \E[r(s_t) | s_0 = s]$, then by Lemma \ref{lemma:plinf}, one has
\[
\begin{aligned}
    &\ll \nb \hV(s) \rl_\Linf = \ll \int_0^\infty e^{-\b t} \nb p(s,t) dt \rl_\Linf \leq  \int_0^\infty e^{-\b t} \ll \nb p(s,t)\rl_\Linf dt\\
    \leq&  \int_0^\infty e^{-\b t} \l(\sqrt{\frac{C_{\nb\hmu,\nb\hs}}{\lammin}} \ll r \rl_\Linf + \ll \nb r \rl_\Linf \r)dt\\ 
     \leq& \frac1\b\l(\frac1{\sqrt{\lammin}}\sqrt{2C_{\nb\mu,\nb\Sig} + \frac{1}{2\lammin}\ll \nb (\Sig - \hs) \rl_\Linf^2 + 2\ll \nb (\mu - \hmu) \rl_\Linf} \ll r \rl_\Linf + \ll \nb r \rl_\Linf \r).
\end{aligned}
\]
Similar to the estimate of the $\ll \nb \cdot (\Sig - \hs)\rl_\Linf$ in Lemma \ref{lemma:muhat - mu}, one has, 
\[
\ll \nb(\Sig - \hs_i ) \rl_\Linf \leq L_{\nb\Sig} \dt^i + o(\dt^i),
\]
where 
\begin{equation}\label{def of LnbSigrho}
    L_{\nb\Sig} =\h{C}_i \l[\sqrt{C_{\nb\mu, \nb\Sig}/\lammin} \ll h \rl_\Linf + \ll \nb h \rl_\Linf + 4\l( \sqrt{C_{\nb\mu, \nb\Sig}/\lammin} \ll \mu \rl_\Linf + \ll \nb \mu \rl_\Linf\r) \r],  
\end{equation}
with $\h{C}_i, h(s), C_{\nb\mu, \nb\Sig}$ defined in \eqref{def of Lmu}, \eqref{def of h f g}, \eqref{def of c123}.
Taking $\nb$ to \eqref{ineq_55} gives, 
\[
\ll \nb(\mu - \hmu_i ) \rl_\Linf \leq \frac{1}{\dt i!}\sum_{j=1}^i|\coef{i}_j| \int_0^{j\dt} \ll \nb p(s,t) \rl t^i dt
\]
with 
\[
p(s,t) = \E[\mL_{\mu,\Sig}^i\mu(s_t) | s_0 = s].
\]
By Lemma \ref{lemma:plinf}, one has,
\[
\ll \nb(\mu - \hmu_i ) \rl_\Linf \leq L_{\nb\mu} \dt^i
\]
with 
\begin{equation}\label{def of Lnbmu}
    L_{\nb\mu} = C_i \l(\sqrt{\frac{C_{\nb\mu, \nb\Sig}}{\lammin} }\ll\mL_{\mu,\Sig}^i\mu(s) \rl_\Linf + \ll\nb\mL_{\mu,\Sig}^i\mu(s) \rl_\Linf \r).
\end{equation}
Therefore, 
\[
\begin{aligned}
    &\ll \nb \hV(s) \rl_\Linf 
     \leq \frac1\b\l[\l(\sqrt{\frac{2C_{\nb\mu,\nb\Sig}}{\lammin}} + \frac{L_{\nb\Sig}}{\sqrt{2}\lammin}\dt^i + \sqrt{\frac{2L_{\nb\mu}}{\lammin}}\dt^{i/2}\r) \ll r \rl_\Linf + \ll \nb r \rl_\Linf \r] + o(\dt^i).
\end{aligned}
\]

\end{document}